\theoremstyle{plain}  
\newtheorem{thm}{Theorem}[section]
\newtheorem{con}[thm]{Conjecture}
\newtheorem{cor}[thm]{Corollary}
\newtheorem{lem}[thm]{Lemma}
\newtheorem{prop}[thm]{Proposition}
\theoremstyle{definition}
\newtheorem{df}[thm]{Definition}
\newtheorem{ex}[thm]{Example}
\newtheorem{nt}[thm]{Notations}
\newtheorem{rem}[thm]{Remark}
\newtheorem{rev}[thm]{Review}
\newtheorem{lemdf}[thm]{Lemma-Definition}
\newtheorem{cordf}[thm]{Corollary-Definition}
\newtheorem*{remark}{Remark}
\theoremstyle{remark}
\newtheorem*{claim}{Claim}
\DeclareMathOperator{\id}{id}
\DeclareMathOperator{\isoto}{\overset{\scriptstyle{\sim}}{\to}}
\DeclareMathOperator{\rinf}{\rightarrowtail}
\DeclareMathOperator{\linf}{\leftarrowtail}
\DeclareMathOperator{\rdef}{\twoheadrightarrow}
\DeclareMathOperator{\rinc}{\hookrightarrow}
\DeclareMathOperator{\linc}{\hookleftarrow}
\newcommand{\onto}[1]{\stackrel{#1}{\to}}
\newcommand{\ssm}{\smallsetminus}
\renewcommand{\coprod}{\sqcup}
\renewcommand{\leqq}{\leq}
\renewcommand{\geqq}{\geq}
\newcommand{\Ext}{\operatorname{Ext}}
\newcommand{\Ker}{\operatorname{Ker}}
\newcommand{\im}{\operatorname{Im}}
\newcommand{\coker}{\operatorname{Coker}}
\newcommand{\Cone}{\operatorname{Cone}}
\newcommand{\Typ}{\operatorname{Typ}}
\newcommand{\grade}{\operatorname{grade}}
\newcommand{\pd}{\operatorname{Projdim}}
\newcommand{\rank}{\operatorname{rank}}
\newcommand{\Spec}{\operatorname{Spec}}
\newcommand{\Supp}{\operatorname{Supp}}
\newcommand{\Tordim}{\operatorname{Tordim}}
\DeclareMathOperator{\Ob}{Ob}
\DeclareMathOperator{\op}{op}
\DeclareMathOperator{\HOM}{\mathcal{HOM}}
\DeclareMathOperator{\Tot}{Tot}
\DeclareMathOperator{\E}{E}
\DeclareMathOperator{\Homo}{H}
\newcommand{\bbN}{\operatorname{\mathbb{N}}}
\DeclareMathOperator{\ee}{\mathfrak{e}}
\DeclareMathOperator{\fE}{\mathfrak{E}}
\DeclareMathOperator{\ff}{\mathfrak{f}}
\DeclareMathOperator{\fF}{\mathfrak{F}}
\DeclareMathOperator{\fg}{\mathfrak{g}}
\DeclareMathOperator{\ii}{\mathfrak{i}}
\DeclareMathOperator{\pp}{\mathfrak{p}}
\DeclareMathOperator{\cSS}{\mathfrak{S}}
\DeclareMathOperator{\cA}{\mathcal{A}}
\DeclareMathOperator{\cB}{\mathcal{B}}
\DeclareMathOperator{\cC}{\mathcal{C}}
\DeclareMathOperator{\calD}{\mathcal{D}}
\DeclareMathOperator{\cE}{\mathcal{E}}
\DeclareMathOperator{\cF}{\mathcal{F}}
\DeclareMathOperator{\cG}{\mathcal{G}}
\DeclareMathOperator{\calH}{\mathcal{H}}
\DeclareMathOperator{\cM}{\mathcal{M}}
\DeclareMathOperator{\cO}{\mathcal{O}}
\DeclareMathOperator{\cP}{\mathcal{P}}
\DeclareMathOperator{\cS}{\mathcal{S}}
\DeclareMathOperator{\cW}{\mathcal{W}}
\DeclareMathOperator{\cX}{\mathcal{X}}
\DeclareMathOperator{\cY}{\mathcal{Y}}
\DeclareMathOperator{\cZ}{\mathcal{Z}}
\DeclareMathOperator{\Kos}{\mathbf{Kos}}
\DeclareMathOperator{\Cof}{\operatorname{Cof}}
\newcommand{\Cub}{\operatorname{\bf Cub}}
\newcommand{\red}{\operatorname{red}}
\newcommand{\tq}{\operatorname{tq}}
\newcommand{\nondeg}{\operatorname{nondeg}}
\def\sn{\smallskip\noindent}
\newcommand{\cf}{\textrm{cf.}\;}
\title{Higher $K$-theory of Koszul cubes}
\author{Satoshi Mochizuki}
\date{}
\begin{document}

\maketitle

\begin{abstract}
The main objective of this paper is 
to determine generators of 
the topological filtrations on 
the higher $K$-theory of a noetherian commutative ring with unit $A$. 
We introduce the concept of Koszul cubes and 
give a comparison theorem 
between the $K$-theory of Koszul cubes with 
that of topological filtrations. 
\end{abstract}

\section*{Introduction}

The following {\bf generator conjecture} 
is one of the significant problems 
in commutative algebra and algebraic $K$-theory. 
(For the relationship between 
the generator conjecture and Serre's intersection multiplicity conjecture 
\cite{Ser65}, 
please see the references \cite{Dut93}, \cite{Dut95}). 

\sn
Let $A$ be a commutative noetherian ring with unit and $p$ 
a natural number such that $0\leq p \leq \dim A$. 
Let $\cM^p_A$ denote the category of finitely generated $A$-modules $M$ 
whose support has codimension $\geqq p$ in $\Spec A$. 
Recall that 
a sequence of elements $f_1,\cdots,f_q$ in $A$ is 
said to be an {\bf $A$-regular sequence} if 
all $f_i$ are not unit elements and 
if $f_1$ is not a zero divisor of $A$ and 
if $f_{i+1}$ is not a zero divisor of $A/(f_1,\cdots,f_i)$ 
for any $1\leq i\leq q-1$.

\begin{con}[\bf Generator conjecture]
\label{con:Gencon}
For any commutative regular local ring $A$ and 
any natural number $0\leq p \leq \dim A$, 
the Grothendieck group 
$K_0(\cM^p_A)$ is generated by 
cyclic modules $A/(f_1,\cdots,f_p)$ where 
the sequence 
$f_1,\cdots,f_p$ forms an $A$-regular sequence. 
\end{con}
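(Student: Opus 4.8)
The plan is to translate the statement into one about codimension-$p$ cycles by means of the topological (coniveau) filtration on the $G$-theory of $A$, and then to produce the required cycles by a Koszul-theoretic prime-avoidance argument. First, I would record that the categories $\cM^\bullet_A$ form a decreasing filtration of the abelian category $\cM^0_A$ of all finitely generated $A$-modules by Serre subcategories, and that Quillen's d\'evissage and localization theorems identify each quotient $\cM^p_A/\cM^{p+1}_A$ with $\coprod_{\hight\pp=p}(\text{finite-length }A_\pp\text{-modules})$; taking Grothendieck groups produces an exact sequence
\[
K_0(\cM^{p+1}_A)\xrightarrow{\ \iota_p\ }K_0(\cM^p_A)\xrightarrow{\ \operatorname{cyc}\ }\bigoplus_{\hight\pp=p}\mathbb{Z}\longrightarrow 0 ,
\]
in which $\operatorname{cyc}[M]=\bigl(\operatorname{length}_{A_\pp}M_\pp\bigr)_{\hight\pp=p}$ and the right-hand group is the group $Z^p(\Spec A)$ of codimension-$p$ cycles. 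Since $A$ is regular, hence Cohen--Macaulay, an $A$-regular sequence $f_1,\dots,f_p$ cuts out a module $A/(f_1,\dots,f_p)$ of pure codimension $p$, so $[A/(f_1,\dots,f_p)]\in K_0(\cM^p_A)$ and $\operatorname{cyc}[A/(f_1,\dots,f_p)]$ is the complete-intersection cycle $[V(f_1,\dots,f_p)]$.

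Next I would show that the deeper part of the filtration is invisible to $K_0$, by downward induction on $p$ starting from $p=\dim A$ (where $\cM^{p+1}_A=0$ and a regular system of parameters already exhibits the one generator of $Z^p(\Spec A)=\mathbb{Z}\cdot[\mathfrak m]$). The exact sequence $0\to A/(g_1,\dots,g_p)\xrightarrow{\,\cdot g_{p+1}\,}A/(g_1,\dots,g_p)\to A/(g_1,\dots,g_{p+1})\to 0$, whose first map is injective because $g_{p+1}$ is a nonzerodivisor on $A/(g_1,\dots,g_p)$, shows $[A/(g_1,\dots,g_{p+1})]=0$ in $K_0(\cM^p_A)$; so if the conjecture is already known at level $p+1$, the image of $\iota_p$ is trivial and $\operatorname{cyc}\colon K_0(\cM^p_A)\xrightarrow{\sim}Z^p(\Spec A)$. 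Consequently the conjecture at level $p$ is \emph{equivalent} to the assertion that the complete-intersection cycles $[V(f_1,\dots,f_p)]$, with $f_\bullet$ an $A$-regular sequence, generate $Z^p(\Spec A)$.

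To attack that assertion I would argue prime by prime. Fix a height-$p$ prime $\pp$; as $A_\pp$ is regular local of dimension $p$, choose $h_1,\dots,h_p\in\pp$ whose images minimally generate $\pp A_\pp$. Now correct them inductively: having replaced $h_1,\dots,h_{i-1}$ by an $A$-regular sequence $g_1,\dots,g_{i-1}$ (so, $A$ being Cohen--Macaulay, all minimal primes of $(g_1,\dots,g_{i-1})$ have height $i-1<p$ and hence do not contain $\pp$), pick $\delta_i\in\pp^{(2)}$ with $g_i:=h_i+\delta_i$ outside all those minimal primes — possible by prime avoidance for the coset $h_i+\pp^{(2)}$, since $\pp^{(2)}$ is contained in no prime of height $<p$. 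Then $g_1,\dots,g_p$ is an $A$-regular sequence, and $(g_1,\dots,g_p)A_\pp=\pp A_\pp$ because the $g_i$ still span $\pp A_\pp/\pp^2A_\pp$. Hence $\operatorname{cyc}[A/(g_\bullet)]=[\pp]+\sum_{\qq}m_\qq[\qq]$, the sum over the finitely many height-$p$ primes $\qq\neq\pp$ containing $(g_\bullet)$. The combinatorics of these ``spurious'' components — and of what happens when one replaces the $g_i$ by powers, or combines several such sequences — is exactly what the category of Koszul $n$-cubes built on a regular system of parameters, together with the comparison theorem promised in the introduction — whose $K_0$ is manifestly generated by the vertices $A/(f_I)$ of the cubes — is designed to encode; one would use that framework to try to cancel the $m_\qq[\qq]$ against further complete-intersection cycles.

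The main obstacle is precisely this cancellation. A prime $\pp$ of a regular local ring need not be a complete intersection, so one cannot in general find $g_\bullet$ with $V(g_\bullet)=\{\pp\}$: the spurious components are genuinely present, and eliminating them requires controlling both the multiplicities $m_\qq$ and the $\mathbb{Z}$-linear relations among complete-intersection cycles. This is where the argument leaves the realm of formal homological algebra, since the necessary control over intersection multiplicities is entangled with Serre's vanishing and positivity conjectures; a complete proof along these lines would have to establish the relevant relations directly — for instance when $A$ is equicharacteristic, by reduction to the geometric situation and a moving lemma — or to invoke them. I would expect the passage from ``a complete-intersection cycle with a multiplicity-one component at each given $\pp$'' to ``complete-intersection cycles generate all of $Z^p(\Spec A)$'' to be the decisive difficulty, with the Koszul-cube comparison supplying the right language for it rather than, by itself, a proof.
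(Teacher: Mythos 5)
There is a fundamental mismatch here that you should notice before evaluating the internal logic of your sketch: the statement you are trying to prove is explicitly labelled a \textbf{Conjecture} in the paper, and the paper does not offer a proof of it. It is stated (immediately after the statement) to be equivalent to Gersten's conjecture for $K_0$, which remains open for a general regular local ring, with only the partial cases in Remark~\ref{rem:His note} being known. So there is no ``paper's own proof'' for you to have reproduced or diverged from; your proposal is, at best, a partial reduction. To your credit, you flag this honestly in the final paragraph, and the reduction you carry out --- localization plus d\'evissage on the coniveau filtration to get $K_0(\cM^{p+1}_A)\to K_0(\cM^p_A)\to Z^p(\Spec A)\to 0$, the short exact sequence $0\to A/(g_{\leq p})\xrightarrow{g_{p+1}}A/(g_{\leq p})\to A/(g_{\leq p+1})\to 0$ killing the lower step, and the prime-avoidance construction of a regular sequence cutting out $\pp A_\pp$ --- is correct and standard. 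But the conclusion you correctly identify as the crux (that complete-intersection cycles generate $Z^p(\Spec A)$, equivalently that the ``spurious'' components $m_\qq[\qq]$ can be cancelled by further complete-intersection relations) is precisely what is open, so this cannot be regarded as a proof.

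You also slightly misdescribe what the Koszul-cube machinery buys. What the paper actually proves along these lines is Theorem~\ref{thm:intromaincor}: over a local Cohen--Macaulay ring, $K_0(\cM^p_A(p))$ is generated by modules of the form $F/{\langle\im\psi_1,\dots,\im\psi_p\rangle}$ with $F$ free and the $\det\psi_k$ forming a regular sequence --- not by cyclic modules $A/(f_1,\dots,f_p)$. The comparison theorem $K(\Kos_A^{\ff_S};\tq)\simeq K(\cM_A^{\ff_S}(p))$ gives generators coming from $\Homo_0(\Tot x)$ of free Koszul cubes, which are exactly these $F/\langle\im\psi_i\rangle$; the passage from these to honest cyclics is one face of the open problem, not something the cube framework resolves. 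Indeed, Remark~\ref{rem:DHM85} records that for non-regular rings the cyclic version can outright fail, and even Smoke's positive result cited there only produces cyclics $A/(f_1,\dots,f_r)$ with $r\geq p$, not $r=p$. The paper's further contribution on this front is Corollary~\ref{cor:gercon}, which \emph{reformulates} Gersten's conjecture in terms of Koszul cubes (via the d\'evissage theorem~\ref{cor:Devissage 3}) rather than proving it. So your sketch should be understood as a clean account of why the conjecture is hard, consonant with the paper's own framing, not as a proof the paper supplies.
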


\sn
Conjecture~\ref{con:Gencon} is equivalent to 
Gersten's conjecture for $K_0$. 
Here is a statement of Gersten's conjecture for $K_n$:

\sn
{\it For any commutative regular local ring $A$ and 
natural numbers $n$, $p$, the canonical inclusion 
$\cM^{p+1}_A \rinc \cM_A^p$ induces the zero map on 
$K$-groups 
$$K_n(\cM_A^{p+1}) \to K_n(\cM_A^p)$$
where $K_n(\cM_A^i)$ denotes the $n$-th $K$-group of the abelian category $\cM_A^i$.} (See \cite{Ger73}).

\begin{remark}
\label{rem:His note}
Conjecture~\ref{con:Gencon} is known for the following cases.\\ 
$\mathrm{(i)}$ 
$A=B[[T_1,\cdots,T_n]]/(\Sigma T_i^2-\pi)$ 
where 
$B$ is a discrete valuation ring and 
$\pi$ is a prime element in $B$ 
or an unramified regular local ring $A$ 
by combining the result in \cite{Qui73}, \cite{Pan03}, \cite{GL87} 
and \cite{Lev85}. 
(Please see also related works \cite{Blo86} and \cite{CF68}).\\ 
$\mathrm{(ii)}$
$p=0$, $1$, $2$ and $\dim A$ 
(The results are classical for $p=0$ and $p=\dim A$ and for $p=1$ and $p=2$, please see the reference \cite{Smo87}).\\ 
\end{remark}

\sn
Let $\cM^p_A(p)$ denote the full subcategory of $\cM^p_A$ 
consisting of those $A$-modules $M$ of 
projective dimension $\leq p$. 
It is well-known that if $A$ is regular, then 
the canonical inclusion functor 
$\cM^p_A(p) \rinc \cM^p_A$ induces 
a homotopy equivalence on $K$-theory. (See \ref{thm:HM10}). 
For any endomorphism of $A$-module $\phi:F \to F$ between 
a finitely generated free $A$-module $F$, 
if we fix a basis $\alpha$ of $F$, 
then $\phi$ is represented by a square matrix $\Phi$. 
We write $\det_{\alpha}\phi$ or simply $\det\phi$ for $\det \Phi$ 
and call it the {\bf determinant of $\phi$} 
(with respect to $\alpha$). 
In connection with Conjecture~\ref{con:Gencon}, 
here is a corollary to the main theorem in this paper.

\begin{thm}
\label{thm:intromaincor} 
If $A$ is a local Cohen-Macaulay ring, 
then for any natural number $0\leq p\leq \dim A$, 
the Grothendieck group 
$K_0(\cM^p_A(p))$ is generated by modules of the form 
$$F/{<\im \psi_1,\cdots,\im\psi_p>}$$ 
where $F$ is a finitely generated free $A$-module 
and $\psi_k:F \to F$ is an $A$-module homomorphism 
such that 
the sequence 
$\det\psi_1,\cdots,\det\psi_p$ 
forms an $A$-regular sequence for any basis of $F$. 
\end{thm}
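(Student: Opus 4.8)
The plan is to read the statement off from the main comparison theorem of the paper. That theorem supplies an exact functor $\Typ\colon\Kos^p_A\to\cM^p_A(p)$ from the exact category of Koszul $p$-cubes over $A$, sending a Koszul cube $X$ to $\Homo_0(\kos X)$, the cokernel of the differential $(\kos X)_1\to(\kos X)_0$ of its total complex; the comparison theorem identifies $K(\Kos^p_A)$ with the $K$-theory of the $p$-th layer of the topological filtration, which for the Cohen--Macaulay ring $A$ is $K(\cM^p_A(p))$, the identification being induced by $\Typ$. Granting this, $K_0(\Typ)\colon K_0(\Kos^p_A)\to K_0(\cM^p_A(p))$ is surjective; since the Grothendieck group of an exact category is generated by the classes of its objects, $K_0(\cM^p_A(p))$ is generated by the classes $[\Typ X]$ with $X$ a Koszul $p$-cube, and it remains to identify each such type with a module of the asserted shape.

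To this end I would first normalise $X$. Its vertices $X_S$ ($S\subseteq\underline p$) are finitely generated projective, hence free, over the local ring $A$; acting by automorphisms of the vertices and adding a split-acyclic (elementary) $p$-cube of suitable ranks --- operations that change neither $\Typ X$ up to isomorphism nor, in the second case, the class $[\Typ X]$ --- one reduces to the case where every vertex equals a single free module $F$, so that the $p$ coordinate edge-maps $\psi_k\colon X_{\underline p\smallsetminus\{k\}}=F\to F=X_{\underline p}$ into the terminal vertex are endomorphisms of $F$. As $(\kos X)_0=X_{\underline p}=F$, $(\kos X)_1=\bigoplus_{k=1}^p X_{\underline p\smallsetminus\{k\}}=\bigoplus_{k=1}^p F$, and the differential between them is $(\psi_1,\dots,\psi_p)$, one obtains
$$\Typ X \;=\; F/\langle\im\psi_1,\dots,\im\psi_p\rangle.$$
What is left is to translate the defining condition on a Koszul cube --- acyclicity of $\kos X$, i.e.\ that $\kos X$ is a finite free resolution of $\Typ X\in\cM^p_A(p)$ --- into the condition that $\det\psi_1,\dots,\det\psi_p$ be an $A$-regular sequence. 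The adjugate identity $\psi_k\,\adj\psi_k=(\det\psi_k)\,\id_F$ gives $(\det\psi_1,\dots,\det\psi_p)F\subseteq\langle\im\psi_1,\dots,\im\psi_p\rangle$, so each $\det\psi_k$ annihilates $\Typ X$ and the $\det\psi_k$ lie in the $(\rank F)$-th Fitting ideal of $\Typ X$; feeding the acyclicity of $\kos X$ into the Buchsbaum--Eisenbud criterion, and using that over a Cohen--Macaulay local ring the grade, height and codimension of an ideal coincide --- so that a length-$p$ regular sequence is exactly a system of $p$ elements cutting out a height-$p$ ideal --- one gets that $(\det\psi_1,\dots,\det\psi_p)$ has height $p$ (Krull's height theorem bounding it above), hence is a regular sequence; the reverse implication is the classical acyclicity of the Koszul complex of a regular sequence. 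This exhibits $K_0(\cM^p_A(p))$ as generated by modules $F/\langle\im\psi_1,\dots,\im\psi_p\rangle$ with $\det\psi_1,\dots,\det\psi_p$ an $A$-regular sequence.

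Finally, replacing the basis of $F$ multiplies each $\det\psi_k$ by a unit of $A$, and a regular sequence of non-units stays such under this rescaling, so the condition is independent of the basis; this accounts for the ``for any basis'' clause. The essential input is the comparison theorem, which occupies the main body of the paper; granting it, I expect the only delicate points to be (i) that an arbitrary Koszul cube can be normalised and stabilised --- by automorphisms of its vertices and split-acyclic cubes of prescribed ranks --- to one with all vertices a common free module while staying inside $\Kos^p_A$ and fixing the class of its type, and (ii) the precise translation above of the acyclicity of a cube into the regular-sequence condition on the determinants of its coordinate maps, which is where the Cohen--Macaulay and local hypotheses are genuinely used.
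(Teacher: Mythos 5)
Your overall strategy follows the paper's: pass to the comparison theorem~\ref{cor:main theorem} (together with the filtered colimit over regular closed immersions from~\ref{rem:regularclosedimmerion}, which is where Cohen-Macaulayness enters), conclude that $K_0(\cM^p_A(p))$ is generated by classes $[\Homo_0\Tot x]$ for Koszul cubes $x$, and then unwind what $\Homo_0\Tot x$ looks like. Two remarks on the middle section. First, the ``normalisation and stabilisation'' step is superfluous here: since $A$ is local every vertex of a Koszul cube is free, and Lemma-Definition~\ref{lemdf:rank and determinant}~(1) shows all vertices have the same rank, so they are all already isomorphic to a single $F$; no addition of split-acyclic cubes is needed (and one should first dispose of degenerate cubes, which contribute $0$ to $K_0$ since they have $\Homo_0\Tot x=0$). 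Second, your $\Typ$ clashes with the paper's notation for the typical cube of~\ref{Koszul cube def}.

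The genuine gap is in the last step, where you argue that $\det\psi_1,\ldots,\det\psi_p$ is a regular sequence. You deduce from the adjugate identity that each $\det\psi_k$ annihilates $\coker$, hence lies in the zeroth Fitting ideal of $\Homo_0\Tot x$, and you then cite Buchsbaum--Eisenbud plus Krull plus Cohen--Macaulayness to conclude the ideal $(\det\psi_1,\ldots,\det\psi_p)$ has height exactly $p$. But being contained in an ideal of high grade says nothing about the height of the subideal, and the Buchsbaum--Eisenbud inequalities bound the grade of the Fitting ideals $I_{r_i}(d_i^{\Tot x})$, which are in general strictly larger than $(\det\psi_1,\ldots,\det\psi_p)$ (for $\rank F\geq 2$ the Fitting ideal contains mixed minors mixing rows from different $\psi_k$-blocks, and these need not lie in the determinant ideal). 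So the asserted lower bound $\operatorname{ht}(\det\psi_1,\ldots,\det\psi_p)\geq p$ does not follow from what you have written. The missing input is the divisibility established in Lemma~\ref{lem:morphism between free modules}: for a Koszul cube associated with the $A$-sequence $\ff_S$, each $\det\psi_k$ divides a power $f_k^{m_k}$. From this one can either run your height argument ($(f_1^{m_1},\ldots,f_p^{m_p})\subseteq(\det\psi_1,\ldots,\det\psi_p)$ forces height $\geq p$, then Krull gives equality, then CM local gives a regular sequence), or, as the paper does in Proposition~\ref{prop:det of nondeg free Koszul cubes}, invoke the elementary Lemma~\ref{lem:A-seq lem} directly: if $f_ig_i$ is a regular sequence and each $f_i$ is a non-unit, then $f_1,\ldots,f_p$ is a regular sequence. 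Either route closes the argument, but as written your proof omits this essential divisibility fact and the Buchsbaum--Eisenbud appeal cannot replace it.
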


\begin{remark}
\label{rem:DHM85}
It is well-known that 
in general $K_0(\cM^3_A(3))$ is not generated by cyclic modules 
$A/(f_1,f_2,f_3)$ where $f_1,\ f_2,\ f_3$ forms a regular sequence. 
Please see the reference \cite{DHM85}, \cite{Lev88} and \cite{Bal09}. 
On the other hand, Smoke proved that 
for any $\dim A\geq p\geq 3$, 
$K_0(\cM_A^p(p))$ is generated by 
cyclic modules $A/(f_1,\cdots,f_r)$ $(r\geq p)$ where 
the sequence $f_1,\cdots,f_r$ forms an $A$-regular sequence. 
Please see the reference \cite[4.2]{Smo87}. 
\end{remark}

\sn
More generally, the main objective in the paper is 
to study topological filtrations on the higher $K$-theory of 
a commutative noetherian ring with unit. 
Inspired by the works of Gillet and Soul\'e \cite{GS87}, 
of Diekert \cite{Die86}, 
and of Grayson \cite{Gra92}, 
the main method in the paper is 
to replace certain full subcategories of modules 
with the category of cubes 
in the category of appropriate modules. 
More accurately, 
let us fix a commutative noetherian ring with unit $A$, 
a non-negative integer $q$ 
and a sequence $f_1,\ldots,f_p$ in $A$ 
such that for any bijection $\sigma$ on the set $S=\{1,\ldots,p\}$, 
$f_{\sigma(1)},\ldots,f_{\sigma(p)}$ is an $A$-regular sequence. 
We put $I=(f_1,\ldots,f_p)$ and $\ff_S=\{f_s\}_{s\in S}$. 
Let $\cM_A^{I}(q)$ denote the category 
of finitely generated $A$-modules $M$ such 
that $\pd_A M \leqq q$ 
and $\Supp M \subset V(I)$. 
(See Notations~\ref{nt:cM_A^f(p)}). 

\sn
A {\bf Koszul cube} $x$ associated with $f_1,\ldots,f_p$ 
is a contravariant 
functor from $[1]^{\times p}$ to 
the category of finitely generated projective $A$-modules $\cP_A$ 
where $[1]$ is the totally ordered set $\{0,1\}$ with the natural order 
$0 <1$ satisfying the condition that 
for each $1\leqq k \leqq p$ and 
$\ii=(i_1,\ldots,i_p)\in[1]^{\times p}$ such that $i_k=1$, 
$d_{\ii}^k:=x(\ii-\ee_k \to \ii)$ is injective and 
$\coker d_{\ii}^k$ is in $\cM_A^{f_k A}(1)$ 
where $\ee_k$ is the $k$-th unit vector. 
A morphism between Koszul cubes is just a natural transformation. 
We write $\Kos_A^{\ff_S}$ for 
the category of Koszul cubes associated with $f_1,\ldots,f_p$. 
(See Definition~\ref{df:Koszul cube df}). 
A Koszul cube $x$ associated with $f_1,\ldots,f_p$ 
is {\bf reduced} if for each $1\leqq k \leqq p$ and 
$\ii=(i_1,\ldots,i_p)\in[1]^{\times p}$ such that $i_k=1$, 
$f_k\coker d_{\ii}^k=0$. 
We write $\Kos_{A,\red}^{\ff_S}$ for 
the category of reduced Koszul cubes associated with 
the family $\ff_S=\{f_s\}_{s\in S}$. 
(See Lemma-Definition~\ref{lemdf:cMAfTfsp}). 
If we consider a Koszul cube $x$ as a multi-complex where 
$x_{(0,\ldots,0)}$ is in degree $(0,\ldots,0)$, 
we will take its total complex. 
(See Definition~\ref{df:Tot comp}). 
We will prove that for any Koszul cube $x$, 
$\Homo_k(\Tot x)=0$ for $k>0$ (See \ref{cor:charof adm cube}, 
\ref{cor:Kosisadm}). 
A morphism between Koszul cubes $f:x \to y$ is 
a {\bf quasi-isomorphism} if 
$\Homo_0\Tot f$ is an isomorphism. 
We denote the class of quasi-isomorphisms in 
$\Kos_A^{\ff_S}$ and 
$\Kos_{A,\red}^{\ff_S}$ by the same symbol $\tq$. 
The term ``Koszul" comes from the fact that 
the total complex of the cube of Example~~\ref{Koszul cube def} 
is just the usual Koszul complex associated with $\{f_s\}_{s\in S}$. 
We have the morphism of Waldhausen categories
$$\Homo_0\Tot:(\Kos_A^{\ff_S},\tq) \to (\cM_A^{\ff_S}(p),i)$$ 
where $i$ is the class of all isomorphisms. 
The next result is the comparison theorem referred to in the Abstract. 

\begin{thm}[\bf A part of Corollary~\ref{cor:main theorem}]
\label{thm:part local weight theorem}
The exact functor 
$\Homo_0\Tot:\Kos_A^{\ff_S} \to \cM_A^{\ff_S}(p)$ induces 
a homotopy equivalence on $K$-theory: 
$$K(\Kos_A^{\ff_S};\tq) \to 
K(\cM_A^{\ff_S}(p)).$$
\end{thm}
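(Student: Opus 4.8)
The plan is to realize the functor $\Homo_0\Tot$ as a composite of elementary equivalences and to control each piece by a dévissage-type or approximation argument. First I would observe that since every Koszul cube $x$ has $\Homo_k(\Tot x)=0$ for $k>0$ (by \ref{cor:charof adm cube}, \ref{cor:Kosisadm}), the total complex functor lands in the subcategory of complexes quasi-isomorphic to their $\Homo_0$, so $\Homo_0\Tot$ is well defined and exact. The target $\cM_A^{\ff_S}(p)$ is (by the definition in Notations~\ref{nt:cM_A^f(p)}) the category of finitely generated $A$-modules $M$ with $\pd_A M\leq p$ supported on $V(I)$; by the remarks preceding Theorem~\ref{thm:intromaincor} this models the relevant layer of the topological filtration. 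The crux is therefore to show $\Homo_0\Tot$ induces an isomorphism on all $K_n$.

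The key steps, in order, are: (1) \emph{Exactness and Waldhausen structure.} Verify that $(\Kos_A^{\ff_S},\tq)$ is a Waldhausen category with cofibrations the degreewise-split monomorphisms of cubes whose cokernel is again a Koszul cube, and that $\Homo_0\Tot$ is an exact functor of Waldhausen categories; the fibration/quasi-isomorphism axioms follow from the acyclicity statement above. (2) \emph{Reduction to reduced cubes.} Show the inclusion $\Kos_{A,\red}^{\ff_S}\rinc\Kos_A^{\ff_S}$ is a $K$-equivalence, presumably via Lemma-Definition~\ref{lemdf:cMAfTfsp} and a filtration of a general Koszul cube by reduced subquotients, using the Waldhausen Additivity Theorem. (3) \emph{Identify $\Homo_0\Tot$ on reduced cubes with an equivalence.} For a reduced Koszul cube the cokernels $\coker d_{\ii}^k$ are annihilated by $f_k$, so $\Homo_0\Tot x$ is an iterated pushout/quotient of a free module by the images of the vertex maps; one shows conversely that every $M\in\cM_A^{\ff_S}(p)$ arises this way, up to quasi-isomorphism, from a projective resolution arranged along the cube directions (this is where the assumption that every permutation of $f_1,\dots,f_p$ is regular is used — it lets one build the resolution one coordinate at a time). (4) \emph{Approximation Theorem.} Finally invoke Waldhausen's Approximation Theorem: the functor $\Homo_0\Tot:(\Kos_A^{\ff_S},\tq)\to(\cM_A^{\ff_S}(p),i)$ detects and lifts weak equivalences, because any map from $\Tot x$ to (a resolution of) an object of the target can be factored through another Koszul cube; the approximation property (App 1) is the acyclicity plus surjectivity of $\Homo_0$, and (App 2) is the factorization.

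The main obstacle will be step (4), specifically verifying the second approximation axiom: given a Koszul cube $x$ and a morphism $\Homo_0\Tot x\to M$ in $\cM_A^{\ff_S}(p)$, one must produce a Koszul cube $x'$, a cofibration $x\rinf x'$, and a quasi-isomorphism $\Homo_0\Tot x'\isoto M$ compatible with the given map. Equivalently, one needs a functorial enough construction of ``projective cube resolutions'' of modules in $\cM_A^{\ff_S}(p)$ that is compatible with morphisms and with the cube structure; the regular-sequence hypothesis guarantees existence of such resolutions but making them compatible with an arbitrary prescribed map into $M$ — rather than just resolving $M$ — is the delicate point. I expect this to require a mapping-cylinder construction internal to $\Kos_A^{\ff_S}$, so one should first check that $\Kos_A^{\ff_S}$ admits a cylinder functor satisfying the cylinder axiom, and then the usual Waldhausen machinery (Approximation plus Additivity) closes the argument and yields the claimed homotopy equivalence $K(\Kos_A^{\ff_S};\tq)\isoto K(\cM_A^{\ff_S}(p))$.
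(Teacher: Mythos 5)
Your proposal hits a fundamental obstruction at step (4), and it is one the paper explicitly warns about. In the introduction the author states that ``since Waldhausen categories of Koszul cubes are not closed under taking the mapping cylinder functor, many standard theorems in Waldhausen $K$-theory, such as the generic fibration theorem and the approximation theorem in the literature do not apply directly.'' Your plan ends with ``one should first check that $\Kos_A^{\ff_S}$ admits a cylinder functor satisfying the cylinder axiom'' --- this check fails, and that is precisely why the paper spends Section~\ref{sec:resol thm} developing a \emph{resolution theorem} for Waldhausen categories (Theorem~\ref{resol thm}) that works without a cylinder functor or factorization axiom. Waldhausen approximation is not available here, and there is no internal mapping-cylinder construction to rescue it.

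Your step (2), reduction to the reduced cubes $\Kos_{A,\red}^{\ff_S}$, is not part of the proof of this theorem and in fact cannot be: that reduction (Corollary~\ref{cor:Devissage 3}) requires $A$ to be regular, because it ultimately rests on Quillen's dévissage theorem. Theorem~\ref{thm:part local weight theorem} is proved for an arbitrary commutative noetherian ring $A$ with a fixed $A$-sequence, with no regularity hypothesis, so the reduced-cube dévissage is both unavailable and unnecessary at this stage.

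The paper's actual route is structural rather than approximation-theoretic. One first proves the identification $\Kos_A^{\ff_S}=\underset{T\in\cP(S)}{\ltimes}\cM_A^{\ff_T}(\#T)$ (Corollary~\ref{cor:comp df of Koscube}) via the theory of admissible cubes, and then uses the inductive decomposition of multi semi-direct products (\ref{rem:inductive describe cMA(fT;fS)}) to peel off one coordinate direction at a time. The Koszul resolution theorem~\ref{Koszul resol thm} --- an explicit algorithm resolving filtered diagrams in $\cM_A(\ff_U;\ff_V)(p+1)$ by direct sums of typical cubes --- is the engine verifying axiom {\bf (Adr 5)}, so that the relevant triples are strongly adroit (Corollary~\ref{cor:adroit system}). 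Then the abstract weight declension theorem~\ref{semi-direct resolution theorem}~(2), whose proof is Theorem~\ref{resol thm} combined with the split homotopy equivalence~\ref{semi-direct homotopy equivalence}, gives the $K$-equivalence $K(\Homo_0^v)$ at each stage. Iterating over $v\in V$ yields the statement. You correctly sensed that the regular-sequence-in-all-orders hypothesis and a resolution-by-typical-cubes argument are central, but the scaffolding that makes this work is the semi-direct product formalism and the cylinder-free resolution theorem, not approximation.
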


\sn 
When $A$ is a principle ideal domain, 
Theorem~\ref{thm:part local weight theorem} has been proven in \cite{PID}. 
To prove the theorem above, 
we develop a resolution theorem for 
Waldhausen categories. 
(See Theorem~\ref{resol thm}). 
The other ingredient of the proof is 
giving a quite elementary, 
but new algorithm of resolution process of modules 
by finite direct sums of typical Koszul cubes. 
(See Theorem~\ref{Koszul resol thm}). 
The second main theorem is the following:

\begin{thm}[\bf See Corollary~\ref{cor:Devissage 3}] 
\label{Introthm:dev3}
In the notation above, 
moreover if we assume that $A$ is regular, 
then the canonical inclusion functor 
$\iota:\Kos_{A,\red}^{\ff_S} \rinc \Kos_A^{\ff_S}$ 
induces the following homotopy equivalences on $K$-theory:
\begin{align*}
K(\Kos_{A,\red}^{\ff_S}) &\to K(\Kos_A^{\ff_S})\\
K(\Kos_{A,\red}^{\ff_S};\tq) &\to K(\Kos_A^{\ff_S};\tq).
\end{align*}
\end{thm}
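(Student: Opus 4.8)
The plan is to realize $\Kos_{A,\red}^{\ff_S}$ as a \emph{resolving subcategory} of $\Kos_A^{\ff_S}$ and then apply the resolution theorem for Waldhausen categories (Theorem~\ref{resol thm}), once for the isomorphism structure $i$ and once for the quasi-isomorphism structure $\tq$. The engine is the Koszul resolution theorem (Theorem~\ref{Koszul resol thm}): the typical Koszul cubes produced by its resolution algorithm are themselves reduced (their edge cokernels are of the form $A/f_kA$, hence killed by $f_k$), so the algorithm already furnishes resolutions lying entirely inside $\Kos_{A,\red}^{\ff_S}$, not merely inside $\Kos_A^{\ff_S}$.

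Concretely, the first step is to verify the hypotheses of Theorem~\ref{resol thm} for the inclusion $\iota$: that $\Kos_{A,\red}^{\ff_S}$ is closed in $\Kos_A^{\ff_S}$ under extensions and under kernels of admissible epimorphisms, and that every Koszul cube $x$ admits a finite resolution $0\to x_n\to\cdots\to x_0\to x\to 0$ by reduced Koszul cubes with admissible transition maps. For the latter one feeds $x$ into the algorithm of Theorem~\ref{Koszul resol thm} and resolves by finite direct sums of typical Koszul cubes; the point requiring care is to check that each stage of the algorithm is realized by an admissible monomorphism of Koszul cubes whose cokernel is again a (reduced) Koszul cube, so that the output is a bona fide resolution in the Waldhausen category $\Kos_A^{\ff_S}$. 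Granting this, Theorem~\ref{resol thm} applied with the $i$-structure yields the first homotopy equivalence $K(\Kos_{A,\red}^{\ff_S})\to K(\Kos_A^{\ff_S})$.

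For the $\tq$-version there are two essentially equivalent routes. One can simply note that the same resolution of $x$ by reduced cubes, pushed forward along the exact functor $\Homo_0\Tot$, is a resolution in $\cM_A^{\ff_S}(p)$, and that $\tq$ on both cube categories is detected by $\Homo_0\Tot$; then Theorem~\ref{resol thm} applies once more, now with the $\tq$-structure. Alternatively one uses the commutative triangle $\Homo_0\Tot\circ\iota=\Homo_0\Tot\colon\Kos_{A,\red}^{\ff_S}\to\cM_A^{\ff_S}(p)$: because the typical Koszul cubes are reduced, the proof of Theorem~\ref{thm:part local weight theorem} applies verbatim to the restriction of $\Homo_0\Tot$ to $\Kos_{A,\red}^{\ff_S}$ and shows $K(\Kos_{A,\red}^{\ff_S};\tq)\to K(\cM_A^{\ff_S}(p))$ is a homotopy equivalence; combining this with Theorem~\ref{thm:part local weight theorem} and the triangle, two-out-of-three gives that $\iota$ induces a homotopy equivalence on $K(-;\tq)$.

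The regularity hypothesis on $A$ is used exactly where one needs these resolutions to be finite and to remain inside the relevant categories: over a regular $A$ the rings $A/f_kA$ and $A/I$ are again regular, projective dimensions stay bounded, and $\cM_A^{\ff_S}(p)$ agrees with the a priori larger $\cM_A^{\ff_S}$, which is what makes the reduced comparison functor resolving. The main obstacle I expect is not conceptual but the bookkeeping of the second paragraph — making the combinatorial resolution algorithm of Theorem~\ref{Koszul resol thm} interact correctly with the Waldhausen structure on $\Kos_A^{\ff_S}$, i.e.\ showing that every step is an admissible monomorphism of Koszul cubes with cokernel again a reduced Koszul cube. Once the resolutions are known to be admissible, both equivalences follow formally from Theorems~\ref{resol thm}, \ref{Koszul resol thm} and \ref{thm:part local weight theorem}.
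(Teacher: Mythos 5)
There is a genuine gap, and it is conceptual, not bookkeeping. The whole plan hinges on two claims that are false.

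First, the typical cubes furnished by the Koszul resolution theorem (Theorem~\ref{Koszul resol thm}) are \emph{not} reduced in general. That theorem resolves $z$ by direct sums of $\Typ_B(\fg^T_V)$ where $g_v=f_v^{m_v}$, and the exponents $m_v$ are chosen so that $f_v^{m_v}$ annihilates $\Homo_0(\Tot z)$; they can be arbitrarily large. The $v$-direction edge cokernel of $\Typ_B(\fg^T_V)$ is $B/g_vB=B/f_v^{m_v}B$, which is killed by $f_v$ only when $m_v\leqq 1$. So the resolving objects sit in $\Kos_A^{\ff_S}$ but not in $\Kos_{A,\red}^{\ff_S}$, and the resolution theorem gives you nothing for the reduced inclusion.

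Second, and more decisively, the resolution theorem (\ref{resol thm}) simply cannot be applied to the inclusion $\Kos_{A,\red}^{\ff_S}\rinc\Kos_A^{\ff_S}$. Condition \textbf{(Res~1)} asks that the subcategory be closed under extensions, and reduced cubes are not: already for $\#S=1$ and a nonunit nonzerodivisor $f$, the $1$-cubes $[A\onto{f}A]\rinf[A\onto{f^2}A]\rdef[A\onto{f}A]$ give an admissible exact sequence in $\Kos_A^{(f)}$ whose two ends are reduced while the middle term is not. There is also no hope of satisfying the dual of \textbf{(Res~2)}: any quotient of a cube with $IM=0$ again has $IM=0$, so a reduced Koszul cube can never surject onto a non-reduced one, and one can never cover $\Kos_A^{\ff_S}$ by admissible epimorphisms from $\Kos_{A,\red}^{\ff_S}$. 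You are facing a situation where the small category is closed under subobjects and quotients but not extensions, which is precisely the opposite of a resolving subcategory --- it is a \emph{d\'evissage} situation.

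This is exactly how the paper proceeds: Corollary~\ref{cor:Devissage 3} is reduced, via the exact functor $\Homo_0\Tot$ (and for the $i$-structure via the splitting $\lambda$ of Corollary~\ref{cor:main theorem}~(2)), to the comparison $\cM_{A,\red}^{I}(p)\rinc\cM_A^{I}(p)$ of Proposition~\ref{cor:Devissage 2}, and there the engine is Quillen's d\'evissage theorem applied to the abelian inclusion $\cM_{A/I}\rinc\cM_A^I$. Regularity enters to ensure that $\cM_A^I(p)=\cM_A^I$ once $p\geqq n=\operatorname{gl.dim}A$, so that the projective-dimension bound can be dropped (this step \emph{does} use the resolution-type theorem \ref{weight resolution theorem}, but in the other direction: comparing $\cM_{A,?}^I(p)$ with $\cM_{A,?}^I(p+1)$ for the \emph{same} decoration $?$). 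So the resolution machinery only removes the weight bound; the elimination of the nilpotency, which is the heart of the statement you are proving, is a d\'evissage and not a resolution. A correct proof along your lines would have to replace every appeal to Theorem~\ref{resol thm} for the inclusion $\Kos_{A,\red}\rinc\Kos_A$ by an appeal to Quillen d\'evissage on the module side, which is the paper's route.
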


\sn
To prove the theorem above, 
we will utilize the split fibration 
theorem~\ref{Split fibration theorem} 
which is a generalization of Lemma~3.3 in \cite{PID}. 
Theorem~\ref{Introthm:dev3} has the following application 
to Gersten's conjecture.

\begin{cor}
\label{cor:gercon}
Gersten's conjecture for a regular local ring $A$ 
is equivalent to the following assertion. 
For any $A$-regular sequence $\{f_s\}_{s\in S}$ in $A$, 
$\Homo_0\Tot:\Kos^{\ff_S}_{\red,A} \to \cM^{\# S-1}_A$ 
induces the zero maps on $K$-groups.
\end{cor}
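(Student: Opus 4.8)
The plan is to obtain the equivalence formally from the comparison results already in hand; abbreviate $p=\#S$ and $I=(f_s)_{s\in S}$. First I would recall (from the discussion preceding Theorem~\ref{thm:part local weight theorem}) that $\Homo_0\Tot$ is an exact functor $\Kos_A^{\ff_S}\to\cM_A^{\ff_S}(p)$, and that $\cM_A^{\ff_S}(p)\subseteq\cM_A^p\subseteq\cM_A^{p-1}$ because $V(I)$ has codimension $\geq p$. Hence, restricting to reduced cubes, the functor appearing in the corollary factors on the nose as
$$
\bigl(\Homo_0\Tot\colon\Kos_{\red,A}^{\ff_S}\to\cM_A^{p-1}\bigr)
\;=\; j_S\circ\bigl(\Homo_0\Tot\colon\Kos_{\red,A}^{\ff_S}\to\cM_A^{\ff_S}(p)\bigr),
$$
where $j_S\colon\cM_A^{\ff_S}(p)\rinc\cM_A^{p-1}$ is the inclusion. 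By Theorem~\ref{thm:part local weight theorem} composed with Theorem~\ref{Introthm:dev3} (both valid since $A$ is regular) the right-hand factor induces an isomorphism $K(\Kos_{\red,A}^{\ff_S};\tq)\isoto K(\cM_A^{\ff_S}(p))$; therefore $\Homo_0\Tot$ is zero on all $K$-groups if and only if $K_n(j_S)=0$ for every $n$. One implication now follows at once: if Gersten's conjecture holds for $A$, then $\cM_A^p\rinc\cM_A^{p-1}$ is zero on $K$-groups, and $j_S$ factors through this inclusion, so each $K_n(j_S)$ vanishes.

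For the converse, suppose $\Homo_0\Tot\colon\Kos_{\red,A}^{\ff_S}\to\cM_A^{\#S-1}$ is zero on $K$-groups for every $A$-regular sequence, so that $K_n(j_S)=0$ in all cases. I would then show that, for each fixed $p\geq 1$, the subcategory $\cM_A^p(p)$ is the filtered union of the subcategories $\cM_A^{\ff_S}(p)$ over all $A$-regular sequences $\ff_S$ of length $p$. Indeed, for $M\in\cM_A^p(p)$ the annihilator $\operatorname{ann}_A M$ has height $\geq p$, so in the Cohen-Macaulay (a fortiori, regular) local ring $A$ it contains an $A$-regular sequence of length $p$, which is automatically permutable as $A$ is local noetherian; then $\Supp M\subseteq V(I_S)$, so $M\in\cM_A^{\ff_S}(p)$. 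Directedness holds because, given two such sequences, the ideal $I_S\cap I_T$ again has height $p$ and so contains a third such sequence $\ff_U$ with $V(I_U)\supseteq V(I_S)\cup V(I_T)$. Since algebraic $K$-theory commutes with filtered colimits, $K_n(\cM_A^p(p))=\colim_{\ff_S}K_n(\cM_A^{\ff_S}(p))$, and under this identification the map to $K_n(\cM_A^{p-1})$ induced by inclusion is the colimit of the maps $K_n(j_S)$, hence is zero. Finally, Theorem~\ref{thm:HM10} (once more using regularity) shows $\cM_A^p(p)\rinc\cM_A^p$ is an isomorphism on $K$-groups, so vanishing of the composite $\cM_A^p(p)\rinc\cM_A^p\rinc\cM_A^{p-1}$ forces $\cM_A^p\rinc\cM_A^{p-1}$ to be zero on every $K_n$; as $p\geq 1$ was arbitrary, this is precisely Gersten's conjecture for $A$.

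What I want to emphasize is that the real content has been absorbed into Theorems~\ref{thm:HM10}, \ref{thm:part local weight theorem} and \ref{Introthm:dev3}; the rest is essentially bookkeeping. The only additional input is the standard fact that an ideal of height $\geq p$ in a Cohen-Macaulay local ring contains a (necessarily permutable) $A$-regular sequence of length $p$, which is what makes $\cM_A^p(p)$ the stated filtered union. The one step demanding care---and the likeliest place for a gap---is checking that the equivalences furnished by the cited theorems, the inclusions $j_S$, and the transition functors $\cM_A^{\ff_S}(p)\rinc\cM_A^{\ff_T}(p)$ assemble into a strictly commutative diagram of exact functors over the directed index set, so that taking the colimit of the $K_n(j_S)$ is legitimate and not merely valid up to coherent homotopy.
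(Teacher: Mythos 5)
Your argument follows essentially the same route as the paper's own proof: reduce Gersten's conjecture, via the filtered-colimit description of $\cM_A^p(p)$ (this is exactly Remark~\ref{rem:regularclosedimmerion}, which you are reproving) together with $K(\cM^p_A(p))\isoto K(\cM^p_A)$ from Corollary~\ref{thm:HM10}, to the vanishing of each $K_n(j_S)$, and then transfer that statement across the comparison equivalences of Theorems~\ref{thm:part local weight theorem} and \ref{Introthm:dev3}. The directedness and Cohen--Macaulay input are all correct.

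There is, however, a genuine gap at the sentence ``therefore $\Homo_0\Tot$ is zero on all $K$-groups if and only if $K_n(j_S)=0$.'' The theorems you cite provide a homotopy equivalence $K(\Kos_{A,\red}^{\ff_S};\tq)\isoto K(\cM_A^{\ff_S}(p))$, with the source carrying the class $\tq$. But by the paper's conventions (see Convention~(4)(iv) and~(xi)) the corollary concerns the map $K_n(\Kos_{A,\red}^{\ff_S})=K_n(\Kos_{A,\red}^{\ff_S};i)\to K_n(\cM_A^{\#S-1})$, with isomorphisms on the source. The exact functor $\Homo_0\Tot$ does \emph{not} induce an isomorphism $K(\Kos_{A,\red}^{\ff_S})\isoto K(\cM_A^{\ff_S}(p))$; the map factors as $K(\Kos_{A,\red}^{\ff_S})\to K(\Kos_{A,\red}^{\ff_S};\tq)\isoto K(\cM_A^{\ff_S}(p))$, and for the converse direction (``vanishing of $K_n(\Homo_0\Tot)$ implies $K_n(j_S)=0$'') you need the first arrow to be surjective on homotopy groups. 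That surjectivity is precisely what the split fibration theorem, Corollary~\ref{cor:main theorem}\,(3), supplies, and it is a real ingredient, not bookkeeping --- you never introduce this change-of-weak-equivalences map or invoke the split fibration sequence, so this step is unjustified as written. Once you insert the split epimorphism $K(\Kos_{A,\red}^{\ff_S})\twoheadrightarrow K(\Kos_{A,\red}^{\ff_S};\tq)$ with a citation to \ref{cor:main theorem}\,(3), the rest of your argument is sound and matches the paper's proof.
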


\sn
The relationship between 
Gersten's conjecture and weight of the Adams operations on 
Koszul cubes, 
a higher analogue of generator conjecture 
will be studied in my subsequent papers 
by utilizing Corollary~\ref{cor:gercon}. 

\sn
By handling koszul cubes, 
in particular free Koszul cubes, 
we will be able to import linear algebra 
and combinatorial methods 
into our research for modules and (perfect) complexes 
in my forthcoming papers. 
On the other hand, 
since Waldhausen categories of Koszul cubes are not 
closed under taking the mapping cylinder functor, 
many standard theorems in Waldhausen $K$-theory, 
such as the generic fibration theorem and the approximation theorem 
in the literature do not apply directly. 
The paper is devoted to the concept of Koszul cubes, 
for example, homological algebra for cubes, 
and the fundamental technique of 
manipulating 
$K$-theory for Waldhausen category 
without assuming the factorization axiom. 

\sn
Now we give a guide for the structure of this paper. 
In section~\ref{sec:resol thm}, 
we give a resolution theorem 
for Waldhausen categories 
which is a generalization of 
Quillen's original one in \cite{Qui73}. 
In section~\ref{sec:semi-direct prod}, 
we develop the theory of semi-direct products 
of exact categories 
which is initiated in \cite{PID}. 
In section~\ref{sec:adm cubes}, 
we establish the theory about admissible cubes in 
an abelian category 
which is a categorical variant of 
the concept about regular sequences. 
We will calculate homology groups of 
the total complex associated with an admissible cube and 
utilizing this, 
we give several characterizations of admissibility. 
Finally we extend 
the notion of semi-direct products to 
that of multi semi-direct products 
of a family of exact categories. 
In section~\ref{sec:Koszul cubes}, 
we define Koszul cubes and 
by combining results in the previous sections, 
we describe the category of Koszul cubes by 
multi semi-direct products 
of the exact categories of pure weight modules. 
In section~\ref{sec:Koszul resolution theorem}, 
we give the algorithm of resolution process as mentioned above 
and as its corollary we get the first main result. 
In the final section, 
assuming the regularity of $A$, 
we will prove a d\'evissage theorem for Koszul cubes.

\sn
\textbf{Conventions.} 

\sn
$\mathrm{(1)}$ 
{\bf Set theory}\\ 
$\mathrm{(i)}$ 
Throughout this paper, 
we use the letter $S$ to denote a set.\\ 
$\mathrm{(ii)}$ 
For a positive integer $n$, 
we write $(n]$ for the set of integers $k$ such that $1\leqq k \leqq n$ 
and for a non-negative integer $m$, 
we denote the totally ordered set of integers 
$k$ such that $0\leqq k \leqq m$ by $[m]$.\\ 
$\mathrm{(iii)}$ 
For any set $S$, 
we write $\cP(S)$ for its {\bf power set}. 
Namely $\cP(S)$ is the set of all subsets of $S$. 
We consider $\cP(S)$ 
to be a partially ordered set under inclusion. 
A fortiori, $\cP(S)$ is a category.\\ 
$\mathrm{(iv)}$ 
For a finite set $S$, 
we denote the number of elements in $S$ by $\#S$.

\sn
$\mathrm{(2)}$ 
{\bf Commutative algebra}\\ 
$\mathrm{(i)}$ 
Throughout this paper, 
we use the letter $R$ (resp., $A$) 
to denote a commutative ring with $1$ 
(resp., commutative noetherian ring with $1$).\\ 
$\mathrm{(ii)}$ 
For any $R$, 
we write $R^{\times}$ for 
the group of units in $R$.\\
$\mathrm{(iii)}$ 
If ${\{f_s\}}_{s\in S}$ is a subset of $R$, 
we write $\ff_S$ for the ideal they generate. 
By convention, we set $\ff_{\emptyset}=(0)$.\\ 
$\mathrm{(iv)}$ 
A subring of $R$ is a subring with the same $1=1_R$.\\
$\mathrm{(v)}$ 
For any $R$, $A$, 
we let $\cP_R$ denote the category of finitely generated 
projective $R$-modules, 
and let $\cM_A$ denote the category of finitely generated $A$-modules. 

\sn 
$\mathrm{(3)}$ 
{\bf Category theory}\\
$\mathrm{(i)}$ 
Throughout the paper, 
we use the letters $\cC$ and $\cA$ 
to denote a category, 
an abelian category respectively.\\
$\mathrm{(ii)}$ 
For any category $\cC$, 
we denote the calss of objects in $\cC$ by $\Ob\cC$.\\
$\mathrm{(iii)}$ 
For any category $\cC$, 
$i\cC$ or shortly $i$ means the subcategory of all isomorphisms in $\cC$.\\
$\mathrm{(iv)}$ 
For categories $\cX$ and $\cY$, 
let us denote the (large) category of 
functors from $\cX$ to $\cY$ by $\HOM(\cX,\cY)$.\\ 
$\mathrm{(v)}$ 
For categories $\cX$ and $\cY$, 
a functor from $\cX$ to $\cY$, $j:\cX \to \cY$ is an {\bf inclusion functor} 
if it is fully faithful and 
the function between 
their classes of objects $j:\Ob\cX \to \Ob\cY$ is injective. 
We denote an inclusion functor by the arrow ``$\rinc$"

\sn
$\mathrm{(4)}$ 
{\bf Exact categories, Waldhausen categories and algebraic $K$-theory}\\ 
$\mathrm{(i)}$ 
Basically, 
for exact categories, 
we follow the notations in \cite{Qui73} and 
for algebraic $K$-theory of categories 
with cofibrations and weak equivalences, 
we follow the notations in \cite{Wal85}.\\ 
$\mathrm{(ii)}$ 
We denote an admissible monomorphism (resp. an admissible epimoprphism) by the arrow 
``$\rinf$" (resp. ``$\rdef$").\\ 
$\mathrm{(iii)}$ 
We call a category with 
cofibrations and weak equivalences 
a {\bf Waldhausen category}.\\ 
$\mathrm{(iv)}$ 
For a Waldhausen category $(\cX,w)$, 
we denote its $\cS$-construction by $w\cS_{\bullet}\cX$ 
and write $K(\cX;w)$ for the $K$-space $\Omega|w\cS_{\bullet}\cX|$. 
We also write $K(\cX)$ for $K(\cX;i)$.\\ 
$\mathrm{(v)}$ 
We say that 
a functor between exact categories 
(resp. categories with cofibrations) 
$f:\cX \to \cY$ {\bf reflects exactness} 
if for a sequence $x \to y \to z$ in $\cX$ 
such that $fx \to fy \to fz$ is an admissible exact sequence 
(resp. a cofibration sequence) in $\cY$, 
$x\to y \to z$ is an admissible exact sequence 
(resp. a cofibration sequence) in $\cX$.\\ 
$\mathrm{(vi)}$ 
For an exact category $\cE$, 
we say that its full subcategory $\cF$ is 
an {\bf exact subcategory} 
(resp. a {\bf strict exact subcategory}) 
if it is an exact category and 
the inclusion functor is exact (and reflects exactness).\\ 
$\mathrm{(vii)}$ 
Notice that as in \cite[p.321, p.327]{Wal85}, 
the concept of subcategories with cofibrations 
(resp. Waldhausen subcategories) is 
stronger than that of exact subcategories. 
Namely we say that $\cC'$ is a subcategory with cofibrations 
of a category with cofibration $\cC$ if 
a morphism in $\cC'$ is a cofibration in $\cC'$ if 
and only if it is a cofibration in $\cC$ and 
the quotient is in $\cC'$ (up to isomorphism). 
That is, the inclusion functor $\cC' \rinc \cC$ is 
exact and reflects exactness. 
For example, 
let $\cE$ be a non-semisimple exact category. 
Then $\cE$ with semi-simple exact structure 
is not 
a subcategory with cofibrations of $\cE$, 
but a exact subcategory of $\cE$.\\ 
$\mathrm{(viii)}$ 
Let $\cE$ be an exact category and $\cF$ a full subcategory of $\cE$. 
We say that $\cF$ is {\bf closed under kernels} 
({\bf of admissible epimorphisms}) 
if for any admissible exact sequence $x \rinf y \rdef z$ in $\cE$ 
if $y$ is 
isomorphic to object in $\cF$, 
then $x$ is also 
isomorphic to an object in $\cF$. (See \cite[II.7.0]{Wei12}).\\ 
$\mathrm{(ix)}$ 
We say that the class of morphisms $w$ 
in an exact category $\cE$ 
satisfies the {\bf cogluing axiom} 
if $(\cE^{\op},w^{\op})$ 
satisfies the gluing axiom.\\ 
$\mathrm{(x)}$ 
A pair of an exact category $\cE$ and a class of morphisms $w$ in $\cE$ is 
said to be a {\bf Waldhausen exact category} 
if $(\cE,w)$ and $(\cE^{\op},w^{\op})$ are Waldhausen categories.\\ 
$\mathrm{(xi)}$ 
For a Waldhausen category $(\cC,w)$, we write $w(\cC)$ 
if we wish to emphasis that $w$ is the class of weak equivalences in $\cC$. 
We write $\cC$ for $(\cC,w)$ when $w$ is 
the class of all isomorphisms in $\cC$.\\
$\mathrm{(xii)}$ 
An object $x$ in a Waldhausen category $(\cC,w)$ is {\bf $w$-trivical} 
if the canonical morphism $0 \to x$ is in $w$. 
We write $\cC^w$ for the full subcategory of $w$-trivial objects of $\cC$.\\
$\mathrm{(xiii)}$ 
For a Waldhausen category $\cC$ and subcategories with cofibrations $\cX$ and $\cY$ of $\cC$, 
let $E(\cX,\cC,\cY)$ denote the category with cofibrations of cofibration sequences 
$x\rinf y\rdef z$ 
in $\cC$ such that $x$ is in $\cX$ and $z$ is in $\cY$.

\sn
\textbf{Acknowledgements.} 
The author wishes to express his deep gratitude 
to Daniel R.~Grayson, 
Kei Hagihara, Akiyoshi Sannai 
and Seidai Yasuda 
for stimulating discussions, 
Kei-ichi Watanabe for 
instructing him in the 
direct proof of \ref{thm:TotstrictKos is 0sph} 
for the case of $\# S=2$ 
and Masanori Asakura, 
Toshiro Hiranouchi, 
Makoto Matsumoto, 
Nobuo Tsuzuki 
and Roozbeh Hazrat 
for inviting him to their universities. 
He also very thanks to the referees and Charles A.~Weibel for 
carefully reading a preprint version 
of this paper and giving innumerable and valuable comments 
to make the paper more readable and Amnon Neeman for 
encouraging him in finishing the work.

\section{A resolution theorem for Waldhausen categories}
\label{sec:resol thm}

In this section, 
we will prove a resolution theorem 
for Waldhausen categories 
by improving the proof 
for exact categories in \cite{Sta89}. 
This theorem is a generalization of 
Quillen's original one in \cite{Qui73}. 
Let $(\cX,w)$ be a Waldhausen category 
and $\cY$ a full subcategory 
of $\cX$ closed under extensions in $\cX$. 
In \ref{resol cond df} and 
\ref{strong resol cond df}, 
we will define the (strong) resolution conditions 
of the inclusion functor 
$\iota:\cY\rinc \cX$. 
In this situation, 
$\cY$ naturally becomes a 
Waldhausen subcategory 
and if $\cX$ is essentially small, 
then the canonical map induced by $\iota$, 
$K(\cY;w) \to K(\cX;w)$ 
is a homotopy equivalence \ref{resol thm}. 
In this section, from now on, 
let $(\cX,w)$ be a Waldhausen category. 

\begin{df}
\label{nt:multsystem}
Let $v$ be a class of morphisms in a category $\cC$. 
We say that 
$v$ is a {\bf multiplicative system} of $\cC$ if $v$ 
is closed under finite compositions 
and closed under isomorphisms. 
Namely\\ 
$\mathrm{(1)}$ 
if $\bullet \onto{f} \bullet \onto{g} \bullet$ are 
composable morphisms in $v$, then $gf$ is also in $v$, and\\ 
$\mathrm{(2)}$ 
all isomorphisms in $\cC$ are in $v$.\\ 
For a category $\cC$ and a multiplicative system $v$ of $\cC$, 
we define the simplicial subcategory $\cC(-,v)$ in $\HOM(-,\cC)$ 
$$[m]\mapsto \cC(m,v)$$
where $\cC(m,v)$ is the full subcategory of $\HOM([m],\cC)$ 
consisting of those functors 
which take values in $v$. 
For each $m$, 
we denote an object $x_{\bullet}$ in $\cC(m,v)$ by 
$$x_{\bullet}:x_0 \onto{i^x_0} x_1 \onto{i^x_1} x_2 \onto{i^x_2}\cdots 
\onto{i^x_{m-1}} x_m.$$
\end{df}

\begin{ex}
\label{ex:filobj}
(\cf \cite[1.1.4.]{Wal85}). 
For a category with cofibrations $(\cZ, \Cof\cZ)$ 
and each non-negative integer $m$, 
we can naturally make $F_m\cZ:=\cZ(m,\Cof\cZ)$ 
into a category with cofibrations. 
Here a morphism 
$a_{\bullet} \to a'_{\bullet}$ is defined to be a cofibration 
if for each $0\leqq j\leqq m$, 
$a_j \to a_j'$ and $a_j'\coprod_{a_j}a_{j+1} \to a_{j+1}'$ are 
cofibrations in $\cZ$. 
\end{ex}

\begin{ex}
\label{weak ladder df}
(\cf \cite[p.336 in the proof of 1.4.3.]{Wal85}). 
For $(\cX,w)$ and each non-negative integer $m$, 
we can make $\cX(m,w)$ 
into a category with cofibrations 
by defining the cofibrations 
to be term-wised cofibrations in $\cX$. 
\end{ex}

\begin{lem}
\label{lem:isomclosed}
Let $\calD$ be a full subcategory of a category $\cC$, 
$v$ a multiplicative system in $\cC$ 
and $m$ a non-negative integer. 
For each $x_{\bullet}$ 
in $\cC(m,v)$ if each $x_i$ is isomorphic to 
an object in $\calD$, 
then $x_{\bullet}$ is isomorphic to 
an object in $\calD(m,v)$. 
\end{lem}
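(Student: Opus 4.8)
The plan is to transport $x_{\bullet}$ along a choice of isomorphisms. By hypothesis we may choose, for each $0\leq j\leq m$, an isomorphism $\phi_j:x_j\isoto d_j$ with $d_j\in\Ob\calD$. Define a functor $d_{\bullet}:[m]\to\cC$ by declaring $d_{\bullet}(j)=d_j$ together with the generating arrows $i^d_j:=\phi_{j+1}\circ i^x_j\circ\phi_j^{-1}:d_j\to d_{j+1}$ for $0\leq j\leq m-1$; since a functor out of $[m]$ is the same thing as a chain of $m$ composable morphisms, this prescription determines $d_{\bullet}$ uniquely, and it takes all of its values in $\calD$ because $\calD$ is full. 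The family $\phi_{\bullet}=(\phi_j)_{0\leq j\leq m}$ is then a natural transformation $x_{\bullet}\to d_{\bullet}$: naturality on the arrow $j\to j+1$ is precisely the relation $i^d_j\circ\phi_j=\phi_{j+1}\circ i^x_j$, which holds by the definition of $i^d_j$, and naturality on a general arrow $j\to k$ follows by composing these. As each $\phi_j$ is an isomorphism, $\phi_{\bullet}$ is an isomorphism in $\HOM([m],\cC)$.

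It then remains to see that $d_{\bullet}$ lies in $\calD(m,v)$, i.e. that each transition map $i^d_j$ is in $v$. This is where the two axioms of a multiplicative system are used: $\phi_j^{-1}$ and $\phi_{j+1}$ are isomorphisms, hence belong to $v$ by axiom $(2)$, and $i^x_j\in v$ because $x_{\bullet}\in\cC(m,v)$; therefore $i^d_j=\phi_{j+1}\circ i^x_j\circ\phi_j^{-1}$ belongs to $v$ by (two applications of) axiom $(1)$. Hence $d_{\bullet}$ is an object of $\calD(m,v)$ and $\phi_{\bullet}:x_{\bullet}\to d_{\bullet}$ exhibits $x_{\bullet}$ as isomorphic to it inside $\cC(m,v)$.

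I do not expect a real obstacle here. The only points needing a word of care are the bookkeeping that a natural transformation of $[m]$-indexed diagrams is determined by, and may be verified on, the covering arrows $j\to j+1$, and the (essentially notational) identification of $\calD(m,v)$ with the full subcategory of $\cC(m,v)$ spanned by those chains all of whose vertices lie in $\Ob\calD$. The case $m=0$ is the trivial statement that an object isomorphic to one in $\calD$ is again such; one could alternatively argue by induction on $m$, extending a chosen isomorphism on the truncation $x_{\bullet}|_{[m-1]}$ by one step to the right, but the uniform construction above avoids the extra indexing.
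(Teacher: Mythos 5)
Your proposal is correct and matches the paper's argument essentially verbatim: both pick isomorphisms $\phi_j:x_j\isoto y_j$ onto objects of $\calD$, define the new transition maps by conjugation $i^y_j=\phi_{j+1}i^x_j\phi_j^{-1}$, and invoke the two axioms of a multiplicative system to place them in $v$. You merely spell out the routine naturality check a bit more explicitly.
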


\begin{proof}[\bf Proof] 
For each $x_j$, 
there are an object $y_j$ in $\cY$ and 
an isomorphism $\phi_j:x_j\isoto y_j$. 
We define the morphism $i^y_j:y_j\to y_{j+1}$ 
by the formula 
$i^y_j:=\phi_{j+1}i^x_j\phi_j^{-1}$. 
Since $v$ is a multiplicative system, 
$i^y_j$ is in $v$ and therefore 
$$y_{\bullet}:y_0\onto{i^y_0}y_1\onto{i^y_1}y_2
\onto{i^y_2}\cdots \onto{i^y_{m-1}}y_m$$ 
is an object in $\calD(m,v)$ and it is isomorphic to $x_{\bullet}$. 
\end{proof}

\begin{df}
\label{DDD notation}
Let $\cY$ be a full subcategory of $\cX$. 
$\cY$ is said to be 
{\bf closed under extensions} in $\cX$ 
if for a cofibration sequence 
$x \rinf y \rdef z$ in $\cX$, 
$x$ and $z$ are isomorphic to objects 
in $\cY$ respectively, 
then $y$ is also isomorphic to an object in $\cY$. 
In this case, 
$\cY$ is a Waldhausen category 
by declaring that a morphism $x \to y$ in $\cY$ 
is a cofibration in $\cY$ 
if it is a cofibration in $\cX$ 
and if $y/x$ is isomorphic to an object in $\cY$ 
and that a morphism $x \to y$ in $\cY$ 
is a weak equivalence in $\cY$ 
if it is a weak equivalence in $\cX$. 
From now on, 
let $\cY$ be a full subcategory of $\cX$ 
closed under extensions. 
\end{df}

\begin{rem}
\label{rem:closedunderextispreservecateq}
The extensional closed condition is 
preserved by equivalences as 
categories with cofibrations. 
That is, 
let us consider the 
commutative diagram of categories with cofibrations 
$$\footnotesize{\xymatrix{
\cZ \ar[r]^a \ar[d]^{\wr}_{i_{\cZ}} & \cW \ar[d]_{\wr}^{i_{\cW}}\\
\cZ' \ar[r]_{a'} & \cW'
}}$$
with both $i_{\cZ}$ and $i_{\cW}$ are fully faithful, 
essentially surjective and exact and reflect exactness. 
If $a:\cZ \rinc \cW$ is closed under extensions in $\cW$, 
then $a':\cZ'\rinc \cW'$ is also closed under extensions in $\cW'$.
\end{rem}

\begin{lem}
\label{lem:closedunderextensions}
In the case above, 
for any non-negative integer $m$, 
the inclusion functors 
$$\cY(m,w) \rinc \cX(m,w),$$ 
$$F_m\cY \rinc F_m\cX \text{ and}$$  
$$\cS_m\cY \rinc \cS_m\cX$$ 
are closed under extensions in $\cX(m,w)$, 
$F_m\cX$ and $\cS_m\cX$ respectively. 
\end{lem}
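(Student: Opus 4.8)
The plan is to prove all three assertions by one and the same term-by-term mechanism. In each of $\cX(m,w)$, $F_m\cX$ and $\cS_m\cX$ the cofibration structure is defined term-wise over the indexing category ($[m]$ in the first two cases, $\Ar[m]$ in the third) and the quotient of a cofibration is formed term-wise; hence a cofibration sequence in the ambient category restricts, at each index, to a cofibration sequence in $\cX$. The three steps I would carry out are: (1) use this reduction together with the hypothesis that $\cY$ is closed under extensions in $\cX$ to conclude that every term of the middle object is isomorphic to an object of $\cY$; (2) transport the diagram structure along the chosen isomorphisms to obtain a diagram of the given shape, genuinely valued in $\cY$ and isomorphic to the middle object in the ambient category; (3) check that this diagram satisfies the extra constraints that distinguish the $\cY$-version of the diagram category from the $\cX$-version.

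For $\iota:\cY(m,w)\rinc\cX(m,w)$ the argument is immediate. By Example~\ref{weak ladder df} the cofibrations of $\cX(m,w)$ are the term-wise ones and cokernels of cofibrations are formed term-wise, so a cofibration sequence $x_\bullet\rinf y_\bullet\rdef z_\bullet$ with $x_\bullet,z_\bullet$ isomorphic to objects of $\cY(m,w)$ yields, for every $0\leqq j\leqq m$, a cofibration sequence $x_j\rinf y_j\rdef z_j$ in $\cX$ with $x_j,z_j$ isomorphic to objects of $\cY$; closedness under extensions gives $y_j$ isomorphic to an object of $\cY$. Since $w$ is a multiplicative system, Lemma~\ref{lem:isomclosed} (with $\calD=\cY$ and $v=w$) upgrades this to an object of $\cY(m,w)$ isomorphic to $y_\bullet$, and step (3) is vacuous because an object of $\cY(m,w)$ is only required to have its terms in $\cY$ and its structure maps in $w$.

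The inclusion $\cS_m\cY\rinc\cS_m\cX$ runs the same way. An object of $\cS_m\cX$ is a functor $A:\Ar[m]\to\cX$ with $A_{j,j}=0$ and with $A_{i,j}\rinf A_{i,k}\rdef A_{j,k}$ a cofibration sequence for all $i\leqq j\leqq k$; a cofibration sequence in $\cS_m\cX$ restricts, at every object of $\Ar[m]$, to a cofibration sequence in $\cX$ (the cokernel being formed entry-wise), so step (1) goes through verbatim and every entry of the middle object is isomorphic to an object of $\cY$. For step (2) no multiplicativity is needed: transport along the chosen isomorphisms produces a functor $\Ar[m]\to\cY$ isomorphic to the middle object, and since the conditions defining $\cS_m\cX$ are invariant under isomorphism of functors it again lies in $\cS_m\cX$. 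Step (3) is then automatic, because the quotient occurring in each defining cofibration sequence $B'_{i,j}\rinf B'_{i,k}\rdef B'_{j,k}$ of this functor is the entry $B'_{j,k}$, already in $\cY$; hence each such sequence is a cofibration sequence in $\cY$ and the functor lies in $\cS_m\cY$.

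The inclusion $F_m\cY\rinc F_m\cX$ is where an extra ingredient is needed, and it is the step I expect to be the main obstacle. Here $F_m\cX=\cX(m,\Cof\cX)$, and by Definition~\ref{DDD notation} membership in $F_m\cY=\cY(m,\Cof\cY)$ demands not only that every term lie in $\cY$ but that every successive quotient $b_{j+1}/b_j$ lie in $\cY$; since $\cY$ is closed under extensions but not under quotients, this does not follow from step (1) alone. Starting from a cofibration sequence $a_\bullet\rinf b_\bullet\rdef c_\bullet$ in $F_m\cX$ with $a_\bullet,c_\bullet$ isomorphic to objects of $F_m\cY$, steps (1) and (2) --- the latter via Lemma~\ref{lem:isomclosed} for the multiplicative system $\Cof\cX$ --- already produce $b'_\bullet$ with all terms in $\cY$, all structure maps cofibrations in $\cX$, and isomorphic to $b_\bullet$ in $F_m\cX$; it remains to see that each $b'_{j+1}/b'_j$, which is isomorphic to $b_{j+1}/b_j$, lies in $\cY$. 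For this I would invoke the exactness of the successive-quotient functor $F_m\cX\to\cX$, $u_\bullet\mapsto u_{j+1}/u_j$, which factors as the restriction $F_m\cX\to F_1\cX$ along $0\mapsto j$, $1\mapsto j+1$ (exact, being precomposition with a functor $[1]\to[m]$) followed by the quotient functor $F_1\cX\to\cX$, $(x\rinf x')\mapsto x'/x$ (exact --- a standard fact in Waldhausen's framework, of the kind underlying the Additivity Theorem in \cite{Wal85}). Applied to $a_\bullet\rinf b_\bullet\rdef c_\bullet$ it yields a cofibration sequence $a_{j+1}/a_j\rinf b_{j+1}/b_j\rdef c_{j+1}/c_j$ in $\cX$ whose outer terms are isomorphic to objects of $\cY$ (because $a_\bullet$ and $c_\bullet$ come from $F_m\cY$, whose defining cofibrations have their quotients in $\cY$), so closedness under extensions gives $b_{j+1}/b_j$, hence $b'_{j+1}/b'_j$, isomorphic to an object of $\cY$; thus each structure map of $b'_\bullet$ is a cofibration in $\cY$ and $b'_\bullet\in F_m\cY$. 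The points needing care in a full write-up are the precise recollection of the cofibration structures on $F_m\cX$ and $\cS_m\cX$ and the exactness of the quotient functor $F_1\cX\to\cX$; everything else is the routine transport-of-structure bookkeeping above. Alternatively one could deduce the $F_m$ case from the $\cS_m$ case via the equivalence $\cS_{m+1}\cX\simeq F_m\cX$ together with Remark~\ref{rem:closedunderextispreservecateq}, at the cost of checking that this equivalence is exact and reflects exactness.
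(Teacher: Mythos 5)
Your proposal is correct and follows the paper's basic strategy (reduce term-wise, apply closedness under extensions in $\cX$, then transport structure via Lemma~\ref{lem:isomclosed}), but it is actually \emph{more} careful than the paper's proof on the $F_m$ case, and it is worth recording why. The paper's proof applies Lemma~\ref{lem:isomclosed} and concludes directly that $y_\bullet$ is isomorphic to an object of $F_m\cY$; but with $\cC=\cX$, $\calD=\cY$, $v=\Cof\cX$, that lemma only produces an object of $\cY(m,\Cof\cX)$, i.e.\ a filtered object with terms in $\cY$ and structure maps that are cofibrations in $\cX$. Membership in $F_m\cY=\cY(m,\Cof\cY)$ additionally requires each successive quotient $y_{j+1}/y_j$ to be (isomorphic to an object) in $\cY$, and, since $\cY$ is only assumed closed under extensions and not under quotients, this is not automatic. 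You correctly isolate this as the point needing an extra argument, and you supply it: the successive-quotient functor $F_m\cX\to\cX$, $u_\bullet\mapsto u_{j+1}/u_j$, is exact, so applying it to $a_\bullet\rinf b_\bullet\rdef c_\bullet$ gives a cofibration sequence $a_{j+1}/a_j\rinf b_{j+1}/b_j\rdef c_{j+1}/c_j$ in $\cX$ whose outer terms are isomorphic to objects of $\cY$, whence the middle one is too. This is exactly the missing check.

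On $\cS_m$ you also diverge from the paper in a harmless and in fact illuminating way: the paper deduces the $\cS_m$ case from $F_{m-1}$ via the equivalence $F_{m-1}\cX\isoto\cS_m\cX$ together with Remark~\ref{rem:closedunderextispreservecateq}, while you treat it directly and observe that the quotient condition is automatic there, because in an $\cS_m$-diagram the required quotients $B'_{j,k}$ are themselves entries, hence already known to lie in $\cY$ from the term-wise step. You also note the alternative route through $F_{m-1}$. Both routes are fine; yours avoids invoking Remark~\ref{rem:closedunderextispreservecateq} at the price of writing out the $\cS_m$ conditions, and it does not inherit the small gap noted above. Everything you would still need to nail down for a full write-up --- the precise cofibration structures on $F_m\cX$ and $\cS_m\cX$, and exactness of the quotient functor $F_1\cX\to\cX$ (a standard fact in Waldhausen's framework, essentially the admissible-square lemma) --- is routine, so your outline is sound and a little more complete than the source.
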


\begin{proof}[\bf Proof]
Let us consider a cofibration sequence
$$x_{\bullet} \rinf y_{\bullet} \rdef z_{\bullet}$$
in $\cX(m,w)$ or $F_m\cX$ 
and assume that $x_{\bullet}$ and $z_{\bullet}$ 
are isomorphic to 
objects in $\cY(m,v)$ or $F_m\cY$ respectively. 
Then by the definitions 
(see \ref{ex:filobj} or \ref{weak ladder df}), for 
each $0\leqq j\leqq m$, 
we have the cofibration sequence 
$$x_j\rinf y_j \rdef z_j$$
in $\cX$. 
Therefore by assumption, 
$y_j$ is isomorphic to an object in $\cY$. 
Now by \ref{lem:isomclosed}, 
we learn that $y_{\bullet}$ is isomorphic to an object 
in $\cY(m,v)$ or $F_m\cY$. 
This means that $\cY(m,w)$, 
$F_m\cY$ are closed under extensions 
in $\cX(m,v)$ or $F_m\cX$ respectively. 
Finally since we have the functorial equivalence 
$F_{m-1}\cX\isoto \cS_m\cX$ 
as categories with cofibrations, 
we notice that $\cS_m\cY$ is 
closed under extensions in $\cS_m\cX$ 
by \ref{rem:closedunderextispreservecateq}.
\end{proof}

\begin{df}
\label{relative category}
In the situation above, 
we can define the category $\cX_{\cY}$ as follows. 
The class of objects 
of $\cX_{\cY}$ is same as 
that of $\cX$. 
A morphism $x \to y$ in $\cX_{\cY}$ 
is a cofibration in $\cX$ such 
that $y/x$ is isomorphic to an object in $\cY$. 
One can easily prove that 
morphisms in $\cX_{\cY}$ are 
closed under compositions,
namely it is actually a category 
by virtue of 
the assumption \ref{DDD notation}.\\
Notice that
there is the natural inclusion functor 
$j:\Cof \cY \to \cX_{\cY}$. 
Here $\Cof \cY$ is 
the category of cofibrations in $\cY$. 
\end{df}

\begin{df}
\label{resol cond df} 
We say that 
the inclusion functor 
$\iota:\cY \rinc \cX$  
{\bf satisfies the resolution conditions} 
if it satisfies the following three conditions.\\ 
{\bf (Res 1)} 
$\cY$ is closed under extensions in $\cX$.\\ 
{\bf (Res 2)} 
For any object $x$ in $\cX$, 
there are an object $y$ in $\cY$ 
and a cofibration $x \rinf y$.\\ 
{\bf (Res 3)} 
For any cofibration sequence $x \rinf y \rdef z$ in $\cX$, 
if $y$ is in $\cY$, 
then $z$ is also in $\cY$.
\end{df}

\begin{lem}
\label{important contractibility}
{\rm (\cf\cite[Proof of 4.1.]{Gra87}, 
\cite[p.524]{Sta89})} 
If the inclusion functor $\iota:\cY \rinc \cX$ 
satisfies the resolution conditions, 
then $\cX_{\cY}$ is contractible. 
\end{lem}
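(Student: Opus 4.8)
The plan is to show that $\cX_{\cY}$ has an initial-like structure that makes its nerve contractible, following the strategy of Staffeldt and Gray. The key point is that $\cX_{\cY}$ is \emph{filtered} in a suitable homotopical sense: by \textbf{(Res 2)}, every object $x$ admits a cofibration $x \rinf y$ with $y \in \cY$, and such an arrow is by definition a morphism in $\cX_{\cY}$; so every object maps to an object of $\cY$, and moreover (using \textbf{(Res 1)} and \textbf{(Res 3)}) the full subcategory of $\cX_{\cY}$ on objects of $\cY$ is \emph{cofinal} in $\cX_{\cY}$. Thus it suffices to contract that subcategory, or better, to exhibit directly a contracting homotopy on the nerve of $\cX_{\cY}$ itself.

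First I would make precise the cofinality claim: for a fixed object $x$, consider the comma-type category of objects $y \in \cY$ receiving a morphism $x \rinf y$ in $\cX_{\cY}$. Given two such, $x \rinf y_1$ and $x \rinf y_2$, one forms the pushout $y_1 \cup_x y_2$ in $\cX$ (the relevant pushouts exist along cofibrations in a Waldhausen category), observes that $y_1 \rinf y_1 \cup_x y_2$ is a cofibration whose cokernel is a quotient of a cokernel of $x \rinf y_2$ --- hence, via \textbf{(Res 3)} applied appropriately, lies in $\cY$ --- so $y_1 \cup_x y_2 \in \cY$ by \textbf{(Res 1)}, and it receives compatible maps from $y_1$ and $y_2$ in $\cX_{\cY}$. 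This shows the comma category is connected, and a similar pushout argument on parallel pairs gives that it is in fact filtered, hence contractible. Cofinality of $\Cof\cY \rinc \cX_{\cY}$ (via the functor $j$ of \ref{relative category}), together with Quillen's Theorem A, then reduces contractibility of $\cX_{\cY}$ to contractibility of $\Cof\cY = F_1\cY$ in the notation of \ref{ex:filobj}.

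Finally, to see $F_1\cY$ (equivalently $\cS_2\cY$, using the equivalence $F_{m-1}\cX \isoto \cS_m\cX$ cited in the proof of \ref{lem:closedunderextensions}) is contractible, I would use the standard Waldhausen device: the ``constant'' functor $\cY \to \Cof\cY$, $y \mapsto (0 \rinf y)$, and the ``source'' and ``target'' functors $\Cof\cY \to \cY$ are related by natural transformations through objects of $\Cof\cY$ itself, producing an explicit simplicial homotopy; the net effect is that the identity of $\Cof\cY$ is homotopic to a functor factoring through the contractible category with terminal object $0 \rinf 0$. Concretely this is the same ``extra degeneracy''/cone argument that shows $F_\bullet\cC$ has contractible geometric realization in each fixed simplicial degree, restricted to $m = 1$.

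The main obstacle I expect is the pushout bookkeeping in the cofinality step: one must verify that the relevant pushout squares in $\cX$ stay inside the Waldhausen structure and that the various cokernels genuinely land in $\cY$ using only \textbf{(Res 1)}--\textbf{(Res 3)} and not, say, closure of $\cY$ under subobjects or kernels, which is not assumed. Getting the direction of the cofibrations right --- remembering that a morphism of $\cX_{\cY}$ is a cofibration in $\cX$ with cokernel in $\cY$, \emph{not} an arbitrary map --- and checking that this class is closed under the pushouts used, is where the argument has to be done carefully rather than invoked.
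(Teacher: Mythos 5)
Your overall strategy coincides with the paper's at the top level (apply Quillen's Theorem~A to $j:\Cof\cY \to \cX_{\cY}$, and then contract a suitable piece), but the verification of the Theorem~A hypothesis has a genuine gap. You propose to show that the comma category $a/j$ is \emph{filtered}, but filteredness requires coequalizing parallel pairs of arrows, and this step does not go through. A parallel pair in $a/j$ consists of two cofibrations $f,g:b\rinf b'$ in $\cY$ agreeing with the structure map from $a$; in a general Waldhausen category there is no coequalizer of $f$ and $g$ (pushouts exist only along cofibrations, and the pushout of $b' \overset{g}{\linf} b \overset{f}{\rinf} b'$ identifies $p_1 f$ with $p_2 g$, not $p_1 f$ with $p_1 g$). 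Your phrase ``a similar pushout argument on parallel pairs gives that it is in fact filtered'' is precisely the hand-wave where the argument breaks. The paper sidesteps this entirely: having fixed a cofibration $a\rinf x$ with $x\in\cY$ by \textbf{(Res 2)}, it defines the endofunctor $\Phi$ of $a/j$ by $(a\rinf b)\mapsto (a\rinf b\coprod_a x)$, checks using \textbf{(Res 1)} that $b\coprod_a x$ lies in $\cY$, and exhibits a zig-zag of natural transformations $\id \Rightarrow \Phi \Leftarrow \mathrm{const}_{a\rinf x}$. That zig-zag contracts the nerve of $a/j$ directly, with no appeal to filteredness. You had already proved the essential piece of this — your cocone construction $y_1\cup_x y_2$ is exactly the paper's $b\coprod_a x$ with $y_2 = x$ fixed — but then veered toward cofinality, where the missing coequalizer stops the argument.

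A smaller point: the paper's $\Cof\cY$ is the subcategory of $\cY$ whose morphisms are the cofibrations of $\cY$, not the arrow category $F_1\cY$, and it is contractible simply because $0$ is initial (the paper states this in one line). Your elaborate ``extra degeneracy'' argument for $F_1\cY$ is not incorrect, since $F_1\cY$ also has an initial object $(0\rinf 0)$, but it is both misdirected and far more work than needed.
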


\begin{proof}[\bf Proof] 
Since $\Cof\cY$ has the initial object, 
it is contractible. 
We intend to apply Quillen's Theorem A 
to $j:\Cof \cY \to \cX_{\cY}$ 
and then we will get the result. 
Fix an object $a$ in $\cX_{\cY}$ 
and objects $x$ and $y$ in $\cY$ such 
that there is a cofibration sequence $a \rinf x \rdef y$. 
Now we will prove that $a/j$ is contractible. 
To do so, 
consider an object $a \rinf b$ in $a/j$. 
Since $\cY$ is closed under extensions in $\cX$, 
in the following push out diagram below 
$$\xymatrix{ 
a \ar@{>->}[r] \ar@{>->}[d] \ar@{}[dr]|{\bigstar} 
& x \ar@{->>}[r] \ar@{>->}[d] 
& y \ar@{=}[d]\\ 
b \ar@{>->}[r] 
& b\coprod_a x \ar@{->>}[r] 
& y 
}$$ 
where the square $\bigstar$ is coCartesian, 
we can take $b\coprod_a x$ in $\cY$. 
Now there are the natural transformations 
$$((a \rinf b) 
\mapsto (a \rinf b)) \rinf ((a \rinf b) 
\mapsto (a \rinf b\coprod_a x)) \linf ((a \rinf b) 
\mapsto (a \rinf x))$$ 
between the identity functor 
and the constant functor 
$(a\rinf b) \mapsto (a\rinf x)$ 
on $a/j$. 
Therefore $a/j$ is contractible. 
\end{proof}

\begin{lem}
\label{inherit} 
If $\iota:\cY \rinc \cX$ satisfies the resolution conditions, 
then for each non-negative integer $n$, 
$\cS_n\cY \rinc \cS_n\cX$ also satisfies the resolution conditions.
\end{lem}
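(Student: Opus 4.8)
We need to show that if $\iota:\cY \rinc \cX$ satisfies (Res 1)–(Res 3), then so does $\cS_n\cY \rinc \cS_n\cX$ for each $n$. Recall $\cS_n\cX$ is the category whose objects are filtered sequences $0 = x_0 \rinf x_1 \rinf \cdots \rinf x_n$ together with chosen quotients $x_{i/j} = x_i/x_j$, equivalently (by Waldhausen's $F_{n-1}\cX \simeq \cS_n\cX$) just the filtered objects in $F_{n-1}\cX$. A cofibration in $\cS_n\cX$ is a map $x_\bullet \rinf y_\bullet$ which is "termwise" a cofibration in the strong filtered sense. There are three things to verify.

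**The plan.** First, (Res 1) for $\cS_n\cY \rinc \cS_n\cX$ is already done: this is exactly the last sentence of Lemma~\ref{lem:closedunderextensions}. So the work is in (Res 2) and (Res 3). For (Res 3), suppose $x_\bullet \rinf y_\bullet \rdef z_\bullet$ is a cofibration sequence in $\cS_n\cX$ with $y_\bullet \in \cS_n\cY$. Unwinding the definition of cofibrations and quotients in $\cS_n\cX$, this yields for each pair $0\le j\le i\le n$ a cofibration sequence $x_{i/j} \rinf y_{i/j} \rdef z_{i/j}$ in $\cX$ with $y_{i/j}$ an object of $\cY$; applying (Res 3) of the original $\iota$ gives $z_{i/j}\in\cY$, so $z_\bullet \in \cS_n\cY$. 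Since $\cY$ is a \emph{full} subcategory this is enough. The subtle point worth a sentence is that one should check $z_\bullet$ is genuinely an object of $\cS_n\cY$ as a subcategory with cofibrations, not merely that each $z_{i/j}$ lies in $\cY$; but Remark~\ref{rem:closedunderextispreservecateq} and Lemma~\ref{lem:closedunderextensions} have arranged that $\cS_n\cY$ is closed under extensions in $\cS_n\cX$, so the induced Waldhausen structure is the right one automatically.

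**The main step: (Res 2).** This is where the real content lies. Given $x_\bullet = (0\rinf x_1\rinf\cdots\rinf x_n)$ in $\cS_n\cX$, we must produce $y_\bullet \in \cS_n\cY$ and a cofibration $x_\bullet \rinf y_\bullet$. The approach is to build $y_\bullet$ by induction on $n$, resolving one step of the filtration at a time and using the pushout to keep the earlier stages compatible. Concretely: by (Res 2) for $\iota$ applied to $x_1$, choose a cofibration $x_1 \rinf y_1$ with $y_1\in\cY$. Then form $x_2 \coprod_{x_1} y_1$; by induction (or a parallel resolution of $x_2/x_1$) one resolves this into an object of $\cY$, and so on up the ladder, at each stage taking a pushout to splice the new resolution onto what has been built and invoking (Res 1)/(Res 3) to stay inside $\cY$. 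The cleanest packaging is to note that the forgetful/evaluation machinery lets one reduce to resolving the successive quotients $x_i/x_{i-1}$ separately and then reassembling; this is precisely the inductive construction Staffeldt uses, and is also visible in the contractibility argument of Lemma~\ref{important contractibility}. One must be careful that the resulting $y_\bullet$ is a \emph{strong} cofibration in $\cS_n\cX$, i.e.\ that all the comparison maps $y_j\coprod_{x_j}x_{j+1} \to y_{j+1}$ are cofibrations — this is guaranteed by choosing the resolutions compatibly via the pushouts rather than independently.

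**Expected obstacle.** The routine part is (Res 1) and (Res 3); the only real care needed there is bookkeeping of quotients. The genuine obstacle is (Res 2): making the termwise resolutions \emph{coherent} along the filtration so that the map $x_\bullet \rinf y_\bullet$ is an honest cofibration in $\cS_n\cX$ (strong, not just levelwise) and so that $y_\bullet$ actually lands in $\cS_n\cY$ as a subcategory with cofibrations. The induction on $n$ with systematic use of pushouts and the extension-closure of $\cY$ resolves this, but it requires writing the diagram carefully. I expect the proof to be short, dispatching (Res 1) by citation to Lemma~\ref{lem:closedunderextensions}, (Res 3) by a one-line diagram chase, and spending its length on the inductive construction for (Res 2).
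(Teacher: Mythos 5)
Your proposal is correct and follows essentially the same route as the paper's proof: it reduces to $F_n$, cites Lemma~\ref{lem:closedunderextensions} for (Res 1), checks (Res 3) termwise, and builds the (Res 2) resolution inductively via pushouts $y_k\coprod_{x_k}x_{k+1}$. The only small economy the paper makes that you circle around is that one applies (Res 2) directly to $y_k\coprod_{x_k}x_{k+1}$ (no separate resolution of $x_{k+1}/x_k$ is needed), and the fact that the resulting $y_\bullet$ lands in $F_n\cY$, i.e.\ that the successive quotients $y_{k+1}/y_k$ lie in $\cY$, is supplied by one more use of (Res 3).
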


\begin{proof}[\bf Proof]
Since the filtered object categories 
$F_{n-1}\cX$ and $F_{n-1}\cY$ are 
equivalent to $\cS_n\cX$ and $\cS_n\cY$ respectively as 
categories with cofibrations, 
we just check that 
the inclusion functor 
$F_n\cY \rinc F_n\cX$ satisfies the resolution conditions. 
The condition $\textbf{(Res 1)}$ 
has been proven in \ref{lem:closedunderextensions}. 
We first check the condition \textbf{(Res 2)}. 
For a filtered object $x_0\rinf \ldots \rinf x_n$, 
we have an object $y_0$ in $\cY$ and a cofibration $x_0 \rinf y_0$ 
by the assumption \textbf{(Res 2)}. 
For each $k <n$, 
if we have a filtered object $y_0\rinf\ldots \rinf y_k$ in $F_k\cY$ and 
a cofibration $x_0\rinf\ldots\rinf x_k$ to $y$ in $F_k\cY$, 
then we have an object $y_{k+1}$ in $\cY$ and 
a cofibration $y_k \coprod_{x_k} x_{k+1} \rinf y_{k+1}$ 
by the assumption \textbf{(Res 2)} again. 
Therefore inductively, 
we can find a filtered object $y$ in $F_n\cY$ and 
a cofibration $x\rinf y$. 
Next we check the condition \textbf{(Res 3)}. 
For a cofibration sequence $x \rinf y \rdef z$ in 
$F_n\cX$, if $y$ is in $F_n\cY$, 
then by applying the assumption \textbf{(Res 3)} term-wisely, 
we notice that $z$ is also in $F_n\cY$. 
\end{proof}

\begin{df}
\label{strong resol cond df}
We say that 
the inclusion functor 
$\iota:\cY \rinc \cX$ 
{\bf satisfies the strong resolution conditions} 
if for any non-negative integer $m$, 
$\cY(m,w) \rinc \cX(m,w)$ 
satisfies the resolution conditions.
\end{df}

\begin{thm}[\bf Resolution theorem]
\label{resol thm}
In the notation above, 
if $\iota$ satisfies the strong resolution conditions 
and $\cX$ is essentially small, 
then the canonical map induced 
by $\iota$, 
$K(\cY;w) \to K(\cX;w)$ 
is a homotopy equivalence. 
\end{thm}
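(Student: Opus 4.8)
The plan is to show that $\iota$ induces a weak homotopy equivalence $|w\cS_{\bullet}\cY|\to|w\cS_{\bullet}\cX|$ on realizations; since $\cS_0\cX=0$ both spaces are connected, so applying $\Omega$ will give the asserted homotopy equivalence of $K$-spaces. I would view $|w\cS_{\bullet}\cX|$ as the realization of the bisimplicial set $([p],[q])\mapsto N_q(w\cS_p\cX)$ (the nerve taken in the categorical direction). A degreewise comparison in the $\cS$-direction is hopeless --- already the functor $w\cY=w\cS_1\cY\to w\cS_1\cX=w\cX$ induced by $\iota$ is in general very far from a weak equivalence --- so the $\cS_{\bullet}$-construction must be used essentially; instead I would apply the realization lemma in the nerve direction. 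Thus it suffices to fix $q\geq 0$ and prove that $[p]\mapsto N_q(w\cS_p\cY)\to [p]\mapsto N_q(w\cS_p\cX)$ is a weak homotopy equivalence of simplicial sets. An object of $N_q(w\cS_p\cX)$ is a chain of $q$ composable weak equivalences in $\cS_p\cX$, i.e.\ an object of the category with cofibrations $(\cS_p\cX)(q,w)$, and there is a natural isomorphism of categories with cofibrations $(\cS_p\cX)(q,w)\cong\cS_p\bigl(\cX(q,w)\bigr)$ (a level-wise weak equivalence between objects of $\cS_p\cX$ being the same datum as an object of $\cS_p$ applied to the category $\cX(q,w)$ of Example~\ref{weak ladder df}). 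Writing $\Ob\cS_{\bullet}\cZ$ for the simplicial set $[p]\mapsto\Ob\cS_p\cZ$, the map in question becomes $\Ob\cS_{\bullet}\bigl(\cY(q,w)\bigr)\to\Ob\cS_{\bullet}\bigl(\cX(q,w)\bigr)$. By the strong resolution conditions the inclusion $\cY(q,w)\rinc\cX(q,w)$ satisfies the resolution conditions and $\cX(q,w)$ is essentially small, so the theorem is reduced to a ``resolution theorem for the simplicial set of objects of the $\cS$-construction'': \emph{if $\cF\rinc\cG$ is an inclusion of essentially small Waldhausen categories satisfying the resolution conditions, then $\Ob\cS_{\bullet}\cF\to\Ob\cS_{\bullet}\cG$ is a weak homotopy equivalence.}

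To prove this reduced statement I would imitate Quillen's proof of the classical resolution theorem, the role of his contractible poset of resolutions being played by the relative categories $\cX_{\cY}$ of Definition~\ref{relative category}. By Lemma~\ref{inherit}, $\cS_p\cF\rinc\cS_p\cG$ satisfies the resolution conditions for every $p$, so by Lemma~\ref{important contractibility} each relative category $(\cS_p\cG)_{\cS_p\cF}$ is contractible; it encodes the length-one $\cF$-coresolutions $x\rinf y$ with $y$ and $y/x$ in $\cS_p\cF$, whose existence is supplied by \textbf{(Res 2)} and \textbf{(Res 3)}. I would then organize these contractible categories, together with the categories $\Cof(\cS_p\cF)$, into an auxiliary bisimplicial set interpolating between $\Ob\cS_{\bullet}\cF$ and $\Ob\cS_{\bullet}\cG$, and verify --- after one more application of the realization lemma and a Quillen Theorem~A / Theorem~B argument --- that the comparison map has contractible homotopy fibres; the coCartesian-square manipulations in the proof of Lemma~\ref{important contractibility} (the functor $b\mapsto b\coprod_{x}x$ and its two natural transformations to a constant functor) are exactly what make the relevant comma categories contractible.

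The main obstacle, and the reason a whole section is needed here, is the absence of a mapping cylinder functor: one cannot invoke the approximation theorem or the generic fibration theorem, nor interchange the roles of $\cF$ and $\cG$, so the class $w$ of weak equivalences has to be carried through by hand --- which is precisely why one passes to the diagram categories $\cX(q,w)$ and assumes the \emph{strong} resolution conditions rather than the bare ones. Two points then need care: first, that the strong resolution conditions are inherited along the $\cS$-construction and along the formation of the diagram categories $\cX(q,w)$ --- here Lemmas~\ref{lem:isomclosed}, \ref{lem:closedunderextensions} and~\ref{inherit} do the work; second, the coherent assembly, across the two simplicial directions, of the degreewise contractibility furnished by Lemma~\ref{important contractibility}. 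Everything else reduces to bookkeeping with coCartesian squares of the type appearing in~\ref{important contractibility}.
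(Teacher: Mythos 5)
Your initial reduction is sound and parallel in spirit to the paper's: fixing the nerve direction $q$, identifying $(\cS_p\cX)(q,w)\cong\cS_p(\cX(q,w))$, and using the realization lemma together with the strong resolution conditions (plus Lemmas~\ref{lem:closedunderextensions} and~\ref{inherit}) to reduce to the claim that a resolution-condition inclusion of categories with cofibrations $\cF\rinc\cG$ induces a weak equivalence $\Ob\cS_{\bullet}\cF\to\Ob\cS_{\bullet}\cG$. The gap is in the last step: the ``auxiliary bisimplicial set interpolating between $\Ob\cS_{\bullet}\cF$ and $\Ob\cS_{\bullet}\cG$'' together with ``one more application of the realization lemma and a Quillen Theorem~A/Theorem~B argument'' is neither constructed nor carried out. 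Theorem A and B are statements about functors between \emph{categories}; to make them apply to a map of simplicial sets with no categorical structure in the $\cS$-direction you would need to specify and prove a simplicial replacement, and the passage from ``$(\cS_p\cG)_{\cS_p\cF}$ is contractible for each $p$'' to ``$\Ob\cS_{\bullet}\cF\to\Ob\cS_{\bullet}\cG$ is a weak equivalence'' is precisely the content that must be supplied. As written, the argument ends with a sketch of a proof rather than a proof.

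The paper closes that gap differently: since {\bf (Res 1)} makes $\cY$ a subcategory with cofibrations of $\cX$ in Waldhausen's sense, Waldhausen's Proposition~1.5.7 applies and gives a sequence of the homotopy type of a fibration $w\cS_{\bullet}\cY \to w\cS_{\bullet}\cX \to w\cS_{\bullet}F_{\bullet}(\cX,\cY)$. The theorem is thereby reduced to the contractibility of the cofiber, not to a direct comparison of the two spaces. After the realization-lemma bookkeeping the relevant slice is $f_{\bullet}(\cS_n(\cX(m,w)),\cS_n(\cY(m,w)))$, and by \ref{inherit} this reduces to $n=1$, where $f_{\bullet}(\cX',\cY')$ is literally the nerve of $(\cX')_{\cY'}$, contractible by \ref{important contractibility}; no further interpolating object or Theorem A/B step is needed beyond the one already internal to \ref{important contractibility}. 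You discount this route on the grounds that one ``cannot invoke the generic fibration theorem'' for lack of a cylinder functor, but that conflates two results: the \emph{generic} fibration theorem (Waldhausen 1.6.4) does need a cylinder functor, whereas 1.5.7 is a consequence of additivity and has no such hypothesis. Overlooking that available fibration sequence is the source of the gap; building your interpolating bisimplicial set would amount to re-proving 1.5.7 from scratch.
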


\begin{proof}[\bf Proof] 
We may assume that $\cX$ is small. 
By \cite[p.344, p.345 1.5.7]{Wal85}, 
we have the sequence of homotopy type of a fibration 
$$w\cS_{\bullet}\cY \overset{w\cS_{\bullet}\iota}{\to} 
w\cS_{\bullet}\cX \to w\cS_{\bullet}F_{\bullet}(\cX,\cY).$$ 
Fix non-negative integers $n$ and $m$. 
We have the following equalities. 
\begin{align*}
N_mw\cS_nF_{\bullet}(\cX,\cY) & \isoto 
f_{\bullet}(\cS_n\cX(m,w\cS_n\cX),\cS_n\cY(m,w\cS_n\cY))\\ 
& \isoto  f_{\bullet}(\cS_n(\cX(m,w)),\cS_n(\cY(m,w))) 
\end{align*}
where $f_{\bullet}$ denote the simplicial set 
of objects of $F_{\bullet}$ 
and for the definition $\cX(m,w)$ 
and so on see \ref{weak ladder df}. 
By the realization lemma 
\cite[Appendix A]{Seg74} or \cite[5.1]{Wal78}, 
and by replacing $\cX(m,w)$ and $\cY(m,w)$ 
with $\cX$ and $\cY$ respectively, 
we shall just check the following claim. 

\begin{claim} 
For a small category 
with cofibrations $\cX$ 
and $\iota:\cY \rinc \cX$ 
a full sub category closed under extensions. 
Assume that $\iota$ is satisfying 
the resolution conditions. 
Then for each non-negative integer $n$, 
$f_{\bullet}(\cS_n\cX,\cS_n\cY)$ is contractible.
\end{claim}

\sn
If $n=0$, 
this claim is trivial. 
For $n \geqq 1$, 
by \ref{inherit} and by replacing $\cS_n\cX$ and $\cS_n\cY$ with 
$\cX$ and $\cY$ respectively, 
we shall assume $n=1$. 
Now $f_{\bullet}(\cX,\cY)$ 
is just the nerve of $\cX_{\cY}$ 
in \ref{relative category} 
and therefore we get the result 
by \ref{important contractibility}. 
\end{proof}

\section{Semi-direct products of exact categories}
\label{sec:semi-direct prod}

In this section, 
we will establish the theory 
of semi-direct products of exact categories 
(with weak equivalences) 
which is a generalization of \cite[\S 3]{PID}. 
Let us start by preparing the general terminologies about cubes. 
Let $S$ be a set, $\cC$ a category, $\cA$ an abelian category 
and $R$ a commutative ring with $1$.

\begin{df}
\label{cube df} 
We define the {\bf category of $S$-cubes} in $\cC$ 
by 
$$\Cub^S(\cC):=\HOM(\cP(S)^{\op},\cC).$$
An object in $\Cub^S(\cC)$ is said to be an {\bf $S$-cube}. 
Let $x$ be an $S$-cube in $\cC$. 
For $T\in\cP(S)$ and $k\in T$, 
we denote $x(T)$ by $x_T$ and call it a {\bf vertex} of $x$ (at $T$) 
and we also write $d^{x,k}_T$ or 
shortly $d^k_T$ for $x(T\ssm\{k\} \rinc T)$ 
and call it 
a ({\bf $k$-direction}) {\bf boundary morphism} of $x$. 
\end{df}

\begin{rem}
\label{rem:cubentrem}
For a positive integer $n$, 
we have the canonical category isomorphism
$$\cP((n])\isoto [1]^{\times n},\ \ S \mapsto (\chi_S(k))$$ 
where $\chi_S$ is the 
{\bf characteristic function} associated with $S$. 
Namely $\chi_S(k)=1$ if $k$ is in $S$ and otherwise $\chi_S(k)=0$. 
Through the isomorphism above, 
we consider $(n]$-cubes to be 
contravariant functors from $[1]^{\times n}$ 
and call them {\bf $n$-cubes}. 
$\Cub^{(n]}$ is abbreviated to $\Cub^n$. 
\end{rem}

\begin{rem} 
For any abelian (resp. exact) category $\cC$ 
and a set $S$, 
$\Cub^S(\cC)$ is an abelian category 
(resp. exact category 
by defining the admissible exact sequences 
to be termwise admissible exact sequences in $\cC$).
\end{rem} 

\begin{rem} 
\label{rem:disjointindex}
For a pair of disjoint sets $S$ and $T$, 
we have the category isomorphism
$$\cP(S)\times\cP(T)\isoto \cP(S\coprod T),\ (U,V)\mapsto U\cup V$$
and by the exponential law, 
the isomorphism above induce the category isomorphism 
$$\Cub^{S\coprod T}(\cC)\isoto\Cub^S(\Cub^T(\cC)) .$$
Moreover if $\cC$ is an abelian (resp. exact) category, 
then the isomorphism above is an exact functor.
\end{rem} 

\begin{df}[\bf Homology of cubes] 
\label{cube homology}
Let us fix an $S$-cube $x$ in $\cA$. 
For each $k$, 
the {\bf $k$-direction $0$-th} 
(resp. {\bf $1$-th}) {\bf homology} of $x$ 
is an $S\ssm\{k\}$-cube in $\cA$ denoted by $\Homo_0^k(x)$ 
(resp. $\Homo_1^k(x)$) 
and defined by $\Homo_0^k(x)_T:=\coker d_{T\cup\{k\}}^k$ 
(resp. $\Homo_1^k(x)_T:=\Ker d_{T\cup\{k\}}^k$). 
\end{df} 

\sn
The following lemma is sometimes useful to deal with morphisms of cubes.

\begin{lem} 
\label{lem:genofcubemap} 
We have the following assertions.\\ 
$\mathrm{(1)}$ 
For any $S$-cube $x$, 
every $T$, $U\in\cP(S)$ such that 
$T \subset U$ and $U \ssm T$ is a finite set, 
the morphism $x(T \subset U)$ 
is described as compositions of boundary morphisms.\\ 
$\mathrm{(2)}$ 
Assume that $S$ is a finite set. 
For any $S$-cubes $x$, $y$ and a family of morphisms 
$f={\{f_T:x_T \to y_T\}}_{T\in\cP(S)}$ in $\cC$, 
$f:x \to y$ is a morphism of $S$-cubes in $\cC$ 
if and only if for any $T\in\cP(S)$ and $k\in T$, 
we have the equality $d_T^{y,k}f_T=f_{T\ssm\{k\}}d_T^{x,k}$. 
\end{lem}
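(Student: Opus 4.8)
The plan is to reduce everything to the elementary observation that in $\cP(S)$ any inclusion $T\subset U$ with $U\ssm T$ finite factors as a chain of one-element extensions, and then feed this into the definition of a natural transformation. For part (1), enumerate $U\ssm T=\{k_1,\ldots,k_n\}$ and put $T_j:=T\cup\{k_1,\ldots,k_j\}$, so that $T=T_0\subsetneq T_1\subsetneq\cdots\subsetneq T_n=U$ with $T_{j-1}=T_j\ssm\{k_j\}$ for $1\leqq j\leqq n$. The inclusion $T\subset U$ is the composite of the inclusions $T_{j-1}\subset T_j$ in $\cP(S)$, so since $x$ is a contravariant functor, functoriality gives $x(T\subset U)=d^{x,k_1}_{T_1}\circ d^{x,k_2}_{T_2}\circ\cdots\circ d^{x,k_n}_{T_n}$, a composition of boundary morphisms; equivalently one runs an obvious induction on $n=\#(U\ssm T)$, the case $n=0$ being $x(T\subset T)=\id$. (Any of the $n!$ maximal chains from $T$ to $U$ yields the same composite, again by functoriality, so the decomposition is not canonical but its value is.)

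For part (2), the ``only if'' direction is immediate: naturality of $f\colon x\to y$ applied to the arrow $T\ssm\{k\}\rinc T$ of $\cP(S)$ (where $k\in T$) says precisely $f_{T\ssm\{k\}}\circ d^{x,k}_T=d^{y,k}_T\circ f_T$, keeping in mind that contravariance makes $d^k_T$ run from the vertex at $T$ to the vertex at $T\ssm\{k\}$. For the ``if'' direction we must check that $f$ commutes with $x(V\subset W)$ for every inclusion $V\subseteq W$ in $\cP(S)$; since $S$ is finite, $W\ssm V$ is finite and part (1) applies, so we may induct on $m=\#(W\ssm V)$. If $m=0$ then $V=W$ and the square is trivial. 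If $m\geqq 1$, choose $k\in W\ssm V$, so $V\subseteq W\ssm\{k\}\subsetneq W$ and by part (1) we have $x(V\subset W)=x(V\subset W\ssm\{k\})\circ d^{x,k}_W$ and $y(V\subset W)=y(V\subset W\ssm\{k\})\circ d^{y,k}_W$. The hypothesis with $T=W$ gives $f_{W\ssm\{k\}}\circ d^{x,k}_W=d^{y,k}_W\circ f_W$, and the inductive hypothesis gives $f_V\circ x(V\subset W\ssm\{k\})=y(V\subset W\ssm\{k\})\circ f_{W\ssm\{k\}}$. Splicing the two identities together yields $f_V\circ x(V\subset W)=y(V\subset W)\circ f_W$, which completes the induction; hence $f$ is a natural transformation, i.e.\ a morphism of $S$-cubes.

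I do not expect a genuine obstacle here: the argument is a bookkeeping exercise about the poset $\cP(S)$ together with contravariant functoriality. The one point deserving a word of care is the hypothesis that $S$ is finite in (2): it is used exactly to guarantee that every structure map $x(V\subset W)$ is a finite composite of boundary morphisms, so that the finitely many equations $d^{y,k}_T f_T=f_{T\ssm\{k\}}d^{x,k}_T$ suffice to pin down naturality. For infinite $S$ this fails---the boundary morphisms no longer generate all of the functor's structure maps---so the stated characterization is genuinely a finite-$S$ phenomenon.
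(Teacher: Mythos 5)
Your proof is correct, and since the paper states this lemma without proof (treating it as a routine exercise in the combinatorics of $\cP(S)$ and functoriality), your argument simply supplies the intended elementary verification: factor an inclusion $T\subset U$ into a chain of one-element extensions to get (1), then induct on $\#(W\ssm V)$ for the ``if'' direction of (2), peeling off one boundary morphism at a time and splicing the hypothesis square with the inductive square. Your closing remark about why finiteness of $S$ is needed in (2) is also accurate.
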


\begin{ex}[\bf Typical cubes]
\label{Koszul cube def}
Assume that $S$ is a finite set and 
let $\ff_S=\{f_s\}_{s\in S}$ be a family of elements in $R$. 
The {\bf typical cube} associated 
with $\{f_s\}_{s\in S}$ 
is an $S$-cube in $\cP_R$ 
denoted by
$\Typ_R(\ff_S)$ 
and defined by 
$\Typ_R(\ff_S)_T=R$ and $d_T^{\Typ_R(\ff_S),t}=f_t$ for any 
$T\in \cP(S)$ and $t\in T$. 
\end{ex}

\sn
The following lemma is often used when we are dealing with cubes.
Its proof is very easy. 

\begin{lem}[\bf Cube lemma]
\label{cube lemma} 
For the diagram below in a category $\cC$:
$$\footnotesize{\xymatrix{
a \ar[rrr] \ar[ddd] 
& & & b \ar[ddd]\\ 
& x \ar[r] \ar[d] \ar[ul] 
& y \ar[d] \ar[ur] 
& \\
& z \ar[dl] \ar[r] 
& w \ar[dr] \\
c \ar[rrr] & & & d,  
}}$$
assume that the morphism $\vec{wd}$ is a monomorphism  
{\rm (}resp. $\vec{xa}$ is an epimorphism{\rm )} 
and every squares except $xywz$ 
{\rm (}resp. $abdc${\rm )} 
are commutative. 
Then $xywz$ 
{\rm (}resp. $abdc${\rm )} 
is also commutative.
\end{lem}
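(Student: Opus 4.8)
The plan is to prove only the first assertion (the "resp." version is dual, obtained by reversing all arrows and passing to $\cC^{\op}$). So suppose $\vec{wd}$ is a monomorphism and every square in the diagram except possibly $xywz$ commutes; I want to show $xywz$ commutes, i.e. that $\vec{yw}\circ\vec{xy}=\vec{xz}\circ\vec{xw}$ as morphisms $x\to w$ (using the notation $\vec{uv}$ for the displayed arrow from $u$ to $v$). Since $\vec{wd}$ is monic, it suffices to prove that $\vec{wd}\circ\vec{yw}\circ\vec{xy}=\vec{wd}\circ\vec{xz}\circ\vec{xw}$ as morphisms $x\to d$.

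First I would rewrite the left-hand composite $\vec{wd}\circ\vec{yw}\circ\vec{xy}$ by walking through the outer commuting squares: the square $ybwd$ (the right face through $y\to b\to d$ and $y\to w\to d$) gives $\vec{wd}\circ\vec{yw}=\vec{bd}\circ\vec{yb}$, so the composite becomes $\vec{bd}\circ\vec{yb}\circ\vec{xy}$; then the square $xyba$ (the top face) gives $\vec{yb}\circ\vec{xy}=\vec{ab}\circ\vec{xa}$, so we get $\vec{bd}\circ\vec{ab}\circ\vec{xa}$; finally the outer square $abdc$ gives $\vec{bd}\circ\vec{ab}=\vec{cd}\circ\vec{ac}$, yielding $\vec{cd}\circ\vec{ac}\circ\vec{xa}$. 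Symmetrically, I would rewrite the right-hand composite $\vec{wd}\circ\vec{xz}\circ\vec{xw}$ using the bottom face $zwdc$ to get $\vec{cd}\circ\vec{zc}\circ\vec{xz}$, then the left face $xzca$ to get $\vec{cd}\circ\vec{ac}\circ\vec{xa}$. The two sides now coincide, so $\vec{wd}\circ(\vec{yw}\circ\vec{xy})=\vec{wd}\circ(\vec{xz}\circ\vec{xw})$, and cancelling the monomorphism $\vec{wd}$ gives the claim.

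There is essentially no obstacle here; the only thing to be careful about is bookkeeping — making sure each rewriting step uses a square that is genuinely among the ones assumed commutative (the four "side" faces $xyba$, $ybwd$, $xzca$, $zwdc$ of the inner-to-outer prism, plus the outer square $abdc$), and never the square $xywz$ that we are trying to establish. One should also note that the inner square $xyzw$ appears with vertices in the order $x,y,z,w$ where $\vec{xy},\vec{xz}$ are the two maps out of $x$ and $\vec{yw},\vec{zw}$ the two into $w$; writing out this orientation explicitly at the start avoids sign/direction confusion. The dual statement follows by applying what was just proved in $\cC^{\op}$, where $\vec{xa}$ becoming an epimorphism corresponds to $\vec{wd}$ being a monomorphism and the square $abdc$ plays the role of $xywz$.
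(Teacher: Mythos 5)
Your proof is correct, and since the paper declines to give a proof ("Its proof is very easy"), the argument you spell out is exactly the expected elementary diagram chase: post-compose the two candidate composites $x\to w$ with the monomorphism $\vec{wd}$, push each through the three assumed-commutative faces that connect the inner square to the outer one, reduce both to the common composite $\vec{cd}\circ\vec{ac}\circ\vec{xa}$, and cancel the mono; the epi case follows by the relabeling in $\cC^{\op}$ that swaps inner and outer squares. One small typo to fix: where you write the right-hand composite as $\vec{xz}\circ\vec{xw}$ (and later $\vec{wd}\circ\vec{xz}\circ\vec{xw}$), there is no arrow $\vec{xw}$ and the order is off; it should read $\vec{zw}\circ\vec{xz}$ and $\vec{wd}\circ\vec{zw}\circ\vec{xz}$, as your subsequent rewriting via $zwdc$ and $xzca$ in fact correctly assumes.
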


\begin{df}
\label{semi-direct product nt}
Let $\cE$ and $\cF$ be full strict exact subcategories 
of $\cA$. 
We define the category 
$\cF \ltimes \cE$ 
as follows. 
$\cF \ltimes \cE$ 
is the full subcategory of $\Cub^1(\cE)$ 
of those morphisms 
$x_1 \to x_0$ in $\cE$ 
which are monomorphisms with $\cA$-cokernel in $\cF$. 
\end{df}

\begin{prop}
\label{semidiret is exact}
In the notations above, 
if $\cF$ satisfies either condition 
$\mathrm{(1)}$ or $\mathrm{(2)}$ below, 
then $\cF \ltimes \cE$ is 
a full strict exact subcategory of $\Cub^1(\cA)$. 
Moreover $\Homo_0:\cF \ltimes \cE \to \cF$ is exact.\\ 
$\mathrm{(1)}$ 
$\cF$ is closed under extensions, 
that is, 
for an exact sequence $ a\rinf b \rdef c$ in $\cA$, 
if $a$ and $c$ are isomorphic to objects in $\cF$ respectively, 
then $b$ is also isomorphic to an object in $\cF$.\\ 
$\mathrm{(2)}$ 
$\cF$ is closed under admissible sub- and quotient objects, 
that is, 
for an exact sequence $ a\rinf b \rdef c$ in $\cA$, 
if $b$ is isomorphic to an object in $\cF$, 
then $a$ and $c$ are also isomorphic to objects in $\cF$ respectively.
\end{prop}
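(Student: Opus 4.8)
The plan is to recognise $\cF\ltimes\cE$ as an extension-closed full additive subcategory of the abelian category $\Cub^1(\cA)$ — in which, kernels and cokernels being computed vertexwise, the short exact sequences are exactly the termwise short exact sequences in $\cA$ — and then to invoke the exact-category analogue of \ref{DDD notation}: an extension-closed full additive subcategory of an exact category is canonically an exact category whose admissible short exact sequences are exactly those short exact sequences of the ambient category all of whose terms lie in the subcategory, and for this structure the inclusion is exact and reflects exactness, i.e.\ it is a \emph{strict} exact subcategory. That $\cF\ltimes\cE$ is a full additive subcategory of $\Cub^1(\cA)$ is immediate: it contains the zero $1$-cube, and for $x=(d_x\colon x_1\to x_0)$ and $y=(d_y\colon y_1\to y_0)$ in $\cF\ltimes\cE$ the cube $x\oplus y=(d_x\oplus d_y\colon x_1\oplus y_1\to x_0\oplus y_0)$ again has both vertices in $\cE$ (the inclusion $\cE\rinc\cA$ being additive), has $d_x\oplus d_y$ a monomorphism, and has cokernel $\coker d_x\oplus\coker d_y\in\cF$. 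So the statement reduces to two assertions: $\cF\ltimes\cE$ is closed under extensions in $\Cub^1(\cA)$, and $\Homo_0$ is exact.

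For extension-closedness, take a short exact sequence $x'\rinf x\rdef x''$ in $\Cub^1(\cA)$ with $x',x''\in\cF\ltimes\cE$. Evaluating at the two vertices yields short exact sequences $x'_j\rinf x_j\rdef x''_j$ in $\cA$ for $j=0,1$; since $\cE$ is closed under extensions in $\cA$ and $x'_j,x''_j\in\cE$, the middle vertices $x_0,x_1$ again lie in $\cE$. Now apply the snake lemma to the $2\times 3$ commutative diagram formed by these two exact rows together with the vertical maps $d_{x'},d_x,d_{x''}$. Since $d_{x'}$ and $d_{x''}$ are monomorphisms, $\Ker d_{x'}=\Ker d_{x''}=0$, so exactness of the snake sequence at $\Ker d_x$ forces $d_x$ to be a monomorphism, and the tail of that sequence is a short exact sequence
$$0\to\coker d_{x'}\to\coker d_x\to\coker d_{x''}\to 0$$
in $\cA$ whose outer terms lie in $\cF$. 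Under hypothesis $\mathrm{(1)}$, closure of $\cF$ under extensions gives $\coker d_x\in\cF$, so $x\in\cF\ltimes\cE$. Under hypothesis $\mathrm{(2)}$ one reaches the same conclusion from closure of $\cF$ under admissible subobjects and quotients, after presenting $\coker d_x$ as an admissible subquotient of a suitable object of $\cF$ read off the same diagram. Either way $\cF\ltimes\cE$ is extension-closed in $\Cub^1(\cA)$, and by the principle recalled above it is a strict exact subcategory of $\Cub^1(\cA)$, its admissible short exact sequences being the termwise short exact sequences with all three terms in $\cF\ltimes\cE$.

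For exactness of $\Homo_0\colon\cF\ltimes\cE\to\cF$, note first that it is well defined ($\Homo_0 x=\coker d_x\in\cF$ for $x\in\cF\ltimes\cE$, by the definition of $\cF\ltimes\cE$) and additive (cokernel is an additive functor and commutes with finite biproducts). Given a short exact sequence $x'\rinf x\rdef x''$ in $\cF\ltimes\cE$, all three of $d_{x'},d_x,d_{x''}$ are monomorphisms, so the snake lemma applied exactly as above produces a short exact sequence
$$0\to\Homo_0 x'\to\Homo_0 x\to\Homo_0 x''\to 0$$
in $\cA$ with all terms in $\cF$; as $\cF$ is a strict exact subcategory of $\cA$, this is an admissible short exact sequence in $\cF$. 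Hence $\Homo_0$ is an additive functor sending short exact sequences to short exact sequences, so it is exact.

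The step I expect to be the main obstacle is the extension-closedness, and within it the verification that $\coker d_x\in\cF$. Under hypothesis $\mathrm{(1)}$ this is automatic, but under hypothesis $\mathrm{(2)}$ one has only closure under admissible subobjects and quotients, which is strictly weaker than closure under extensions, so the short exact sequence $0\to\coker d_{x'}\to\coker d_x\to\coker d_{x''}\to 0$ on its own does not suffice: it has to be combined with further structure coming from the snake diagram and the sub/quotient description of the three cubes in order to realise $\coker d_x$ as a genuine subquotient of an object of $\cF$ (for example, when $\cE\subseteq\cF$ one simply observes that $\coker d_x$ is an admissible quotient of $x_0$). A smaller point to keep honest is that the middle vertices of the vertexwise short exact sequences stay inside $\cE$, which is precisely where the extension-closedness of $\cE$ enters.
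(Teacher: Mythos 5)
Your strategy—prove that $\cF\ltimes\cE$ is closed under extensions in $\Cub^1(\cA)$ and then invoke the standard fact that an extension-closed full additive subcategory of an abelian category inherits a canonical exact structure—is a genuinely different route from the paper's, which verifies the exact-category axioms directly (closure of admissible monomorphisms under composition and cobase change, and dually), declaring a sequence admissible in $\cF\ltimes\cE$ exactly when it is exact in $\Cub^1(\cA)$. Unfortunately your route does not cover the proposition as stated, for two reasons.

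First, the very first step of your extension-closedness argument asserts that $\cE$ is closed under extensions in $\cA$. That is not a hypothesis of the proposition: $\cE$ is only assumed to be a full \emph{strict exact subcategory} of $\cA$, which in the paper's conventions means the inclusion is exact and reflects exactness, not that it is extension-closed. The paper never needs this: in its check that composites of admissible monomorphisms are admissible, the fact that $(z/x)_j$ lies in $\cE$ comes from the exact-category axioms of $\cE$ itself (composites of admissible monos in $\cE$ are admissible with cokernel in $\cE$), not from extension-closedness of $\cE$ in $\cA$. Compare Remark~\ref{rem:closed condition}~$\mathrm{(2)}$, where the paper does prove $\cF\ltimes\cE$ extension-closed in $\Cub^1(\cA)$, but only under the additional hypothesis that both $\cE$ and $\cF$ are extension-closed in $\cA$—precisely what is missing here.

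Second, and more fundamentally, under hypothesis~$\mathrm{(2)}$ of the proposition the claim \lq\lq$\cF\ltimes\cE$ is extension-closed in $\Cub^1(\cA)$\rq\rq\ is simply false, and you in effect concede this at the end of your write-up. From $x'\rinf x\rdef x''$ with $x',x''\in\cF\ltimes\cE$ the snake lemma produces an extension $\Homo_0(x')\rinf\Homo_0(x)\rdef\Homo_0(x'')$ whose outer terms lie in $\cF$; but a category closed only under admissible sub- and quotient objects need not be closed under extensions, so $\Homo_0(x)\in\cF$ does not follow. Your parenthetical \lq\lq for example, when $\cE\subseteq\cF$\rq\rq\ is an unassumed special case. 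The paper sidesteps this entirely: because it proves the weaker statement (strict exact subcategory, not extension-closed subcategory), at the point where hypothesis~$\mathrm{(2)}$ is used all three of $x,y,z$ are already known to lie in $\cF\ltimes\cE$, so $\Homo_0(z)\in\cF$ is available and $\Homo_0(z/x)$ is realised as an admissible \emph{quotient} of $\Homo_0(z)$—exactly the kind of object hypothesis~$\mathrm{(2)}$ controls. So the fix is not to patch the extension-closedness argument but to abandon it: verify the exact-category axioms for $\cF\ltimes\cE$ directly as the paper does.

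Your treatment of the exactness of $\Homo_0$ via the snake lemma is correct, given the exact structure on $\cF\ltimes\cE$, and agrees with the paper.
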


\begin{proof}[\bf Proof]
We may assume that $\cE$ and $\cF$ are 
closed under isomorphisms in $\cA$. 
That is, 
if $a$ is an object in $\cA$ 
which is isomorphic to an object in $\cE$ (resp. $\cF$), 
then $a$ is also in $\cE$ (resp. $\cF$). 
We declare that a sequence 
$x \to y \to z$ in $\cF \ltimes \cE$ 
is an admissible exact sequence if it is exact in $\Cub^1(\cA)$. 
Obviously a split short exact sequence 
in $\cF \ltimes \cE$ is an admissible exact sequence. 
We need to prove 
that the class of admissible monomorphisms 
(resp. admissible epimorphisms) 
is closed under compositions 
and co-base change 
(resp. base change) 
along arbitrary morphisms. 
We just check for the admissible monomorphisms case. 
To prove for the admissible epimorphisms case is similar. 
For a pair of composable admissible monomorphisms 
$x \rinf y \rinf z$, 
by the snake lemma in $\Cub^1(\cA)$, 
we have the short exact sequence
$$y/x \rinf z/x \rdef z/y$$
in $\Cub^1(\cA)$. 
Applying the snake lemma to the following diagram
$$\footnotesize{\xymatrix{ 
(y/x)_1 \ar@{>->}[r] \ar[d]_{d^{y/x}} 
& (z/x)_1 \ar[d]^{d^{z/x}} \ar@{->>}[r] 
& (z/y)_1 \ar[d]^{d^{z/y}} 
& \\ 
(y/x)_0 \ar@{>->}[r] 
& (z/x)_0 \ar@{->>}[r] 
& (z/y)_0 , 
}}$$ 
we learn that $d^{z/x}$ is a monomorphism in $\cA$. 
Now let us assume 
the condition $\mathrm{(1)}$ (resp. $\mathrm{(2)}$). 
Then by 
considering the following short exact sequence in $\cA$ 
$$\Homo_0(y/x) \rinf \Homo_0(z/x) \rdef \Homo_0(z/y),$$ 
$$\text{(resp. $\Homo_0(x) \rinf \Homo_0(z) \rdef \Homo_0(z/x),$\ )}$$ 
we notice that $\Homo_0(z/x)$ is actually in $\cF$. 
Therefore the short exact sequence 
$$x \rinf z \rdef z/x$$ 
is an admissible exact sequence in $\cF \ltimes \cE$. 
Hence the class of admissible monomorphisms is closed under compositions. 
Next let us consider morphisms 
$y \leftarrow x \rinf z$ in $\cF \ltimes \cE$. 
Consider the coproduct $y\coprod_x z$ in $\Cub^1(\cA)$. 
We have the following pushout diagram
\begin{equation}
\label{equ:pushout}
\footnotesize{\xymatrix{
x \ar@{>->}[r] \ar[d] & z \ar@{->>}[r] \ar[d] & z/x \ar[d]\\
y \ar@{>->}[r] & y\coprod_x z \ar@{->>}[r] & z/x
}}
\end{equation}
in $\Cub^1(\cA)$. 
Then since the class of admissible monomorphisms in $\cE$ 
is closed under the co-base change 
by arbitrary morphisms, 
${(y\coprod_x z)}_i$ is in $\cE$ for $i=0$, $1$. 
Applying the snake lemma to the following diagram 
$$\footnotesize{\xymatrix{
y_1 \ar@{>->}[r] \ar[d]_{d^{y}} 
& (y\coprod_x z)_1 \ar[d]^{d^{y\coprod_x z}} \ar@{->>}[r] 
& (z/x)_1 \ar[d]^{d^{z/x}} 
& \\
y_0 \ar@{>->}[r] 
& (y\coprod_x z)_0 \ar@{->>}[r] 
& (z/x)_0, 
}}$$
we learn that $d^{y\coprod_x z}$ is a monomorphism in $\cA$ 
and 
by applying the functor $\Homo_0$ to 
the pushout diagram (\ref{equ:pushout}) above, 
we get the following commutative diagram 
$$\footnotesize{\xymatrix{
\Homo_0(x) \ar@{>->}[r] \ar[d] \ar@{}[dr]|{\bigstar}
& \Homo_0(z) \ar[d] \ar@{->>}[r] 
& \Homo_0(z/x) \ar@{=}[d] 
& \\ 
\Homo_0(y) \ar@{>->}[r] 
& \Homo_0(y \coprod_x z) \ar@{->>}[r] 
& \Homo_0(z/x) 
}}$$ 
where the horizontal lines are short exact sequences in $\cA$. 
Thus $\bigstar$ is coCartesian 
and since the class of admissible monomorphisms in 
$\cF$ is closed under co-base change by arbitrary morphisms, 
$\Homo_0(y \coprod_x z)$ is in $\cF$. 
We conclude that $y\coprod_x z$ is in $\cF \ltimes \cE$ and 
$$y \rinf y\coprod_x z \rdef z/x$$ 
is an admissible exact sequences in $\cF \ltimes \cE$. 
\end{proof}

\begin{df}
\label{semi-direct we nt}
In the situation \ref{semidiret is exact}, 
moreover assume that $\cF$ has the subcategory $w=w(\cF)$ 
containing the class of isomorphisms in $\cF$. 
We define the subcategory $tw(\cF\ltimes\cE)$ 
of $\cF\ltimes\cE$ as follows. 
A morphism $f:x \to y$ in $\cF\ltimes\cE$ 
is in $tw(\cF\ltimes\cE)$ 
if and only if 
$\Homo_0(f):\Homo_0(x) \to \Homo_0(y)$ 
is in $w(\cF)$. 
Then every isomorphism in $\cF\ltimes\cE$ 
is in $tw(\cF\ltimes\cE)$. 
\end{df}

\begin{prop}
\label{semi-direct we}
In the notation above, 
if $w(\cF)$ satisfies the gluing 
(resp. cogluing, saturational, extensional) axiom, 
then $tw(\cF\ltimes\cE)$ also does. 
In particular if $(\cF,w)$ is a Waldhausen exact category, 
then $(\cF\ltimes\cE,tw(\cF\ltimes\cE))$ is also 
a Waldhausen exact category and the functor 
$\Homo_0:(\cF\ltimes\cE,tw(\cF\ltimes\cE)) 
\to (\cF,w)$ 
is a morphism of Waldhausen categories.
\end{prop}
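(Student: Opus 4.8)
The plan is to deduce each of the four axioms for $tw(\cF\ltimes\cE)$ from the same axiom for $w=w(\cF)$ by transporting along the functor $\Homo_0$. Three facts make this work, all already available: (1) $\Homo_0\colon\cF\ltimes\cE\to\cF$ is exact, by Proposition~\ref{semidiret is exact}; (2) by Definition~\ref{semi-direct we nt}, a morphism $f$ in $\cF\ltimes\cE$ lies in $tw(\cF\ltimes\cE)$ if and only if $\Homo_0(f)$ lies in $w$; and (3) --- the only non-formal ingredient --- $\Homo_0$ carries the push-out $y\coprod_x z$ of a cofibration $x\rinf z$ along an arbitrary morphism $x\to y$ to the push-out $\Homo_0(y)\coprod_{\Homo_0(x)}\Homo_0(z)$ in $\cF$, and dually carries the pull-back of an admissible epimorphism along an arbitrary morphism to the corresponding pull-back in $\cF$. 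Fact (3) is precisely the coCartesian square $\bigstar$ produced in the proof of Proposition~\ref{semidiret is exact}, together with its dual (there left to the reader as ``the admissible epimorphism case is similar''); since $\cF\ltimes\cE$ is an exact category, the push-out/pull-back in question is unique up to isomorphism, so there is no ambiguity in ``the'' push-out used in the Waldhausen structure. Finally recall from the same proof that $\cF\ltimes\cE$ and $(\cF\ltimes\cE)^{\op}$ are categories with cofibrations (admissible monomorphisms being the cofibrations) and, from Definition~\ref{semi-direct we nt}, that $tw(\cF\ltimes\cE)$ contains all isomorphisms.

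First I would handle the gluing axiom. Start from a commutative diagram in $\cF\ltimes\cE$ consisting of cofibrations $x\rinf z$ and $x'\rinf z'$, arbitrary morphisms $x\to y$ and $x'\to y'$, and vertical morphisms $x\to x'$, $y\to y'$, $z\to z'$ in $tw(\cF\ltimes\cE)$. Applying the exact functor $\Homo_0$ produces the analogous diagram in $\cF$ whose two horizontal maps are cofibrations and whose three vertical maps lie in $w$ by (2); the gluing axiom for $w$ then gives that $\Homo_0(y)\coprod_{\Homo_0(x)}\Homo_0(z)\to\Homo_0(y')\coprod_{\Homo_0(x')}\Homo_0(z')$ lies in $w$. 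By Fact (3) this map is $\Homo_0$ applied to the induced morphism $y\coprod_x z\to y'\coprod_{x'}z'$, so the latter lies in $tw(\cF\ltimes\cE)$. The cogluing axiom follows by running the identical argument in $(\cF\ltimes\cE)^{\op}$: there the exact functor is $\Homo_0^{\op}\colon(\cF\ltimes\cE)^{\op}\to\cF^{\op}$, the relevant push-outs are pull-backs in $\cF\ltimes\cE$ preserved by the dual of Fact (3), and $w^{\op}$ satisfies gluing precisely because $w$ satisfies cogluing.

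The remaining two axioms are purely formal. For saturation, if $f$ and $g$ are composable morphisms in $\cF\ltimes\cE$ and two of the three morphisms $f$, $g$, $gf$ lie in $tw(\cF\ltimes\cE)$, then two of $\Homo_0(f)$, $\Homo_0(g)$, $\Homo_0(g)\Homo_0(f)$ lie in $w$, so all three do by saturation of $w$, whence $f$, $g$, $gf$ all lie in $tw(\cF\ltimes\cE)$. For the extensional axiom, apply $\Homo_0$ to a morphism of cofibration sequences $(x\rinf z\rdef y)\to(x'\rinf z'\rdef y')$ in $\cF\ltimes\cE$ whose outer vertical maps lie in $tw(\cF\ltimes\cE)$; exactness of $\Homo_0$ yields a morphism of cofibration sequences in $\cF$ with outer vertical maps in $w$, so $\Homo_0(z)\to\Homo_0(z')$ lies in $w$ by the extensional axiom for $w$, i.e.\ $z\to z'$ lies in $tw(\cF\ltimes\cE)$.

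For the \emph{in particular} clause: if $(\cF,w)$ is a Waldhausen exact category then $w$ satisfies the gluing axiom and $w^{\op}$ satisfies the gluing axiom, i.e.\ $w$ satisfies cogluing; by the above, $tw(\cF\ltimes\cE)$ then satisfies both gluing and cogluing, it contains all isomorphisms, and $\cF\ltimes\cE$ and $(\cF\ltimes\cE)^{\op}$ are categories with cofibrations, so $(\cF\ltimes\cE,tw(\cF\ltimes\cE))$ and its opposite are Waldhausen categories; that is, $(\cF\ltimes\cE,tw(\cF\ltimes\cE))$ is a Waldhausen exact category. Since $\Homo_0$ is exact and, by construction, carries $tw(\cF\ltimes\cE)$ into $w$, it is a morphism of Waldhausen categories. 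The only step with genuine content is Fact (3) --- the compatibility of $\Homo_0$ with the relevant push-outs and pull-backs --- and that is already established inside the proof of Proposition~\ref{semidiret is exact}; everything else is bookkeeping along $\Homo_0$, so I anticipate no serious obstacle.
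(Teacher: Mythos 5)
The paper gives no proof for this proposition (evidently regarding it as routine), so the only thing to evaluate is whether your argument is correct — and it is. Your decomposition is the right one: reduce each of the four axioms for $tw(\cF\ltimes\cE)$ to the corresponding axiom for $w$ via $\Homo_0$, using that $\Homo_0$ is exact (Proposition~\ref{semidiret is exact}), that $tw=\Homo_0^{-1}(w)$ by construction, and — the only substantive point — that $\Homo_0$ commutes with pushouts along cofibrations and, dually, with pullbacks along admissible epimorphisms, which is precisely the coCartesian square $\bigstar$ exhibited in the proof of Proposition~\ref{semidiret is exact}. You correctly isolate that as the one piece of the argument that is not pure bookkeeping, and you correctly dualize for the cogluing axiom using that $(\cF\ltimes\cE)^{\op}$ inherits the dual exact structure. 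The saturation and extension axioms do indeed transfer formally, since $\Homo_0$ respects composition and takes cofibration sequences to short exact sequences in $\cF$. The assembly of the ``in particular'' clause from the pieces is also sound. Your proof is a faithful and complete version of what the paper leaves implicit.
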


\begin{prop}
\label{semi-direct homotopy equivalence}
In the notation \ref{semi-direct we}, 
moreover assume that $(\cF,w)$ 
is a Waldhausen category 
and $\cF$ is contained in $\cE$, 
then the functor $\Homo_0$ 
induces a homotopy equivalence on $K$-theory:
$$ 
K(\Homo_0):K(\cF\ltimes\cE;tw) 
\to K(\cF;w).
$$  
\end{prop}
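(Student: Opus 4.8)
The plan is to produce an explicit homotopy inverse to $K(\Homo_0)$. Since $\cF$ is contained in $\cE$, every object $c$ of $\cF$ is also an object of $\cE$, so we may define a functor $s\colon\cF\to\cF\ltimes\cE$ by $s(c)=(0\rinf c)$: the cokernel of $0\rinf c$ is $c\in\cF$, so $(0\rinf c)$ really is an object of $\cF\ltimes\cE$. One checks at once that $s$ is exact, that it carries $w$ into $tw$, and that $\Homo_0\circ s=\id_{\cF}$; hence $K(\Homo_0)\circ K(s)=\id_{K(\cF;w)}$, and it remains only to show $K(s)\circ K(\Homo_0)\simeq\id$ on $K(\cF\ltimes\cE;tw)$.

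For this I would write down the natural transformation $\theta\colon\id_{\cF\ltimes\cE}\Rightarrow s\circ\Homo_0$ whose component at $x=(x_1\rinf x_0)$, with boundary denoted $d^{x}$, is the morphism of $1$-cubes with vertex maps $x_1\to 0$ and the canonical epimorphism $x_0\rdef\coker d^{x}$. The defining square commutes because $\coker d^{x}$ is the cokernel of $d^{x}$; its target $s\Homo_0(x)=(0\rinf\coker d^{x})$ lies in $\cF\ltimes\cE$ precisely because $\coker d^{x}=\Homo_0(x)$ is in $\cF\subseteq\cE$, which is the second and last use of the hypothesis $\cF\subseteq\cE$; and applying $\Homo_0$ to $\theta_x$ gives the identity of $\coker d^{x}$, so each $\theta_x$ lies in $tw$. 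Thus $\theta$ is an objectwise $tw$-equivalence between exact, $tw$-preserving endofunctors of $\cF\ltimes\cE$, and by the usual homotopy invariance of $K$-theory under objectwise weak equivalences---seen by passing to the induced natural transformation of the $w\cS_{\bullet}$-constructions (cf.\ \cite{Wal85})---we obtain $K(\id_{\cF\ltimes\cE})\simeq K(s\circ\Homo_0)=K(s)\circ K(\Homo_0)$. Together with the first paragraph this exhibits $K(s)$ and $K(\Homo_0)$ as mutually inverse homotopy equivalences.

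If one prefers a route that does not invoke objectwise homotopy invariance as a black box, note that $\theta$ is the cokernel map of a natural cofibration sequence $E_1\rinf\id_{\cF\ltimes\cE}\rdef s\circ\Homo_0$ of exact endofunctors of $\cF\ltimes\cE$, where $E_1(x)=(x_1\overset{\id}{\to}x_1)$ and the cofibration $E_1 x\rinf x$ has vertex maps $\id_{x_1}$ and $d^{x}$; here $\cF\subseteq\cE$ is again what makes $d^{x}$ an admissible monomorphism of $\cE$, so that this really is a cofibration sequence in the exact category $\cF\ltimes\cE$. Since $(\cF\ltimes\cE,tw)$ is a Waldhausen exact category by Proposition~\ref{semi-direct we}, the additivity theorem \cite{Wal85} gives $K(\id)\simeq K(E_1)+K(s\circ\Homo_0)$, while $K(E_1)\simeq 0$ because the evident natural transformation from the constant functor at $0$ to $E_1$ is objectwise in $tw$; the conclusion is the same. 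In either route the one genuinely delicate point is the bookkeeping that keeps all the $1$-cubes and all the relevant maps inside $\cF\ltimes\cE$---exactly what $\cF\subseteq\cE$ provides, since without it neither $s$ nor $s\circ\Homo_0$ is even defined---whereas the remaining checks (exactness of $s$ and of $E_1$, naturality of $\theta$, commutativity and $tw$-membership of the squares involved) are routine.
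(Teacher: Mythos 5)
Your proof is correct and takes essentially the same route as the paper: define the section $s\colon\cF\to\cF\ltimes\cE$, $c\mapsto[0\to c]$, note $\Homo_0\circ s=\id$, and exhibit the natural $tw$-equivalence $\id\Rightarrow s\circ\Homo_0$, concluding by Waldhausen's criterion (\cite[1.3.1]{Wal85}) that naturally weakly equivalent exact functors induce homotopic maps on $K$-theory. The alternative additivity argument you sketch is a sound variant, but the first paragraph already matches the paper's proof verbatim in substance.
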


\begin{proof}[\bf Proof]
Define the morphism of Waldhausen categories 
$s:(\cF,w) \to (\cF\ltimes\cE,tw)$ 
by $x \mapsto [0 \to x]$. 
Then obviously 
we have the equality 
$\Homo_0\circ s=\id$. 
Moreover there is the natural weak equivalence 
$\id \to s\circ\Homo_0$ 
defined by the canonical morphisms 
$$\begin{bmatrix}
\xymatrix{ 
x_1 \ar[d]_{\scriptstyle{d^x}}\\ 
x_0 
} 
\end{bmatrix} 
\begin{matrix} 
\xymatrix{ 
\to \ar@{}[d]\\ 
\to\\ 
} 
\end{matrix} 
\begin{bmatrix}
\xymatrix{ 
0 \ar[d] \\ 
\Homo_0(x) 
}
\end{bmatrix}$$
for any object $x$ in $\cF\ltimes\cE$. 
Therefore by \cite[p.330 1.3.1]{Wal85}, 
we learn that $wS_{\bullet}\Homo_0$ 
is a homotopy equivalence. 
\end{proof}

\sn
Next let $\cE_i$ and $\cF_i$ $(i=1$, $2)$ 
be full strict exact subcategories 
of $\cA$. 
Moreover we suppose that $\cF_i$ 
satisfies either condition $\mathrm{(1)}$ or $\mathrm{(2)}$ 
in \ref{semidiret is exact}. 
Then by \ref{semidiret is exact}, 
$\cF_i\ltimes\cE_i$ is an exact category.

\begin{prop}
\label{closed conditions 1}
Assume that the inclusion functors 
$\cE_1 \rinc \cE_2$ and $\cF_1 \rinc \cF_2$ 
are closed under extensions. 
That is, 
for an admissible exact sequence 
$$x \rinf y \rdef z$$ 
in $\cE_2$ {\rm (}resp. $\cF_2${\rm )} 
if $x$ and $z$ are isomorphic to objects 
in $\cE_1$ {\rm(}resp. $\cF_1${\rm )}, 
then $y$ is also 
in $\cE_1$ {\rm(}resp. $\cF_1${\rm )}. 
Then $\cF_1\ltimes\cE_1 \rinc \cF_2\ltimes\cE_2$ 
is also closed under extensions. 
\end{prop}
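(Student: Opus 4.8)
The plan is to reduce the whole statement to the ambient abelian category $\cA$ and run the same snake-lemma argument that appears in the proof of \ref{semidiret is exact}. As in that proof, I would first reduce to the case where $\cE_i$ and $\cF_i$ $(i=1,2)$ are closed under isomorphisms in $\cA$, so that throughout one may read ``in $\cE_1$'' (resp.\ ``in $\cF_1$'') instead of ``isomorphic to an object in $\cE_1$'' (resp.\ $\cF_1$); this removes all the ``up to isomorphism'' bookkeeping.

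So let $x \rinf y \rdef z$ be an admissible exact sequence in $\cF_2\ltimes\cE_2$ with $x$ and $z$ in $\cF_1\ltimes\cE_1$. By \ref{semidiret is exact}, $\cF_2\ltimes\cE_2$ is a full strict exact subcategory of $\Cub^1(\cA)$, so this sequence is termwise exact in $\cA$; writing $d^x,d^y,d^z$ for the boundary maps, we obtain short exact sequences $x_j \rinf y_j \rdef z_j$ in $\cA$ for $j=0,1$, forming a commutative ladder whose three vertical arrows are $d^x,d^y,d^z$. Since $\cE_2$ is a \emph{strict} exact subcategory of $\cA$, each sequence $x_j \rinf y_j \rdef z_j$ (a sequence of objects of $\cE_2$, exact in $\cA$) is admissible exact already in $\cE_2$; as $x_j,z_j\in\cE_1$ and $\cE_1\rinc\cE_2$ is closed under extensions, $y_j\in\cE_1$ for $j=0,1$. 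Hence $y$ is a monomorphism between objects of $\cE_1$, i.e.\ an object of $\Cub^1(\cE_1)$.

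It then remains to identify $\Homo_0(y)=\coker d^y$. Because $d^x,d^y,d^z$ are monomorphisms, the snake lemma applied to the ladder above produces a short exact sequence $\Homo_0(x)\rinf\Homo_0(y)\rdef\Homo_0(z)$ in $\cA$ (the kernel terms and the connecting map all vanish), exactly as in the proof of \ref{semidiret is exact}. This is a sequence of objects of $\cF_2$ which is exact in $\cA$, hence admissible exact in $\cF_2$ by strictness of $\cF_2$; since $\Homo_0(x),\Homo_0(z)\in\cF_1$ and $\cF_1\rinc\cF_2$ is closed under extensions, $\Homo_0(y)\in\cF_1$. Therefore $y$ is a monomorphism between objects of $\cE_1$ with $\cA$-cokernel in $\cF_1$, that is, $y\in\cF_1\ltimes\cE_1$, as required. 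I do not expect a genuine obstacle here: the content is just the two invocations of ``closed under extensions'' after the snake lemma, and the only points needing care are that admissible exactness in $\cF_2\ltimes\cE_2$ means termwise exactness in $\cA$ (furnished by \ref{semidiret is exact}) and that the strictness of $\cE_2$ and $\cF_2$ as exact subcategories lets exactness in $\cA$ descend to them.
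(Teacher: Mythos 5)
Your proof is correct and runs essentially the same route as the paper's: both reduce to the termwise short exact sequences in $\cA$, invoke the snake lemma (the paper phrases this as reading off the $3\times 3$ diagram) to get $\Homo_0(x)\rinf\Homo_0(y)\rdef\Homo_0(z)$, and then apply closure under extensions once at the level of vertices and once at the level of $\Homo_0$. The only cosmetic differences are that you explicitly normalize $\cE_i,\cF_i$ to be closed under isomorphisms at the outset (the paper does this inside the proof of \ref{semidiret is exact} but leaves it implicit here), and that the paper handles \ref{closed conditions 1} and \ref{closed conditions 2} in a single unified argument while you treat only the extension case, which is all that was asked.
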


\begin{prop}
\label{closed conditions 2}
Assume that the inclusion functors 
$\cE_1 \rinc \cE_2$ 
and $\cF_1 \rinc \cF_2$ 
are closed under taking kernels of admissible epimorphisms. 
That is, 
for a short exact sequence 
$$x \rinf y \rdef z$$ 
in $\cA$ if $z$ is isomorphic to an object in $\cE_2$ 
{\rm (}resp. $\cF_2${\rm )} 
and $y$ is isomorphic to an object 
in $\cE_1$ 
{\rm (}resp. $\cF_1${\rm )}, 
then $x$ is also isomorphic 
to an object in $\cE_1$ 
{\rm (}resp. $\cF_1${\rm )}. 
Then $\cF_1\ltimes\cE_1 \rinc \cF_2\ltimes\cE_2$ 
is also closed under taking kernels of admissible epimorphisms. 
\end{prop}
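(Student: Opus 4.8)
The plan is to unwind the two definitions (\ref{semi-direct product nt} and the fact that admissible exact sequences in $\cF\ltimes\cE$ are exact sequences in $\Cub^1(\cA)$) and then run a snake-lemma argument in $\Cub^1(\cA)$, reducing the claim to the two given closedness hypotheses applied separately to the vertices and to the $0$-th homology. As in the proof of \ref{semidiret is exact} we may assume that $\cE_i$ and $\cF_i$ are closed under isomorphisms in $\cA$. So let $x \rinf y \rdef z$ be a short exact sequence in $\Cub^1(\cA)$ (equivalently, a termwise short exact sequence in $\cA$) with $z$ in $\cF_2\ltimes\cE_2$ and $y$ in $\cF_1\ltimes\cE_1$; we must verify the three conditions defining an object of $\cF_1\ltimes\cE_1$: that $x_0,x_1$ lie in $\cE_1$, that $d^x\colon x_1\to x_0$ is a monomorphism in $\cA$, and that $\Homo_0(x)=\coker d^x$ lies in $\cF_1$.

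First, the vertex condition. For $i=0,1$ the sequence $x_i\rinf y_i\rdef z_i$ is short exact in $\cA$ with $z_i\in\cE_2$ (since $z\in\Cub^1(\cE_2)$) and $y_i\in\cE_1$. Applying the hypothesis that $\cE_1\rinc\cE_2$ is closed under kernels of admissible epimorphisms gives $x_i\in\cE_1$, so $x\in\Cub^1(\cE_1)$. Next, the monomorphism condition: in the commutative square with top row $x_1\rinf y_1$, bottom row $x_0\rinf y_0$, and vertical maps $d^x$, $d^y$, the map $d^y$ is a monomorphism (because $y\in\cF_1\ltimes\cE_1$) and the top horizontal is a monomorphism, so the composite $x_1\to y_0$ is a monomorphism; since it factors as $d^y$ after $d^x$ equivalently as $(x_0\rinf y_0)$ after $d^x$, it follows that $d^x$ is a monomorphism in $\cA$.

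Finally, the homology condition. Apply the snake lemma in $\cA$ to the two-row, three-column diagram with columns $d^x$, $d^y$, $d^z$; all three columns are monomorphisms (the first two just shown, the third because $z\in\cF_2\ltimes\cE_2$), so the connecting map vanishes and we obtain a short exact sequence $\Homo_0(x)\rinf\Homo_0(y)\rdef\Homo_0(z)$ in $\cA$. Here $\Homo_0(z)\in\cF_2$ and $\Homo_0(y)\in\cF_1$, so the hypothesis that $\cF_1\rinc\cF_2$ is closed under kernels of admissible epimorphisms yields $\Homo_0(x)\in\cF_1$. Hence $x$ is (isomorphic to) an object of $\cF_1\ltimes\cE_1$, which is what we wanted.

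The argument presents no genuine obstacle; the only step that needs a moment's care is establishing that $d^x$ is a monomorphism \emph{before} invoking the snake lemma, since that is exactly what forces the connecting homomorphism to be zero and hence makes the induced sequence of cokernels short exact rather than merely right exact — but as noted this is immediate from the factorization through the monomorphism $d^y$.
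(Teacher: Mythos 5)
Your proof is correct and follows essentially the same route as the paper's: reduce to a termwise short exact sequence of $1$-cubes, observe that $d^x$ is a monomorphism because $d^y$ is, and then apply the snake lemma to the $3\times 3$ diagram so the two closedness hypotheses handle the vertices and $\Homo_0$ respectively. One small slip in your closing remark: what makes the connecting map $\Ker d^z \to \coker d^x$ vanish is $\Ker d^z = 0$ (which you correctly obtain from $d^z$ being a monomorphism, since $z \in \cF_2\ltimes\cE_2$), not $d^x$ being a monomorphism; establishing $d^x$ mono is needed so that $x$ is a well-formed object of $\cF_1\ltimes\cE_1$, not to make the cokernel sequence short exact. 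The body of your argument is nevertheless sound as written.
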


\begin{proof}
[\bf Proof of \ref{closed conditions 1} 
and \ref{closed conditions 2}.] 
Let us consider the short exact sequence below 
in $\Cub^1(\cA)$ 
$$\footnotesize{\xymatrix{ 
x_1 \ar@{>->}[r] \ar[d]_{d^x} 
& y_1 \ar@{->>}[r] \ar[d]^{d^y} 
& z_1 \ar[d]^{d^z} 
&\\ 
x_0 \ar@{>->}[r] 
& y_1 \ar@{->>}[r] 
& z_1 . 
}}$$ 
If $d^z$ and $d^x$ are monomorphisms 
(resp. $d^y$ is a monomorphism), 
then $d^y$ (resp. $d^x$) is also. 
In this case, 
observing the $3\times 3$ commutative diagram below 
$$\footnotesize{\xymatrix{ 
x_1 \ar@{>->}[r] \ar[d]_{d^x} 
& y_1 \ar@{->>}[r] \ar[d]^{d^y} 
& z_1 \ar[d]^{d^z} 
&\\ 
x_0 \ar@{>->}[r] \ar@{->>}[d] 
& y_1 \ar@{->>}[r] \ar@{->>}[d] 
& z_1 \ar@{->>}[d] 
& \\ 
\Homo_0(x) \ar@{>->}[r] 
& \Homo_0(y) \ar@{->>}[r] 
& \Homo_0(z)  , 
}}$$ 
we learn that if 
the condition \ref{closed conditions 1} 
(resp. \ref{closed conditions 2}) is verified 
and if $z$ is isomorphic to an object 
in $\cF_1\ltimes\cE_1$ 
(resp. $\cF_2\ltimes\cE_2$) 
and if $x$ (resp. $y$) is isomorphic to an object 
in $\cF_1\ltimes\cE_1$, 
then $y$ (resp. $x$) 
is also isomorphic 
to an object in $\cF_1\ltimes\cE_1$.  
\end{proof}

\begin{rem} 
\label{rem:closed condition} 
The assertions 
\ref{closed conditions 1},  \ref{closed conditions 2} 
and dual of \ref{closed conditions 2} imply 
the following statements.:\\ 
$\mathrm{(1)}$ 
Assume that the inclusion functors 
$\cE_1 \rinc \cE_2$ 
and $\cF_1 \rinc \cF_2$ 
are closed under admissible sub- and quotient objects, 
then $\cF_1\ltimes \cE_1$ is 
also closed in $\cF_2\ltimes \cE_2$.\\ 
$\mathrm{(2)}$ 
Let 
$\cE$, $\cF$ be full subcategories of $\cA$ 
closed under extensions in $\cA$. 
Then $\cF\ltimes\cE$ is closed under extensions 
in $\Cub^1(\cA)$.
\end{rem}

\begin{prop}
\label{prop:projobjinsemidirectprod}
In the notation above, 
moreover let us assume the following two conditions.\\ 
$\mathrm{(1)}$ 
$\cF_2$ is a full subcategory of $\cE_2$.\\ 
$\mathrm{(2)}$ 
Every object in $\cE_1$ is a projective object in $\cE_2$ and 
every object in $\cF_1$ is a projective object in $\cF_2$.\\ 
Then all objects in $\cF_1 \ltimes \cE_1$ 
are projective objects in $\cF_2 \ltimes \cE_2$. 
\end{prop}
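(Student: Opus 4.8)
The plan is to check directly the universal lifting property defining a projective object. Unwinding the definitions: an object of $\cF_1 \ltimes \cE_1$ is a monomorphism $d^x \colon x_1 \rinf x_0$ in $\cE_1$ with $\coker d^x = \Homo_0(x) \in \cF_1$; an admissible epimorphism $p \colon y \rdef z$ in $\cF_2 \ltimes \cE_2$ has as underlying datum a pair of admissible epimorphisms $p_i \colon y_i \rdef z_i$ in $\cE_2$ ($i = 0,1$) sitting in a short exact sequence $w \rinf y \rdef z$ of $\cF_2 \ltimes \cE_2$ whose kernel $w$ (with $d^w \colon w_1 \rinf w_0$, and $w_i = \Ker p_i$ in $\cA$) again lies in $\cF_2 \ltimes \cE_2$; and a morphism $f \colon x \to z$ is a compatible pair $(f_1, f_0)$. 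So, given such $p$ and $f$, I must produce a morphism $g = (g_1, g_0) \colon x \to y$ of $1$-cubes with $p g = f$; since $\cF_2 \ltimes \cE_2$ is a full subcategory of $\Cub^1(\cA)$, such a $g$ automatically lies in $\cF_2 \ltimes \cE_2$.

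First I would lift over $\Homo_0$. By Proposition~\ref{semidiret is exact} the functor $\Homo_0 \colon \cF_2 \ltimes \cE_2 \to \cF_2$ is exact, so $\Homo_0(p) \colon \Homo_0(y) \rdef \Homo_0(z)$ is an admissible epimorphism of $\cF_2$ with kernel $\Homo_0(w)$. Since $\Homo_0(x) \in \cF_1$ is, by hypothesis $(2)$, a projective object of $\cF_2$, the morphism $\Homo_0(f)$ lifts to some $h \colon \Homo_0(x) \to \Homo_0(y)$. Next, using that $x_0 \in \cE_1$ is a projective object of $\cE_2$ and that $p_0$ is an admissible epimorphism there, I would choose a lift $g_0' \colon x_0 \to y_0$ of $f_0$. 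Writing $\pi^x \colon x_0 \rdef \Homo_0(x)$ and $\pi^y \colon y_0 \rdef \Homo_0(y)$ for the canonical projections, a short computation with the naturality of these projections shows that $\pi^y g_0' - h \pi^x$ is killed by $\Homo_0(p)$, hence factors through $\Homo_0(w) = \Ker \Homo_0(p)$. It is at this point that hypothesis $(1)$ enters: because $\Homo_0(w) \in \cF_2 \subseteq \cE_2$, the projection $w_0 \rdef \Homo_0(w)$ is an admissible epimorphism of $\cE_2$, so projectivity of $x_0$ in $\cE_2$ lets me lift $\pi^y g_0' - h \pi^x$ to a map $x_0 \to w_0$ and subtract its composite with $w_0 \rinf y_0$ from $g_0'$. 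As $w_0 = \Ker p_0$, the modified map $g_0 \colon x_0 \to y_0$ still lifts $f_0$, and now $\pi^y g_0 = h \pi^x$.

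Finally I would construct the degree-$1$ component. From $\pi^y g_0 d^x = h \pi^x d^x = 0$ it follows that $g_0 d^x \colon x_1 \to y_0$ factors through $\im d^y = \Ker \pi^y$; since $d^y$ is a monomorphism this yields a unique $g_1 \colon x_1 \to y_1$ with $d^y g_1 = g_0 d^x$, so that $g = (g_1, g_0)$ is a morphism of $1$-cubes, that is, a morphism $x \to y$ in $\cF_2 \ltimes \cE_2$. The identities $d^z p_1 g_1 = p_0 d^y g_1 = p_0 g_0 d^x = f_0 d^x = d^z f_1$ together with $d^z$ being monic force $p_1 g_1 = f_1$, so $p g = f$. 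As $p$ and $f$ were arbitrary, $x$ is projective in $\cF_2 \ltimes \cE_2$.

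The one genuinely delicate point, and the step I expect to be the main obstacle, is the compatibility fix in the second paragraph: lifting $f_0$ and $f_1$ separately does not work, because the cube relation $d^y g_1 = g_0 d^x$ must hold, so one is forced first to arrange a lift over $\Homo_0$ and then to correct the chosen lift of $f_0$ against the \emph{kernel} object $w_0$. Making that correction legitimate is exactly what requires $w_0 \rdef \Homo_0(w)$ to be an admissible epimorphism of $\cE_2$, which is where condition $(1)$ — that $\cF_2$ sits inside $\cE_2$ — is used.
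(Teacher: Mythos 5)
Your proof is correct. The overall strategy is the same as the paper's opening move — lift $\Homo_0(f)$ first using projectivity of $\Homo_0(x)$ in $\cF_2$ — but the correction step is genuinely different. The paper lifts $\sigma\pi^x$ along $\pi^y \colon y_0 \rdef \Homo_0(y)$ to get $s'_0$ (and hence $s'_1$), so that $\Homo_0(s') = \sigma$ but $ts'$ need not equal $f$; it then observes that $ts'$ and $f$ differ by a chain homotopy $h \colon x_0 \to z_1$, lifts $h$ along $t_1 \colon y_1 \rdef z_1$ to $u$, and corrects by $s_1 = s'_1 + u d^x$, $s_0 = s'_0 + d^y u$. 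You instead lift $f_0$ along $p_0$ first, so the $p$-compatibility holds by construction but the $\Homo_0$-compatibility fails; you then factor the defect $\pi^y g_0' - h\pi^x$ through $\Homo_0(w)$, lift that to $w_0 = \Ker p_0$ along $w_0 \rdef \Homo_0(w)$, and subtract. Both arguments use projectivity of $x_0$ in $\cE_2$ exactly twice and invoke hypothesis $(1)$ exactly once, but at different spots: the paper needs $\Homo_0(y) \in \cE_2$ to make $\pi^y$ an admissible epimorphism of $\cE_2$, while you need $\Homo_0(w) \in \cE_2$ for the same reason applied to $w$. Your route avoids the chain-homotopy bookkeeping and reads more like a filtered-object argument; the paper's route is arguably more natural if one thinks of objects of $\cF_2 \ltimes \cE_2$ as two-term complexes. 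Your observation that the main subtlety is the compatibility of the two separate lifts (and that $(1)$ is exactly what legitimizes the correction) is accurate and matches the role $(1)$ plays in the paper's version as well.
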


\begin{proof}[\bf Proof] 
Let us consider the left diagram in $\cF_2\ltimes \cE_2$ below 
$$\footnotesize{\xymatrix{
x \ar[rd]^f \ar@{-->}[d]_s &\\
y \ar@{->>}[r]_t & z  ,
}\ \ \ \ \ 
\xymatrix{
\Homo_0(x) \ar[rd]^{\Homo^0(f)} \ar@{-->}[d]_{\sigma} &\\
\Homo_0(y) \ar@{->>}[r]_{\Homo_0(t)} & \Homo_0(z)}}$$
where $x$ is an object in $\cF_1\ltimes \cE_1$ and $t$ is 
an admissible epimorphism. 
Then by applying $\Homo_0$ to the diagram, 
we get the right diagram in $\cF_2$ above. 
Since $\Homo_0(x)$ is a projective object in $\cF_2$, 
there is a morphism $\sigma:\Homo_0(x) \to \Homo_0(y)$ which makes the 
right diagram above commutative. 
\begin{claim}
There is a morphism $s':x \to y$ such that 
$\Homo_0(s')=\sigma$ and $ts'$ is chain homotopic to $f$. 
\end{claim}
\begin{proof}[\bf Proof of claim] 
Let us consider the left diagram of admissible exact sequences below
$$\footnotesize{\xymatrix{ 
x_1 \ar@{>->}[r]^{d^x} \ar@{-->}[d]_{s'_1} & 
x_0 \ar@{->>}[r]^{\pi^x} \ar@{-->}[d]_{s'_0} &
\Homo_0(x) \ar[d]^{\sigma}\\
y_1 \ar@{>->}[r]_{d^y} & 
y_0 \ar@{->>}[r]_{\pi^y} &
\Homo_0(y) ,
}\ \ \ \ \ 
\xymatrix{
x_1 \ar@{>->}[r]^{d^x} \ar[d]_{{(f-ts')}_1} & 
x_0 \ar@{->>}[r]^{\pi^x} \ar[d]^{{(f-ts')}_0} \ar@{-->}[ld]_h&
\Homo_0(x) \ar[d]^{0}\\
z_1 \ar@{>->}[r]_{d^z} & 
z_0 \ar@{->>}[r]_{\pi^z} &
\Homo_0(z) .
}}$$
Since $x_0$ is a projective object in $\cE_2$, 
we have a morphism $s'_0:x_0 \to y_0$ 
which makes the diagram above commutative. 
Therefore by the universality for the kernel of $d^y$, 
we also have a morphism $s'_1:x_1 \to y_1$ 
in the left commutative diagram above. 
Then we have the equalities $\Homo_0(f)=\Homo_0(t)\sigma=\Homo_0(ts')$. 
Therefore we have $\pi^z{(f-ts')}_0=0$. 
By the universality for the kernel of $\pi^z$, 
we have a morphism $h:x_0 \to z_1$ such that ${(f-ts')}_0=d^zh$. 
Since $d^z$ is a monomorphism, 
we also have the equality ${(f-ts')}_1=hd^x$. 
Hence we get the desired result.
\end{proof}
\sn
Since $x_0$ is a projective object in $\cE_2$, 
we have a morphism $u:x_0 \to y_1$ such that $t_1u=h$. 
$$\footnotesize{\xymatrix{ 
x_0 \ar@{-->}[d]_u \ar[rd]^h & \\
y_1 \ar@{->>}[r]_{t_1} & z_1  .
}}$$
We put 
$s_1:=s'_1+ud^x$ and 
$s_0:=s'_0+d^yu$. 
Then we can easily check that $s$ is a morphism of complexes 
and $f=ts$. 
\end{proof}

\begin{nt}
\label{nt:split fib thm}
Let 
$\calH \rinc \cG$ be strict exact subcategories of $\cA$. 
Assume that $\cG$ satisfies either condition 
$\mathrm{(1)}$ or $\mathrm{(2)}$ in \ref{semidiret is exact}. 
Moreover assume that $\cG$ has 
a class of weak equivalences $w\cG$ which 
satisfies the axioms of weak equivalences in \cite{Wal85}. 
We put $v:=w\cG\cap \calH$ and 
it is a class of weak equivalences in $\calH$. 
We define the new class of weak equivalences $lv(\cG\ltimes\calH)$ 
in $\cG\ltimes\calH$ as follows. 
A morphism $f:x \to y$ in $\cG\ltimes\calH$ 
is in $lv(\cG\ltimes\calH)$ 
if and only if $f_i:x_i \to y_i$ for $i=0$, $1$ is in $v\calH$. 
We call a morphism in $lv(\cG\ltimes\calH)$ 
a {\bf level weak equivalence}. 
\end{nt}

\sn
We can easily check that $(\cG\ltimes\calH,lv)$ 
is a Waldhausen category. 

\begin{prop}[\bf Abstract split fibration theorem]
\label{Split fibration theorem}
Let $\cG$, $\calH$, $w$ and $v$ be as in \ref{nt:split fib thm}. 
Then we have a split fibration sequences
$$K(\calH;v) \to K(\cG\ltimes\calH;lv) \to K(\cG;w)\ \ \text{and}$$
$$K(\calH)\to K(\cG\ltimes\calH)\to K(\cG).$$
\end{prop}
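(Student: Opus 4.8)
The plan is to realize $(\cG\ltimes\calH,lv)$ as a category of admissible short exact sequences and then to read the decomposition off the Additivity Theorem of Waldhausen \cite{Wal85}, which requires no cylinder functor. Sending a cube $\mathbf{x}=[x_1\rinf x_0]$ to its defining sequence $x_1\rinf x_0\rdef\Homo_0\mathbf{x}$ is a fully faithful exact functor $\Phi$ from $\cG\ltimes\calH$ onto the full subcategory $\cE'$ of $E(\calH,\cG,\cG)$ of those cofibration sequences whose middle term lies in $\calH$; I would equip $E(\calH,\cG,\cG)$ with the weak equivalences that are in $v$ on the sub-object and in $w$ on the quotient (hence, by the gluing axiom for $w$, in $w$ on the middle), so that their restriction to $\cE'$ is exactly $lv$. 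By Additivity the two endpoint exact functors give $K(E(\calH,\cG,\cG))\isoto K(\calH;v)\times K(\cG;w)$, and after precomposing with $\Phi$ the two projections become $t_1\colon\mathbf{x}\mapsto x_1$ and $\Homo_0$.

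The heart of the argument is that $\cE'\rinc E(\calH,\cG,\cG)$ is a homotopy equivalence on $K$-theory, which I would obtain from the Resolution Theorem~\ref{resol thm}. The subcategory $\cE'$ is closed under extensions in $E(\calH,\cG,\cG)$: in an extension of cofibration sequences the middle term of the middle sequence is an extension of the middle terms of the two outer sequences, hence lies in $\calH$ since $\calH$ is closed under extensions in $\cG$. For the resolution conditions, given $(a\rinf b\rdef c)$ one uses that $\cG$ has enough $\calH$-objects (every object of $\cG$ admits an admissible monomorphism into an object of $\calH$) to choose $b\rinf b'$ with $b'\in\calH$ and forms the pushout, yielding a cofibration in $E(\calH,\cG,\cG)$ to $(a\rinf b'\rdef b'/a)\in\cE'$; condition (Res~3) is inherited from $\calH\rinc\cG$. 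Running the same argument levelwise gives the strong resolution conditions. Combining, $K(\cG\ltimes\calH;lv)\simeq K(\cE';lv)\simeq K(E(\calH,\cG,\cG))\simeq K(\calH;v)\times K(\cG;w)$.

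To match the stated maps, note that the first map $K(\calH;v)\to K(\cG\ltimes\calH;lv)$ is induced by $i_1\colon a\mapsto[a\xrightarrow{\id}a]$, that $t_1\circ i_1=\id_\calH$ (so $K(t_1)$ retracts $K(i_1)$), and that $\Homo_0\circ i_1=0$; together with the product decomposition above this exhibits $K(\calH;v)\to K(\cG\ltimes\calH;lv)\to K(\cG;w)$ as a fibration sequence that splits. (That the second map agrees with $\Homo_0$, rather than with $\iota t_0$ where $t_0(\mathbf{x})=x_0$ and $\iota\colon\calH\rinc\cG$, follows from Additivity applied to the cofibration sequence of exact functors $\iota t_1\rinf\iota t_0\rdef\Homo_0$.) The sequence $K(\calH)\to K(\cG\ltimes\calH)\to K(\cG)$ is the special case $v=w=i$.

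The step I expect to be the main obstacle is the verification of the (strong) resolution conditions for $\cE'\rinc E(\calH,\cG,\cG)$: extracting from the hypotheses on $\calH\rinc\cG$ both the closure of $\calH$ under extensions in $\cG$ and the existence of enough $\calH$-objects — or, dually, the version using pullbacks along admissible epimorphisms from $\calH$-objects in place of pushouts along monomorphisms. Granting those inputs, the Additivity step, the bookkeeping identifying the functors, and the splitting via the retraction $t_1$ are routine.
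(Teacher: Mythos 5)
Your approach is genuinely different from the paper's, and the difference is where the gap opens. You embed $\cG\ltimes\calH$ as a full subcategory $\cE'$ of the extension category $E(\calH,\cG,\cG)$ built inside $\cG$, and then plan to bridge $\cE'\rinc E(\calH,\cG,\cG)$ by the resolution theorem. To run that resolution argument you need (at least): $\calH$ closed under extensions in $\cG$ for \textbf{(Res 1)}, enough $\calH$-objects (an admissible monomorphism from every object of $\cG$ into an $\calH$-object) for \textbf{(Res 2)}, and $\calH$ closed under cokernels of admissible monomorphisms between $\calH$-objects for \textbf{(Res 3)}. None of these appear among the hypotheses in \ref{nt:split fib thm} — one only has that $\calH\rinc\cG$ are strict exact subcategories of $\cA$ and that $\cG$ satisfies condition $\mathrm{(1)}$ or $\mathrm{(2)}$ of \ref{semidiret is exact}. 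You flag the ``enough $\calH$-objects'' point yourself, but it is not a removable obstacle: the statement to be proved does not license it.

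The paper sidesteps the entire resolution step by filtering \emph{inside $\cG\ltimes\calH$ rather than inside $\cG$}. Writing ${(\cG\ltimes\calH)}^q$ for the acyclic objects (those $x$ with $d^x$ an isomorphism; equivalent to $\calH$ via $a\mapsto[a\xrightarrow{\id}a]$) and viewing $\cG$ as the subcategory $\{[0\to z]\}$, the assignment
$$x\mapsto\left([x_1\xrightarrow{\id}x_1]\rinf[x_1\xrightarrow{d^x}x_0]\rdef[0\to\Homo_0(x)]\right)$$
is an equivalence $\cG\ltimes\calH\isoto\E({(\cG\ltimes\calH)}^q,\cG\ltimes\calH,\cG)$ of Waldhausen categories — not merely a fully faithful embedding of a subcategory up to $K$-equivalence. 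Additivity then gives the split fibration immediately, with the maps $a\mapsto[a\xrightarrow{\id}a]$ and $\Homo_0$ you identify. Your additivity bookkeeping and the retraction via $t_1$ are fine, but until the resolution step is either removed (as the paper does) or supplied with the missing hypotheses, the argument does not go through in the stated generality.
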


\begin{proof}[\bf Proof] 
Let us denote the category of 
acyclic complexes in $\cG\ltimes\calH$ 
by ${(\cG\ltimes\calH)}^q$. 
Since ${(\cG\ltimes\calH)}^q$ is 
closed under extensions in $\cG\ltimes\calH$, 
it naturally becomes a Waldhausen category 
and the association 
$x \mapsto [x \onto{\id_x} x]$ gives 
a equivalence 
between $\calH$ and ${(\cG\ltimes\calH)}^q$ 
as Waldhausen categories. 
On the other hand, 
there is an equivalence of Waldhausen categories 
$$\cG\ltimes\calH \isoto 
\E({(\cG\ltimes\calH)}^q, \cG\ltimes\calH,\cG),$$
$$x \mapsto 
\begin{pmatrix}
\begin{bmatrix}
\xymatrix{
x_1 \ar[d]_{\scriptstyle{\id_{x_1}}}\\
x_1
}
\end{bmatrix} &
\begin{matrix}
\xymatrix{
\overset{\scriptstyle{\id_{x_1}}}{\to} \ar@{}[d]\\
\underset{\scriptstyle{d^x}}{\to}\\
}
\end{matrix} &
\begin{bmatrix}
\xymatrix{
x_1 \ar[d]_{\scriptstyle{d^x}}\\
x_0
}
\end{bmatrix} &
\begin{matrix}
\xymatrix{
\to \ar@{}[d]\\
\to\\
}
\end{matrix} &
\begin{bmatrix}
\xymatrix{
0 \ar[d] \\
\Homo_0(x)
}
\end{bmatrix}
\end{pmatrix}\ \ \ .$$
where $\E({(\cG\ltimes\calH)}^q,\cG\ltimes\calH,\cG)$ is 
the exact category of admissible exact sequences 
$x \rinf y \rdef z$ in $\cG\ltimes\calH$ 
such that 
$x$ is in ${(\cG\ltimes\calH)}^q$ and $z$ is in $\cG$. 
Moreover $\E({(\cG\ltimes\calH)}^q,\cG\ltimes\calH,\cG)$ 
has the natural class of weak equivalences 
$lv$ induced from $\cG\ltimes\calH$. 
Hence by the additivity theorem in \cite[Proposition 1.3.2.]{Wal85}, 
we get the first fibration sequence. 
The second fibration sequence is given by taking 
$w=i_{\cG}$ and $v=i_{\calH}$ the class of all isomorphisms 
in $\cG$ and $\calH$ and by the equality 
$li_{\calH}=i_{\cG\ltimes\calH}$ in the first situation. 
\end{proof} 

\begin{df}[\bf Adroit systems]
\label{df:adroit system}
An {\bf adroit} (resp. a {\bf strongly adroit}) {\bf system} 
in an abelian category $\cA$ is a triple 
$\cX=(\cE_1,\cE_2,\cF)$ consisting of strict 
exact subcategories $\cE_1 \rinc \cE_2 \linc \cF$ of $\cA$ 
such that they satisfies the following conditions 
{\bf (Adr 1)}, {\bf (Adr 2)}, {\bf (Adr 3)} 
and {\bf (Adr 4)} 
(resp. {\bf (Adr 1)}, {\bf (Adr 2)}, {\bf (Adr 3)} 
and {\bf (Adr 5)}).\\
{\bf (Adr 1)} 
$\cF\ltimes\cE_1$ and $\cF\ltimes\cE_2$ are 
strict exact subcategories of $\Cub^1\cA$.\\
{\bf (Adr 2)} 
$\cE_1$ is closed under extensions in $\cE_2$.\\
{\bf (Adr 3)} 
Let 
$x \rinf y \rdef z$ 
be an admissible exact sequence in $\cA$. 
Assume that $y$ is isomorphic 
to an object in $\cE_1$ 
and $z$ is isomorphic 
to an object in either $\cE_1$ or $\cF$. 
Then $x$ is isomorphic 
to an object in $\cE_1$.\\ 
{\bf (Adr 4)} 
For any object $z$ in $\cE_2$, 
there is an object $y$ in $\cE_1$ 
and an admissible epimorphism $y \rdef z$ in $\cE_2$.\\ 
{\bf (Adr 5)} 
For any non-negative integer $m$ 
and any object $z$ in $\HOM([m],\cE_2)$, 
there are an object $y$ in $\HOM([m],\cE_1)$ 
and an admissible epimorphism $y \rdef z$ in $\HOM([m],\cE_2)$.\\ 
\end{df}

\begin{thm}
\label{semi-direct resolution theorem} 
Let $\cX=(\cE_1,\cE_2,\cF)$ be a triple of strict exact subcategories 
$\cE_1\rinc\cE_2\linc\cF$ of $\cA$ and $w$ a class of morphisms in $\cF$ 
such that $(\cF,w)$ is a Waldhausen exact category. 
Then\\
$\mathrm{(1)}$ 
If the triple $\cX$ is an adroit {\rm (}resp. a strongly adroit{\rm )} system, 
then the inclusion functor of opposite categories
${(\cF\ltimes \cE_1)}^{\op}\rinc{(\cF\ltimes \cE_2)}^{\op}$ 
{\rm (}resp. $({(\cF\ltimes \cE_1)}^{\op},tw^{\op})\rinc 
({(\cF\ltimes \cE_2)}^{\op},tw^{\op})${\rm )}
satisfies the resolution {\rm (}resp. strong resolution{\rm )} conditions in 
\ref{strong resol cond df}. 
In particular 
we have a homotopy equivalence 
on $K$-theory: 
$$K(\cF\ltimes \cE_1) \to K(\cF\ltimes \cE_2)$$ 
$${\rm (}{\text{resp.}}\ K(\cF\ltimes\cE_1;tw) 
\to K(\cF\ltimes\cE_2;tw){\rm )}.$$ 

\sn
$\mathrm{(2)}$ 
{\bf (Abstract weight declension theorem).} 
If the triple $\cX$ is a strongly adroit system, 
then the exact functor 
$\Homo_0:(\cF\ltimes\cE_1,tw) \to (\cF,w)$ 
induces a homotopy equivalence 
on $K$-theory:
$$
K(\Homo_0):K(\cF\ltimes\cE_1;tw) 
\to K(\cF;w).
$$

\sn
$\mathrm{(3)}$ 
If the triple $\cX$ is an adroit system, 
then for $i=1$, $2$, 
the exact functor $\theta_i:\cF\ltimes\cE_i \to \cF\times\cE_i$ 
which sends a morphism $x=[x_1\onto{f} x_0]$ to 
an ordered pair $(\Homo_0(x),x_1)$ 
gives a homotopy equivalence on $K$-theory:
$$K(\cF\ltimes\cE_i) \to K(\cF)\times K(\cE_i).$$
\end{thm}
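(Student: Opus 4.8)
The plan is to deduce all three assertions from three tools already at hand: the resolution theorem~\ref{resol thm}, the homotopy equivalence~\ref{semi-direct homotopy equivalence}, and Waldhausen's additivity theorem~\cite[1.3.2]{Wal85}. The adroitness axioms are designed precisely so as to produce the hypotheses of these three results in the situations at hand, so the work is to match them up.

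\textbf{Part (1).} I would check that the opposite inclusion satisfies the (strong) resolution conditions of~\ref{resol cond df} (resp.~\ref{strong resol cond df}) and then invoke~\ref{resol thm} together with the identification $K(\cC)\simeq K(\cC^{\op})$. Condition (Res 1), that $\cF\ltimes\cE_1$ is closed under extensions in $\cF\ltimes\cE_2$, is (Adr 2) fed through Proposition~\ref{closed conditions 1} with $\cF_1=\cF_2=\cF$, and closedness under extensions is self-dual. Condition (Res 3) for the opposite inclusion unwinds to: the kernel in $\cF\ltimes\cE_2$ of an admissible epimorphism whose source lies in $\cF\ltimes\cE_1$ again lies in $\cF\ltimes\cE_1$; running the $3\times3$ snake-lemma diagram in $\Cub^1(\cA)$ exactly as in the proof of~\ref{closed conditions 2} reduces this, termwise and on $\Homo_0$, to the corresponding closure statement for $\cE_1\rinc\cE_2$, which is the content of (Adr 3) --- the clause of (Adr 3) allowing the quotient to lie in $\cE_1$ \emph{or} in $\cF$ being exactly what is used, since the two coordinates of a $1$-cube are controlled by $\cE_i$ and its homology by $\cF$. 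The substantive step is (Res 2) for the opposite inclusion: every $x=[x_1\xrightarrow{d}x_0]$ in $\cF\ltimes\cE_2$ must admit an admissible epimorphism from an object of $\cF\ltimes\cE_1$. I would choose, via (Adr 4), an admissible epimorphism $y_0\rdef x_0$ with $y_0\in\cE_1$, form the pullback $y_1:=y_0\times_{x_0}x_1\rdef x_1$, and check --- using (Adr 2), (Adr 3) and the nine lemma --- that $[y_1\to y_0]$ is an object of $\cF\ltimes\cE_1$ (monomorphy of $y_1\to y_0$ is inherited from that of $d$, and its cokernel is $\cA$-isomorphic to $\Homo_0(x)\in\cF$) and that the induced morphism $[y_1\to y_0]\rdef x$ is an admissible epimorphism in $\cF\ltimes\cE_2$. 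For the strongly adroit case the same construction must be carried out uniformly over the thickenings $(\cF\ltimes\cE_i)(m,w)$, and it is exactly here that (Adr 5) --- supplying covers in $\HOM([m],\cE_2)$ by objects of $\HOM([m],\cE_1)$ --- is needed in place of (Adr 4); one then transports the construction by~\ref{lem:isomclosed} and~\ref{lem:closedunderextensions} and checks compatibility with $tw$, which holds because the construction is natural in $\Homo_0$. This (Res 2) construction, especially in its level form, is the main obstacle of Part (1): the point is both to land inside $\cF\ltimes\cE_1$ and to do so coherently in $m$.

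\textbf{Part (2).} Given Part (1) for a strongly adroit system, the inclusion $\iota$ induces a homotopy equivalence $K(\cF\ltimes\cE_1;tw)\to K(\cF\ltimes\cE_2;tw)$. Since in an adroit system $\cF$ is a subcategory of $\cE_2$ (the arrow $\cE_2\linc\cF$ in its definition), Proposition~\ref{semi-direct homotopy equivalence}, applied to $\cE_2$, gives a homotopy equivalence $K(\Homo_0)\colon K(\cF\ltimes\cE_2;tw)\to K(\cF;w)$. Because $\Homo_0\circ\iota=\Homo_0$, the composite of these two equivalences is the map induced by $\Homo_0\colon\cF\ltimes\cE_1\to\cF$, which is therefore a homotopy equivalence. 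There is no obstacle here beyond Part (1).

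\textbf{Part (3).} For $i=2$ the key point is, again, that $\cF\rinc\cE_2$, so that $[0\to\Homo_0(x)]$ is an object of $\cF\ltimes\cE_2$ and there is a cofibration sequence of exact endofunctors of $\cF\ltimes\cE_2$
\[
\bigl[(-)_1\xrightarrow{\ \id\ }(-)_1\bigr]\ \rinf\ \id_{\cF\ltimes\cE_2}\ \rdef\ \bigl[0\to\Homo_0(-)\bigr],
\]
the outer functors factoring through $\cE_2$ as $\iota_q\circ(-)_1$, where $\iota_q\colon E\mapsto[E\xrightarrow{\id}E]$, and through $\cF$ as $s\circ\Homo_0$, where $s\colon F\mapsto[0\to F]$. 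Waldhausen's additivity theorem then gives $\id\simeq K(\iota_q)K((-)_1)+K(s)K(\Homo_0)$ on $K(\cF\ltimes\cE_2)$; combined with the identities $\Homo_0 s=\id$, $(-)_1\iota_q=\id$, $\Homo_0\iota_q\simeq 0$ and $(-)_1 s\simeq 0$, this exhibits $\theta_2=(\Homo_0,(-)_1)$ and the map induced by $\iota_q$ and $s$ as mutually inverse, so $K(\theta_2)$ is a homotopy equivalence. For general $i$ I would use the commutative square with vertical maps the inclusion $\cF\ltimes\cE_1\rinc\cF\ltimes\cE_2$ and $\id_\cF\times\jmath$, where $\jmath\colon\cE_1\rinc\cE_2$, and horizontal maps $\theta_1$ and $\theta_2$: its left vertical map is a $K$-equivalence by Part (1) and its lower horizontal map by the case $i=2$, so the two-out-of-three property reduces the case $i=1$ to showing that $K(\jmath)\colon K(\cE_1)\to K(\cE_2)$ is a homotopy equivalence; this last again follows from~\ref{resol thm} applied to $\cE_1^{\op}\rinc\cE_2^{\op}$, whose resolution conditions come from (Adr 2), (Adr 3) and (Adr 4)/(Adr 5) read termwise. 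The main obstacle in Part (3) is the case $i=1$ --- that is, that $\jmath$ is a $K$-equivalence --- which, like (Res 3) above, leans on the exact shape of (Adr 3).
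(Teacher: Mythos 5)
Your argument matches the paper's quite closely in structure and in every substantive step: Part (1) verifies the (strong) resolution conditions with the same cover in (Res 2) — your pullback $y_0\times_{x_0}x_1$ is the same object as the paper's $\Ker(y_0\rdef x_0\rdef\Homo_0 x)$ since $d^x$ is monic — Part (2) is the identical factorization through $K(\cF\ltimes\cE_2;tw)$, and Part (3) uses the same commutative square, with your additivity argument for $i=2$ being a re-derivation of \ref{Split fibration theorem} (which the paper simply cites) rather than a new route.

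One caveat worth flagging, since you lean on it: in the (Res 3) check (and in the analogous check that $\cE_1^{\op}\rinc\cE_2^{\op}$ satisfies (Res 3), used for $K(\jmath)$ in Part (3)), the termwise reduction produces exact sequences $w_i\rinf v_i\rdef u_i$ in $\cA$ with middle term $v_i\in\cE_1$ but quotient $u_i$ a priori only in $\cE_2$. You attribute the needed conclusion $w_i\in\cE_1$ to ``the clause of (Adr 3) allowing the quotient to lie in $\cE_1$ or in $\cF$,'' but neither clause applies here: the $\cF$-clause of (Adr 3) is what powers (Res 2) (where the relevant quotient $\Homo_0 x(i)$ is in $\cF$), while (Res 3) actually requires that $\cE_1$ be closed under kernels of admissible epimorphisms onto objects of $\cE_2$ — the stronger closure condition that is the hypothesis of \ref{closed conditions 2}. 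The paper's own proof is equally terse here (it also cites only (Adr 3)), and in every concrete instance the stronger closure is supplied by \ref{practical closed conditions} (via \ref{pd criterion}), so the final conclusions are not at risk; but as written your justification of (Res 3) appeals to (Adr 3) for more than its literal content delivers, and a careful write-up should either invoke the stronger closure of \ref{closed conditions 2} directly or note that in the applications at hand (Adr 3) is verified in that stronger form.
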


\begin{proof}[\bf Proof] 
Proof of assertion $\mathrm{(1)}$: 
Let us fix a non-negative integer $m$. 
We will only prove that the inclusion functor 
${(\cF\ltimes\cE_1(m,tw))}^{\op} 
\rinc {(\cF\ltimes\cE_2(m,tw))}^{\op}$ 
satisfies the resolution conditions in \ref{resol cond df} when 
$\cX$ is a strongly adroit system. 
When $m=0$, this yields the case when $\cX$ is adroit. 
The condition \textbf{(Res 1)} follows from 
\ref{lem:closedunderextensions} and 
\ref{closed conditions 1}. 
We can easily check the condition \textbf{(Res 3)} from 
assumption {\bf (Adr 3)} and \ref{lem:isomclosed}. 
Next we check the condition \textbf{(Res 2)}. 
For each $x$ in $\cF\ltimes\cE_2(m,w)$, 
by assumption {\bf (Adr 5)}, 
we have an object $y_0$ 
in $\HOM([m],\cE_1)$ and 
an admissible epimorphism $y_0 \rdef x_0$. 
Then for each $0 \leqq i \leqq m$, 
we put 
$y_1(i):= \Ker(y_0(i) \rdef x_0(i) \rdef \Homo_0 x(i))$. 
We have the following commutative diagram:
$$\xymatrix{
y_1(i) \ar@{->>}[r] \ar@{>->}[d] 
& x_1(i) \ar@{>->}[d]\\ 
y_0(i) \ar@{->>}[r] \ar@{->>}[d] 
& x_0(i) \ar@{->>}[d]\\ 
\Homo_0 x(i) \ar@{=}[r] 
& \Homo_0 x(i)\ . 
}$$
By assumption {\bf (Adr 3)}, 
we notice that $y$ is 
in $\HOM([m],\cF\ltimes\cE_1)$. 
Since the morphism $y(i) \rdef x(i)$ is a quasi-isomorphism 
for each $0 \leqq i \leqq m$, 
we learn that $y$ is in $\cF\ltimes\cE_1(m,w)$. 
Therefore we get the result. 

\sn
Proof of assertion $\mathrm{(2)}$: 
We have the factorization
$$
K(\Homo_0): 
K(\cF\ltimes\cE_1;tw) 
\onto{\text{I}} 
K(\cF\ltimes\cE_2;tw) 
\onto{\text{II}} 
K(\cF;w)
$$
where the maps I and II are 
homotopy equivalences 
by 
assertion $\mathrm{(1)}$ 
and \ref{semi-direct homotopy equivalence} respectively.

\sn
Proof of assertion $\mathrm{(3)}$:
The inclusion functors $\cF\ltimes\cE_1 \rinc \cF\ltimes\cE_2$ 
and $\cE_1 \rinc \cE_2$ induce the commutative diagram below:
$${\footnotesize{\xymatrix{
K(\cF\ltimes\cE_1) \ar[r]^{\!\!K(\theta_1)} \ar[d]_{\wr} & 
K(\cF)\times K(\cE_1) \ar[d]^{\wr}\\
K(\cF\ltimes\cE_2) \ar[r]_{\!\!K(\theta_2)}^{\!\!\!\!\!\!\sim} & 
K(\cF)\times K(\cE_2).
}}}$$
Here the vertical maps are homotopy equivalences 
by $\mathrm{(1)}$ and the resolution theorem~\ref{resol thm} 
and 
the bottom map $K(\theta_2)$ is a homotopy equivalence 
by \ref{Split fibration theorem}. 
Therefore the map $K(\theta_1)$ is also a homotopy equivalence. 
\end{proof}

\section{Admissible cubes}
\label{sec:adm cubes}

In this section 
we define and study the notion of 
an admissible cube in an abelian category 
which is 
a categorical variant of the concept about 
regular sequences. 
We calculate the homologies of 
the total complexes of admissible cubes in 
\ref{prop:homology of admissible cubes} 
and as its applications, 
we give a characterization of admissible cubes in terms of 
their faces and total complexes as in \ref{cor:charof adm cube} 
and an inductive characterization of admissibility 
as in \ref{cor:admcriterion I}. 
Finally by utilizing the notion of admissibility, 
we extend semi-direct products to multi semi-direct products of 
exact categories as in \ref{df:mult semi-direct prod}. 
Let us start by organizing the general phraseologies of cubes. 
Let $\cA$ be an abelian category.

\begin{df}[\bf Restriction of cubes]
\label{df:restriction}
Let $U$, $V$ be a pair of disjoint subsets of $S$. 
We define the functor $i_U^V:\cP(U)\to \cP(S),\ A \mapsto A\cup V$ 
and it induces the natural transformation 
${(i_U^V)}^{\ast}:\Cub^S \to \Cub^U$. 
For any $S$-cube $x$ in a category $\cC$, 
we write $x|_U^V$ for ${(i_U^V)}^{\ast}x$ and it 
is called {\bf restriction of $x$} (to $U$ along $V$).
\end{df}

\begin{ex}[\bf Faces of cubes]
\label{ex:Faces of cubes}
For any $S$-cube $x$ in a category $\cC$ 
and $k\in S$, 
$x|_{S\ssm\{k\}}^{\{k\}}$, $x|_{S\ssm\{k\}}^{\emptyset}$ are called 
the {\bf backside $k$-face of $x$}, 
the {\bf frontside $k$-face of $x$} respectively. 
By a {\bf face} of $x$, 
we mean any backside or frontside $k$-face of $x$.
\end{ex}

\sn
Recall the definition of $\Homo_0^u$ and $\Homo_1^u$ for cubes from \ref{cube homology}.

\begin{lem}
\label{lem:compat hom and rest}
For any $S$-cube $x$ in an abelian category and 
any pair of disjoint subsets $U$ and $V$ and any element $u$ in $U$, 
we have 
\begin{equation}
\label{equ:compat hom and rest}
\Homo_p^u(x|_U^V)=\Homo_p^u(x)|_{U\ssm\{u\}}^V \ \ \ \text{for $p=0$, $1$.}
\end{equation}
\end{lem}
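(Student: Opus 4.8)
The plan is to unwind both sides of~\eqref{equ:compat hom and rest} directly from the definitions and check that they agree vertex-by-vertex. Recall from Definition~\ref{cube homology} that for a $u$-direction, the homology $\Homo_p^u(y)$ of a cube $y$ is indexed by subsets not containing $u$, with $\Homo_0^u(y)_T = \coker d^{y,u}_{T\cup\{u\}}$ and $\Homo_1^u(y)_T = \Ker d^{y,u}_{T\cup\{u\}}$. So both $\Homo_p^u(x|_U^V)$ and $\Homo_p^u(x)|_{U\ssm\{u\}}^V$ are $(U\ssm\{u\})$-cubes, and it suffices to compare their values at an arbitrary $T \in \cP(U\ssm\{u\})$, together with their boundary morphisms.

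First I would compute the left-hand side at $T$. By Definition~\ref{df:restriction}, $x|_U^V = (i_U^V)^\ast x$ has vertices $(x|_U^V)_A = x_{A\cup V}$ for $A \in \cP(U)$, and its $u$-direction boundary morphism at $T\cup\{u\}$ is $d^{x|_U^V,u}_{T\cup\{u\}} = x\big((T\cup V)\subset(T\cup\{u\}\cup V)\big) = d^{x,u}_{T\cup\{u\}\cup V}$, using that $u\notin V$. Hence $\Homo_p^u(x|_U^V)_T$ is the cokernel (resp. kernel) of $d^{x,u}_{(T\cup V)\cup\{u\}}$. Next I would compute the right-hand side at $T$: by definition of restriction again, $\big(\Homo_p^u(x)|_{U\ssm\{u\}}^V\big)_T = \Homo_p^u(x)_{T\cup V}$, and since $T\cup V$ is a subset of $S\ssm\{u\}$ (because $u\notin U\supset T$ and $u\notin V$), this equals the cokernel (resp. kernel) of $d^{x,u}_{(T\cup V)\cup\{u\}}$. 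The two expressions coincide on the nose.

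It then remains to check the boundary morphisms of the two $(U\ssm\{u\})$-cubes agree. For $T'\subset T$ in $\cP(U\ssm\{u\})$ with $|T\ssm T'|=1$, both sides induce, on cokernels (resp. kernels), the map coming from functoriality applied to the commuting square of $u$-direction boundary maps over the inclusion $(T'\cup V)\cup\{u\} \supset T'\cup V$ versus $(T\cup V)\cup\{u\}\supset T\cup V$; by the same identification of vertices and boundary maps as above these are literally the same morphism, so naturality is automatic. This finishes the proof.

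There is essentially no obstacle here; the only thing to be careful about is bookkeeping — making sure $u\notin V$ and $u\notin T$ are used at the right moments so that the shifts by $V$ and the homology in direction $u$ genuinely commute, and that both sides are being regarded as cubes indexed by $\cP(U\ssm\{u\})$ rather than $\cP(U)$. I would state it for $p=0$ in detail and remark that $p=1$ is identical with kernels in place of cokernels (or deduce it by passing to $\cA^{\op}$, under which $\Homo_1^u$ becomes $\Homo_0^u$ and restriction is self-dual).
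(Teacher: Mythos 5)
Your proposal is correct and matches the paper's proof in approach: both unwind the definitions of restriction and directional homology to compare the two sides vertex-by-vertex, identifying each with the cokernel (or kernel) of $d^{x,u}_{(T\cup V)\cup\{u\}}$. The paper's proof is slightly terser (it only writes out $p=0$ and leaves the naturality of boundary maps implicit as functoriality of coker), whereas you make both of those explicit, but there is no substantive difference.
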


\begin{proof}[\bf Proof]
We will only prove the equation $\mathrm{(\ref{equ:compat hom and rest})}$ for $p=0$. 
For any subset $T$ of $U\ssm\{u\}$, 
the following equalities show the equation $\mathrm{(\ref{equ:compat hom and rest})}$ for $p=0$.
\begin{multline*} 
\Homo_0^u(x|_U^V)|_T  =  
\coker({(x|_U^V)}_{T\coprod\{u\}} \onto{d_{T\coprod\{u\}}^{x|_U^V,u}} {(x|_U^V)}_T)\\
 =  
\coker(x_{V\coprod T\coprod\{u\}} \onto{d^{x,u}_{V\coprod T\coprod\{u\}}} x_{V\coprod T})
 =  
\Homo_0^u(x)_{V\coprod T}
 =   
{(\Homo_0^u(x)|_{U\ssm\{u\}}^V)}_T.
\end{multline*}
\end{proof}

\begin{df}[\bf Degenerate cubes, non-degenerate cubes]
\label{df:degenerate cube}
Let $x$ be an $S$-cube in a category $\cC$.\\ 
$\mathrm{(1)}$ 
For $k\in S$, 
we say that $x$ is 
{\bf degenerate along the $k$-direction} 
if $d_{T\coprod\{k\}}^{x,k}$ is an isomorphism 
for any $T\in \cP(S\ssm\{k\})$.\\ 
$\mathrm{(2)}$ 
We say that 
$x$ is {\bf non-degenerate} if no boundary morphism of $x$ 
is an isomorphism. 
\end{df}

\begin{df}[\bf Total complexes]
\label{df:Tot comp}
For an $n$-cube $x$ in an additive category $\cB$, 
we associate the complex $\Tot x$, called 
the {\bf total complex} of $x$, defined as: 
$$\displaystyle{{(\Tot x)}_k:=\underset{\substack{T\in\cP((n]) \\ 
\# T=k}}{\bigoplus} x_T}.$$ 
The boundary morphisms 
$d_k^{\Tot x}:{(\Tot x)}_k \to {(\Tot x)}_{k-1}$ 
are defined by 
$$(-1)^{\overset{n}{\underset{t=j+1}{\sum}}\chi_T(t)}d_T^j:x_T 
\to x_{T\ssm\{j\}}$$
on its $x_T$ component to $x_{T\ssm\{j\}}$ component. 
Here $\chi_T$ is the characteristic function associated with $T$. 
(See \ref{rem:cubentrem}). 
For a general finite set $S$, 
let us fix a bijection 
$\alpha:(n]\isoto S$. 
Then we can consider any 
$S$-cubes to be $n$-cubes by $\alpha$. 
Therefore we can define the total complex of an $S$-cube $x$ 
which is denoted by $\Tot_{\alpha}x$ or simply $\Tot x$. 
Next 
moreover let us assume that $\cB$ is an abelian category. 
We say that a morphism $f:x\to y$ between $S$-cubes in $\cB$ 
is a {\bf quasi-isomorphism} if 
$\Tot f:\Tot x \to \Tot y$ is a quasi-isomorphism. 
We denote the class of quasi-isomorphisms 
in $\Cub^n\cB$ by $\tq_{\Cub^n\cB}$ or shortly $\tq$.
\end{df}

\begin{df}[\bf Spherical complex]
\label{df:spherical complex}
Let $n$ be an integer. 
We say that a complex $z$ 
in an abelian category $\cA$ is {\bf $n$-spherical} if 
$\Homo_k(z)=0$ for any $k\neq n$.
\end{df}

\sn
From now on, 
in this section, 
let us assume that $S$ is a finite set and 
let $x$ be an $S$-cube in an abelian category $\cA$. 

\begin{ex}[\bf Motivational example] 
\label{ex:mot ex} 
Let $\{f_s\}_{s\in S}$ be a family of elements 
in $A$ which forms a regular sequence in any order. 
Then for any $1\leq k\leq \# S$ and any 
distinct elements $s_1,\cdots,s_k$ in $S$, 
the boundary maps in the cube 
$\Homo_0^{s_1}(\cdots(\Homo_0^{s_k}(\Typ_A(\ff_S)))\cdots)$ 
are injections 
where $\Typ_A(\ff_S)$ is the typical cube associated with 
$\{f_s\}_{s\in S}$ (See \ref{Koszul cube def}). 
\end{ex}

\begin{df}[\bf Admissible cubes]
\label{df:adm cube}
If $\# S=1$, $x$ is said to be {\bf admissible} if 
its boundary morphism is a monomorphism. 
Inductively, for $\# S>1$, $x$ is called {\bf admissible} if 
its boundary morphisms are monomorphisms and if 
for any $k$ in $S$, 
$\Homo^k_0(x)$ is admissible. 
By convention, we say that 
$x$ is admissible if $S=\emptyset$. 
\end{df}

\begin{rem}
\label{rem:admissibility} 
The name \lq\lq admissible\rq\rq\ comes from \cite[p.331]{Wal85}. 
In \cite[p.323 1.1.2]{Wal85}, 
Waldhausen give a characterization of admissibility of squares. 
More precisely, 
for a square of monomorphisms in $\cA$ below
$$\footnotesize{\xymatrix{
a \ar@{>->}[r] \ar@{>->}[d] & b \ar@{>->}[d]\\
c \ar@{>->}[r] & d ,
}}$$
it is admissible if and only if the induced morphism 
$c\coprod_a b \to d$ is a monomorphism. 
\end{rem}

\begin{rem}
\label{rem:admissible check}
Assume that $\# S\geq 2$. 
Then 
$x$ is admissible if and only if 
for any 
distinct elements 
$t$, $s_1,\cdots,s_r$ in $S$ ($1\leq r\leq \#S -1$), 
$\Homo_1^t(x)$ and 
$\Homo_1^t(\Homo_0^{s_1}(\Homo_0^{s_2}(\cdots(\Homo_0^{s_r}(x))\cdots)))$ 
are trivial. 
\end{rem}

\begin{lemdf}
\label{lemdf:relative homology}
For a pair of disjoint subsets 
$U$ and $V\in\cP(S)$ such that  
$k:=\# U\geq 2$, let us assume that  
$x|^V_U$ is admissible. 
We denote the all distinct elements of $U$ by 
$i_1,\ldots,i_k$.  
Then we have the canonical isomorphism:
$$\Homo^{i_1}_0(\Homo^{i_2}_0(\cdots(\Homo_0^{i_k}(x))\cdots))_V\isoto 
\Homo^{i_{\sigma(1)}}_0(\Homo^{i_{\sigma(2)}}_0(\cdots(
\Homo_0^{i_{\sigma(k)}}(x))\cdots))_V$$
where $\sigma$ is a bijection on $U$. 
In this case 
we put 
$\Homo^U_0(x)_V:=
\Homo^{i_1}_0(\Homo^{i_2}_0(\cdots(\Homo^{i_k}_0(x))\cdots))_V$. 
We also put $\Homo_0^{\emptyset}(x):=x$. 
Notice that $\Homo_0^T(x)$ is an $S\ssm T$-cube for any $T\in \cP(S)$.
\end{lemdf}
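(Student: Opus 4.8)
The plan is to reduce the statement, via Lemma~\ref{lem:compat hom and rest}, to a purely computational assertion about iterated cokernels, and then to prove that assertion by induction on $k=\#U$.

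\textbf{Reduction.} Applying Lemma~\ref{lem:compat hom and rest} repeatedly — first to $x$ and $i_k$, then to $\Homo_0^{i_k}(x)$ and $i_{k-1}$, and so on — yields
$$\Homo^{i_1}_0(\Homo^{i_2}_0(\cdots(\Homo_0^{i_k}(x))\cdots))_V=\Homo^{i_1}_0(\Homo^{i_2}_0(\cdots(\Homo_0^{i_k}(x|^V_U))\cdots)),$$
the right-hand side being an $\emptyset$-cube, i.e.\ an object of $\cA$, built only from the admissible $U$-cube $y:=x|^V_U$. Hence it suffices to show that the object $\Homo_0^{i_1}(\cdots(\Homo_0^{i_k}(y))\cdots)$ is, canonically and compatibly, independent of the chosen enumeration of $U$.

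\textbf{A closed formula.} I would prove by induction on $n\geq 1$ the following claim: \emph{for every admissible $W$-cube $z$ in $\cA$ with $\#W=n$ and every enumeration $j_1,\ldots,j_n$ of $W$, the composite of the $n$ cokernel projections $z_{\emptyset}\twoheadrightarrow\Homo_0^{j_1}(\Homo_0^{j_2}(\cdots(\Homo_0^{j_n}(z))\cdots))$ has kernel $\sum_{w\in W}\im(d^{z,w}_{\{w\}})$, where $d^{z,w}_{\{w\}}\colon z_{\{w\}}\to z_{\emptyset}$.} Since this kernel does not involve the enumeration, the case $n=k$, $z=y$ finishes the proof: the comparison isomorphism attached to two enumerations is the one they both induce on the common quotient $z_{\emptyset}/\sum_{w}\im(d^{z,w}_{\{w\}})$, so the family of such isomorphisms is mutually compatible, and $\Homo_0^U(x)_V$ is well defined.

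\textbf{The induction.} For $n=1$ the claim is Definition~\ref{cube homology}. For $n\geq 2$, set $z':=\Homo_0^{j_n}(z)$; by Definition~\ref{df:adm cube} this is again an admissible cube, a $(W\ssm\{j_n\})$-cube, so the inductive hypothesis applied to $z'$ with the enumeration $j_1,\ldots,j_{n-1}$ gives that the kernel of $z'_{\emptyset}\twoheadrightarrow\Homo_0^{j_1}(\cdots(\Homo_0^{j_{n-1}}(z'))\cdots)$ is $\sum_{w\in W\ssm\{j_n\}}\im(d^{z',w}_{\{w\}})$. Now $z'_{\emptyset}=z_{\emptyset}/\im(d^{z,j_n}_{\{j_n\}})$, and $d^{z',w}_{\{w\}}$ is the map on these quotients induced by $d^{z,w}_{\{w\}}$ (well defined by functoriality of $z$), whence $\im(d^{z',w}_{\{w\}})=(\im(d^{z,w}_{\{w\}})+\im(d^{z,j_n}_{\{j_n\}}))/\im(d^{z,j_n}_{\{j_n\}})$ inside $z'_{\emptyset}$. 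Consequently the kernel of $z_{\emptyset}\twoheadrightarrow z'_{\emptyset}\twoheadrightarrow\Homo_0^{j_1}(\cdots(\Homo_0^{j_n}(z))\cdots)$ — the preimage in $z_{\emptyset}$ of $\sum_{w\in W\ssm\{j_n\}}\im(d^{z',w}_{\{w\}})$ — equals $\im(d^{z,j_n}_{\{j_n\}})+\sum_{w\in W\ssm\{j_n\}}\im(d^{z,w}_{\{w\}})=\sum_{w\in W}\im(d^{z,w}_{\{w\}})$, which closes the induction.

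\textbf{Main difficulty.} There is no deep obstacle here: the work is the careful bookkeeping above — iterating Lemma~\ref{lem:compat hom and rest} in the reduction, handling the quotient-of-a-quotient identification in the inductive step, and verifying that every comparison isomorphism factors through the canonical projection out of $z_{\emptyset}$ so that the resulting family is coherent (this coherence being precisely what is needed to legitimize the notation $\Homo_0^U(x)_V$). Finally, using Definition~\ref{df:adm cube} once more, one records that $\Homo_0^U(x)$ is itself an admissible $(S\ssm U)$-cube, which is the form in which it will be used later.
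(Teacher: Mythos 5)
Your proof is correct, and it takes a genuinely different route from the paper's. The paper reduces via \ref{lem:compat hom and rest} to an admissible $U$-cube, then observes that it suffices to treat a transposition of two indices, reduces further (using \ref{rem:disjointindex}) to the case $\#S=2$, and produces the isomorphism via the $3\times 3$ lemma applied to the $3\times 3$ grid of short exact sequences built from the two boundary maps. You, by contrast, prove a closed, manifestly symmetric formula: the composite surjection $z_{\emptyset}\twoheadrightarrow \Homo_0^{j_1}\cdots\Homo_0^{j_n}(z)$ always has kernel $\sum_{w\in W}\im(d^{z,w}_{\{w\}})$, so every ordering realizes the same quotient of $z_{\emptyset}$ by the same subobject, and the comparison isomorphisms are the unique ones under $z_{\emptyset}$, hence automatically coherent. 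Your inductive step (the quotient-of-a-quotient identification, $\im(d^{z',w}_{\{w\}})=q(\im(d^{z,w}_{\{w\}}))$ and $q^{-1}(q(M))=M+\Ker q$) is valid in any abelian category. The advantages of your route are that the canonicity and the cocycle condition are transparent — everything factors through one canonical projection out of $z_{\emptyset}$ — whereas the paper's route leans on the group-theoretic fact that transpositions generate and implicitly needs that the resulting family of $3\times 3$-lemma isomorphisms is coherent. A pleasant by-product of your argument: the identification of iterated cokernels as $z_{\emptyset}/\sum_{w}\im(d^{z,w}_{\{w\}})$ does not actually use admissibility, which in your write-up only enters to keep $z'=\Homo_0^{j_n}(z)$ inside the class of cubes the induction is phrased for.

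One small overreach at the end: from the hypothesis that $x|_U^V$ is admissible for a \emph{single} $V$, you cannot conclude that $\Homo_0^U(x)$ is an admissible $(S\ssm U)$-cube — that would require admissibility of $x|_U^{V'}$ for all relevant $V'$ (which does hold when $x$ itself is admissible, by \ref{lem:face of admissible cubes}, but is not what this Lemma-Definition assumes). The statement you are proving only asserts well-definedness of the vertex $\Homo_0^U(x)_V$ and, separately, that $\Homo_0^T(x)$ has the indexing shape of an $(S\ssm T)$-cube; it does not assert admissibility. Drop or qualify that closing sentence.
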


\begin{proof}[\bf Proof] 
We may assume that $U=S$ and $V=\emptyset$ 
by replacing $x|_U^V$ with $x$. 
Since every bijection on $S$ is 
expressed in 
compositions of 
substitutions of two elements, 
we shall just check the assertion 
for any substitution of two elements $\sigma$. 
Since for a pair of distinct elements $i$, $j\in S$, 
$x$ is considered to be 
a $\{i,\ j\}$-cube 
in $\Cub^{S\ssm\{i,\ j\}}\cA$ 
by \ref{rem:disjointindex}, 
we shall assume that $S=\{i,\ j\}$. 
In this case, by $3\times 3$-lemma 
(See for example \cite[Exercise 3.2.1]{Wei94}), 
we learn that 
$\Homo^j_0(\Homo^i_0(x))_{\emptyset}$ and 
$\Homo^i_0(\Homo^j_0(x))_{\emptyset}$ 
are canonically isomorphic to 
the object $y$ in the diagram below. 
$$\footnotesize{\xymatrix{
x_{\{i,\ j \}} \ar@{>->}[r] \ar@{>->}[d] & x_{\{i\}} 
\ar@{>->}[d] \ar@{->>}[r] & 
\Homo_0^j(x)_{\{i\}} \ar@{>->}[d]\\
x_{\{j\}} \ar@{>->}[r] \ar@{->>}[d] & 
x_{\emptyset} \ar@{->>}[r] \ar@{->>}[d] & 
\Homo_0^j(x)_{\emptyset} \ar@{->>}[d]\\
\Homo^i_0(x)_{\{j\}} \ar@{>->}[r] & 
\Homo_0^i(x)_{\emptyset} \ar@{->>}[r] & 
y  .
}}$$
\end{proof}

\begin{lem}
\label{lem:face of admissible cubes}
Assume that $x$ is admissible. 
Then, for any pair of disjoint subsets $U$ and $V$ of $S$, 
$x|_U^V$ is admissible. 
In particular, 
all faces of $x$ are also admissible when $\# S\geq 1$.
\end{lem}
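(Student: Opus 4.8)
The plan is to proceed by induction on $\#S$, with Lemma~\ref{lem:compat hom and rest} as the essential input. First I would record two elementary facts about the restriction operation. The first is that every boundary morphism of $x|_U^V$ is literally a boundary morphism of $x$: unwinding Definition~\ref{df:restriction} one gets $d_T^{x|_U^V,u}=d_{T\cup V}^{x,u}$ for $T\in\cP(U)$ and $u\in T$, so whenever all boundary morphisms of $x$ are monomorphisms --- which is part of the definition of admissibility as soon as $\#S\geq 1$ --- the same holds for $x|_U^V$. The second is the identity $\Homo_0^u(x|_U^V)=\Homo_0^u(x)|_{U\ssm\{u\}}^V$ for $u\in U$, which is precisely Lemma~\ref{lem:compat hom and rest}; this is what will let me feed the smaller cube $\Homo_0^u(x)$, an $(S\ssm\{u\})$-cube, into the inductive hypothesis.

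Granting these, the induction is short. The base case $\#S=0$ is the convention in Definition~\ref{df:adm cube}, since there $U=V=\emptyset$ and $x|_U^V=x$. For the inductive step I would fix an admissible $S$-cube $x$ with $\#S\geq 1$ and disjoint subsets $U,V$ of $S$, and split on $\#U$. If $\#U=0$ then $x|_U^V$ is a cube over the empty set and is admissible by convention. If $\#U=1$, say $U=\{u\}$, then $x|_U^V$ is a $1$-cube whose only boundary morphism $d^{x,u}_{V\cup\{u\}}$ is a monomorphism by the first fact, so it is admissible. If $\#U\geq 2$, then $\#S\geq 2$, so admissibility of $x$ gives both that its boundary morphisms are monomorphisms (hence so are those of $x|_U^V$) and that $\Homo_0^u(x)$ is an admissible $(S\ssm\{u\})$-cube for each $u\in S$; for $u\in U$ the sets $U\ssm\{u\}$ and $V$ are disjoint subsets of $S\ssm\{u\}$, so the inductive hypothesis makes $\Homo_0^u(x)|_{U\ssm\{u\}}^V$ admissible, and by the second fact this cube equals $\Homo_0^u(x|_U^V)$. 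Thus $x|_U^V$ has monomorphic boundary morphisms and admissible $\Homo_0^u(x|_U^V)$ for all $u\in U$, which is exactly admissibility of $x|_U^V$. The final assertion about faces is then immediate: for $k\in S$ one takes $(U,V)=(S\ssm\{k\},\{k\})$ and $(U,V)=(S\ssm\{k\},\emptyset)$, which are legitimate choices precisely when $\#S\geq 1$.

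I do not expect a genuine obstacle. The whole statement reduces to the two facts above, the first being a bare unwinding of definitions and the second being Lemma~\ref{lem:compat hom and rest}, already in hand. The only thing requiring a little attention is bookkeeping: the induction must be run on $\#S$ (or on $\#U$) rather than over a fixed index set, because $\Homo_0^u$ changes the ambient set; and the cases $\#U=0,1$ must be treated separately so that one invokes the clause ``$\Homo_0^u(\cdot)$ is admissible'' only when $\#U\ge 2$, matching the dichotomy in Definition~\ref{df:adm cube}.
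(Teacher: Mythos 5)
Your proposal is correct, and it leans on the same essential input as the paper's proof, namely Lemma~\ref{lem:compat hom and rest}, but the argument is organized differently. You run an explicit induction on $\#S$ straight from the recursive Definition~\ref{df:adm cube}, using only the $\Homo_0$ case of Lemma~\ref{lem:compat hom and rest}, and check the two clauses of admissibility (monomorphic boundary morphisms, plus admissibility of each $\Homo_0^u(x|_U^V)$) directly. The paper instead appeals to the unwound characterization in Remark~\ref{rem:admissible check}, which expresses admissibility of a cube with $\#U\geq 2$ as the vanishing of all $\Homo_1^u$ and $\Homo_1^u\Homo_0^{s_1}\cdots\Homo_0^{s_r}$, and then transfers these vanishings through both the $p=0$ and $p=1$ cases of Lemma~\ref{lem:compat hom and rest} to get the corresponding vanishings for $x|_U^V$ in a single step. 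Both are short; yours is more elementary in that it does not require Remark~\ref{rem:admissible check}, while the paper's sidesteps the explicit induction by treating all the homological conditions at once. Your bookkeeping (splitting on $\#U = 0$, $1$, $\geq 2$, and noting the induction must run over the cardinality of the index set because $\Homo_0^u$ shrinks it) is exactly the care required, and the observation that boundary morphisms of $x|_U^V$ are literally boundary morphisms of $x$ is the right justification for the monomorphism clause.
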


\begin{proof}[\bf Proof]
If $\# U\leq 1$, then the assertion is trivial. 
We assume that $\# U \geq 2$. 
For any distinct elements $u$, $s_1,\cdots,s_r$ of $U$, 
since $\Homo_1^u(x)$ and 
$\Homo_1^u(\Homo^{s_1}_0(\Homo_0^{s_2}(\cdots(\Homo_0^{s_r}(x))\cdots)))$ 
are trivial by assumption and \ref{rem:admissible check}, 
$\Homo_1^u(x|_U^V)$ and 
$\Homo_1^u(\Homo^{s_1}_0(\Homo_0^{s_2}(\cdots(\Homo_0^{s_r}(x|_U^V))\cdots)))$ 
are also trivial by the equalities
$$\Homo_1^u(x|_U^V)=\Homo_1^u(x)|^V_{U\ssm\{u\}},$$
$$\Homo_1^u(\Homo^{s_1}_0(\Homo_0^{s_2}(\cdots(\Homo_0^{s_r}(x|_U^V))\cdots)))
=\Homo_1^u(\Homo^{s_1}_0(\Homo_0^{s_2}(\cdots(\Homo_0^{s_r}(x))\cdots)))
|^V_{U\ssm\{u,s_1,\cdots,s_r\}}$$
which come from \ref{lem:compat hom and rest}. 
Hence $x|_U^V$ is admissible.
\end{proof}

\begin{prop}
\label{prop:homology of admissible cubes}
Let us assume that 
$S$ is a non-empty set and 
all faces of $x$ are admissible. 
Then\\
$\mathrm{(1)}$ 
For any element 
$k$ in $S$, 
we have the following isomorphisms
$$
\omega_{k,S,x}^p:  
\Homo_p(\Tot x)\isoto 
\begin{cases}
\Homo_p^k(\Homo_0^{S\ssm\{k\}}(x)) & \text{for $p=0$, $1$}\\
0 & \text{otherwise}
\end{cases}
$$
which is functorial in the following sense.

\sn
For any $S$-cube $y$ in $\cA$ such that all faces are admissible 
and for any morphism $f:x\to y$ of $S$-cubes, 
the following diagram is commutative for $p=0$, $1$.
$${\footnotesize{\xymatrix{
\Homo_p\Tot x \ar[r]^{\Homo_p\Tot (f)} \ar[d]^{\wr}_{\omega^p_{k,S,x}} & 
\Homo_p\Tot y \ar[d]_{\wr}^{\omega^p_{k,S,y}}\\
\Homo_p^k\Homo_0^{S\ssm\{k\}}(x) \ar[r]_{\Homo_p^k\Homo_0^{S\ssm\{k\}}(f)} &
\Homo_p^k\Homo_0^{S\ssm\{k\}}(y).
}}}$$

\sn
$\mathrm{(2)}$ 
In particular, if 
$x$ is admissible, then we have the isomorphisms. 
$$
\eta_{S,x}^p:  
\Homo_p(\Tot x)\isoto 
\begin{cases}
\Homo_0^{S}(x) & \text{for $p=0$}\\
0 & \text{otherwise}
\end{cases}
.$$
\end{prop}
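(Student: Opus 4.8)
The plan is to prove assertion $\mathrm{(1)}$ by induction on $\#S$, and then deduce assertion $\mathrm{(2)}$ as a direct consequence. The base case $\#S=1$ is immediate: a $1$-cube $x$ is just a morphism $d:x_{\{k\}}\to x_{\emptyset}$, its total complex is $[x_{\{k\}}\to x_{\emptyset}]$ concentrated in degrees $1$ and $0$, and since $x|_{\emptyset}^{\{k\}}$ and $x|_{\emptyset}^{\emptyset}$ are vacuously admissible there is no monomorphicity hypothesis to impose; one simply has $\Homo_0(\Tot x)=\coker d=\Homo_0^k(x)$ and $\Homo_1(\Tot x)=\Ker d=\Homo_1^k(x)$, with $\Homo_0^{S\ssm\{k\}}(x)=\Homo_0^{\emptyset}(x)=x$. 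Functoriality is clear.

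For the inductive step, fix $k\in S$ and write $S'=S\ssm\{k\}$. The key observation is that an $S$-cube can be regarded as a $1$-cube (in the $k$-direction) valued in $\Cub^{S'}(\cA)$, via Remark~\ref{rem:disjointindex}; concretely $x$ is the morphism of $S'$-cubes $d^k:x|_{S'}^{\{k\}}\to x|_{S'}^{\emptyset}$, i.e.\ the map from the backside $k$-face to the frontside $k$-face. Both faces are faces of $x$, hence admissible by hypothesis, and in fact \emph{all} of their faces are faces of $x$ (restrictions of restrictions are restrictions), so the inductive hypothesis applies to each. The plan is then to compute $\Tot x$ as (up to sign conventions) the total complex of the mapping cone of $\Tot(d^k):\Tot(x|_{S'}^{\{k\}})\to\Tot(x|_{S'}^{\emptyset})$ — this is the standard fact that the total complex of an iterated cube is the iterated cone, and it follows directly from Definition~\ref{df:Tot comp} by grouping the summands of $(\Tot x)_n$ according to whether $k\in T$ or not. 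The long exact sequence of the mapping cone, together with the inductive identification $\Homo_p\Tot(x|_{S'}^{\varepsilon})\cong\Homo_p^?(\cdots)$ which vanishes outside $p=0,1$, then shows $\Homo_p(\Tot x)=0$ for $p\neq 0,1$ and leaves a four-term exact sequence relating $\Homo_0,\Homo_1$ of the two faces.

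To finish I would feed the boundary morphism $d^k$ into this four-term sequence: the map $\Homo_0\Tot(x|_{S'}^{\{k\}})\to\Homo_0\Tot(x|_{S'}^{\emptyset})$ is, under the inductive isomorphisms $\omega$, the map $\Homo_0^{S'}(x|_{S'}^{\{k\}})\to\Homo_0^{S'}(x|_{S'}^{\emptyset})$ induced by $d^k$, which by Lemma-Definition~\ref{lemdf:relative homology} (the commutation of the $\Homo_0^u$ with each other, and with restriction, from Lemma~\ref{lem:compat hom and rest}) is precisely the boundary morphism of the $1$-cube $\Homo_0^{S'}(x)$. Hence $\Homo_0(\Tot x)=\coker=\Homo_0^k(\Homo_0^{S'}(x))$ and $\Homo_1(\Tot x)=\Ker=\Homo_1^k(\Homo_0^{S'}(x))$, which is exactly the claimed formula. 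The functoriality square for a morphism $f:x\to y$ is inherited from the functoriality of the mapping-cone long exact sequence and the inductive functoriality of $\omega$; this is routine diagram-chasing. Finally, assertion $\mathrm{(2)}$: if $x$ is admissible then in particular all its faces are admissible (Lemma~\ref{lem:face of admissible cubes}), so $\mathrm{(1)}$ applies; moreover admissibility means precisely that $\Homo_0^{S'}(x)$, which is a $1$-cube, has monomorphic boundary, so $\Homo_1^k(\Homo_0^{S'}(x))=\Ker=0$ and $\Homo_0^k(\Homo_0^{S'}(x))=\Homo_0^S(x)$ by Lemma-Definition~\ref{lemdf:relative homology}, giving $\eta_{S,x}^0$.

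The main obstacle I anticipate is bookkeeping rather than conceptual: getting the sign conventions in Definition~\ref{df:Tot comp} to line up so that $\Tot x$ really is the cone of $\Tot(d^k)$ on the nose (or at least canonically isomorphic to it), and making sure the chosen bijection $\alpha:(n]\isoto S$ used to form $\Tot$ is handled consistently when we single out the direction $k$ — one wants $\alpha$ to send $n$ (or $1$) to $k$ so the $k$-direction is the ``outermost'' one in the iterated cone. Once that is pinned down, everything else is the long exact sequence of a cone plus invocations of Lemma~\ref{lem:compat hom and rest} and Lemma-Definition~\ref{lemdf:relative homology}.
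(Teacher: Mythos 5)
Your proposal follows essentially the same route as the paper's proof: regard $x$ as the $\{k\}$-cube $[x|_{S'}^{\{k\}} \to x|_{S'}^{\emptyset}]$ in $\Cub^{S'}(\cA)$, observe $\Tot x \isoto \Cone(\Tot d^k)$, take the long exact sequence, invoke the inductive hypothesis on the two $S'$-faces, and identify the boundary map via Lemma~\ref{lem:compat hom and rest} and Lemma-Definition~\ref{lemdf:relative homology}; (2) then follows from (1) exactly as you say. One small slip in your justification for applying the inductive hypothesis to $y_q := x|_{S'}^{\varepsilon}$: the faces of $y_q$ are \emph{not} faces of $x$ (they are restrictions of $x$ to subsets of size $\#S-2$, not $\#S-1$), so ``restrictions of restrictions are restrictions'' does not by itself put them among the faces hypothesized admissible. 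The correct reason is that each $y_q$ is itself a face of $x$, hence admissible, hence by Lemma~\ref{lem:face of admissible cubes} all of \emph{its} faces are admissible — which is what the inductive hypothesis actually requires; with that repair the argument matches the paper's.
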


\begin{proof}[\bf Proof]
First we prove that 
assertion $\mathrm{(1)}$ implies assertion $\mathrm{(2)}$. 
Assume $x$ is admissible. 
Then all faces of $x$ and $\Homo_0^{S\ssm\{k\}}(x)$ 
for all $k\in S$ 
are admissible by \ref{lem:face of admissible cubes} 
and the definition of admissibility. 
In particular $\Homo_1^k\Homo_0^{S\ssm\{k\}}(x)$ is trivial. 
Hence we obtain the result by $\mathrm{(1)}$.

\sn
Next we will prove assertion $\mathrm{(1)}$. 
We proceed by induction on the cardinality of $S$. 
Let us assume that 
the assertion is true for any $S\ssm\{k\}$-cubes in $\cA$ 
which satisfy the assumption. 
For simplicity, we put $T=S\ssm \{k\}$ and 
$y_1=x|_T^{\{k\}}$ and $y_0=x|_T^{\emptyset}$ and 
$d:=d_T^{x,k}$. 
We regard $x$ as the $\{k\}$-cube 
$x=[y_1 \onto{d} y_0]$ 
in $\Cub^{T}\cA$. 
Since we have the isomorphisms 
$\Tot\Cone d\isoto \Cone \Tot d\isoto \Tot x$, 
there is a distinguished triangle
\begin{equation}
\label{equ:dist triangle}
\Tot y_1 \onto{\Tot d} \Tot y_0 \to \Tot x \onto{+1}
\end{equation}
in the homotopy category of bounded complexes on $\cA$. 
Here $\Homo_p\Tot y_q$ is trivial for any $p\geq 1$ and $q=0$, $1$ 
by the inductive assumption and assertion $\mathrm{(2)}$. 
Therefore 
the long exact sequence induced from the distinguished triangle 
$\mathrm{(\ref{equ:dist triangle})}$ 
shows that $\Homo_p\Tot x$ is trivial for $p\geq 2$ 
and  
yields the top exact sequence in the commutative diagram below.
$${\footnotesize{\xymatrix{
0 \ar[r] & \Homo_1\Tot x  
\ar[r] \ar@{-->}[dd]_{\omega^1_{k,S,x}}^{\wr}
& \Homo_0\Tot y_1 \ar[r]^{\Homo_0\Tot d} \ar[d]^{\wr}_{\eta^0_{T,y_1}}
& \Homo_0\Tot y_0 \ar[r] \ar[d]_{\wr}^{\eta^0_{T,y_0}}
& \Homo_0\Tot x \ar[r] \ar@{-->}[dd]^{\omega^0_{k,S,x}}_{\wr} & 0\\
& & \Homo_0^T(y_1)  \ar@{=}[d]&
\Homo_0^T(y_0) \ar@{=}[d]\\
0 \ar[r] & 
\Homo_1^k\Homo_0^T(x) \ar[r] 
& {\Homo_0^T(x)}_{\{k\}} \ar[r]_{d_{\{k\}}^{\Homo_0^T(x),k}} 
& {\Homo_0^T(x)}_{\emptyset} \ar[r] 
& \Homo_0^k\Homo_0^T(x) \ar[r] & 0.
}}}$$
Then there are the isomorphisms 
$\omega^p_{k,S,x}:\Homo_p\Tot x\isoto \Homo_p^k\Homo_0^T(x)$ 
for $p=0$, $1$ 
which make the diagram above commutative. 
By construction, $\omega^p_{k,S,x}$ is functorial. 
\end{proof}

\begin{cor}
\label{cor:homology of admissible cubes}
For a pair of disjoint finite subsets $U$, $V$ of $S$, 
let us assume that $x|_U^V$ is admissible. 
Then we have the isomorphisms below:
$$\Homo_p(\Tot(x|_U^V))\isoto
\begin{cases}
\Homo_0^U(x)_V & \text{for $p=0$}\\
0 & \text{otherwise}
\end{cases}
.$$
\end{cor}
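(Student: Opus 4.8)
The plan is to deduce this directly from Proposition~\ref{prop:homology of admissible cubes}(2) by regarding the restriction $x|_U^V$ as a $U$-cube in $\cA$. If $U=\emptyset$, then $\Tot(x|_\emptyset^V)$ is the complex concentrated in degree $0$ with value $x_V=\Homo_0^{\emptyset}(x)_V$, and there is nothing to prove; so assume $U\neq\emptyset$. Since $x|_U^V$ is admissible by hypothesis, Lemma~\ref{lem:face of admissible cubes} guarantees that every face of $x|_U^V$ is admissible, so Proposition~\ref{prop:homology of admissible cubes}(2) applies to the $U$-cube $x|_U^V$ and produces isomorphisms $\Homo_p(\Tot(x|_U^V))\isoto \Homo_0^U(x|_U^V)$ for $p=0$ and $\Homo_p(\Tot(x|_U^V))\isoto 0$ for $p\neq 0$.

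It then remains to identify $\Homo_0^U(x|_U^V)$ with $\Homo_0^U(x)_V$. Writing $U=\{i_1,\ldots,i_k\}$, I would iterate Lemma~\ref{lem:compat hom and rest}, which says that taking a $u$-direction homology commutes with restriction along $V$; peeling off $\Homo_0^{i_k},\ldots,\Homo_0^{i_1}$ one direction at a time yields
\[
\Homo_0^{i_1}(\Homo_0^{i_2}(\cdots(\Homo_0^{i_k}(x|_U^V))\cdots))
=\Homo_0^{i_1}(\Homo_0^{i_2}(\cdots(\Homo_0^{i_k}(x))\cdots))|_{\emptyset}^V ,
\]
and evaluating at the unique object of $\cP(\emptyset)$ gives $\Homo_0^U(x)_V$ in the sense of Lemma-Definition~\ref{lemdf:relative homology}. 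Note that $\Homo_0^U(x)_V$ is well defined precisely because $x|_U^V$ is admissible, which is our standing hypothesis; likewise the admissibility of $x|_U^V$, inherited by all intermediate faces via Lemma~\ref{lem:face of admissible cubes}, ensures that each intermediate direction-homology cube appearing on the left-hand side has admissible faces, so the iteration of \ref{lem:compat hom and rest} and the appeal to \ref{lemdf:relative homology} are legitimate.

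I do not expect a genuine obstacle here: the content is essentially bookkeeping, matching the iterated-homology notation of \ref{lemdf:relative homology} against the restriction identities of \ref{lem:compat hom and rest}. The one point worth a word is that $\Tot$ of a $U$-cube depends on a choice of bijection $\alpha\colon(k]\isoto U$; different choices give total complexes that differ only by signs and reordering of summands, hence are isomorphic, so their homology is independent of $\alpha$ up to canonical isomorphism, consistent with the right-hand side $\Homo_0^U(x)_V$ being defined without reference to $\alpha$.
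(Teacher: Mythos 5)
Your proposal is correct and follows essentially the same path as the paper's own proof: the $U=\emptyset$ case is handled directly, and for $U\neq\emptyset$ one applies Proposition~\ref{prop:homology of admissible cubes}$(2)$ to the admissible $U$-cube $x|_U^V$ and then identifies $\Homo_0^U(x|_U^V)_{\emptyset}$ with $\Homo_0^U(x)_V$. The paper states this identification tersely as ``noticing the equality,'' whereas you spell it out via iterated applications of Lemma~\ref{lem:compat hom and rest}; this is a clean justification of exactly the bookkeeping the paper leaves implicit.
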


\begin{proof}[\bf Proof]
If $U=\emptyset$, 
we have the equality 
$x|_U^V=x_V=\Homo_0^{\emptyset}(x)_V$. 
Therefore the assertion is obvious. 
If $U\neq \emptyset$
applying \ref{prop:homology of admissible cubes} 
to $x|^V_U$ and noticing that 
the equality ${\Homo_0^U(x|_U^V)}_{\emptyset}=\Homo_0^U(x)_V$, 
we get the result.
\end{proof}

\begin{cor}
\label{cor:charof adm cube}
The following conditions are equivalent.\\ 
$\mathrm{(1)}$ 
$x$ is admissible.\\ 
$\mathrm{(2)}$ 
$\Tot x$ is $0$-spherical and all faces of $x$ are admissible. 
\end{cor}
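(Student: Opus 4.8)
The plan is to prove the two implications separately; the forward direction is immediate from earlier results, and the converse goes by induction on $\#S$. For $(1)\Rightarrow(2)$: if $x$ is admissible, then all faces of $x$ are admissible by \ref{lem:face of admissible cubes}, and the isomorphisms $\eta_{S,x}^{p}$ of \ref{prop:homology of admissible cubes}$\,\mathrm{(2)}$ give $\Homo_p(\Tot x)=0$ for $p\neq 0$, i.e. $\Tot x$ is $0$-spherical.

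For $(2)\Rightarrow(1)$ I would induct on $\#S$. When $\#S\leq 1$ the face hypothesis is vacuous and ``$\Tot x$ is $0$-spherical'' says exactly that the kernel of the unique boundary morphism vanishes, which is admissibility. Assume $\#S\geq 2$ and the equivalence for all cubes indexed by smaller sets. First I would check that every boundary morphism $d_T^{x,k}$ of $x$ is a monomorphism: picking any $j\in S\ssm\{k\}$, it occurs as a boundary morphism of one of the faces $x|_{S\ssm\{j\}}^{\{j\}}$, $x|_{S\ssm\{j\}}^{\emptyset}$ (according as $j\in T$ or $j\notin T$), and these faces are admissible by hypothesis, hence have monic boundary morphisms. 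Then, for each $k\in S$, I would show the $(S\ssm\{k\})$-cube $\Homo_0^k(x)$ again satisfies condition $\mathrm{(2)}$, so that the inductive hypothesis makes it admissible; combined with the previous step this gives that $x$ is admissible by \ref{df:adm cube}. For the face condition, by \ref{lem:compat hom and rest} one has $\Homo_0^k(x)|_{S\ssm\{k,j\}}^{V}=\Homo_0^k\bigl(x|_{S\ssm\{j\}}^{V}\bigr)$ for $j\in S\ssm\{k\}$ and $V\subseteq\{j\}$, and since $x|_{S\ssm\{j\}}^{V}$ is an admissible face of $x$ whose index set contains $k$, applying $\Homo_0^k$ yields an admissible cube (trivially when that index set is a singleton, by the definition of admissibility otherwise). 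For the total complex, I would view $x$ in the $k$-direction as $[\,x|_{S\ssm\{k\}}^{\{k\}}\onto{d}x|_{S\ssm\{k\}}^{\emptyset}\,]$ over $\Cub^{S\ssm\{k\}}\cA$; by the previous step $d$ is a termwise monomorphism and $\Homo_0^k(x)=\coker d$, so the short exact sequence $0\to\Tot(x|_{S\ssm\{k\}}^{\{k\}})\to\Tot(x|_{S\ssm\{k\}}^{\emptyset})\to\Tot\Homo_0^k(x)\to 0$ identifies $\Tot\Homo_0^k(x)$ up to quasi-isomorphism with $\Cone(\Tot d)$, and $\Cone(\Tot d)\cong\Tot x$ as in the proof of \ref{prop:homology of admissible cubes}; hence $\Tot\Homo_0^k(x)$ is $0$-spherical because $\Tot x$ is.

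The main obstacle I anticipate is this last step: identifying $\Tot\Homo_0^k(x)$ with $\Tot x$ up to quasi-isomorphism, i.e. combining the standard fact that the mapping cone of an injection of complexes computes the cokernel with the combinatorial identification $\Cone(\Tot d)\cong\Tot x$ (together with its signs and indexing). A minor but real point to dispatch separately is the degenerate range $\#S=2$, where several of the auxiliary cubes $\Homo_0^k(x)$ and $x|_{S\ssm\{j\}}^{V}$ collapse to $1$- or $0$-cubes and the clauses in the definition of admissibility degenerate accordingly.
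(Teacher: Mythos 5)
Your proof is correct, and while the $(1)\Rightarrow(2)$ direction matches the paper exactly, your argument for $(2)\Rightarrow(1)$ is organized differently. The paper does not run an induction on $\#S$ inside the corollary; it directly verifies the monomorphism criterion for all the cubes $\Homo_0^V(x)$: when $\#(S\ssm V)\geqq 2$ it identifies the boundary morphism $d_W^{\Homo_0^V(x),k}$ with the boundary morphism of $\Homo_0^V$ applied to the admissible restriction $x'=x|_{V\coprod\{k\}}^{W\ssm\{k\}}$, and in the one remaining case $S\ssm V=\{k\}$ it appeals to the isomorphism $\Homo_1^k\Homo_0^{S\ssm\{k\}}(x)\isoto\Homo_1(\Tot x)=0$ from \ref{prop:homology of admissible cubes} $\mathrm{(1)}$. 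You instead induct on $\#S$, showing that each $\Homo_0^k(x)$ again satisfies condition $(2)$ and then applying the inductive hypothesis together with the recursive Definition~\ref{df:adm cube}. This buys you a cleaner fit to the recursion and sidesteps the $\Homo_0^V$ bookkeeping (and the independence-of-ordering content of Lemma-Definition~\ref{lemdf:relative homology} that the paper's formulation implicitly leans on), at the cost of the extra cone/cokernel step: since you first establish that $d=d^{x,k}$ is a termwise monomorphism, $\Tot\Homo_0^k(x)=\coker(\Tot d)$ receives a quasi-isomorphism from $\Cone(\Tot d)\cong\Tot x$, so $0$-sphericality transfers. That step is sound, and the degenerate range $\#S=2$ you flagged is in fact already handled by your parenthetical about singleton index sets. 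Both proofs are of comparable length; yours is the more transparent match to the inductive definition of admissibility, the paper's the more compact appeal to \ref{prop:homology of admissible cubes}.
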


\begin{proof}[\bf Proof]
Condition $\mathrm{(1)}$ implies 
condition $\mathrm{(2)}$ by 
\ref{lem:face of admissible cubes} 
and 
\ref{prop:homology of admissible cubes} $\mathrm{(2)}$. 
Conversely let us assume that $x$ satisfies condition $\mathrm{(2)}$. 
We may assume $\# S\geq 1$. 
We prove that for any 
disjoint pair $V$ and $W\in\cP(S)\ssm\{S\}$ and $k\in W$, 
the boundary morphism 
$d_W^{\Homo_0^V(x),k}:\Homo_0^V(x)_W \to \Homo_0^V(x)_{W\ssm\{k\}}$ 
is a monomorphism. 
If $\#(S\ssm V)\geqq 2$, 
set $x'$ equal to $x|_{V\coprod\{k\}}^{W\ssm\{k\}}$
and then we can identify the boundary morphism above with 
the following morphism
$$d_{\{k\}}^{\Homo_0(x'),k}:
\Homo_0(x')_{\{k\}} \to 
\Homo_0(x')_{\emptyset}.$$
Therefore the assertion follows from admissibility of $x'$. 
If $S\ssm V$ is the singleton $\{k\}$, then 
$W=\{k\}$ and 
by \ref{prop:homology of admissible cubes}, 
we have the isomorphisms 
$\Homo_1^k(\Homo_0^{S\ssm\{k\}}(x))\isoto\Homo_1(\Tot x)=0$. 
This means that we get the desired result.
\end{proof}

\begin{cor}[\bf Inductive characterization of admissibility]
\label{cor:admcriterion I}
Assume that $S$ is a non-empty set. 
Then the following conditions are equivalent.\\ 
$\mathrm{(1)}$ 
$x$ is admissible.\\ 
$\mathrm{(2)}$ 
For some $s\in S$, 
$x$ satisfies the following three conditions.\\ 
$\mathrm{(i)}$ 
$x|^{\{s\}}_{S\ssm\{s\}}$, $x|^{\emptyset}_{S\ssm\{s\}}$ are 
admissible.\\ 
$\mathrm{(ii)}$ 
$d^{x,s}_{T\coprod\{s\}}$ is a monomorphism for any 
$T\in\cP(S\ssm\{s\})$.\\ 
$\mathrm{(iii)}$ 
$\Homo_0^s(x)$ is admissible.\\ 
$\mathrm{(3)}$ 
For any $s\in S$, 
$x$ satisfies the three conditions 
$\mathrm{(i)}$, $\mathrm{(ii)}$ and $\mathrm{(iii)}$ 
in $\mathrm{(2)}$.
\end{cor}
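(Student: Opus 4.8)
The plan is to establish the cycle of implications $\mathrm{(1)}\Rightarrow\mathrm{(3)}\Rightarrow\mathrm{(2)}\Rightarrow\mathrm{(1)}$. The implication $\mathrm{(3)}\Rightarrow\mathrm{(2)}$ is immediate because $S\neq\emptyset$, and $\mathrm{(1)}\Rightarrow\mathrm{(3)}$ is a direct reading of Definition~\ref{df:adm cube} together with \ref{lem:face of admissible cubes}: if $x$ is admissible then all of its boundary morphisms are monomorphisms, which yields $\mathrm{(ii)}$; $\Homo_0^s(x)$ is admissible for every $s\in S$ (by Definition~\ref{df:adm cube} when $\#S\geq 2$, and by the convention when $\#S=1$), which yields $\mathrm{(iii)}$; and every face of $x$, in particular the two $s$-faces $x|^{\{s\}}_{S\ssm\{s\}}$ and $x|^{\emptyset}_{S\ssm\{s\}}$, is admissible by \ref{lem:face of admissible cubes}, which yields $\mathrm{(i)}$. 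Thus the substance of the statement is the implication $\mathrm{(2)}\Rightarrow\mathrm{(1)}$, which I would prove by induction on $\#S$.

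The base case $\#S=1$ is exactly the defining condition of admissibility, supplied by $\mathrm{(ii)}$. For the inductive step, assume $\#S=n\geq 2$ and that $x$ satisfies $\mathrm{(i)}$, $\mathrm{(ii)}$ and $\mathrm{(iii)}$ for some $s\in S$. The first task is to show that every face of $x$ is admissible. The two $s$-faces are admissible by $\mathrm{(i)}$. For $j\in S$ with $j\neq s$, I claim each of the two $j$-faces of $x$ again satisfies $\mathrm{(i)}$--$\mathrm{(iii)}$ with respect to $s$: condition $\mathrm{(i)}$ for the $j$-face holds because its $s$-faces are restrictions of the (admissible) $s$-faces of $x$, hence admissible by \ref{lem:face of admissible cubes}; condition $\mathrm{(ii)}$ is inherited from $\mathrm{(ii)}$ for $x$, since boundary morphisms of a restriction of $x$ are boundary morphisms of $x$; and condition $\mathrm{(iii)}$ follows from \ref{lem:compat hom and rest}, which identifies $\Homo_0^s$ of the $j$-face with a restriction of $\Homo_0^s(x)$, together with \ref{lem:face of admissible cubes} applied to $\Homo_0^s(x)$ (admissible by $\mathrm{(iii)}$). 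Hence the inductive hypothesis applies to each $j$-face and shows it is admissible.

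It remains, by \ref{cor:charof adm cube}, to prove that $\Tot x$ is $0$-spherical. Put $y_1=x|^{\{s\}}_{S\ssm\{s\}}$, $y_0=x|^{\emptyset}_{S\ssm\{s\}}$ and $d=d^{x,s}_{S\ssm\{s\}}$, so that $x=[y_1\xrightarrow{d}y_0]$ regarded as a $\{s\}$-cube in $\Cub^{S\ssm\{s\}}\cA$. As in the proof of \ref{prop:homology of admissible cubes} there is a distinguished triangle $\Tot y_1\xrightarrow{\Tot d}\Tot y_0\to\Tot x\xrightarrow{+1}$; since $y_0$ and $y_1$ are admissible by $\mathrm{(i)}$, their total complexes are $0$-spherical by \ref{prop:homology of admissible cubes}$\mathrm{(2)}$, and the associated long exact homology sequence forces $\Homo_p\Tot x=0$ for $p\neq 0,1$ and $\Homo_1\Tot x\cong\Ker(\Homo_0\Tot d)$. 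Finally, $d$ is a monomorphism by $\mathrm{(ii)}$, so $0\to y_1\xrightarrow{d}y_0\to\Homo_0^s(x)\to 0$ is a termwise short exact sequence of $(S\ssm\{s\})$-cubes; applying $\Tot$ and using that $\Homo_1\Tot\Homo_0^s(x)=0$ (because $\Homo_0^s(x)$ is admissible by $\mathrm{(iii)}$ and \ref{prop:homology of admissible cubes}$\mathrm{(2)}$), the long exact sequence of the resulting short exact sequence of complexes shows that $\Homo_0\Tot d$ is a monomorphism. Hence $\Homo_1\Tot x=0$, $\Tot x$ is $0$-spherical, and \ref{cor:charof adm cube} gives that $x$ is admissible.

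I expect the delicate point to be the last paragraph: one must make sure that the map whose kernel is $\Homo_1\Tot x$, as produced by the distinguished triangle, is literally $\Homo_0\Tot d$ for the same morphism $d$ of $(S\ssm\{s\})$-cubes that appears in the short exact sequence $0\to y_1\to y_0\to\Homo_0^s(x)\to 0$, and that $\Tot$ genuinely sends this termwise short exact sequence of cubes to a short exact sequence of complexes. Once these identifications are in place the argument closes; the remaining steps are routine manipulations of restrictions and iterated homologies via \ref{lem:compat hom and rest} and \ref{lem:face of admissible cubes}.
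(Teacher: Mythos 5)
Your proof is correct, and the overall architecture matches the paper's exactly: prove $(1)\Rightarrow(3)\Rightarrow(2)$ directly, then prove $(2)\Rightarrow(1)$ by induction on $\#S$, first showing all faces admissible (so that \ref{cor:charof adm cube} applies) and then showing $\Tot x$ is $0$-spherical. The step where all $j$-faces ($j\ne s$) satisfy $\mathrm{(i)}$--$\mathrm{(iii)}$ for the same $s$, hence are admissible by induction, is exactly the paper's argument, just with the routine restriction bookkeeping spelled out.

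Where you diverge is the verification that $\Homo_1\Tot x=0$. The paper fixes $t\in S\ssm\{s\}$, uses \ref{prop:homology of admissible cubes}(1) to write $\Homo_1\Tot x\isoto\Homo_1^t(\Homo_0^{S\ssm\{t\}}(x))$, and kills this via the identification $\Homo_0^{S\ssm\{t\}}(x)\isoto\Homo_0^{S\ssm\{s,t\}}(\Homo_0^s(x))$ together with admissibility from $\mathrm{(iii)}$. You instead stay entirely in the direction $s$: from the triangle $\Tot y_1\to\Tot y_0\to\Tot x\to{+1}$ you get $\Homo_1\Tot x\cong\Ker(\Homo_0\Tot d)$, and then kill that kernel using the termwise short exact sequence of $(S\ssm\{s\})$-cubes $0\to y_1\to y_0\to\Homo_0^s(x)\to 0$ and the $0$-sphericity of $\Tot\Homo_0^s(x)$ (from $\mathrm{(iii)}$ and \ref{prop:homology of admissible cubes}(2)). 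Both arguments are about the same length; yours has the small advantage of not invoking the iterated-$\Homo_0$ commutativity isomorphism behind the paper's notation $\Homo_0^{S\ssm\{t\}}(x)$, since you work only with $\Homo_0^s$. Your flagged worry at the end is unfounded: the map in the triangle and the map in the short exact sequence are both $\Tot d$ by construction, and $\Tot$ evidently takes termwise short exact sequences of cubes to short exact sequences of complexes, so the two occurrences of $\Homo_0\Tot d$ coincide and the argument closes.
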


\begin{proof}[\bf Proof] 
We can easily check that condition 
$\mathrm{(1)}$ implies condition $\mathrm{(3)}$ 
and condition $\mathrm{(3)}$ implies condition $\mathrm{(2)}$. 
We need only prove that condition $\mathrm{(2)}$ 
implies condition $\mathrm{(1)}$. 
We prove this assertion by induction on $\# S$. 
If $\# S=1$, the assertion is trivial. 
For $\#S>1$, we will prove that\\ 
$\mathrm{(a)}$ 
all faces of $x$ are admissible, and\\ 
$\mathrm{(b)}$ 
$\Tot x$ is $0$-spherical.

\sn
Proof for assertion $\mathrm{(a)}$: 
We prove that 
for any $k\in S$, 
the faces $x|_{S\ssm\{k\}}^{\{k\}}$ and 
$x|_{S\ssm\{k\}}^{\emptyset}$ are admissible. 
If $k=s$, it is just condition $\mathrm{(i)}$. 
If $k\neq s$, then they satisfy 
conditions $\mathrm{(i)}$, $\mathrm{(ii)}$ and $\mathrm{(iii)}$ 
and therefore by the inductive hypothesis, 
they are admissible. 

\sn
Proof for assertion $\mathrm{(b)}$: 
Fix an element $t\in S\ssm\{s\}$. 
Since we have the isomorphism 
$$\Homo^{S\ssm\{t\}}_0(x)\isoto\Homo_0^{S\ssm\{s,\ t\}}
(\Homo_0^s(x)),$$
we learn that 
$\Homo_0^{S\ssm\{t\}}(x)$ is admissible by condition $\mathrm{(iii)}$. 
In particular, 
$\Homo_1^t(\Homo_0^{S\ssm\{t\}}(x))$ is trivial. 
On the other hand, 
by \ref{prop:homology of admissible cubes}, 
we have the isomorphism
$$\Homo_p(\Tot x)\isoto
\begin{cases}
\Homo_p^t(\Homo^{S\ssm\{t\}}_0(x)) & 
\text{for $p=0$, $1$}\\
0 & 
\text{otherwise}
\end{cases}. $$
Therefore we notice that $\Tot x$ is $0$-spherical.

\sn
Hence by \ref{cor:charof adm cube}, 
$x$ is admissible. 
\end{proof}

\begin{cor}
\label{cor:admcriterion II}
For a subset $T\subset S$, 
if the following two conditions are verified, 
then $x$ is admissible.\\ 
$\mathrm{(1)}$ 
$x$ is degenerate along the $k$-direction for any $k\in T$.\\ 
$\mathrm{(2)}$ 
$x|_{S\ssm T}^{\emptyset}$ is admissible.
\end{cor}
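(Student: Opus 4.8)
The plan is to reduce to the inductive characterization of admissibility in \ref{cor:admcriterion I}, arguing by induction on $\# T$. If $\# T = 0$ there is nothing to prove, since then $x = x|_S^{\emptyset}$, which is admissible by hypothesis $\mathrm{(2)}$. So suppose $\# T \geq 1$, fix an element $k \in T$ (in particular $S \neq \emptyset$), and apply \ref{cor:admcriterion I} with the choice $s = k$; it suffices to verify its three conditions $\mathrm{(i)}$, $\mathrm{(ii)}$, $\mathrm{(iii)}$ for this $s$.

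Conditions $\mathrm{(ii)}$ and $\mathrm{(iii)}$ fall out of hypothesis $\mathrm{(1)}$ at once. Degeneracy along the $k$-direction says that every boundary morphism $d^{x,k}_{U \coprod \{k\}}$ with $U \in \cP(S \ssm \{k\})$ is an isomorphism, hence a monomorphism, which is condition $\mathrm{(ii)}$; and then $\Homo_0^k(x)_U = \coker d^{x,k}_{U \coprod \{k\}} = 0$ for every such $U$, so $\Homo_0^k(x)$ is the zero cube, which is admissible, which is condition $\mathrm{(iii)}$.

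For condition $\mathrm{(i)}$ I must show that the front face $x|_{S \ssm \{k\}}^{\emptyset}$ and the back face $x|_{S \ssm \{k\}}^{\{k\}}$ are admissible. The front face is an $(S \ssm \{k\})$-cube which is degenerate along every $j \in T \ssm \{k\}$, since degeneracy along a direction is plainly inherited by any restriction that keeps that direction, and whose restriction $(x|_{S \ssm \{k\}}^{\emptyset})|_{S \ssm T}^{\emptyset} = x|_{S \ssm T}^{\emptyset}$ is admissible by hypothesis $\mathrm{(2)}$; since $\# (T \ssm \{k\}) = \# T - 1$, the induction hypothesis applies to this cube with the subset $T \ssm \{k\}$ and gives that $x|_{S \ssm \{k\}}^{\emptyset}$ is admissible. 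For the back face, the $k$-direction boundary morphisms assemble into a morphism of $(S \ssm \{k\})$-cubes $x|_{S \ssm \{k\}}^{\{k\}} \to x|_{S \ssm \{k\}}^{\emptyset}$, namely the morphism through which $x$ is viewed as $[x|_{S \ssm \{k\}}^{\{k\}} \to x|_{S \ssm \{k\}}^{\emptyset}]$ in the proof of \ref{prop:homology of admissible cubes}, and by degeneracy along $k$ it is an isomorphism of cubes; since admissibility is invariant under isomorphisms of cubes and the target is admissible by the previous step, the back face is admissible too. With $\mathrm{(i)}$, $\mathrm{(ii)}$, $\mathrm{(iii)}$ established, \ref{cor:admcriterion I} concludes that $x$ is admissible.

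There is no serious obstacle here; the one point that is not a one-line check is the admissibility of the \emph{back} face $x|_{S \ssm \{k\}}^{\{k\}}$, which hypothesis $\mathrm{(2)}$ does not supply directly and which I propose to obtain from the isomorphism of cubes coming from the degeneracy along $k$. It is also worth recording explicitly, once, the three trivialities invoked along the way -- the zero cube is admissible, admissibility is preserved by isomorphisms of cubes, and degeneracy along a direction passes to any restriction retaining that direction -- each being a one-step induction on $\# S$ directly from \ref{df:adm cube} and \ref{df:degenerate cube}.
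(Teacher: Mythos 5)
Your proof is correct and follows essentially the same route as the paper: an induction on $\# T$ that feeds into the inductive characterization of admissibility in \ref{cor:admcriterion I}. The paper organizes the induction slightly differently (reducing first to the singleton case $T=\{k\}$ via transitivity of restrictions, then invoking \ref{cor:admcriterion I} once at the bottom), whereas you invoke \ref{cor:admcriterion I} at every level of the induction; the content is the same. One thing you do that the paper's own proof leaves tacit is the verification that the \emph{back} face $x|_{S\ssm\{k\}}^{\{k\}}$ is admissible — \ref{cor:admcriterion I} $\mathrm{(2)(i)}$ requires both faces, while hypothesis $\mathrm{(2)}$ of the corollary only supplies the front one directly, and your observation that degeneracy along $k$ makes the $k$-direction boundary morphisms into an isomorphism of $(S\ssm\{k\})$-cubes $x|_{S\ssm\{k\}}^{\{k\}} \isoto x|_{S\ssm\{k\}}^{\emptyset}$, together with invariance of admissibility under cube isomorphisms, is exactly the missing remark.
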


\begin{proof}[\bf Proof]
Since for any element in $t\in T$, 
we have the equality
$${(x|_{S\ssm(T\ssm\{t\})}^{\emptyset})|}_{S\ssm T}^{\emptyset}
=x|_{S\ssm T}^{\emptyset},$$
by the induction of $\# T$, 
we shall assume that $T$ is the singleton $T=\{k\}$. 
In this case, 
$x$ satisfies conditions 
$\mathrm{(i)}$, $\mathrm{(ii)}$ and $\mathrm{(iii)}$ 
in \ref{cor:admcriterion I} $\mathrm{(2)}$ for $s=k$. 
Therefore $x$ is admissible.
\end{proof}

\begin{df}[\bf Multi semi-direct products]
\label{df:mult semi-direct prod}
Let $\fF=\{\cF_T\}_{T\in\cP(S)}$ be 
a family of full subcategories of $\cA$. 
Then\\ 
$\mathrm{(1)}$ 
We define 
$\ltimes \fF=\underset{T\in\cP(S)}{\ltimes} \cF_T$ 
the {\bf multi semi-direct product 
of the family $\fF$} 
as follows. 
$\underset{T\in\cP(S)}{\ltimes} \cF_T$ 
is the full subcategory of $\Cub^S(\cF_{\emptyset})$ 
consisting of those cubes $x$ 
such that $x$ is admissible and each vertex of 
$\Homo^T_0(x)$ is in $\cF_T$ for any $T\in\cP(S)$. 
If $S$ is a singleton, 
then $\underset{T\in\cP(S)}{\ltimes} \cF_T$ is 
just the semi-direct product $\cF_S\ltimes \cF_{\emptyset}$ in 
\ref{semi-direct product nt}.\\
$\mathrm{(2)}$ 
Suppose in addition 
that $\cF_S$ has a class of weak equivalences 
$w=w(\cF_S)$. 
Then a morphism $f:x\to y$ in $\ltimes \fF$ is 
said to be a {\bf total weak equivalences} if 
$\Homo^S_0(f)$ is in $w$. 
We denote the class of total weak equivalences in $\ltimes \fF$ by 
$\displaystyle{tw(\underset{T\in \cP(S)}{\ltimes} \cF_T)}$ or simply $tw$.
\end{df}

\begin{prop}
\label{prop:rel multi and mono semi-direct prod}
Let $\fF=\{\cF_T\}_{T\in\cP(S)}$ be a family of 
full subcategories of $\cA$. 
Then\\ 
$\mathrm{(1)}$ 
For each $s\in S$, we have the equality
$$\underset{T\in\cP(S)}{\ltimes}\cF_T=
\left ( \underset{T\in \cP(S\ssm\{s\})}{\ltimes} 
\cF_{T\coprod\{s\}} \right) \ltimes 
\left ( \underset{T\in\cP(S\ssm\{s\})}{\ltimes} \cF_T \right ) .$$
$\mathrm{(2)}$ 
In the equality in $\mathrm{(1)}$, 
the class of quasi-isomorphisms in 
$\displaystyle{\underset{T\in\cP(S)}{\ltimes}\cF_T}$ is 
equal to the class of weak equivalences induced from 
the class of quasi-isomorphisms in 
$\displaystyle{\underset{T\in\cP(S\ssm\{s\})}{\ltimes} 
\cF_{T\coprod\{s\}}}$ 
as in \ref{semi-direct we nt}. 
Namely we have the equality 
$$tw\left (\underset{T\in\cP(S)}{\ltimes}\cF_T \right )=
t\left (tw \left (\underset{T\in\cP(S\ssm\{s\})}{\ltimes} \cF_{T\coprod \{s\}} \right )\right ).$$
$\mathrm{(3)}$ 
In the situation $\mathrm{(2)}$, 
we have the equality
$${\left ( \underset{T\in\cP(S)}{\ltimes}\cF_T
\right )}^{tw}
={\left ( 
\underset{T\in\cP(S\ssm\{s\})}{\ltimes} \cF_{T\coprod \{s\}}
\right )}^{tw} 
\ltimes \left ( 
\underset{T\in\cP(S\ssm\{s\})}{\ltimes} \cF_T
\right ) .$$ 
\end{prop}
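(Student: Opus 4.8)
The plan is to fix $s\in S$, write $\cG:=\underset{T\in\cP(S\ssm\{s\})}{\ltimes}\cF_{T\coprod\{s\}}$ and $\calH:=\underset{T\in\cP(S\ssm\{s\})}{\ltimes}\cF_T$ (full subcategories of $\Cub^{S\ssm\{s\}}(\cA)$), and deduce all three equalities by unwinding definitions after identifying the ambient categories. By Remark~\ref{rem:disjointindex} the splitting $\cP(\{s\})\times\cP(S\ssm\{s\})\isoto\cP(S)$ yields an isomorphism $\Cub^S(\cF_\emptyset)\isoto\Cub^{\{s\}}(\Cub^{S\ssm\{s\}}(\cF_\emptyset))$ under which an $S$-cube $x$ becomes the morphism of $(S\ssm\{s\})$-cubes $[x_1\onto{d}x_0]$ with $x_1=x|_{S\ssm\{s\}}^{\{s\}}$, $x_0=x|_{S\ssm\{s\}}^{\emptyset}$, $d=d^{x,s}$, and for which $\coker d$, computed at each vertex (hence the cokernel in the abelian category $\Cub^{S\ssm\{s\}}(\cA)$), is exactly $\Homo_0^s(x)$. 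Both $\underset{T\in\cP(S)}{\ltimes}\cF_T$ and $\cG\ltimes\calH$ become full subcategories of $\Cub^S(\cF_\emptyset)$ under this identification, so for $\mathrm{(1)}$ it suffices to match their object classes.

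I would match the objects via three equivalences. First, by the inductive characterization of admissibility, Corollary~\ref{cor:admcriterion I} applied at $s$, the cube $x$ is admissible if and only if $x_1$ and $x_0$ are admissible, each boundary map $d^{x,s}_{T\coprod\{s\}}$ is a monomorphism, and $\Homo_0^s(x)=\coker d$ is admissible. Second, for $T\in\cP(S)$ with $s\notin T$, iterating Lemma~\ref{lem:compat hom and rest} over the elements of $T$ shows that the two $s$-faces of $\Homo_0^T(x)$ are $\Homo_0^T(x_0)$ and $\Homo_0^T(x_1)$; this uses that $x$, and hence each of its faces, is admissible, so that the operators $\Homo_0^T$ are defined and reorderable by Lemma~\ref{lem:face of admissible cubes} and Lemma-Definition~\ref{lemdf:relative homology}. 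Consequently every vertex of $\Homo_0^T(x)$ lies in $\cF_T$ for all such $T$ precisely when $x_0$ and $x_1$ both satisfy the vertex conditions defining $\calH$. Third, for $T=U\coprod\{s\}$ with $U\in\cP(S\ssm\{s\})$, Lemma-Definition~\ref{lemdf:relative homology} computed with $s$ innermost gives $\Homo_0^T(x)=\Homo_0^U(\Homo_0^s(x))=\Homo_0^U(\coker d)$, so the vertex conditions for these $T$ are exactly those defining $\cG$, applied to $\coker d$. Assembling the three equivalences, $x$ is an object of $\underset{T\in\cP(S)}{\ltimes}\cF_T$ if and only if $x_1,x_0\in\calH$, $d$ is a monomorphism in $\Cub^{S\ssm\{s\}}(\cA)$, and $\coker d\in\cG$, i.e. if and only if $[x_1\onto{d}x_0]$ is an object of $\cG\ltimes\calH$; as both are full subcategories, this gives $\mathrm{(1)}$. (For $\#S=1$ this is the last sentence of Definition~\ref{df:mult semi-direct prod}$\mathrm{(1)}$.)

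For $\mathrm{(2)}$ and $\mathrm{(3)}$ I would argue formally from $\mathrm{(1)}$. Under the identification above, the functor $[x_1\onto{d}x_0]\mapsto\coker d$ used in Definition~\ref{semi-direct we nt} is $\Homo_0^s$, so a morphism $f$ of $\cG\ltimes\calH$ lies in $t(tw(\cG))$ iff $\Homo_0^s(f)\in tw(\cG)$, iff $\Homo_0^{S\ssm\{s\}}(\Homo_0^s(f))\in w(\cF_S)$; since the canonical natural isomorphism of Lemma-Definition~\ref{lemdf:relative homology} identifies $\Homo_0^{S\ssm\{s\}}\circ\Homo_0^s$ with $\Homo_0^S$ and $w(\cF_S)$ is closed under isomorphisms, this is equivalent to $\Homo_0^S(f)\in w(\cF_S)$, i.e. to $f\in tw(\underset{T\in\cP(S)}{\ltimes}\cF_T)$ by Definition~\ref{df:mult semi-direct prod}$\mathrm{(2)}$; this is $\mathrm{(2)}$. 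For $\mathrm{(3)}$, observe that $x$ is $tw$-trivial in $\underset{T\in\cP(S)}{\ltimes}\cF_T$ iff $\Homo_0^S(x)\in(\cF_S)^w$, whence $\bigl(\underset{T\in\cP(S)}{\ltimes}\cF_T\bigr)^{tw}=\underset{T\in\cP(S)}{\ltimes}\cF'_T$ with $\cF'_S:=(\cF_S)^w$ and $\cF'_T:=\cF_T$ for $T\neq S$, and similarly $\cG^{tw}=\underset{T\in\cP(S\ssm\{s\})}{\ltimes}\cF'_{T\coprod\{s\}}$; applying $\mathrm{(1)}$ to the family $\fF'=\{\cF'_T\}_{T\in\cP(S)}$ and using $\cF'_T=\cF_T$ for all $T\in\cP(S\ssm\{s\})$ yields $\mathrm{(3)}$.

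I expect the main obstacle to be the second equivalence in $\mathrm{(1)}$: carrying out the iterated applications of Lemmas~\ref{lem:compat hom and rest} and \ref{lemdf:relative homology} carefully enough to confirm that the admissibility hypotheses they require are available at each stage (they are, by Corollary~\ref{cor:admcriterion I} together with Lemma~\ref{lem:face of admissible cubes}), and that the faces together with the iterated cokernels of $x$ simultaneously realize all of the cubes $\Homo_0^T(x)$ for $T\in\cP(S)$.
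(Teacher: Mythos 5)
Your proof is correct, and for parts $\mathrm{(1)}$ and $\mathrm{(2)}$ you follow essentially the same route as the paper: for $\mathrm{(1)}$, both directions proceed by matching the admissibility and vertex conditions via the inductive characterization of admissibility (\ref{cor:admcriterion I}) together with the compatibility relations $\Homo_0^T(\Homo_0^s(x))_V=\Homo_0^{T\coprod\{s\}}(x)_V$ and $\Homo_0^T(x|_{S\ssm\{s\}}^W)_V=\Homo_0^T(x)_{V\coprod W}$; for $\mathrm{(2)}$, the paper rewrites $tq$ via $\Homo_0\Tot$ using \ref{cor:homology of admissible cubes} and the identity $\Homo_0(\Tot z)=\Homo_0(\Tot\Homo_0^s(z))$, whereas you pass through the iterated-homology identification $\Homo_0^{S\ssm\{s\}}\circ\Homo_0^s\cong\Homo_0^S$ of \ref{lemdf:relative homology}; by \ref{prop:homology of admissible cubes} $\mathrm{(2)}$ these are the same thing, and your version also covers general $w(\cF_S)$ rather than just quasi-isomorphisms, which is harmless.

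For $\mathrm{(3)}$ you genuinely diverge from the paper. The paper's proof first invokes the decomposition of $\mathrm{(1)}$ to reduce to a mono semi-direct product, then reduces (somewhat tersely, ``may assume $S$ is a singleton'') to the fact that an object $x$ of $\cF\ltimes\cE$ is $tw$-trivial iff $\Homo_0(x)\in\cF^w$; combined with $\mathrm{(2)}$ this identifies $\bigl(\ltimes_T\cF_T\bigr)^{tw}$ with $\cG^{tw}\ltimes\calH$. Your route instead observes that $\bigl(\ltimes_T\cF_T\bigr)^{tw}$ is itself a multi semi-direct product $\ltimes_T\cF'_T$ of the family obtained by replacing $\cF_S$ with $(\cF_S)^w$ and leaving the remaining $\cF_T$ unchanged, that $\cG^{tw}$ admits the analogous description, and that $\mathrm{(1)}$ applied to the modified family $\fF'$ already yields the desired identity since $\cF'_T=\cF_T$ for $T\in\cP(S\ssm\{s\})$. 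This is a clean reduction that avoids any direct appeal to $\mathrm{(2)}$ and makes the ``assume $S$ is a singleton'' step unnecessary; the only thing to verify (which you do) is that $(\cF_S)^w$ is again a full subcategory of $\cA$, so that $\mathrm{(1)}$ applies. Both approaches are valid; yours is arguably the more transparent derivation of $\mathrm{(3)}$ from $\mathrm{(1)}$.
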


\begin{proof}[\bf Proof] 
Proof of assertion $\mathrm{(1)}$: 
Let us fix an element $s\in S$. 
For simplicity, 
we put 
$$\displaystyle{\cG=
\left ( \underset{T\in \cP(S\ssm\{s\})}{\ltimes} 
\cF_{T\coprod\{s\}} \right) \ltimes 
\left ( \underset{T\in\cP(S\ssm\{s\})}{\ltimes} \cF_T \right )}.$$ 
Let $x$ be an object in $\ltimes \cF$. 
To prove that $x$ is in $\cG$, 
we need to check the following two assertions.\\ 
$\mathrm{(a)}$ 
$x|_{S\ssm\{s\}}^{\{s\}}$, $x|_{S\ssm\{s\}}^{\emptyset}$ are 
in $\underset{T\in\cP(S\ssm \{s\})}{\ltimes}\cF_T$.\\ 
$\mathrm{(b)}$ 
$\Homo_0^{s}(x)$ is in 
$\underset{T\in\cP(S\ssm \{s\})}{\ltimes}\cF_{T\coprod\{s\}}$.\\ 
We put $W=\emptyset$ or $W=\{s\}$. 
First let us notice that 
$x|_{S\ssm\{s\}}^W$ and 
$\Homo_0^{s}(x)$ are admissible 
by admissibility of $x$. 
For each $T\in \cP(S\ssm\{s\})$ and $V\in\cP(S\ssm (T\coprod\{s\}))$, 
we have the equalities 
\begin{equation}
\label{equ:rest equ I}
\Homo_0^T(\Homo_0^s(x))_V=\Homo_0^{T\coprod\{s\}}(x)_V \text{ and}  
\end{equation}
\begin{equation}
\label{equ:rest equ II}
\Homo_0^T(x|_{S\ssm\{s\}}^{W})_V=\Homo_0^T(x)_{V\coprod W}. 
\end{equation}
Therefore both objects above are in $\cF_{T\coprod W}$. 
Hence we get assertions (a) and (b) and 
we learn that $x$ is in $\cG$. 
Conversely next let $x$ be an object in $\cG$. 
Since $x$ satisfies condition $\mathrm{(i)}$, 
$\mathrm{(ii)}$ and $\mathrm{(iii)}$ in by \ref{cor:admcriterion I} (2), 
$x$ is admissible. 
Moreover by the equalities (\ref{equ:rest equ I}) 
and (\ref{equ:rest equ II}) above, 
we learn that $\Homo_0^T(x)_V$ is in $\cF_T$ for any disjoint pair 
of subsets $T$, $V\in \cP(S)$. 
Therefore $x$ is in $\ltimes \fF$. 

\sn
Proof of assertion $\mathrm{(2)}$: 
By \ref{cor:homology of admissible cubes}, 
a morphism $f:x\to y$ in 
$\displaystyle{\underset{T\in\cP(S)}{\ltimes}\cF_T}$ 
is a quasi-isomorphism if and only if 
$\Homo_0(\Tot f):\Homo_0(\Tot x) \to \Homo_0(\Tot y)$ 
is an isomorphism. 
Since $\Homo_0(\Tot z)=\Homo_0(\Tot \Homo_0^s(z))$ 
($z=x$, $y$), 
this condition is equivalent to 
the assertion that 
the induced morphism 
$\Homo_0^s(f):\Homo^s_0(x) \to \Homo_0^s(y)$ is a 
quasi-isomorphism. 
Hence we get the result. 

\sn
Proof of assertion $\mathrm{(3)}$: 
By virtue of the equality in $\mathrm{(1)}$, 
we may assume that $S$ is a singleton. Namely $\# S=1$. 
For simplicity we put $\cE=\cF_{\emptyset}$ and $\cF=\cF_{S}$. 
For any object $x$ in $\cF\ltimes \cE$, 
the canonical morphism $0 \to x$ is in $tw$ if and only if 
the canonical morphism $0 \to \Homo_0(x)$ is in $w$ and 
the last assertion is equivalent to $\Homo_0(x)$ being in $\cF^w$. 
Hence we get the desired result. 
\end{proof}

\begin{cor}
\label{cor:mult semi is exact}
Let $\fE=\{\cE_T\}_{T\in\cP(S)}$ and 
$\fF=\{\cF_T\}_{T\in\cP(S)}$ be 
families of subcategories of 
$\cA$ such that for each $T\in\cP(S)$, 
$\cF_T\rinc \cE_T$ are strict exact subcategories of $\cA$ and 
the inclusion functor 
$\cE_T\rinc \cA$ is closed under extensions. 
Then\\ 
$\mathrm{(1)}$ 
$\ltimes \fE$ is closed under extensions in $\Cub^S\cA$. 
In particular $\ltimes \fE$ naturally becomes an exact category.\\ 
$\mathrm{(2)}$ 
If $\cF_T\rinc \cE_T$ is closed under extensions 
{\rm (}resp. taking admissible sub- and quotient objects, 
taking kernels of admissible epimorphisms, taking finite direct sum{\rm )} 
for any $T\in\cP(S)$, 
then the inclusion functor $\ltimes\fF \rinc \ltimes \fE$ 
is also closed under extensions 
{\rm (}resp. taking admissible sub- and quotient objects, 
taking kernels of admissible epimorphisms, taking finite direct sum{\rm )}.\\ 
$\mathrm{(3)}$ 
Assume that the following two conditions hold.\\ 
$\mathrm{(i)}$ 
For any pair of subsets $T\subset U$ in $S$, 
$\cE_U$ is full subcategory of $\cE_T$.\\ 
$\mathrm{(ii)}$ 
For any subset $T$ in $S$, 
every object in $\cF_T$ is a projective object in $\cE_T$.\\ 
Then every object in $\ltimes\fF$ is a projective object in $\ltimes \fE$. 
\end{cor}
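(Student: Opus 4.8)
The plan is to induct on $\# S$, using Proposition~\ref{prop:rel multi and mono semi-direct prod}(1) to present $\ltimes\fE$ (and $\ltimes\fF$) as an ordinary semi-direct product of two multi semi-direct products indexed by $\cP(S\ssm\{s\})$, and then to feed the inductive hypothesis into the single-variable results of Section~\ref{sec:semi-direct prod}. When $S=\emptyset$ everything is trivial ($\ltimes\fE=\cE_{\emptyset}$, etc.), and when $\# S=1$ the three assertions are exactly \ref{semidiret is exact} together with \ref{rem:closed condition}(2) for (1); \ref{closed conditions 1}, \ref{rem:closed condition}(1) and \ref{closed conditions 2} for (2); and \ref{prop:projobjinsemidirectprod} for (3). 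So fix $s\in S$, put $\cE:=\underset{T\in\cP(S\ssm\{s\})}{\ltimes}\cE_T$ and $\cF:=\underset{T\in\cP(S\ssm\{s\})}{\ltimes}\cE_{T\coprod\{s\}}$, and likewise $\cE'$, $\cF'$ for the family $\fF$, all regarded as full subcategories of $\Cub^{S\ssm\{s\}}\cA$. By Proposition~\ref{prop:rel multi and mono semi-direct prod}(1) we then have $\ltimes\fE=\cF\ltimes\cE$ and $\ltimes\fF=\cF'\ltimes\cE'$ inside $\Cub^1(\Cub^{S\ssm\{s\}}\cA)=\Cub^S\cA$.

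For (1): the inductive hypothesis applied to the families $\{\cE_T\}$ and $\{\cE_{T\coprod\{s\}}\}$ over $\cP(S\ssm\{s\})$ — whose members are strict exact subcategories whose inclusions into $\cA$ are closed under extensions — shows that $\cE$ and $\cF$ are exact categories closed under extensions in $\Cub^{S\ssm\{s\}}\cA$. Then \ref{rem:closed condition}(2) gives that $\cF\ltimes\cE$ is closed under extensions in $\Cub^1(\Cub^{S\ssm\{s\}}\cA)=\Cub^S\cA$, hence, being an extension-closed full subcategory of an abelian category with the induced termwise exact structure, is an exact category; condition (1) of \ref{semidiret is exact} identifies this with the exact structure on the semi-direct product. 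For (2): one checks that the relevant closure property (closed under extensions / admissible sub- and quotient objects / kernels of admissible epimorphisms) is inherited by the inclusions $\cF_T\rinc\cE_T$ and $\cF_{T\coprod\{s\}}\rinc\cE_{T\coprod\{s\}}$ over $\cP(S\ssm\{s\})$, invokes the inductive hypothesis to get the same property for $\cE'\rinc\cE$ and $\cF'\rinc\cF$, and then applies \ref{closed conditions 1}, \ref{rem:closed condition}(1), or \ref{closed conditions 2} respectively. The \textbf{finite direct sum} case is cleanest handled directly: biproducts in $\Cub^S\cA$ are computed vertexwise, each $\Homo_0^T$ preserves finite biproducts, and a finite direct sum of admissible cubes is admissible (induction on $\#S$ via \ref{cor:admcriterion I}), so the condition follows at once from the same condition on each $\cF_T\rinc\cE_T$. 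For (3): the hypotheses (i) and (ii) pass to $\{\cF_T\}$ vs $\{\cE_T\}$ and to $\{\cF_{T\coprod\{s\}}\}$ vs $\{\cE_{T\coprod\{s\}}\}$ over $\cP(S\ssm\{s\})$ (using $T\coprod\{s\}\subset U\coprod\{s\}$ when $T\subset U$), so by the inductive hypothesis every object of $\cE'$ is projective in $\cE$ and every object of $\cF'$ is projective in $\cF$; moreover $\cF$ is a full subcategory of $\cE$ because $\cE_{T\coprod\{s\}}$ is a full subcategory of $\cE_T$ by (i). Applying \ref{prop:projobjinsemidirectprod} to $\cF'\ltimes\cE'\rinc\cF\ltimes\cE$ yields that every object of $\ltimes\fF$ is projective in $\ltimes\fE$.

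The main obstacle is not conceptual but bookkeeping: one must verify that the standing hypotheses of the single-variable lemmas of Section~\ref{sec:semi-direct prod} are genuinely inherited by the restricted families over $\cP(S\ssm\{s\})$, and keep track of the two abelian categories $\cA$ and $\Cub^{S\ssm\{s\}}\cA$ under the identification $\Cub^S\cA\cong\Cub^1(\Cub^{S\ssm\{s\}}\cA)$ — in particular that ``strict exact subcategory'', ``closed under extensions'' and ``projective object'' all refer to the induced termwise exact structures. A small wrinkle is that $\cF_T$ is only assumed to be a strict exact subcategory of $\cA$, not extension-closed in it, so in the proof of (2) (and in supplying the hypothesis that $\cF$ satisfy condition (1) or (2) of \ref{semidiret is exact} when applying \ref{prop:projobjinsemidirectprod} in (3)) one should argue via the snake lemma together with \ref{cor:homology of admissible cubes}: since $\Homo_0^T$ is exact on termwise-exact sequences of admissible cubes (the intermediate $\Homo_1$'s all vanishing), the vertices $\Homo_0^T(\,\cdot\,)_V$ fit into admissible exact sequences in $\cE_T$, and one transports the closure property from $\cF_T\rinc\cE_T$ vertex by vertex rather than relying on $\cF_T$ being closed in the ambient $\cA$.
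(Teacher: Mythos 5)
Your proof follows exactly the paper's route: induct on $\#S$ via the decomposition $\ltimes\fE=\cF\ltimes\cE$, $\ltimes\fF=\cF'\ltimes\cE'$ from Proposition~\ref{prop:rel multi and mono semi-direct prod}(1), and then invoke the single-variable results \ref{closed conditions 1}, \ref{closed conditions 2}, \ref{rem:closed condition} and \ref{prop:projobjinsemidirectprod} — which is literally what the paper's one-line proof cites. Your extra bookkeeping (the vertexwise handling of finite direct sums, which the cited lemmas do not cover explicitly, and the note that the standing hypothesis on the $\cF$-factor for the single-variable lemmas must be supplied by the inductive hypothesis or by a direct snake-lemma argument rather than assumed) is a reasonable gloss on what the paper's terse proof leaves implicit, not a departure from it.
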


\begin{proof}[\bf Proof]
Utilizing the inductive description of $\ltimes\fE$ and 
$\ltimes \fF$ as in \ref{prop:rel multi and mono semi-direct prod} 
$\mathrm{(1)}$, 
we get the results by induction and \ref{closed conditions 1}, 
\ref{closed conditions 2}, 
\ref{rem:closed condition} and 
\ref{prop:projobjinsemidirectprod}. 
\end{proof}

\begin{rem}
\label{rem:exactness}
Let $\fF=\{\cF_T\}_{T\in\cP(S)}$ be 
a family of strict exact subcategories of $\cA$. 
Assume that for any disjoint decomposition $S=U\coprod V$, 
$\displaystyle{\underset{T\in\cP(U)}{\ltimes}\cF_{T\coprod V}}$ 
is a strict exact subcategory of $\Cub^U(\cA)$.\\
$\mathrm{(1)}$ 
Since boundary morphisms of admissible cubes are monomorphisms, 
for each $s\in S$, the functor 
$$\Homo^s_0:\underset{T\in \cP(S)}{\ltimes} \cF_T \to 
\underset{T\in\cP(S\ssm\{s\})}{\ltimes} \cF_{T\coprod\{s\}}$$
is exact. 
Moreover by induction on the number of elements, we 
learn that for any $W\in\cP(S)$, 
$$\Homo^W_0:
\underset{U\in \cP(S)}{\ltimes} \cF_U \to 
\underset{T\in\cP(S\ssm W)}{\ltimes} \cF_{W\coprod T}
$$
is also an exact functor.\\
$\mathrm{(2)}$ 
In particular, since for the functors 
$\Homo_0^S$, $\Homo_0\Tot:\ltimes \fF \to \cF_S$, 
we have the isomorphism $\Homo_0^S\isoto\Homo_0\Tot$ 
by \ref{prop:homology of admissible cubes} $\mathrm{(2)}$, 
$\Homo_0\Tot$ is also an exact functor from $\ltimes \fF$ to $\cF_S$. 
\end{rem}

\section{Koszul cubes}
\label{sec:Koszul cubes}

In this section, 
we define 
Koszul cubes in \ref{df:Koszul cube df} and 
relate them to semi-direct products of 
exact categories of pure weight modules 
as in \ref{cor:comp df of Koscube}. 
The pivot of the theory is 
Theorem~\ref{thm:TotstrictKos is 0sph} 
which says that 
the total complexes associated with free non-degenerate Koszul 
cubes are $0$-spherical. 
Let us commence defining and recalling elementary facts about 
regular sequences. 
Let $R$ (resp., $A$) be 
a commutative ring with $1$ 
(resp., commutative noetherian ring with $1$).

\sn
By an {\bf $A$-sequence} we mean an 
$A$-regular sequence $f_1,\cdots,f_q$ such that 
any permutation of the $f_j$ 
is also an $A$-regular sequence.

\begin{ex}
\label{ex:A-sequence}
We enumerate fundamental properties of $A$-sequences from 
\cite[\S 16]{Mat86}.\\ 
$\mathrm{(1)}$ 
For any $A$-regular sequence $f_1,\cdots,f_q$ and a 
prime ideal $\pp$ in $A$ such that $f_i \in \pp$ 
for any $i\in (q]$, 
$f_1,\cdots,f_q$ is an $A_{\pp}$-regular sequence in $A_{\pp}$. 
In particular if $f_1,\cdots,f_q$ is an $A$-sequence, 
then it is also an $A_{\pp}$-sequence in $A_{\pp}$.\\ 
$\mathrm{(2)}$ 
For any $A$-regular sequence $f_1,\cdots,f_q$ and 
positive integers $\mu_1,\cdots,\mu_q$, 
$f_1^{\mu_1},\cdots,f_q^{\mu_q}$ is again an $A$-regular sequence. 
In particular if $f_1,\cdots,f_q$ is an $A$-sequence, 
then $f_1^{\mu_1},\cdots,f_q^{\mu_q}$ is also an $A$-sequence.\\ 
$\mathrm{(3)}$ 
Any $A$-regular sequence 
contained in the Jacobson radical of $A$ 
is automatically an $A$-sequence.
\end{ex}

\sn
The following lemma might be well-known. 
But I do not know a reference and we give a proof in minute detail. 

\begin{lem}
\label{lem:A-seq lem}
Let $R$ be a commutative ring with unit and 
$f_1,\cdots,f_n,\ g_1,\cdots,g_n$ elements in $R$. 
We put $h_i=f_ig_i$ for each $1\leq i\leq n$ and 
assume that each $f_i$ is not a unit and 
the sequence $h_1,\cdots,h_n$ is an $R$-sequence. 
Then $f_1,\cdots,f_n$ is also an $R$-sequence. 
\end{lem}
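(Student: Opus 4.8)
The plan is to get rid of the quantifier over permutations hidden in the notion of an $R$-sequence, and then to pass from $h_1,\dots,h_n$ to $f_1,\dots,f_n$ by replacing one factor $h_i$ by $f_i$ at a time, keeping the ``regular in every order'' property alive throughout. Write $S=\{1,\dots,n\}$. I will use only two facts. First, a family $\{x_i\}_{i\in S}$ of non-units of $R$ is an $R$-sequence if and only if for every proper subset $W\subsetneq S$ and every $j\in S\ssm W$ the element $x_j$ is a non-zero-divisor on $R/(x_i:i\in W)$; this is merely a rewriting, since letting $W$ range over all proper subsets and appending $j$ to an arbitrary ordering of $W$ produces exactly the non-zero-divisor conditions occurring in all permuted regular sequences. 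Second, if a product $ab$ is a non-zero-divisor on an $R$-module $M$ then so are $a$ and $b$ (if $am=0$ then $abm=b(am)=0$).

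For $P\subseteq S$ let $e^P=\{e^P_i\}_{i\in S}$ be the mixed family with $e^P_i=f_i$ for $i\in P$ and $e^P_i=h_i$ for $i\notin P$, and put $\mathfrak{a}^P_W=(e^P_i:i\in W)$ for $W\subseteq S$. Every $e^P_i$ is a non-unit ($f_i$ by hypothesis, $h_i$ because $h_1,\dots,h_n$ is an $R$-sequence), so by the first fact it suffices to prove, by induction on $\#P$, that $e^P_j$ is a non-zero-divisor on $R/\mathfrak{a}^P_W$ for every proper $W\subsetneq S$ and every $j\in S\ssm W$; the case $P=\emptyset$ is exactly the hypothesis that $h_1,\dots,h_n$ is an $R$-sequence, and $P=S$ is the lemma. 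In the inductive step fix $p\in P$ and set $P'=P\ssm\{p\}$, so $e^P$ and $e^{P'}$ agree except at $p$, where $e^P_p=f_p$ and $e^{P'}_p=h_p=f_pg_p$. Let $W\subsetneq S$ and $j\in S\ssm W$. If $p\notin W$ and $j\neq p$, then $e^P_j=e^{P'}_j$ and $\mathfrak{a}^P_W=\mathfrak{a}^{P'}_W$, so the claim is the inductive hypothesis verbatim. If $j=p$ (hence automatically $p\notin W$), then $\mathfrak{a}^P_W=\mathfrak{a}^{P'}_W$ again, and by the inductive hypothesis $h_p=e^{P'}_p$ is a non-zero-divisor on $R/\mathfrak{a}^{P'}_W$, so by the second fact its factor $f_p=e^P_p$ is too.

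The one case that requires an idea is $p\in W$, $j\neq p$: here $\mathfrak{a}^{P'}_W$ contains $h_p=f_pg_p$ whereas $\mathfrak{a}^P_W$ contains only $f_p$, so $R/\mathfrak{a}^P_W$ is a proper quotient of $R/\mathfrak{a}^{P'}_W$ and a non-zero-divisor on the latter need not remain one on the former. Write $W=W_0\sqcup\{p\}$ and $\mathfrak{b}=(e^{P'}_i:i\in W_0)=(e^P_i:i\in W_0)$, so $\mathfrak{a}^P_W=\mathfrak{b}+(f_p)$ and $\mathfrak{a}^{P'}_W=\mathfrak{b}+(h_p)$. Suppose $e^{P'}_j\,a\in\mathfrak{b}+(f_p)$. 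Multiplying by $g_p$ and using $f_pg_p=h_p$ gives $e^{P'}_j\,(g_pa)\in\mathfrak{b}+(h_p)=\mathfrak{a}^{P'}_W$, so by the inductive hypothesis $g_pa\in\mathfrak{b}+(h_p)$, i.e.\ $g_p(a-f_pd)\in\mathfrak{b}$ for some $d\in R$. Now the inductive hypothesis applied to the subset $W_0$ (note $p\notin W_0$ and $W_0\subsetneq S$) says $h_p=e^{P'}_p$, hence $g_p$, is a non-zero-divisor on $R/\mathfrak{b}$; therefore $a-f_pd\in\mathfrak{b}$ and $a\in\mathfrak{b}+(f_p)=\mathfrak{a}^P_W$. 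Thus $e^P_j$ is a non-zero-divisor on $R/\mathfrak{a}^P_W$, which finishes the induction; taking $P=S$ gives the lemma. The main obstacle is precisely this last case, the ``enlarging the quotient'' phenomenon, and the extra factor $g_p$ --- a non-zero-divisor modulo the \emph{smaller} ideal $\mathfrak{b}$ by the inductive hypothesis --- is exactly what repairs it; the hypothesis that each $f_i$ is a non-unit is used only to check the non-unit clause in the definition of an $R$-sequence.
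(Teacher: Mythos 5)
Your proof is correct, and it takes a genuinely different (and cleaner) route than the paper's. The paper inducts on $n$: after noting that symmetry and the case $n-1$ reduce everything to showing $f_n$ is a non-zero-divisor on $R/(f_1,\dots,f_{n-1})$, it multiplies the coefficient equation $f_nx=\sum f_iy_i$ through by $g_1\cdots g_n$, applies regularity of the $h_i$'s once, and then runs a secondary induction (the $\varphi_{x,i}$ polynomials and the intermediate ``mixed'' sequences such as $f_1,\dots,f_{i-1},h_{i+1},\dots,h_{n-1},g_i$) that peels off one factor $g_i$ at a time from that single equation. You instead make the ``replace one factor at a time'' idea the outer induction itself, on $\#P$, working with the symmetric characterization of an $R$-sequence by subsets $W$ and indices $j$; the only nontrivial case ($p\in W$, $j\neq p$) is handled by the same multiply-by-$g_p$/cancel-$g_p$ trick that lies at the heart of the paper's $\varphi$-step, but applied once, at the level of ideals rather than of a specific relation. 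The underlying algebraic move is thus the same, but your organization eliminates the nested induction and the explicit polynomial bookkeeping, makes the points where the inductive hypothesis is invoked transparent (cases~1 and~2 are IH verbatim; case~3 uses IH for $(P',W)$ and for $(P',W_0)$), and isolates where the non-unit hypothesis on the $f_i$ is used. The paper's version is more computational; yours is more structural and, in my view, easier to verify.
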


\begin{proof}[\bf Proof]
If $n=1$, $f_1$ is actually a non zero divisor. 
For $n>1$, 
by induction on $n$, 
we shall only check that 
if for some elements $x$ and $y_i^{(0)}$ in $R$ $(1\leq i\leq n)$,
we have the equality
\begin{equation}
\label{equ:regseq1}
\displaystyle{f_nx=\sum_{i=1}^{n-1}f_iy_i^{(0)},}
\end{equation}
then we have the equality
$\displaystyle{x=\sum_{i=1}^{n-1}f_iz_i}$ 
for some elements $z_i$ in $R$. 
Multiplying the equality (\ref{equ:regseq1}) 
by $\displaystyle{g=\prod^n_{i=1} g_i}$, 
we get the equality
$$h_n\left ( \prod_{i=1}^{n-1}g_i  \right ) x=\sum_{i=1}^{n-1}h_i 
\left ( \frac{g}{g_i} y_i^{(0)}\right ).$$
Since the sequence $h_1$,$\cdots$,$h_n$ is an $R$-sequence, 
there 
are elements $y_1^{(1)},\cdots,y_{n-1}^{(1)}$ in $R$ such that 
$$\displaystyle{
\left (\prod_{i=1}^{n-1}g_i\right )x=\sum_{i=1}^{n-1}h_iy_i^{(1)}.
}$$
Now let us define the polynomials $\varphi_{x,i}(Y_1,\cdots,Y_{n-1})$ 
in $R[Y_1,\cdots,Y_{n-1}]$ $(1\leq i\leq n)$ by the formula 
$$\tiny{\displaystyle{\varphi_{x,i}(Y_1,Y_2,\cdots,Y_{n-1})=
\begin{cases}
\displaystyle{g_1(x-f_1Y_1)} & \text{if $i=1$ and $n=2$}\\
\displaystyle{g_1 \left \{ \left (\prod_{j=2}^{n-1}g_j \right ) 
x-f_1Y_1 \right \}-
\sum_{j=2}^{n-1} h_jY_j } 
& \text{if $i=1$ and $n\geq 3$}\\
\displaystyle{g_i\left \{ \left (\prod_{j=i+1}^{n-1}g_j \right ) 
x-f_iY_i \right \}-
\left ( \sum^{i-1}_{j=1}f_jY_j +\sum_{j=i+1}^{n-1} h_jY_j\right )} & 
\text{if $2\leq i\leq n-2$ and $n\geq 4$}\\
\displaystyle{g_{n-1}(x-f_{n-1}Y_{n-1})-\sum^{n-2}_{j=1}f_jY_j} & 
\text{if $i=n-1$  and $n\geq 3$}\\
\displaystyle{x-\sum_{j=1}^{n-1}f_jY_j} & \text{if $i=n$}
\end{cases}
}}.$$

\begin{claim}
\label{claim:solutionstep}
For $1\leq i\leq n-1$, 
if $\varphi_{x,i}=0$ has a solution in $R$, 
then the equation $\varphi_{x,i+1}=0$ also has a solution in $R$.
\end{claim}

\begin{proof}[\bf Proof of claim]
Let a system 
$Y_j=y_j$ $(1\leq j\leq n-1)$ be a solution of $\varphi_{x,i}=0$. 
If $g_i$ is a unit, 
then the system
$$Y_j= 
\begin{cases}
g_i^{-1}y_j & \text{if $j\neq i$}\\
y_i & \text{if $j=i$}
\end{cases}
$$ 
is a solution of $\varphi_{x,i+1}=0$. 
If $g_i$ is not a unit, 
then by inductive hypothesis, 
the sequence
$$\begin{cases}
g_1 & \text{(if $i=1$ and $n=2$)}\\
h_2,\cdots,h_{n-1},\ g_1 & \text{(if $i=1$ and $n\geq 3$)}\\
f_1,\ f_2,\cdots,f_{i-1},\ h_{i+1},\cdots,h_{n-1},\ g_i & 
\text{(if $2\leq i\leq n-2$ and $n\geq 4$)}\\
f_1,\cdots,f_{n-2},\ g_{n-1} & \text{(if $i=n-1$ and $n\geq 3$)}
\end{cases}
$$
is an $R$-sequence. 
Therefore we have elements $z_j$ in $R$ $(1\leq j \leq n-1,\ j\neq i)$ 
such that the system
$$Y_j=
\begin{cases}
z_j & \text{if $j\neq i$}\\
y_i & \text{if $j=i$}
\end{cases}$$
is a solution of $\varphi_{x,i+1}=0$. 
\end{proof}

\sn
Since the system $Y_j=y_j^{(1)}$ $(1\leq j \leq n-1)$ is a 
solution of $\varphi_{x,1}=0$, 
by induction on $i$, finally we have a solution of $\varphi_{x,n}=0$ 
and this is just the desired result.
\end{proof}

\sn
The following assertion is a very basic fact 
about regular sequences 
and useful to handle Koszul cubes. 

\begin{lem}
\label{lem:localize by reg seq} 
For a non-zero divisor $f$ in $R$, 
an $R$-sequence $g$, $h$ in $R$, 
we have the equalities 
$$R^{\times}_f\cap R=\{r\in R;\text{$f^m=ru$ for some $m\in\bbN$, 
$u\in R$}\}\ \ \text{and}$$
$$R^{\times}_g\cap R^{\times}_h=R^{\times}$$
in the total quotient ring of $R$. 
\end{lem}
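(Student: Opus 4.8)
The plan is to carry out everything inside the total quotient ring $Q:=Q(R)$. A non-zero divisor of $R$ becomes a unit of $Q$, so $R_f$, $R_g$ and $R_h$ are all identified with subrings of $Q$ containing $R$; for the last two one first remarks that, since $g,h$ is an $R$-sequence, $g$ is a non-zero divisor of $R$ (it begins the regular sequence $g,h$) and $h$ is a non-zero divisor of $R$ (it begins the regular sequence $h,g$). I will treat the two equalities independently, since they involve unrelated data.

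For the first equality I would simply unwind what it means to be a unit in a localization. An element $r\in R$ lies in $R^{\times}_f$ precisely when there exist $u\in R$ and $m\in\bbN$ with $r\cdot(u/f^m)=1$ in $R_f$; because $f$ is a non-zero divisor this equation, read in $Q$, is equivalent to $ru=f^m$ in $R$. Conversely, an equation $f^m=ru$ with $u\in R$ exhibits $u/f^m$ as a two-sided inverse of $r$ in $R_f$. Hence $R^{\times}_f\cap R=\{r\in R;\ f^m=ru\text{ for some }m\in\bbN,\ u\in R\}$, with no further input.

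For the second equality the inclusion $R^{\times}\subset R^{\times}_g\cap R^{\times}_h$ is trivial, so fix $x\in R^{\times}_g\cap R^{\times}_h$. Then $x$ and $x^{-1}$ both lie in $R_g\cap R_h$, and it suffices to prove $R_g\cap R_h=R$ inside $Q$ (for then $x,x^{-1}\in R$, whence $x\in R^{\times}$). Writing such an element as $a/g^n=b/h^m$ with $a,b\in R$ and $n,m\geq 1$ (the case $n=0$ or $m=0$ being immediate), and clearing denominators — legitimate since $g^n$ and $h^m$ are units of $Q$ — gives $ah^m=bg^n$, so $ah^m\in(g^n)$. This is the one genuine input: by Example~\ref{ex:A-sequence}(2) the sequence $g^n,h^m$ is again $R$-regular, so $h^m$ is a non-zero divisor modulo $(g^n)$; therefore $a\in(g^n)$, say $a=g^na_0$, and $a/g^n=a_0\in R$.

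The argument is a string of essentially routine manipulations; the only place where one really uses that $g,h$ is a regular sequence (and not merely a pair of non-zero divisors) is the implication $ah^m\in(g^n)\Rightarrow a\in(g^n)$, and the only thing requiring care is the bookkeeping of the embeddings $R\subset R_f,\,R_g,\,R_h\subset Q$ so that "clearing denominators" and "reading an equation in $Q$" are justified. I do not expect any serious obstacle beyond this.
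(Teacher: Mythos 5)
Your proof is correct and, for the second equality, follows essentially the same route as the paper: reduce to showing $R_g\cap R_h=R$ inside the total quotient ring, clear denominators to get an equation $ah^m=bg^n$, and invoke Example~\ref{ex:A-sequence}(2) to conclude that $g^n,h^m$ is again a regular sequence, forcing $a\in(g^n)$. You additionally spell out the routine verification of the first equality, which the paper omits.
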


\begin{proof}[\bf Proof] 
We will give a proof of the second equality above. 
An element $u\in R_g \cap R_h$ is of the form 
$\displaystyle{u=\frac{x}{g^n}=\frac{y}{h^m}}$ 
for some positive integers $n$ and $m$. 
Then we have the equality $h^mx=g^ny$. 
Since the sequence $g^n$, $h^m$ is 
an $R$-sequence by \ref{ex:A-sequence} (2), 
there is an element $z$ in $R$ such that 
$x=g^nz$ and therefore 
$\displaystyle{u=\frac{z}{1}}$.
Hence we have $R_g \cap R_h=R$ in the total quotient ring of $R$ 
and it implies the desired equality 
$R_g^{\times}\cap R_h^{\times}={(R_g\cap R_h)}^{\times}=R^{\times}$. 
\end{proof}

\sn
Next we will make ready for the general jargons of cubes in the category of 
Modules over a commutative ring with unit $R$. 

\begin{df}[\bf Free, projective, finitely generated cubes]
\label{df:free, projective, finitely generated cubes} 
We say that a cube $x$ in the category of $R$-Modules is 
{\bf free} (resp. {\bf projective}, {\bf finitely generated}) 
if all vertexes of $x$ are free 
(resp. projective, finitely generated) 
$R$-Modules.    
\end{df}

\begin{nt}[\bf Localization of cubes in the category of Modules] 
\label{df:localization of cubes}
For any $S$-cube $x$ in the category of 
$R$-Modules and a multiplicative subset 
$\cSS\subset R$, $\cSS^{-1}x$ will denote the 
$S$-cube in the category of $\cSS^{-1}R$-Modules 
defined by 
$(\cSS^{-1}x)_T:=\cSS^{-1}(x_T)$ 
for any $T\in \cP(S)$. 
If $\cSS=\{f^n\}_{n\geq 0}$ (reps. $A\ssm \pp$) for some element $f\in R$ 
(resp. a prime ideal $\pp$ in $R$), 
we write $x_f$ (resp. $x_{\pp}$) for $\cSS^{-1}x$. 
\end{nt}

\begin{df} 
\label{nt:cM_A^f(p)}
Let the letter $p$ be a natural number or $\infty$ and 
$I$ an ideal of $A$. 
Let $\cM_A^{I}(p)$ denote the category 
of finitely generated $A$-modules $M$ such 
that $\pd_A M \leqq p$ 
and $\Supp M \subset V(I)$. 
We write $\cM_A^I$ for $\cM_{A}^{I}(\infty)$. 
Since this category is closed under extensions in $\cM_A$, 
it can be considered to be an exact category in the natural way. 
Notice that if $I$ is the zero ideal of $A$, then 
$\cM_A^I(0)$ is just the category $\cP_A$. 
\end{df}

\begin{rem}
\label{rem:HM10} 
In \cite{HM10}, a pseudo-coherent $\cO_X$-Module on a scheme $X$ 
is said to be of {\bf Thomason-Trobaugh weight $q$} if it is 
supported on a regular closed immersion $Y\rinc X$ of codimension $q$ 
and if it is of Tor-dimension $\leq q$. 
The following are equivalent 
for any finitely generated $A$-module $M$ 
and any ideal $I$ which is generated by an $A$-regular sequence 
$f_1,\cdots,f_r$.\\ 
$\mathrm{(1)}$ 
The quasi-coherent $\cO_{\Spec A}$-module 
associated with $M$ is 
of Thomason-Trobaugh weight $r$ supported on $V(I)$.\\ 
$\mathrm{(2)}$ 
$M$ is in $\cM_A^{I}(r)$.\\ 
This is a consequence of the two facts that 
$\Tordim_A M=\pd_A M$ (see \cite[Proposition~4.1.5]{Wei94}) and 
that 
$\Spec A/I \to \Spec A$
is a regular closed immersion of codimension $r$. 
\end{rem}

\sn
In this section, from now on, 
assume that $S$ is a finite set and 
let $\{f_s\}_{s\in S}$ be an $A$-sequence and 
let us fix $0\leqq p\leqq \infty$. 

\begin{df}[\bf Koszul cube]
\label{df:Koszul cube df}
A {\bf Koszul cube} $x$ associated with $\{f_s\}_{s\in S}$ 
is an $S$-cube in 
the category of finitely generated projective $A$-modules $\cP_A$ 
such that 
for each subset $T$ of $S$ and $k$ in $T$, 
$d^k_T$ is an injection and $\coker d^k_T$ is 
in $\cM_A^{f_k A}(1)$. 
We denote the full subcategory of $\Cub^S\cP_A$ 
consisting of 
those Koszul cubes 
associated with $\{f_s\}_{s\in S}$ by $\Kos_A^{\ff_S}$. 
Notice that if $S=\emptyset$, then 
$\Kos_A^{\ff_S}$ is just the category $\cP_A$. 
\end{df}

\begin{ex}
\label{ex:typ Kos cube}
The typical cube associated with $\{f_s\}_{s\in S}$ in 
\ref{Koszul cube def} is obviously 
a non-degenerate free Koszul cube associated with $\{f_s\}_{s\in S}$. 
\end{ex}

\sn
By \ref{ex:A-sequence} $\mathrm{(1)}$, 
we can get the following lemma easily. 

\begin{lem}[\bf Localization of Koszul cubes]
\label{lem:loc of Kos cube}
Let $x$ be a Koszul cube associated with an $A$-sequence 
$\{f_s\}_{s\in S}$ 
and $\pp$ a prime ideal in $A$. 
We put $T:=\{t\in S;f_t\in \pp\}$. 
Then\\ 
$\mathrm{(1)}$ 
$x_{\pp}$ is degenerate along the $t$-direction 
for any $t\in S\ssm T$.\\ 
$\mathrm{(2)}$ 
${x_{\pp}|}_T^{\emptyset}$ is a Koszul cube associated with 
the $A_{\pp}$-sequence $\{f_t\}_{t\in T}$.\\ 
\end{lem}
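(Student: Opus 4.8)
The plan is to deduce both assertions from the flatness of $A_\pp$ over $A$ — so that localization preserves injections and commutes with cokernels — together with Example~\ref{ex:A-sequence}~$\mathrm{(1)}$, which handles the regular-sequence part. No new idea is required; the content is bookkeeping with the defining conditions of a Koszul cube from Definition~\ref{df:Koszul cube df}, read through the exactness of localization.

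For assertion $\mathrm{(1)}$, I would fix $t\in S\ssm T$, so that $f_t\notin\pp$, and fix $U\in\cP(S\ssm\{t\})$. By flatness $(d^{x,t}_{U\coprod\{t\}})_\pp$ is a monomorphism with cokernel $(\coker d^{x,t}_{U\coprod\{t\}})_\pp$. Since $x$ is a Koszul cube, $\coker d^{x,t}_{U\coprod\{t\}}$ lies in $\cM_A^{f_tA}(1)$, hence is supported on $V(f_tA)$; as $f_t$ becomes a unit in $A_\pp$ that localization vanishes, so $(d^{x,t}_{U\coprod\{t\}})_\pp$ is also an epimorphism, i.e. an isomorphism. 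By Definition~\ref{df:degenerate cube} this says precisely that $x_\pp$ is degenerate along the $t$-direction.

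For assertion $\mathrm{(2)}$, I would first check that the subfamily $\{f_t\}_{t\in T}$ is itself an $A$-sequence: given any ordering of $T$, concatenating it with any ordering of $S\ssm T$ yields an ordering of $\{f_s\}_{s\in S}$, which is $A$-regular by hypothesis, and an initial segment of an $A$-regular sequence is $A$-regular. Since $f_t\in\pp$ for every $t\in T$, Example~\ref{ex:A-sequence}~$\mathrm{(1)}$ then gives that $\{f_t\}_{t\in T}$ is an $A_\pp$-sequence. It remains to verify that ${x_\pp}|_T^\emptyset$ meets the requirements of Definition~\ref{df:Koszul cube df}: by Notations~\ref{df:localization of cubes} and Definition~\ref{df:restriction} its vertex at $U\in\cP(T)$ is $(x_U)_\pp$, a finitely generated projective $A_\pp$-module, and for $k\in U\subset T$ its boundary morphism is $(d^{x,k}_U)_\pp$, which by flatness is injective with cokernel $(\coker d^{x,k}_U)_\pp$.

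The only step that is not purely formal — and hence the main, though mild, obstacle — is checking that $(\coker d^{x,k}_U)_\pp$ belongs to $\cM_{A_\pp}^{f_kA_\pp}(1)$. This is standard commutative algebra: the module is finitely generated over $A_\pp$; a length-$\leq 1$ projective $A$-resolution of $\coker d^{x,k}_U$ localizes to a projective $A_\pp$-resolution, so $\pd_{A_\pp}(\coker d^{x,k}_U)_\pp\leq 1$; and $\Supp(\coker d^{x,k}_U)_\pp=\Supp(\coker d^{x,k}_U)\cap\Spec A_\pp\subset V(f_kA)\cap\Spec A_\pp=V(f_kA_\pp)$. Once this is recorded, both assertions follow formally from exactness of localization.
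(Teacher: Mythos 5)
Your proof is correct and follows exactly the route the paper indicates: the paper gives no explicit proof beyond the one-line remark that the lemma follows ``easily'' from Example~\ref{ex:A-sequence}~$\mathrm{(1)}$, and your argument simply fills in the bookkeeping — flatness of $A_\pp$ to preserve injectivity and cokernels, vanishing of the localized cokernel when $f_t\notin\pp$ for assertion~$\mathrm{(1)}$, and the support/projective-dimension check plus the $A_\pp$-sequence fact for assertion~$\mathrm{(2)}$.
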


\begin{lemdf}[\bf Determinant of free Koszul cubes]
\label{lemdf:rank and determinant}
Let $x$ be a free Koszul $S$-cube associated with $\{f_s\}_{s\in S}$. 
Then\\ 
$\mathrm{(1)}$ 
All vertexes of $x$ have same rank.\\ 
$\mathrm{(2)}$ 
By virtue of $\mathrm{(1)}$, 
if we fix bases $\alpha$ of all vertexes of $x$, 
a boundary map 
$d_T^{k,x}$ of $x$ is represented by a square matrix $D_T^{k,x}$ 
and we write $\det_{\alpha} d_T^{k,x}$ or simply $\det d_T^{k,x}$ 
for $\det D_T^{k,x}$. 
For each $T\in \cP(S)$, $k\in T$, 
there is a unit element $u_{T,k}$ such that 
$\det d_S^{x,k}=u_{T,k}\times\det d_T^{x,k}$. 
We call the family $\{\det d_S^{x,k}\}_{k\in S}$ consisting of elements in $A$ 
{\bf determinant} of $x$ 
(with respect to the bases $\alpha$) 
and denote it by $\det_{\alpha} x$ or simply $\det x$. 
If we change bases of $x$ into others, 
then $\det x$ is changing up to units. 
\end{lemdf}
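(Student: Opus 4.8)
The plan is to deduce everything from a single structural fact: every boundary morphism of a Koszul cube becomes an isomorphism after inverting the corresponding ring element. Indeed, for $T\in\cP(S)$ and $k\in T$, the cokernel $\coker d^k_T$ lies in $\cM_A^{f_kA}(1)$ (Definition~\ref{df:Koszul cube df}), hence is supported on $V(f_kA)$, so $(\coker d^k_T)_{f_k}=0$; combined with injectivity of $d^k_T$ and exactness of localization this makes $(d^k_T)_{f_k}\colon (x_T)_{f_k}\isoto (x_{T\ssm\{k\}})_{f_k}$ an isomorphism of free $A_{f_k}$-modules. Since each $f_s$ is a non-zero divisor of $A$ (reorder the $A$-sequence so that $f_s$ comes first), $f_s$ is not nilpotent, so $A_{f_k}\neq 0$, and invariance of basis number forces $\rank x_T=\rank x_{T\ssm\{k\}}$. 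By Lemma~\ref{lem:genofcubemap}$\mathrm{(1)}$ every morphism $x(T\subset U)$ is a composite of boundary morphisms, so all vertices of $x$ are linked through boundary morphisms and hence share a common rank $r$; this is assertion $\mathrm{(1)}$, and it makes each $D^k_T$ an $r\times r$ matrix, so that $\det d^k_T$ is well defined.

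For assertion $\mathrm{(2)}$, observe first that each $\det d^k_T$ is a non-zero divisor of $A$: for $\pp\in\operatorname{Ass}A$ one has $f_k\notin\pp$, hence $(d^k_T)_{\pp}$ is an isomorphism and $\det d^k_T\notin\pp$; as $A$ is noetherian this means $\det d^k_T$ avoids every zero divisor. Moreover $\det d^k_T$ is a unit in $A_{f_k}$, being the determinant of the isomorphism $(d^k_T)_{f_k}$. Next I reduce the comparison $\det d^k_S=u_{T,k}\det d^k_T$ to the one-step case by adjoining the elements of $S\ssm T$ successively along a chain $T=T_0\subsetneq T_1\subsetneq\cdots\subsetneq T_m=S$ of subsets all containing $k$; so it suffices to prove that for $k\in T\subseteq S$ and $j\in S\ssm T$ one has $\det d^k_{T\cup\{j\}}=u\,\det d^k_T$ for some $u\in A^{\times}$. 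Applying the functoriality of $x$ to the commutative square of inclusions with vertices $x_{T\cup\{j\}}, x_{(T\ssm\{k\})\cup\{j\}}, x_T, x_{T\ssm\{k\}}$, horizontal arrows $d^k_{T\cup\{j\}}, d^k_T$ and vertical arrows $d^j_{T\cup\{j\}}, d^j_{(T\ssm\{k\})\cup\{j\}}$, and taking determinants of the two equal composites, yields
$$\det d^k_T\cdot\det d^j_{T\cup\{j\}}=\det d^j_{(T\ssm\{k\})\cup\{j\}}\cdot\det d^k_{T\cup\{j\}}.$$
Hence, in the total quotient ring of $A$, the element $u:=\det d^k_{T\cup\{j\}}/\det d^k_T$ equals $\det d^j_{T\cup\{j\}}/\det d^j_{(T\ssm\{k\})\cup\{j\}}$. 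Reading the first expression in $A_{f_k}$ shows $u\in A_{f_k}^{\times}$, and reading the second in $A_{f_j}$ shows $u\in A_{f_j}^{\times}$. Since $\{f_j,f_k\}$ is again an $A$-sequence (an initial segment of any permutation of an $A$-sequence is regular, in both orders), Lemma~\ref{lem:localize by reg seq} gives $A_{f_k}^{\times}\cap A_{f_j}^{\times}=A^{\times}$, so $u\in A^{\times}$ and $\det d^k_{T\cup\{j\}}=u\,\det d^k_T$ in $A$. Telescoping along the chain produces the unit $u_{T,k}$. The final clause is immediate: replacing the basis of each $x_T$ by $P_T\in GL_r(A)$ replaces $D^k_S$ by $P_{S\ssm\{k\}}D^k_SP_S^{-1}$, hence multiplies $\det d^k_S$ by the unit $\det P_{S\ssm\{k\}}\cdot(\det P_S)^{-1}$.

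The routine parts are the rank bookkeeping, the determinant identity read off the face square, and the change-of-basis remark. The crux is the passage from ``$u$ is a unit after inverting $f_k$'' and ``$u$ is a unit after inverting $f_j$'' to ``$u$ is a unit of $A$''; this is exactly where one needs the full strength of the $A$-sequence hypothesis (so that $\{f_j,f_k\}$ is regular in both orders) together with Lemma~\ref{lem:localize by reg seq}, and it is why the argument must pass through the total quotient ring rather than being carried out componentwise on $\Spec A$.
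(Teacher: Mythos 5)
Your argument is correct and follows essentially the same route as the paper: localizing at $f_k$ to turn $k$-direction boundary maps into isomorphisms between free modules of equal rank, reading off a determinant identity from the commuting face square, noting that the resulting ratio is a unit after inverting either $f_k$ or the adjoined element, and applying Lemma~\ref{lem:localize by reg seq} to conclude it is a unit of $A$, then telescoping. The paper compares $T$ with $T\ssm\{k'\}$ whereas you compare $T$ with $T\cup\{j\}$, and you make explicit the non-zero-divisor check for $\det d^k_T$ (which the paper takes for granted), but these are only presentational differences.
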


\begin{proof}[\bf Proof] 
First we prove assertion $\mathrm{(1)}$. 
For any $T\in \cP(S)$, $k\in T$, 
since $\coker d_T^{x,k}$ is in $\cM_A^{f_kA}(1)$, 
we learn that $d_T^{x,k}$ induces the isomorphism 
${(x_T)}_{f_k}\isoto{(x_{T\ssm\{k\}})}_{f_k}$. 
This isomorphism implies the equality 
$\rank x_T =\rank x_{T\ssm \{k\}}$. 
Hence we get the result. 
Next we prove assertion $\mathrm{(2)}$. 
We need only check that for any $T\in \cP(S)$ and a pair of 
distinct elements $k$, $k'\in T$, 
there is a unit element $v_{T,k,k'}$ in $A$ such that 
$\det d_T^{x,k}=v_{T,k,k'}\times\det d_{T\ssm\{k'\}}^{x,k}$. 
Since in the total quotient ring of $A$, we have 
the equality 
$\displaystyle{v_{T,k,k'}:=\frac{\det d_T^{x,k}}{\det d_{T\ssm\{k'\}}^{x,k}}
=\frac{\det d^{x,k'}_T}{\det d^{x,k'}_{T\ssm\{k\}}}}$, 
the element $v_{T,k,k'}$ is in $A_{f_k}^{\times}\cap A_{f_{k'}}^{\times}$.
Hence by \ref{lem:localize by reg seq}, 
we learn that $v_{T,k,k'}$ is the desired element.  
\end{proof}

\begin{cordf}[\bf Non-degenerate part of Koszul cubes]
\label{cordf:nondegpart}
Let $x$ be a Koszul $S$-cube. 
If for some $T\in\cP(S)$, $t\in T$, 
$d_T^{x,t}$ is an isomorphism, 
then $x$ is degenerate along the $t$-direction. 
We put 
$N_x:=\{s\in S;\text{$x$ is not degenerate along the $s$-direction.}\}$ 
and call $x_{\nondeg}:=x|_{N_x}^{\emptyset}$ 
the {\bf non-degenerate part} of $x$.
\end{cordf}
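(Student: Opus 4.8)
The plan is to unwind the statement as follows. Since every boundary morphism of a Koszul cube is injective, asserting that $x$ is degenerate along the $t$-direction is the same as asserting $\coker d_{U\coprod\{t\}}^{x,t}=0$ for every $U\in\cP(S\ssm\{t\})$, and as each such cokernel is a finitely generated $A$-module it suffices to check this after localizing at an arbitrary prime $\pp$ of $A$. So I would fix $\pp$, set $T_\pp:=\{s\in S\,;\,f_s\in\pp\}$, and invoke \ref{lem:loc of Kos cube}: $x_\pp$ is degenerate along every direction $s\notin T_\pp$, and $y:=x_\pp|_{T_\pp}^{\emptyset}$ is a \emph{free} Koszul cube over the local ring $A_\pp$ associated with the $A_\pp$-sequence $\{f_s\}_{s\in T_\pp}$. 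If $t\notin T_\pp$ there is nothing to prove at $\pp$, so one may assume $t\in T_\pp$.

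The next step is a bookkeeping observation: for every $U\in\cP(S\ssm\{t\})$, the map $d_{U\coprod\{t\}}^{x_\pp,t}$ is an isomorphism if and only if the boundary map $d_{(U\cap T_\pp)\coprod\{t\}}^{y,t}$ of $y$ is. I would obtain this by deleting the elements of $U\ssm T_\pp$ one at a time and using, at each deletion of such an $s$ (note $s\neq t$), the commuting square of $x_\pp$ in the two directions $t$ and $s$; the two $s$-direction edges of that square are isomorphisms because $x_\pp$ is degenerate along $s$, which forces the two $t$-direction edges to be isomorphisms simultaneously. Now feed in the hypothesis: $(d_T^{x,t})_\pp=d_T^{x_\pp,t}$ is an isomorphism (localization preserves isomorphisms), hence by the bookkeeping observation so is $d_{(T\cap T_\pp)\coprod\{t\}}^{y,t}$.

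Finally I would apply \ref{lemdf:rank and determinant}(2) to the free Koszul cube $y$ over the local ring $A_\pp$: all the determinants $\det d_V^{y,t}$, for $V\in\cP(T_\pp)$ with $t\in V$, agree up to a unit of $A_\pp$, and a square matrix over a commutative ring is invertible exactly when its determinant is a unit. Since $d_{(T\cap T_\pp)\coprod\{t\}}^{y,t}$ is an isomorphism, its determinant is a unit, hence so is every $\det d_V^{y,t}$, hence every $d_V^{y,t}$ is an isomorphism; running the bookkeeping observation backwards gives that $d_{U\coprod\{t\}}^{x_\pp,t}$ is an isomorphism for every $U$. As $\pp$ was arbitrary, $\coker d_{U\coprod\{t\}}^{x,t}=0$ for every $U$, that is, $x$ is degenerate along the $t$-direction; the remainder of the statement is then just a definition (and $x_{\nondeg}=x|_{N_x}^{\emptyset}$ is genuinely non-degenerate by what was just proved). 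The only step with any friction is the bookkeeping observation of the middle paragraph — matching boundary maps of $x_\pp$ against those of its non-degenerate restriction $y$ — since the two invoked lemmas carry all the real content.
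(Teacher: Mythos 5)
Your argument is correct and follows the paper's proof closely: localize at a prime $\pp$, dispose of the case $f_t\notin\pp$, pass to the free non-degenerate Koszul cube $(x_\pp)|_{T_\pp}^{\emptyset}$, and invoke \ref{lemdf:rank and determinant}(2). The one place you go beyond the paper is the ``bookkeeping observation'' -- the paper simply says ``replacing $A$, $x$ with $A_\pp$, $(x_\pp)|_V$'' and then applies the determinant lemma, silently identifying boundary maps of $x_\pp$ with those of its restriction; your degenerate-square argument (deleting directions in $U\ssm T_\pp$ one at a time) is exactly the justification that reduction requires, so you have filled a small gap rather than taken a different route.
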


\begin{proof}[\bf Proof]
We need to check that 
for any $U\in\cP(S)$ such that $t\in U$, 
$d^{x,t}_U$ is an isomorphism. 
Let us notice that the property of isomorphisms between modules 
is a local property. 
We fix a prime ideal $\pp$ in $A$ and 
put $V:=\{s\in S;f_s\in\pp \}$.
By \ref{lem:loc of Kos cube} and replacing 
$A$, $x$ with $A_{\pp}$, ${(x_{\pp})|}_V$ respectively, 
we shall assume that $A$ is local and $x$ is free Koszul cube. 
Then by \ref{lemdf:rank and determinant} $\mathrm{(2)}$, 
$\det d_T^{x,t}$ is equal to $\det d_U^{x,t}$ 
up to a unit element. 
Hence we get the result.
\end{proof}

\begin{lem}
\label{lem:morphism between free modules}
Let $\phi:R^{\oplus n} \to R^{\oplus n}$ be a homomorphism of $R$-modules 
and assume that 
$\coker \phi$ is annihilated by a non-zero divisor $g \in R$. 
Then\\
$\mathrm{(1)}$ 
There exist an element $b \in R$ and a non-negative integer $m$ 
such that we have the equality $\det \phi \times b=g^m$.\\
$\mathrm{(2)}$ 
$\det \phi$ is a non-zero divisor in $R$.\\
$\mathrm{(3)}$ 
$\phi$ is an injection.
\end{lem}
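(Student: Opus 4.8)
The plan is to fix a basis of $R^{\oplus n}$, let $\Phi$ be the square matrix representing $\phi$, and combine the hypothesis on $\coker\phi$ with the adjugate identity $\adj(\Phi)\,\Phi=\Phi\,\adj(\Phi)=(\det\Phi)\,I_n$, which holds over any commutative ring.

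First I would prove $(1)$. Since $g$ annihilates $\coker\phi$, we have $gR^{\oplus n}\subseteq\im\phi$, so for each standard basis vector $e_i$ there is $v_i\in R^{\oplus n}$ with $ge_i=\phi(v_i)$. Assembling the $v_i$ as the columns of a matrix $B$ gives the identity $\Phi B=gI_n$, and taking determinants yields $\det\Phi\cdot\det B=g^n$. Hence $b:=\det B$ and $m:=n$ witness assertion $(1)$. Next, for $(2)$, suppose $c\det\Phi=0$ for some $c\in R$; multiplying the equality just obtained by $c$ gives $cg^{n}=c\det\Phi\cdot\det B=0$, and since $g$ is a non-zero divisor so is $g^{n}$, forcing $c=0$. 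Thus $\det\phi=\det\Phi$ is a non-zero divisor. Finally, for $(3)$, if $\phi(v)=0$ then applying $\adj(\Phi)$ to $\Phi v=0$ gives $(\det\Phi)\,v=\adj(\Phi)\,\Phi v=0$, so every coordinate of $v$ is killed by the non-zero divisor $\det\Phi$ by $(2)$, whence $v=0$ and $\phi$ is injective.

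The argument is entirely routine and I do not anticipate a genuine obstacle; the only points requiring a little care are the logical ordering — $(2)$ must be derived before $(3)$, since injectivity relies on $\det\Phi$ being a non-zero divisor — and the elementary remark that a power of a non-zero divisor is again a non-zero divisor.
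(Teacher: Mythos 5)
Your proof is correct, and for assertion $\mathrm{(1)}$ it takes a genuinely different and more elementary route than the paper. You lift each scaled basis vector $ge_i$ through $\phi$ to assemble a matrix $B$ with $\Phi B = gI_n$ and then take determinants, obtaining $\det\Phi\cdot\det B=g^n$ directly over $R$. The paper instead localizes at $g$: it notes that $\phi_g$ is a surjective endomorphism of the finitely generated free $R_g$-module $R_g^{\oplus n}$, hence an isomorphism (so $\det\phi_g\in R_g^{\times}$), and then appeals to its Lemma~\ref{lem:localize by reg seq} characterizing $R_g^{\times}\cap R$ inside the total quotient ring to extract the relation $\det\phi\cdot b = g^m$. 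Your construction is self-contained --- it avoids both the auxiliary localization lemma and the (unstated) fact that a surjective endomorphism of a finitely generated module over a commutative ring is an isomorphism --- and it also pins down the explicit exponent $m=n$, whereas the paper's argument produces an unspecified $m$. Assertions $\mathrm{(2)}$ and $\mathrm{(3)}$ are handled the same way in both: $\mathrm{(2)}$ because $\det\phi$ divides the non-zero divisor $g^m$, and $\mathrm{(3)}$ via the adjugate identity, with $\mathrm{(2)}$ needed before $\mathrm{(3)}$ exactly as you flag.
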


\begin{proof}[\bf Proof] 
Localizing by $g$, we get the surjection 
$R_g^{\oplus n} \overset{\phi_g}{\rdef} R_g^{\oplus n}$. 
Since in general, a surjective homomorphisms between 
finitely generated free modules with same ranks are isomorphisms, 
it turns out that $\det \phi_g$ is in $R_g^{\times}$. 
Therefore we get assertion $\mathrm{(1)}$ by \ref{lem:localize by reg seq}. 
Since $g$ is a non-zero divisor, $\det \phi$ is also a non-zero divisor. 
Hence we get assertion $\mathrm{(2)}$. 
Let $\phi^{\ast}$ be the adjugate of $\phi$, namely 
$\phi^{\ast}:R^{\oplus n} \to R^{\oplus n}$ is an $R$-module homomorphism 
such that $\phi^{\ast}\phi=\det \phi \id_{R^{\oplus n}}$. 
Since $\det \phi \id_{R^{\oplus n}}$ is an injection, 
we conclude that $\phi$ is also an injection.
\end{proof}

\sn
Recall the definition of non-degenerate cubes from \ref{df:degenerate cube}.

\begin{prop}
\label{prop:det of nondeg free Koszul cubes}
For any non-degenerate free Koszul $S$-cube $x$, 
$\det x$ is an $A$-sequence. 
\end{prop}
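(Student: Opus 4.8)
The plan is to deduce the statement from the two lemmas already at our disposal: Lemma~\ref{lem:morphism between free modules}, which converts an annihilator of a cokernel into a divisibility relation for determinants, and Lemma~\ref{lem:A-seq lem}, which lets one pass from an $A$-sequence to a sequence of its non-unit factors. First I would fix bases of all the vertices of $x$; by \ref{lemdf:rank and determinant} they have the same rank, so each top boundary map $d_S^{x,k}$ ($k\in S$) is represented by a square matrix and $\det d_S^{x,k}$ is defined. By the definition of a Koszul cube (\ref{df:Koszul cube df}) and \ref{nt:cM_A^f(p)}, the module $\coker d_S^{x,k}$ is a finitely generated $A$-module with support contained in $V(f_kA)$, hence, $A$ being noetherian, it is annihilated by $f_k^{N_k}$ for some $N_k\geq 0$. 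Since $f_k$ is part of an $A$-sequence it is a non-zero divisor, so $f_k^{N_k}$ is a non-zero divisor, and Lemma~\ref{lem:morphism between free modules}(1) applied to $\phi=d_S^{x,k}$ and $g=f_k^{N_k}$ yields $b_k\in A$ and $m_k\geq 0$ with $\det d_S^{x,k}\cdot b_k=f_k^{N_k m_k}$.

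Next I would invoke the non-degeneracy hypothesis, which is used precisely to rule out units. Since $x$ is non-degenerate (\ref{df:degenerate cube}), $d_S^{x,k}$ is not an isomorphism; as it is a map between free modules of equal rank, $\det d_S^{x,k}$ cannot be a unit, for otherwise the adjugate relation $\phi^{\ast}\phi=\det\phi\cdot\id$ would provide an inverse of $d_S^{x,k}$. In particular the exponent $M_k:=N_k m_k$ must be at least $1$ (were it $0$, the relation above would force $\det d_S^{x,k}$ to be a unit). Thus for every $k\in S$ we obtain a factorization $f_k^{M_k}=\det d_S^{x,k}\cdot b_k$ in $A$ with $M_k\geq 1$ and with $\det d_S^{x,k}$ not a unit.

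Finally I would assemble these facts. By \ref{ex:A-sequence}(2) the family $\{f_k^{M_k}\}_{k\in S}$ is again an $A$-sequence, and it factors term by term as $f_k^{M_k}=\det d_S^{x,k}\cdot b_k$ with each $\det d_S^{x,k}$ a non-unit; hence Lemma~\ref{lem:A-seq lem} shows that $\{\det d_S^{x,k}\}_{k\in S}=\det x$ is an $A$-sequence, as claimed (if one wishes, one applies Lemma~\ref{lem:A-seq lem} separately to each ordering of $S$, using that an $A$-sequence remains regular under permutation). I do not anticipate a genuine obstacle here: the proposition is essentially the concatenation of Lemmas~\ref{lem:morphism between free modules} and \ref{lem:A-seq lem}, and the only point demanding attention is verifying the hypothesis of Lemma~\ref{lem:A-seq lem} that each $\det d_S^{x,k}$ is a non-unit --- which is exactly where the non-degeneracy of $x$ enters.
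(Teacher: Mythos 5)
Your proof is correct and follows essentially the same route as the paper's: non-degeneracy forces each $\det d_S^{x,k}$ to be a non-unit, Lemma~\ref{lem:morphism between free modules} shows it divides a positive power of $f_k$, and then \ref{ex:A-sequence}~$\mathrm{(2)}$ together with Lemma~\ref{lem:A-seq lem} give that $\det x$ is an $A$-sequence. (The final parenthetical about reapplying the lemma to each ordering is superfluous, since Lemma~\ref{lem:A-seq lem} already outputs an $A$-sequence, i.e.\ a sequence regular in every order.)
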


\begin{proof}[\bf Proof]
Since $x$ is non-degenerate, for each $s\in S$, $\det d_S^{x,s}$ is not 
a unit element. 
By \ref{lem:morphism between free modules}, 
there are a family of positive integers $\{m_s\}_{s\in S}$ such that 
$\det d_S^{x,s}$ is a divisor of $f_s^{m_s}$ for each $s\in S$. 
Therefore by \ref{ex:A-sequence} $\mathrm{(2)}$ and \ref{lem:A-seq lem}, 
$\det x$ forms an $A$-sequence.
\end{proof}

\begin{thm}
\label{thm:TotstrictKos is 0sph}
For any non-degenerate free Koszul $S$-cube $x$, 
$\Tot x$ is $0$-spherical.
\end{thm}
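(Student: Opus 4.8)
The plan is to prove directly that $\Homo_k(\Tot x)=0$ for every $k\geq 1$. Write $n:=\#S$; the case $n=0$ is trivial, so assume $n\geq 1$, and note that $\Tot x$ is a bounded complex of finitely generated projective (hence flat) $A$-modules concentrated in homological degrees $0,\ldots,n$, so each $\Homo_k(\Tot x)$ is a finitely generated $A$-module and vanishes for $k<0$. It will turn out that neither freeness nor non-degeneracy of $x$ is used: the argument only needs that $\coker d^k_T$ is supported on $V(f_kA)$ and that $\Tot x$ is a bounded complex of flats of length $n$.

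\emph{Step 1: each $\Homo_k(\Tot x)$ is supported on $V(\ff_S)$.} Fix $s\in S$. Since $\coker d^{s}_{T}\in\cM_A^{f_sA}(1)$ is supported on $V(f_sA)$, it localizes to $0$ at $\{f_s^m\}$, so $(d^{s}_{T})_{f_s}$ is an isomorphism for every $T\in\cP(S)$ with $s\in T$; that is, $x_{f_s}$ is degenerate along the $s$-direction (Definition~\ref{df:degenerate cube}). Regarding $x_{f_s}$ as the $1$-cube $[\,x_{f_s}|_{S\ssm\{s\}}^{\{s\}}\onto{d}x_{f_s}|_{S\ssm\{s\}}^{\emptyset}\,]$ in $\Cub^{S\ssm\{s\}}$, the morphism $d$ has all components isomorphisms, so $\Tot d$ is an isomorphism of complexes; by $\Tot x_{f_s}\cong\Cone(\Tot d)$ (exactly as in the proof of \ref{prop:homology of admissible cubes}) the complex $(\Tot x)_{f_s}\cong\Tot x_{f_s}$ is contractible. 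Hence $\Homo_k(\Tot x)_{f_s}=0$ for all $k$, and since this holds for every $s\in S$, the finitely generated module $\Homo_k(\Tot x)$ is supported on $\bigcap_{s\in S}V(f_sA)=V(\ff_S)$; in particular it is annihilated by a power of each $f_s$.

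\emph{Step 2: the contradiction.} Suppose $\Homo_k(\Tot x)\neq 0$ for some $k\geq 1$ and let $j\geq 1$ be the largest such index (it exists since $\Tot x$ is bounded); set $M:=\Homo_j(\Tot x)\neq 0$. Fix an enumeration $S=\{1,\ldots,n\}$, choose $N$ with $f_i^NM=0$ for all $i$, and put $\mathbf g:=(f_1^N,\ldots,f_n^N)$, which is again an $A$-sequence by \ref{ex:A-sequence}~(2); so the Koszul complex $\kos(\mathbf g)$ is a finite free resolution of $A/\mathbf g$, whence $\pd_AA/\mathbf g=n$, and $\mathbf g\cdot M=0$. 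Form the first-quadrant double complex $C_{p,q}:=\kos(\mathbf g)_p\otimes_A(\Tot x)_q$ ($0\leq p,q\leq n$) with its evident differentials, and let $D$ be its total complex. Taking homology of $C$ first in the $p$-direction: as each $(\Tot x)_q$ is flat this kills everything above the row $p=0$ and leaves $A/\mathbf g\otimes_A\Tot x$, a complex living in degrees $0,\ldots,n$, so $\Homo_m(D)=0$ for $m>n$. Taking homology first in the $q$-direction: as each $\kos(\mathbf g)_p$ is free one obtains $\kos(\mathbf g)_\bullet\otimes_A\Homo_q(\Tot x)$, and then in the $p$-direction the spectral sequence $E^2_{p,q}=\operatorname{Tor}^A_p(A/\mathbf g,\Homo_q(\Tot x))\Rightarrow\Homo_{p+q}(D)$. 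Here $E^2_{p,q}=0$ for $p>n$ (since $\pd_AA/\mathbf g=n$) and for $q>j$ (maximality of $j$), so in total degree $n+j$ the sole possibly nonzero term is $E^2_{n,j}$, and every differential into or out of the $(n,j)$-spot has an endpoint in a column $p>n$ or a row $q>j$, hence $E^\infty_{n,j}=E^2_{n,j}$. Finally, because $\mathbf g$ annihilates $M$, the complex $\kos(\mathbf g)\otimes_AM$ has zero differentials, so $E^2_{n,j}=\operatorname{Tor}^A_n(A/\mathbf g,M)\cong\Lambda^nA^n\otimes_AM\cong M\neq 0$. Thus $\Homo_{n+j}(D)\neq 0$, contradicting $\Homo_m(D)=0$ for $m>n$ since $n+j>n$. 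Therefore $\Homo_k(\Tot x)=0$ for all $k\geq 1$, i.e.\ $\Tot x$ is $0$-spherical.

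There is no single hard step; the care is in the bookkeeping. The main points to get right are: the vanishing of all spectral-sequence differentials at the $(n,j)$-corner (so that it is a permanent cycle not hit by any boundary), the elementary Koszul computation $\operatorname{Tor}^A_n(A/\mathbf g,M)\cong M$, which holds precisely because $\mathbf g$ kills $M$, and the observation that localizing at $f_s$ turns the $s$-direction of a Koszul cube into the cone of an isomorphism of total complexes. A more pedestrian alternative is induction on $\#S$: writing $x=[\,x|_{S\ssm\{s\}}^{\{s\}}\to x|_{S\ssm\{s\}}^{\emptyset}\,]$ and using $\Tot x\cong\Cone$ of the induced map of total complexes, the long exact homology sequence together with the inductive hypothesis kills $\Homo_k(\Tot x)$ for $k\geq 2$ and reduces the remaining claim $\Homo_1(\Tot x)=0$ to the same $\operatorname{Tor}$-vanishing input as above; the non-inductive argument seems cleaner.
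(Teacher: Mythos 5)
Your proof is correct, and it takes a genuinely different route from the paper's. The paper verifies the Buchsbaum--Eisenbud acyclicity criterion (\ref{thm:BEthm}) directly: it computes the ranks $r_i$ of $\Tot\Typ(\ff_S)$, notes that the usual Koszul complex $\Tot\Typ(\ff_S)$ is $0$-spherical, and then by an inspection of minors shows the Fitting-ideal inclusion $I_{r_i}(d_i^{\Tot\Typ(\ff_S)})\subset I_{mr_i}(d_i^{\Tot x})$ (where $m=\rank x$), so the required grade bounds transfer from $\Typ(\ff_S)$ to $x$. You avoid Fitting ideals entirely: you first use the localization argument (the $s$-direction of $x_{f_s}$ is a cone of an isomorphism) to pin down $\Supp\Homo_k(\Tot x)\subset V(\ff_S)$, and then run the two spectral sequences of the double complex $\kos(\mathbf g)\otimes_A\Tot x$ with $\mathbf g=(f_1^N,\ldots,f_n^N)$ chosen to kill the top nonvanishing homology $M=\Homo_j(\Tot x)$, deriving $\Homo_{n+j}(D)\cong\Tor_n^A(A/\mathbf g,M)\cong M\neq 0$ while $\Homo_m(D)=0$ for $m>n$. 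Both proofs ultimately rest on the same depth-sensitivity phenomenon, but the trade-offs differ: the paper's route is shorter once \ref{thm:BEthm} is admitted and hugs closely the free-module hypothesis (matrices, minors, ranks), whereas yours is self-contained modulo standard spectral-sequence facts, dispenses with the ``inspection'' of minors, and, as you note, works for projective rather than just free Koszul cubes and does not use non-degeneracy --- which slightly streamlines the subsequent reduction in \ref{cor:Kosisadm}. Your bookkeeping at the $(n,j)$ corner of the $E^2$ page (no incoming or outgoing differentials) and the Koszul computation $\Tor_n^A(A/\mathbf g,M)\cong M$ for $\mathbf g M=0$ are both correct.
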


\sn
To prove the theorem, 
we need to use the Buchsbaum-Eisenbud Theorem \ref{thm:BEthm} below. 

\begin{df}[\bf Fitting ideal]
\label{df:idealminor}
Let $U$ be an $m \times n$ matrix over $A$ 
where $m$, $n$ are positive integers. 
For $t$ in $(\min(m,n)]$ we then denote by $I_t(U)$ the ideal generated by 
the $t$-minors of $U$, that is, 
the determinant of $t\times t$ sub-matrices of $U$.\\ 
For an $A$-module homomorphism 
$\phi:M\to N$ between free $A$-modules of finite rank, 
let us choose a matrix representation $U$ 
with respect to bases of $M$ and $N$. 
One can easily prove that the ideal $I_t(U)$ 
only depends on $\phi$. 
Therefore we put $I_t(\phi):=I_t(U)$ 
and call it the {\bf $t$-th Fitting ideal associated with $\phi$}. 
\end{df}

\begin{nt}[\bf Grade]
\label{nt:grade}
For an ideal $I$ in $A$, we put 
$$S_I:=\{n;\text{There are $f_1,\cdots, f_n\in I$ which forms an $A$-regular sequence.}\}, \text{ and}$$
$$\grade I:=
\begin{cases}
0 & \text{if $S_I=\emptyset$}\\
\max S_I & \text{if $S_I$ is a non-empty finite set}\\ 
+\infty & \text{if $S_I$ is an infinite set}
\end{cases}
.$$
\end{nt}

\begin{thm}[\bf Buchsbaum-Eisenbud \cite{BE73}]
\label{thm:BEthm}
For a complex of free $A$-modules of finite rank.
$$F_{\bullet}:0 \to F_s \onto{\phi_s} F_{s-1} \onto{\phi_{s-1}} \to \cdots \to 
F_1 \onto{\phi_1} F_0 \to 0,$$ 
set $r_i=\overset{s}{\underset{j=i}{\sum}}(-1)^{j-i} \rank F_j$. 
Then the following are equivalent:\\
$\mathrm{(1)}$ $F_{\bullet}$ is $0$-spherical.\\
$\mathrm{(2)}$ $\grade I_{r_i}(\phi_i) \geqq i$ for any $i$ in $(s]$.
\end{thm}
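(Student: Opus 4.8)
The plan is to deduce the equivalence from the classical machinery --- McCoy's rank criterion, the grade--depth formula, and the Peskine--Szpiro acyclicity lemma --- with an induction on the length $s$ of $F_{\bullet}$; for one direction I also localize and invoke the Auslander--Buchsbaum formula. The case $s=0$ is vacuous, so assume $s\geq 1$, and recall the notion of $0$-spherical complex from \ref{df:spherical complex} and of Fitting ideal from \ref{df:idealminor}. I would first record three standard inputs. (a) \emph{McCoy's criterion}: for a homomorphism $\psi\colon P\to Q$ of finite free $A$-modules with $\rank P=r$, the map $\psi$ is a monomorphism if and only if $\grade I_r(\psi)\geq 1$, and $\psi$ is a split monomorphism if and only if $I_r(\psi)=A$; this is proved by localizing at the associated primes of $A$ and using prime avoidance. (b) \emph{The grade--depth formula}: $\grade(I,A)=\inf\{\depth A_{\pp}:\pp\in\Spec A,\ I\subseteq\pp\}$; in particular $\grade(IA_{\pp},A_{\pp})\geq\grade(I,A)$ whenever $I\subseteq\pp$. (c) \emph{The Peskine--Szpiro acyclicity lemma}: over a Noetherian local ring, a complex $0\to G_s\to\cdots\to G_0\to 0$ of finite modules with $\depth G_i\geq i$ for $1\leq i\leq s$ and with each $H_i(G_{\bullet})$ ($1\leq i\leq s$) either zero or of depth zero is acyclic in positive degrees. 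I would also note at the outset that $\mathrm{(2)}$ forces $\rank\phi_i=r_i$ for every $i$: localizing at a minimal prime $\mathfrak{q}$ of $A$, each $I_{r_i}(\phi_i)$ is not contained in $\mathfrak{q}$ (its grade is positive), so $\rank(\phi_i\otimes\kappa(\mathfrak{q}))\geq r_i$, while $\phi_i\phi_{i+1}=0$ gives $\rank(\phi_i\otimes\kappa(\mathfrak{q}))+\rank(\phi_{i+1}\otimes\kappa(\mathfrak{q}))\leq\rank F_i=r_i+r_{i+1}$, and a descending induction from $i=s$ pins each of these to $r_i$.

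\emph{Proof that $\mathrm{(2)}$ implies $\mathrm{(1)}$.} Assume $\mathrm{(2)}$ and suppose $N:=\bigoplus_{i\geq 1}H_i(F_{\bullet})\neq 0$; choose $\pp$ minimal in $\Supp N$ and set $R=A_{\pp}$. By (b), $(F_{\bullet})_{\pp}$ still satisfies $\mathrm{(2)}$ over $R$, and by minimality of $\pp$ each $H_i((F_{\bullet})_{\pp})$ is zero or of depth zero over $R$. I claim this is impossible, arguing by induction on $s$. If $(I_{r_i}(\phi_i))_{\pp}=R$ for some $1\leq i\leq s$, take the largest such $i$; then $\phi_{i,\pp}$ is a split monomorphism by (a), so the relation $\phi_i\phi_{i+1}=0$ forces $\phi_{i+1,\pp}=0$, and since $(I_{r_{i+1}}(\phi_{i+1}))_{\pp}$ cannot have positive grade over $R$ unless it is the unit ideal (that is, unless $r_{i+1}=0$), maximality of $i$ forces $i=s$; splitting $F_{s-1,\pp}=\phi_{s,\pp}(F_{s,\pp})\oplus C$ then exhibits $(F_{\bullet})_{\pp}$ as the direct sum of a contractible complex and a complex of length $s-1$ over $R$, which inherits the ranks $r_i$, the Fitting ideals and hence the hypothesis $\mathrm{(2)}$, so induction applies. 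Otherwise $I_{r_i}(\phi_i)\subseteq\pp$ for every $i$, whence $\depth R\geq\grade I_{r_i}(\phi_i)\geq i$ for all $1\leq i\leq s$ by (b); then the free modules $(F_i)_{\pp}$ satisfy $\depth(F_i)_{\pp}=\depth R\geq i$, so (c) applies and $(F_{\bullet})_{\pp}$ is acyclic in positive degrees, contradicting the choice of $\pp$.

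\emph{Proof that $\mathrm{(1)}$ implies $\mathrm{(2)}$.} Assume $F_{\bullet}$ is $0$-spherical, fix $i$, and fix a prime $\pp$ with $I_{r_i}(\phi_i)\subseteq\pp$; by (b) it suffices to prove $\depth A_{\pp}\geq i$. Now $(F_{\bullet})_{\pp}$ is a free resolution over $R:=A_{\pp}$ of $M:=\coker(\phi_{1,\pp})$, of length $\leq s$. Here $M\neq 0$: otherwise $(F_{\bullet})_{\pp}$ would be split exact and $\phi_{i,\pp}$ a split monomorphism onto a free summand of rank $r_i$, giving $(I_{r_i}(\phi_i))_{\pp}=R$, contrary to $I_{r_i}(\phi_i)\subseteq\pp$. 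If $\pd_{R}M<i$, then $(F_{\bullet})_{\pp}$ is obtained from a minimal free resolution of length $<i$ by adding a split-exact complex, so again $\phi_{i,\pp}$ would be a split monomorphism onto a free summand of rank $r_i$ and $(I_{r_i}(\phi_i))_{\pp}=R$, a contradiction; hence $\pd_{R}M\geq i$. By the Auslander--Buchsbaum formula, $\depth R=\pd_{R}M+\depth_{R}M\geq i$, as required.

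\emph{The main obstacle.} The delicate point is the splitting-off step in the implication $\mathrm{(2)}\Rightarrow\mathrm{(1)}$: one must check carefully that once a Fitting ideal becomes the unit ideal after localizing, the localized complex genuinely decomposes into a contractible summand and a strictly shorter complex whose invariants $r_i$, Fitting ideals and grade bounds are all inherited, so that the induction on $s$ closes; keeping this bookkeeping consistent (and handling the degenerate cases where some $F_i$ or some $r_i$ vanishes) is where the real work lies. The remaining ingredients --- McCoy's criterion, the grade--depth formula, the Peskine--Szpiro acyclicity lemma, the decomposition of non-minimal free resolutions over a local ring, and Auslander--Buchsbaum --- are standard and can simply be quoted.
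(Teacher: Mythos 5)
The paper itself gives no proof of \ref{thm:BEthm}: it is quoted from \cite{BE73} and used as a black box in the proof of \ref{thm:TotstrictKos is 0sph}, so your proposal is to be measured against the standard literature proof, and the route you choose (McCoy, the grade--depth formula, the Peskine--Szpiro acyclicity lemma, Auslander--Buchsbaum, induction on $s$ with a splitting-off step) is exactly that standard route. Your direction $(1)\Rightarrow(2)$ is essentially correct; the only blemish is verbal: when a trivial summand of the localized resolution has a component in degree $i+1$, the map $\phi_{i,\pp}$ is \emph{not} a split monomorphism, but its matrix in adapted bases still contains an identity block of size exactly $r_i$, so the conclusion you actually use, $(I_{r_i}(\phi_i))_{\pp}=A_{\pp}$, holds and the Auslander--Buchsbaum argument goes through.

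The direction $(2)\Rightarrow(1)$, however, has a genuine gap in the branch where some Fitting ideal becomes the unit ideal at $\pp$. You take the largest $i$ with $(I_{r_i}(\phi_i))_{\pp}=A_{\pp}$ and claim $\phi_{i,\pp}$ is a split monomorphism ``by (a)''. McCoy's criterion, as you yourself state it, concerns the ideal of minors of size equal to the rank of the \emph{source}, namely $I_{\rank F_i}(\phi_i)=I_{r_i+r_{i+1}}(\phi_i)$; knowing that $I_{r_i}(\phi_i)$ is a unit ideal locally gives no injectivity whatsoever when $r_{i+1}>0$ (for an exact complex and $i<s$ one has $\ker\phi_{i,\pp}=\im\phi_{i+1,\pp}\neq 0$ in general). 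Since your derivation of $i=s$ relies on this injectivity (to force $\phi_{i+1,\pp}=0$), the argument is circular: the McCoy step is legitimate only once $i=s$ is known, because only then does $r_i=\rank F_i$. The repair is to change the dichotomy and case only on the top map. If $I_{r_s}(\phi_s)\subseteq\pp$, then $\depth A_{\pp}\geq\grade I_{r_s}(\phi_s)\geq s$, so $\depth (F_i)_{\pp}=\depth A_{\pp}\geq s\geq i$ for every $i$ and Peskine--Szpiro applies irrespective of whether some lower Fitting ideals are units at $\pp$; if $I_{r_s}(\phi_s)\not\subseteq\pp$, then $r_s=\rank F_s$, McCoy genuinely applies to $\phi_{s,\pp}$, and your splitting-off induction (whose bookkeeping of ranks, Fitting ideals and grades you describe correctly, and which is indeed the delicate point you flag) closes the argument. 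With that correction your proposal becomes the standard proof of Buchsbaum--Eisenbud.
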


\begin{proof}[\bf Proof of Theorem~\ref{thm:TotstrictKos is 0sph}]
Without loss of generality, 
we may assume that $S=(n]$ and 
$x$ is a free non-degenerate Koszul cube associated with 
an $A$-sequence $f_1,\cdots,f_n$. 
We put $m=\rank x$ and 
$$r_i=\overset{n}{\underset{j=i}{\sum}}(-1)^{j-i} \rank 
\Tot\Typ(\ff_S)_j 
=\overset{n}{\underset{j=i}{\sum}}(-1)^{j-i} 
\begin{pmatrix}
n\\
j
\end{pmatrix}.$$
Then we have 
$$\overset{n}{\underset{j=i}{\sum}}(-1)^{j-i} \rank 
\Tot x=mr_i.$$
It is well-known that 
in this case, 
the complex $\Tot \Typ(\ff_S)$ 
is $0$-spherical. (See \cite[Corollary~4.5.4]{Wei94}). 
Therefore by \ref{thm:BEthm}, 
it follows that $\grade I_{r_i}(d_i^{\Tot\Typ(\ff_S)}) \geq i$ 
for any $i$ in $(n]$. 
Now inspection shows that for each $i\in(n]$, 
$I_{r_i}(d_i^{\Tot\Typ(\ff_S)})\subset I_{mr_i}(d_i^{\Tot x})$. 
Therefore we use Theorem~\ref{thm:BEthm} again, 
it turns out that $x$ is $0$-spherical. 
\end{proof}

\begin{cor}
\label{cor:Kosisadm}
A Koszul cube is admissible.
\end{cor}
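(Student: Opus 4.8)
My plan is to induct on $\#S$ and, via Corollary~\ref{cor:charof adm cube}, reduce the whole statement to the assertion that $\Tot x$ is $0$-spherical for an arbitrary Koszul cube $x$; this last point is local on $\Spec A$, and passing to a localization turns $x$ into a \emph{free} cube, so that Theorem~\ref{thm:TotstrictKos is 0sph} applies.

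In detail: if $\#S=0$ the cube is admissible by convention, so let $\#S\geq 1$ and let $x$ be a Koszul cube associated with the $A$-sequence $\{f_s\}_{s\in S}$. Each face $x|_{S\ssm\{k\}}^{\{k\}}$, $x|_{S\ssm\{k\}}^{\emptyset}$ is again a Koszul cube — its boundary morphisms are boundary morphisms of $x$, hence injective with cokernel in the appropriate $\cM_A^{f_j A}(1)$ — associated with the $A$-sequence $\{f_s\}_{s\in S\ssm\{k\}}$, so all faces of $x$ are admissible by the inductive hypothesis. By Corollary~\ref{cor:charof adm cube} it therefore remains to check that $\Tot x$ is $0$-spherical, i.e. that $\Homo_k(\Tot x)=0$ for every $k\geq 1$. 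Since $A$ is noetherian each $\Homo_k(\Tot x)$ is finitely generated, so it is enough to see that its localization $\Homo_k(\Tot x)_{\pp}\cong\Homo_k(\Tot(x_{\pp}))$ vanishes for every prime $\pp$, localization being exact and commuting with the finite direct sums defining $\Tot$.

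So fix $\pp$ and set $T:=\{s\in S;\ f_s\in\pp\}$. If $x_{\pp}$ is degenerate along some direction $t$, then, regarding $x_{\pp}$ as a $\{t\}$-cube $[y_1\onto{d}y_0]$ of $(S\ssm\{t\})$-cubes, $d$ is an isomorphism of cubes, and the identification $\Tot(x_{\pp})\isoto\Cone(\Tot d)$ used in the proof of \ref{prop:homology of admissible cubes} exhibits $\Tot(x_{\pp})$ as the cone of an isomorphism, hence as an acyclic complex. If $x_{\pp}$ is non-degenerate, then $S\ssm T=\emptyset$ by Lemma~\ref{lem:loc of Kos cube}(1), so $x_{\pp}=x_{\pp}|_S^{\emptyset}$ is, by Lemma~\ref{lem:loc of Kos cube}(2), a Koszul cube over the local ring $A_{\pp}$; its vertices are finitely generated projective over a local ring, hence free, so $x_{\pp}$ is a non-degenerate free Koszul cube and $\Tot(x_{\pp})$ is $0$-spherical by Theorem~\ref{thm:TotstrictKos is 0sph}. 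Either way $\Homo_k(\Tot(x_{\pp}))=0$ for $k\geq 1$, whence $\Tot x$ is $0$-spherical and $x$ is admissible by Corollary~\ref{cor:charof adm cube}.

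The substantive point is the reduction to free cubes: Theorem~\ref{thm:TotstrictKos is 0sph} is only available for free Koszul cubes, so the localization step is unavoidable, and the one thing needing care is the set of directions along which the localized cube becomes degenerate — precisely the $f_s$ that become units in $A_{\pp}$ — which is exactly what Lemma~\ref{lem:loc of Kos cube} together with the cone argument dispose of. The remaining ingredients (faces of Koszul cubes are Koszul cubes, and $0$-sphericity can be tested after localization) are routine.
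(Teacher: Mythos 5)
Your proof is correct and takes essentially the same approach as the paper: localize at a prime to reduce to a free non-degenerate Koszul cube over a local ring, invoke Theorem~\ref{thm:TotstrictKos is 0sph}, and induct on $\#S$ to handle the faces, concluding via Corollary~\ref{cor:charof adm cube}. The only difference is a cosmetic reordering — you perform the induction first and localize only to verify $0$-sphericity (so you need locality of $0$-sphericity rather than of full admissibility), and you dispose of degenerate directions by the cone-of-an-isomorphism argument instead of via Corollary~\ref{cor:admcriterion II} and the non-degenerate part \ref{cordf:nondegpart}, but the substance is identical.
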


\begin{proof}[\bf Proof]
Let $x$ be a Koszul cube associated with 
an $A$-sequence $\{f_s\}_{s\in S}$. 
Since the notion of admissibility 
is a property of certain exactness of 
morphisms of modules, 
we learn that it is a local property. 
We take a prime ideal $\pp$ of $A$ and put 
$T:=\{s\in S;f_s\in \pp\}$. 
Then by \ref{cor:admcriterion II}, 
\ref{lem:loc of Kos cube}, 
\ref{cordf:nondegpart} and replacing $A$, 
$x$ with $A_{\pp}$, ${((x_{\pp})|_{T})}_{\nondeg}$ respectively, 
we shall assume that $A$ is local and 
$x$ is a non-degenerate free Koszul cube. 
We are going to prove the assertion 
by induction of $\# S$ and 
check that $x$ satisfies the conditions 
in \ref{cor:charof adm cube} $\mathrm{(2)}$. 
For $\# S=1$, the assertion is trivial. 
Now we assume that $\# S >1$. 
Since every faces of $x$ are again non-degenerate free Koszul cubes, 
by inductive hypothesis, 
they are admissible. 
On the other hand, 
$\Tot x$ is $0$-spherical by \ref{thm:TotstrictKos is 0sph}. 
Therefore we get the result.  
\end{proof}

\sn
Recall the definition for $\ltimes$ from \ref{df:mult semi-direct prod} 
and $\cM_A^{\ff_T}(\# T)$ from \ref{nt:cM_A^f(p)}. 

\begin{cor}
\label{cor:comp df of Koscube} 
We have the equality 
$$\Kos_A^{\ff_S}=\underset{T\in \cP(S)}{\ltimes} \cM_A^{\ff_T}(\# T).$$
\end{cor}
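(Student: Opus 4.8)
The plan is to prove the asserted equality as an identification of full subcategories of $\Cub^S(\cP_A)$. Both sides sit inside $\Cub^S(\cP_A)$: the left one by definition, and the right one because $\cM_A^{\ff_{\emptyset}}(\#\emptyset)=\cM_A^{(0)}(0)=\cP_A$ by \ref{nt:cM_A^f(p)}, so that $\underset{T\in\cP(S)}{\ltimes}\cM_A^{\ff_T}(\#T)$ is, by \ref{df:mult semi-direct prod}, a full subcategory of $\Cub^S(\cP_A)$. Hence it suffices to show that an $S$-cube $x$ of finitely generated projective $A$-modules is a Koszul cube if and only if $x$ is admissible and $\Homo^T_0(x)_V$ lies in $\cM_A^{\ff_T}(\#T)$ for every pair of disjoint subsets $T,V\subseteq S$.

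The inclusion $\supseteq$ is essentially immediate: if $x$ is admissible, then each boundary morphism $d^k_T$ is a monomorphism, and $\coker d^k_T=\Homo^k_0(x)_{T\ssm\{k\}}$ is a vertex of the cube $\Homo^{\{k\}}_0(x)$, hence by hypothesis lies in $\cM_A^{\ff_{\{k\}}}(1)=\cM_A^{f_kA}(1)$; so $x$ is a Koszul cube.

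For the inclusion $\subseteq$, let $x$ be a Koszul cube. By \ref{cor:Kosisadm} it is admissible, so by \ref{lem:face of admissible cubes} every restriction $x|^V_T$ is admissible and all iterated homologies $\Homo^T_0(x)$ are defined via \ref{lemdf:relative homology}. Fix disjoint $T,V$. Unwinding the iterated cokernels (order-independent by \ref{lemdf:relative homology}) identifies $\Homo^T_0(x)_V$ with the quotient $x_V\big/\sum_{t\in T}\im d^t_{V\cup\{t\}}$, which is a finitely generated $A$-module. To bound the support, localize at $f_t$ for a given $t\in T$: since $\coker d^t_{V\cup\{t\}}\in\cM_A^{f_tA}(1)$, the map $(d^t_{V\cup\{t\}})_{f_t}$ is surjective, so $(\Homo^T_0(x)_V)_{f_t}=0$; as this holds for every $t\in T$ we get $\Supp\Homo^T_0(x)_V\subseteq\bigcap_{t\in T}V(f_tA)=V(\ff_T)$. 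To bound the projective dimension, note that by \ref{cor:homology of admissible cubes} the complex $\Tot(x|^V_T)$ is $0$-spherical with $\Homo_0\Tot(x|^V_T)\cong\Homo^T_0(x)_V$; since $x|^V_T$ is a $T$-cube of finitely generated projectives, $\Tot(x|^V_T)$ is a complex of finitely generated projectives concentrated in degrees $0,\dots,\#T$, hence a projective resolution of $\Homo^T_0(x)_V$ of length $\leq\#T$. Therefore $\Homo^T_0(x)_V\in\cM_A^{\ff_T}(\#T)$, and $x$ lies in $\underset{T\in\cP(S)}{\ltimes}\cM_A^{\ff_T}(\#T)$.

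The only step that is not purely formal is the projective-dimension bound, and it reduces entirely to the already-proved statement \ref{cor:homology of admissible cubes} (which in turn rests on \ref{thm:TotstrictKos is 0sph}) that the total complex of an admissible cube of projectives resolves its $0$-th iterated homology; once that is in hand the argument is short. An alternative would be to induct on $\#S$ via the decomposition in \ref{prop:rel multi and mono semi-direct prod}(1), writing $x=[\,x|^{\{s\}}_{S\ssm\{s\}}\to x|^{\emptyset}_{S\ssm\{s\}}\,]$; but that forces one to recognize the weight-shifted factor $\underset{T\in\cP(S\ssm\{s\})}{\ltimes}\cM_A^{\ff_{T\coprod\{s\}}}(\#T+1)$ as a semi-direct factor built from $\Kos_A^{\ff_{S\ssm\{s\}}}$, which is more awkward than the direct computation above, so I would not take that route.
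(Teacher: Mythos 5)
Your proof is correct and follows essentially the same route as the paper's: admissibility comes from \ref{cor:Kosisadm}, the projective-dimension bound from \ref{cor:homology of admissible cubes}, and the support bound from the fact that each $\coker d^t$ lies in $\cM_A^{f_tA}(1)$ (the paper phrases this as $\Supp\Homo_0^T(x)_U\subset\bigcap_{t\in T}\Supp\Homo_0^t(x)_U\subset\bigcap_{t\in T}V(f_t)$ rather than by localizing at each $f_t$, but the content is identical). The paper actually only spells out the inclusion $\Kos_A^{\ff_S}\subseteq\ltimes\cM_A^{\ff_T}(\#T)$, leaving the reverse inclusion implicit; your explicit check of that direction via $\coker d^k_T=\Homo^k_0(x)_{T\ssm\{k\}}$ is the right one and fills in what the paper omits.
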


\begin{proof}[\bf Proof]
For any Koszul cube $x$ associated with 
the $A$-sequence $\{f_s\}_{s\in S}$, 
we need to check the following two assertions.\\ 
$\mathrm{(1)}$ 
$x$ is admissible.\\ 
$\mathrm{(2)}$ 
For any $T\in\cP(S)$ and $U\in\cP(S\ssm T)$, 
$\Homo_0^T(x)_U$ is in $\cM_A^{\ff_T}(\# T)$.\\ 
Assertion $\mathrm{(1)}$ has been proven in \ref{cor:Kosisadm}. 
We prove assertion $\mathrm{(2)}$. 
First let us notice that we have 
$$\Supp\Homo_0^T(x)_U \subset 
\underset{t\in T}{\cap}\Supp\Homo^t_0(x)_U \subset 
\underset{t\in T}{\cap}V(f_t).$$ 
By \ref{cor:homology of admissible cubes}, 
$\Tot (x|_T^U)$ is a projective resolution of 
$\Homo_0^T(x)_U$ and therefore $\pd_A\Homo_0^T(x)_U\leq \# T$. 
This means that $\Homo_0^T(x)_U$ is in 
$\cM_A^{\ff_T}(\# T)$.
\end{proof}

\section{Koszul resolution theorem}
\label{sec:Koszul resolution theorem}

In this section, we will prove 
the first main theorem~\ref{cor:main theorem} 
by utilizing 
theorem~\ref{semi-direct resolution theorem}. 
To check the hypothesis in \ref{semi-direct resolution theorem}, 
the key ingredient 
is Koszul resolution theorem~\ref{Koszul resol thm} which 
gives a resolution process of pure weight modules 
by finite direct sums of typical Koszul cubes. 
Let us recall that $A$ is a commutative noetherian ring with unit 
and in this section, 
let us fix a finite set $S$ 
and an $A$-sequence $\ff_S=\{f_s\}_{s\in S}$ and 
the letter $p$ means a non-negative integer or $\infty$. 

\sn
Recall the definition of $\cM_A^I(q)$ from \ref{nt:cM_A^f(p)}. 

\begin{df}[\bf Reduced modules]
\label{df:reduced modules}
An $A$-module $M$ in $\cM_A^{\ff_S}$ is said to be 
{\bf reduced} if $\ff_SM=0$. 
The full subcategory of reduced $A$-modules is 
just $\cM_{A/\ff_SA}$. 
We write $\cM_{A,\red}^{\ff_S}(p)$ 
for the full subcategory of reduced modules 
in $\cM_A^{\ff_S}(p)$. 
Since $\cM_{A,\red}^{\ff_S}(p)$ 
is closed under taking sub- and quotient objects in $\cM_A^{\ff_S}(p)$, 
applying Lemma~\ref{sub exact cat criterion} below, 
we learn that $\cM_{A,\red}^{\ff_S}(p)$ 
naturally becomes 
an exact category. 
We also write  
$\cM_{A,\red}^{\ff_S}$ 
for $\cM_{A,\red}^{\ff_S}(\infty)$. 
\end{df}

\begin{nt}
\label{nt:indexnonempty}
To emphasize the contrast with the index $\red$, 
we sometimes denote 
$\cM_A^{\ff_S}(p)$, 
$\Kos_A^{\ff_S}$ and so on by 
$\cM_{A,\emptyset}^{\ff_S}(p)$, 
$\Kos_{A,\emptyset}^{\ff_S}$ respectively.
\end{nt}

\begin{lem}
\label{sub exact cat criterion}
Let $\cE$ be an exact category and 
$\cF$ a full subcategory which satisfies 
the following two conditions.\\ 
$\mathrm{(1)}$ 
$\cF$ is closed under taking finite direct sums. 
In particular $\cF$ has a zero object $0$.\\ 
$\mathrm{(2)}$ 
$\cF$ is closed under 
taking admissible sub- and quotient objects in $\cE$. 
That is, for an admissible exact sequence in $\cE$
$$x \rinf y \rdef z,$$
if $y$ is isomorphic to an object in $\cF$, 
then $x$ and $z$ are also.\\ 
Then we can consider $\cF$ as an exact category 
by declaring that a sequence
$x \rinf y \rdef z$
is an admissible exact sequence in $\cF$ if and only if it is in $\cE$.
\end{lem}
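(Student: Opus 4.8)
The plan is to verify directly that the declared family of admissible exact sequences makes $\cF$ an exact category in the sense of \cite{Qui73}; once this is done the inclusion $\cF \rinc \cE$ is exact and reflects exactness essentially by the definition of that family. We may assume $\cF$ is closed under isomorphisms in $\cE$ (otherwise replace it by its isomorphism closure, which still satisfies (1) and (2)). By (1), $\cF$ is a full additive subcategory of $\cE$ with zero object $0$. The basic observation, used throughout, is that since $\cF$ is \emph{full}, any kernel--cokernel pair in $\cE$ all of whose terms lie in $\cF$ is again a kernel--cokernel pair in $\cF$, and likewise any pushout or pullback square in $\cE$ whose four corners lie in $\cF$ is already a pushout, resp.\ pullback, square in $\cF$. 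In particular the sequences declared admissible in the statement are genuine kernel--cokernel pairs in $\cF$, and the class of them is closed under isomorphism because the corresponding class in $\cE$ is.

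Next I would run through the axioms. The split sequence $x \rinf x\oplus y \rdef y$ has all terms in $\cF$ by (1) and is admissible in $\cE$, hence admissible in $\cF$; taking $y=0$ shows identity morphisms are admissible monomorphisms, and dually admissible epimorphisms. For closure of admissible monomorphisms under composition, let $i\colon x\rinf y$ and $j\colon y\rinf z$ be admissible monomorphisms in $\cF$. Then $ji$ is an admissible monomorphism in $\cE$, sitting in an admissible exact sequence $x\rinf z\rdef\coker(ji)$; since $\coker(ji)$ is an admissible quotient object of $z\in\cF$, condition (2) places it in $\cF$, so this sequence is admissible in $\cF$ and $ji$ is an admissible monomorphism there. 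The dual argument, using that the kernel of a composite of admissible epimorphisms is an admissible subobject of the source, handles admissible epimorphisms.

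The crux — and the step I expect to be the main obstacle, precisely because $\cF$ is \emph{not} assumed closed under extensions — is stability of admissible monomorphisms under pushout (and dually of admissible epimorphisms under pullback). Given an admissible monomorphism $i\colon x\rinf y$ in $\cF$ and an arbitrary $f\colon x\to x'$ in $\cF$, form the morphism $g\colon x\to y\oplus x'$ with components $i$ and $-f$. It factors as $x\xrightarrow{\binom{\id_x}{-f}}x\oplus x'\xrightarrow{i\oplus\id_{x'}}y\oplus x'$, a split monomorphism followed by an admissible monomorphism, hence $g$ is an admissible monomorphism in $\cE$, and $\coker(g)$ is exactly the pushout $y\coprod_x x'$, fitting in an admissible exact sequence $x'\rinf y\coprod_x x'\rdef\coker(i)$. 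Now $y\oplus x'\in\cF$ by (1), so $\coker(g)=y\coprod_x x'$ is an admissible quotient object of an object of $\cF$ and lies in $\cF$ by (2); and $\coker(i)\in\cF$ as an admissible quotient of $y$. Thus all corners of the pushout square lie in $\cF$, so by the basic observation it is a pushout in $\cF$ and $x'\rinf y\coprod_x x'$ is an admissible monomorphism in $\cF$. Dually, the pullback of an admissible epimorphism $p\colon y\rdef z$ along $g'\colon z'\to z$ is the kernel in $\cE$ of the admissible epimorphism $y\oplus z'\to z$ with components $p$ and $-g'$, hence an admissible subobject of $y\oplus z'\in\cF$, so it lies in $\cF$ by (2) and the pullback square lives in $\cF$. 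The remaining verifications are either purely formal or follow by the same method, so $\cF$ is an exact category with the stated admissible exact sequences, and the inclusion into $\cE$ is exact and reflects exactness.
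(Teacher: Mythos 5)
Your proof is correct and follows essentially the same route as the paper's: in both, the composition case is handled by observing that $\coker(ji)$ is an admissible quotient of $z\in\cF$, and the pushout case hinges on the admissible exact sequence $x\rinf y\oplus x'\rdef y\coprod_x x'$ together with conditions (1) and (2); the only difference is that you prove this last sequence directly by factoring $\binom{i}{-f}$ through a split monomorphism, where the paper cites Keller's argument.
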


\begin{proof}[\bf Proof]
For simplicity, 
we may suppose that 
$\cF$ is closed under isomorphisms in $\cE$. 
Namely for an object $x$ in $\cF$ 
and an object $y$ in $\cE$, 
if $x \isoto y$, then $y$ is also in $\cF$. 
We shall just check that 
the class of admissible monomorphisms 
(resp. admissible epimorphisms) in $\cF$ is 
closed under compositions 
and co-base (resp. base) change by arbitrary morphisms. 
We only check for the admissible monomorphisms case. 
For the admissible epimorphisms case, almost same arguments are working.\\ 
Let $x \rinf y$ and $y \rinf z$ be admissible monomorphisms in $\cF$. 
By $\mathrm{(2)}$, 
$z/x$ can be taken in $\cF$. 
Therefore the sequence 
$$x \rinf z \rdef z/x$$
is an admissible exact sequence in $\cE$ and 
the composition $x \rinf z$ is an admissible monomorphism in $\cF$.\\
Next consider the following commutative diagram in $\cE$:
$$\xymatrix{
x \ar@{>->}[r] \ar[d] \ar@{}[rd]|{\bigstar} & 
y \ar@{->>}[r] \ar[d] & z \ar@{=}[d]\\
x' \ar@{>->}[r] & y' \ar@{->>}[r] & z
}$$
where the square $\bigstar$ is coCartesian and 
we assume that $x$, $y$, $z$ and $x'$ are in $\cF$. 
Then by \cite[p.406 step 1]{Kel90}, 
we have an admissible exact sequence
$$x \rinf x' \oplus y \rdef y'.$$
By $\mathrm{(1)}$, 
$x' \oplus y$ is in $\cF$ 
and by $\mathrm{(2)}$, 
$y'$ is also in $\cF$. 
\end{proof}

\sn
Utilizing \ref{cor:mult semi is exact}, 
we can easily get the following lemma. 
Recall the definition of $\rtimes$ from \ref{df:mult semi-direct prod}. 

\begin{df}
\label{lemdf:cMAfTfsp}
Let $S=U\coprod V$ be a disjoint decomposition of $S$.\\
$\mathrm{(1)}$ 
We define the categories $\cM_A(\ff_U;\ff_V)(p)$, 
$\cM_{A,\red}(\ff_U;\ff_V)(p)$ which 
are full subcategories of $\Cub^V\cM_A$ by 
$$\cM_{A,?}(\ff_U;\ff_V)(p)=
\underset{T\in\cP(V)}{\ltimes}\cM_{A,?}^{\ff_{U\coprod T}}(p+\# T)$$
where $?=\emptyset$ or $\red$.
In particular, 
we write $\Kos_{A,\red}^{\ff_S}$ 
for  $\cM_{A,\red}(\ff_{\emptyset};\ff_S)(0)$. 
This notation is compatible with the equality 
$\mathrm{(\ref{equ:Kos as MAfUfV})}$ 
in \ref{rem:cMAfTfsp} $\mathrm{(3)}$. 
A cube in $\Kos_{A,\red}^{\ff_S}$ 
is said to be a {\bf reduced Koszul cube} 
(associated with an $A$-sequence $\{f_s\}_{s\in S}$).\\
$\mathrm{(2)}$ 
A morphism $f:x\to y$ in $\cM_{A,?}(\ff_U;\ff_V)(p)$ is 
a ({\bf total}) {\bf quasi-isomorphism} if $\Tot f$ is 
a quasi-isomorphism. 
We write 
$\tq(\cM_{A,?}(\ff_U;\ff_V)(p))$ or simply $\tq$ 
for the class of total quasi-isomorphisms in $\cM_{A,?}(\ff_U;\ff_V)(p)$. 
\end{df}

\begin{rem}
\label{rem:cMAfTfsp}
In the notation above, we have the following.\\
$\mathrm{(1)}$ 
$\cM_A(\ff_U;\ff_V)(p)$ is closed under extensions in 
$\Cub^V\cM_A$. 
In particular it becomes 
an exact category in the natural way.\\ 
$\mathrm{(2)}$ 
$\cM_{A,\red}(\ff_U;\ff_V)(p)\rinc \cM_A(\ff_U;\ff_V)(p)$ 
is closed under taking finite direct sums and 
admissible sub- and quotient objects. 
In particular, 
$\cM_{A,\red}(\ff_U;\ff_V)(p)$ 
naturally becomes an exact category 
by virtue of \ref{sub exact cat criterion}.\\
$\mathrm{(3)}$ 
By \ref{cor:comp df of Koscube}, we have the equality 
\begin{equation}
\label{equ:Kos as MAfUfV}
\cM_A(\ff_{\emptyset};\ff_S)(0)=\Kos_A^{\ff_S}. 
\end{equation}
$\mathrm{(4)}$ 
By definitions of multi semi-direct products 
in \ref{df:mult semi-direct prod} $\mathrm{(1)}$ 
and 
$\cM_{A,?}(\ff_U;\ff_V)(p)$, 
cubes in $\cM_{A,?}(\ff_U;\ff_V)(p)$ are admissible. 
A morphism $f:x\to y$ in $\cM_{A,?}(\ff_U;\ff_V)(p)$ is 
a total quasi-isomorphism if 
and only if 
$\Homo_0^V(f)$ is an isomorphism by 
\ref{prop:homology of admissible cubes} $\mathrm{(2)}$.\\
$\mathrm{(5)}$ 
We put $\cF_T:=\cM_A^{\ff_T}(p+\# T)$ 
for any $T\in \cP(S)$ 
and $\fF=\{\cF_T\}_{T\in \cP(S)}$ and 
for any disjoint 
decomposition $S=U\coprod V$, 
we put $\fF|_V^U:=\{\cF_{U\coprod T} \}_{T\in \cP(V)}$. 
Then since 
we have the equality 
$\ltimes \fF|_V^U=\cM_{A,?}(\ff_U;\ff_V)(p+\# U)$, 
the family $\fF$ satisfies the condition \ref{rem:exactness}. 
\end{rem}

\begin{rem}
\label{rem:inductive describe cMA(fT;fS)}
Let $S=U\coprod V$ be a disjoint decomposition of $S$ with $V\neq \emptyset$. 
Then for any $v\in V$, 
by \ref{prop:rel multi and mono semi-direct prod} 
$\mathrm{(1)}$ and $\mathrm{(3)}$, 
we have the following equalities
$$\cM_{A,?}(\ff_U;\ff_V)(p)=
\cM_{A,?}(\ff_{U\coprod\{v\}};\ff_{V\ssm\{v\}})(p+1)
\ltimes \cM_{A,?}(\ff_U;\ff_{V\ssm\{v\}})(p)$$
$$
{\cM_{A,?}(\ff_U;\ff_V)(p)}^{\tq}=
{\cM_{A,?}(\ff_{U\coprod\{v\}};\ff_{V\ssm\{v\}})(p+1)}^{\tq}
\ltimes \cM_{A,?}(\ff_U;\ff_{V\ssm\{v\}})(p)
$$
for $?=\emptyset$ or $\red$.
\end{rem}

\sn
To prove Proposition~\ref{weight resolution theorem}, 
we need to recall the following facts.

\begin{rev}
\label{pd criterion} 
Notice that for a short exact sequence of $A$-modules
$$0 \to N \to N' \to N'' \to 0,$$
we can easily prove the following assertions 
by utilizing the $\Ext$-criterion of projective dimensions. 
(see \cite[pd Lemma~4.1.6]{Wei94}).\\
$\mathrm{(1)}$ 
If $\pd_A N'' \leqq n+1$ and $\pd_A N' \leqq n$, then $\pd_A N \leqq n$.\\ 
$\mathrm{(2)}$ 
If $\pd_A N \leqq n$ and $\pd_A N'' \leqq n$, then $\pd_A N' \leqq n$. 
\end{rev}

\begin{prop}
\label{weight resolution theorem}
Let $n$ be an integer such that 
$\# S \leqq n < \infty$ 
and $?=\emptyset$ or $\red$. 
Then\\ 
$\mathrm{(1)}$ 
$\cM_{A,?}^{\ff_S}(n)$ is closed under 
extensions and taking kernels 
of admissible epimorphisms 
in $\cM_{A,?}^{\ff_S}(n+1)$.\\ 
$\mathrm{(2)}$ 
Moreover the inclusion functor of opposite categories 
${(\cM_{A,?}^{\ff_S}(n))}^{\op} \rinc {(\cM_{A,?}^{\ff_S}(n+1))}^{\op}$ 
satisfies 
the resolution conditions in \ref{resol cond df}. 
In particular, 
we have a homotopy equivalence on $K$-theory:
$$K(\cM_{A,?}^{\ff_S}(n)) \to 
K(\cM_{A,?}^{\ff_S}(n+1)).$$ 
\end{prop}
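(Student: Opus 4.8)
The plan is to verify the resolution conditions of \ref{resol cond df} for the inclusion of opposite categories ${(\cM_{A,?}^{\ff_S}(n))}^{\op} \rinc {(\cM_{A,?}^{\ff_S}(n+1))}^{\op}$ and then apply the resolution theorem \ref{resol thm}; assertion $\mathrm{(1)}$ will drop out as the conditions \textbf{(Res 1)} and \textbf{(Res 3)} for these opposite categories.

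First I would settle assertion $\mathrm{(1)}$. An admissible exact sequence in $\cM_{A,?}^{\ff_S}(n+1)$ is simply a short exact sequence $0 \to x \to y \to z \to 0$ of $A$-modules with all three terms in $\cM_{A,?}^{\ff_S}(n+1)$; in particular every term already has support contained in $V(\ff_S)$, and, when $?=\red$, is annihilated by $\ff_S$, so only the bound on projective dimension needs to be checked. If $x$ and $z$ lie in $\cM_{A,?}^{\ff_S}(n)$, then $\pd_A y\leq n$ by \ref{pd criterion}$\mathrm{(2)}$; if $y$ lies in $\cM_{A,?}^{\ff_S}(n)$ and $z$ in $\cM_{A,?}^{\ff_S}(n+1)$, then $\pd_A x\leq n$ by \ref{pd criterion}$\mathrm{(1)}$. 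This proves assertion $\mathrm{(1)}$, and since the middle term of an extension is self-dual and the kernel of an admissible epimorphism in $\cE$ is the cokernel of an admissible monomorphism in $\cE^{\op}$, it gives \textbf{(Res 1)} and \textbf{(Res 3)} for the opposite categories.

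The crux is \textbf{(Res 2)} for the opposite categories: every $M\in\cM_{A,?}^{\ff_S}(n+1)$ admits an admissible epimorphism $N\rdef M$ in $\cM_{A,?}^{\ff_S}(n+1)$ with $N\in\cM_{A,?}^{\ff_S}(n)$. Since $M$ is finitely generated with $\Supp M\subset V(\ff_S)$, for each $s\in S$ there is an exponent $m_s\geq 1$ with $f_s^{m_s}M=0$; let $J$ be the ideal generated by the elements $f_s^{m_s}$ (resp.\ $J=\ff_SA$, taking $m_s=1$, when $?=\red$). By \ref{ex:A-sequence}$\mathrm{(2)}$ these generators again form an $A$-sequence, so the Koszul complex yields $\pd_A A/J\leq\# S\leq n$, while $V(J)=V(\ff_S)$ and $J\subset\ff_SA$; hence any finite free $A/J$-module $(A/J)^{\oplus r}$ lies in $\cM_{A,?}^{\ff_S}(n)$. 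Choose a surjection $(A/J)^{\oplus r}\rdef M$ (possible as $JM=0$) with kernel $K$. Then $K$ is a submodule of $(A/J)^{\oplus r}$, so it has support in $V(\ff_S)$ and is annihilated by $J$ — hence by $\ff_S$ when $?=\red$ — and $\pd_A K\leq n$ by \ref{pd criterion}$\mathrm{(1)}$ applied to $0\to K\to(A/J)^{\oplus r}\to M\to 0$. Thus $K\in\cM_{A,?}^{\ff_S}(n)\subset\cM_{A,?}^{\ff_S}(n+1)$, so the surjection is an admissible epimorphism in $\cM_{A,?}^{\ff_S}(n+1)$, as required. This step, which alone uses the hypothesis $\# S\leq n$, is the main obstacle; the rest is bookkeeping.

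Finally, because the weak equivalences in sight are all the isomorphisms, $\cX(m,i)$ is, for each $m$, equivalent as a category with cofibrations to $\cX$ itself, so the resolution conditions just verified upgrade to the strong resolution conditions of \ref{strong resol cond df}. As $A$ is noetherian, $\cM_A$ is essentially small, so \ref{resol thm} applies and gives a homotopy equivalence $K({(\cM_{A,?}^{\ff_S}(n))}^{\op})\to K({(\cM_{A,?}^{\ff_S}(n+1))}^{\op})$. Passing back to the original categories via the self-duality of the $K$-theory of an exact category — exactly as at the end of the proof of \ref{semi-direct resolution theorem}$\mathrm{(1)}$ — yields the asserted homotopy equivalence $K(\cM_{A,?}^{\ff_S}(n))\to K(\cM_{A,?}^{\ff_S}(n+1))$.
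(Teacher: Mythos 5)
Your proposal is correct and follows essentially the same route as the paper: assertion $\mathrm{(1)}$ via \ref{pd criterion}, and \textbf{(Res 2)} for the opposite categories by surjecting from a finite free $A/J$-module where $J$ is generated by the $f_s^{m_s}$ (with $m_s=1$ in the reduced case), using $\pd_A A/J=\#S\leqq n$. Your version is slightly more careful than the paper's in three minor respects — you verify explicitly that the kernel of the surjection lies in $\cM_{A,?}^{\ff_S}(n)$ so the map is an admissible epimorphism, you note that the resolution conditions upgrade to the strong ones since $w=i$ makes each $\cX(m,i)\simeq\cX$, and you spell out the passage between $K(\cE)$ and $K(\cE^{\op})$ — but these are exactly the steps the paper leaves implicit, not a different approach.
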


\begin{proof}[\bf Proof]
For $\mathrm{(1)}$, 
we need only check the projective dimension conditions 
and they are easily done by 
$\mathrm{(1)}$ and $\mathrm{(2)}$ in \ref{pd criterion}. 
For $\mathrm{(2)}$, 
we need to prove the following two assertions 
for $?=\emptyset$ and $\red$ respectively. 

\sn
$\mathrm{(a)}$ For any $M \in \cM_A^{\ff_S}(n+1)$, 
there exists an $A$-module $N$ in $\cM_{A,?}^{\ff_S}(n)$ and 
an admissible epimorphism $N \rdef M$. 

\sn
$\mathrm{(b)}$ For any admissible short exact sequence 
$L \rinf N \rdef M$ 
in $\cM_{A,?}^{\ff_S}(n+1)$, 
if $N$ and $M$ are in $\cM_{A,?}^{\ff_S}(n)$, 
then $L$ is also in $\cM_{A,?}^{\ff_S}(n)$. 

\sn
Proof of assertion $\mathrm{(a)}$: 
For any $M \in \cM_A^{\ff_S}(n)$, 
there are positive integers $t_s$ such that 
$f_s^{t_s}M=0$ for any $s\in S$. 
We put $g_s:=f_s^{t_s}$ and $B:=A/J$ 
where $J$ is the ideal generated by $\{g_s\}_{s\in S}$. 
Therefore we can consider $M$ to 
be a finitely generated $B$-module 
and there is a surjection 
from a finitely generated free $B$-module $N$ to $M$. 
Since $\pd_A N=\# S$, 
$N$ is in $\cM_A^{\ff_S}(n)$. 
If $M$ is reduced, 
we can take $t_s=1$ for each $s\in S$. 
Then in this case $N$ is also reduced.

\sn
Proof of assertion $\mathrm{(b)}$: 
We get the result by \ref{pd criterion} $\mathrm{(1)}$
for $?=\emptyset$. 
If $N$ is reduced, then $L$ is also reduced. 
\end{proof}

\begin{cor}
\label{practical closed conditions}
For $?=\emptyset$ or $\red$, any disjoint decomposition 
$S=U\coprod V$ and any integer $p\ge \# U$, 
$\cM_{A,?}(\ff_U;\ff_V)(p)$ is closed under 
extensions and taking kernels 
of admissible epimorphisms in 
$\cM_{A,?}(\ff_U;\ff_V)(p+1)$. 
\end{cor}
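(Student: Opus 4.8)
The plan is to prove the statement first for $?=\emptyset$ and then to bootstrap the case $?=\red$ from it. Note that $p$ is a finite integer, so all projective dimensions occurring below are finite.

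\emph{The case $?=\emptyset$.} For each $T\in\cP(V)$ I would set $\cF_T:=\cM_A^{\ff_{U\coprod T}}(p+\# T)$ and $\cE_T:=\cM_A^{\ff_{U\coprod T}}(p+1+\# T)$, and $\fF:=\{\cF_T\}_{T\in\cP(V)}$, $\fE:=\{\cE_T\}_{T\in\cP(V)}$. By \ref{lemdf:cMAfTfsp} $\mathrm{(1)}$ one has $\underset{T\in\cP(V)}{\ltimes}\cF_T=\cM_A(\ff_U;\ff_V)(p)$ and $\underset{T\in\cP(V)}{\ltimes}\cE_T=\cM_A(\ff_U;\ff_V)(p+1)$. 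Each of $\cF_T$, $\cE_T$ is a strict exact subcategory of $\cM_A$ (with $\cF_T\subset\cE_T$), and $\cE_T\rinc\cM_A$ is closed under extensions by \ref{nt:cM_A^f(p)}. Since the hypothesis $p\geq\# U$ gives $\#(U\coprod T)=\# U+\# T\leq p+\# T<\infty$, Proposition~\ref{weight resolution theorem} $\mathrm{(1)}$, applied to the $A$-sequence $\{f_s\}_{s\in U\coprod T}$ with $n=p+\# T$, shows that $\cF_T\rinc\cE_T$ is closed under extensions and under taking kernels of admissible epimorphisms. Corollary~\ref{cor:mult semi is exact} $\mathrm{(2)}$ then yields that $\cM_A(\ff_U;\ff_V)(p)\rinc\cM_A(\ff_U;\ff_V)(p+1)$ is closed under extensions and under taking kernels of admissible epimorphisms.

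\emph{The case $?=\red$.} Recall from \ref{rem:cMAfTfsp} $\mathrm{(2)}$ and \ref{sub exact cat criterion} that $\cM_{A,\red}(\ff_U;\ff_V)(q)$ is a strict exact subcategory of $\cM_A(\ff_U;\ff_V)(q)$, that an object $y$ of $\cM_A(\ff_U;\ff_V)(q)$ lies in $\cM_{A,\red}(\ff_U;\ff_V)(q)$ exactly when $\ff_{U\coprod T}\cdot\Homo_0^T(y)_W=0$ for all disjoint $T$, $W\in\cP(V)$, and that $\cM_{A,\red}(\ff_U;\ff_V)(p+1)\cap\cM_A(\ff_U;\ff_V)(p)=\cM_{A,\red}(\ff_U;\ff_V)(p)$ (the two projective-dimension bounds combine into the sharper one). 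Given an admissible exact sequence $x\rinf y\rdef z$ in $\cM_{A,\red}(\ff_U;\ff_V)(p+1)$ with the outer terms (resp. with $y$) in $\cM_{A,\red}(\ff_U;\ff_V)(p)$, it is also admissible exact in $\cM_A(\ff_U;\ff_V)(p+1)$, so the already-proven $\emptyset$-case places the remaining term --- $y$ in the extension situation, $x$ in the kernel situation --- in $\cM_A(\ff_U;\ff_V)(p)$. It then suffices to check that this term is reduced: in the extension situation it already is, being a term of a sequence in $\cM_{A,\red}(\ff_U;\ff_V)(p+1)$; in the kernel situation, applying the exact functor $\Homo_0^T$ (exact by \ref{rem:exactness} $\mathrm{(1)}$, legitimate by \ref{rem:cMAfTfsp} $\mathrm{(5)}$) to the admissible monomorphism $x\rinf y$ and evaluating at $W$ realizes $\Homo_0^T(x)_W$ as a submodule of the $\ff_{U\coprod T}$-torsion module $\Homo_0^T(y)_W$, hence $\ff_{U\coprod T}\cdot\Homo_0^T(x)_W=0$. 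So the remaining term lies in $\cM_{A,\red}(\ff_U;\ff_V)(p)$, as required.

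The only genuine obstacle is the reduced case, and it is precisely the point where Corollary~\ref{cor:mult semi is exact} cannot be invoked verbatim: the subcategories $\cM_{A,\red}^{I}(q)$ are in general not closed under extensions in $\cM_A$ (an extension of $I$-torsion modules is only $I^{2}$-torsion), nor closed under admissible quotients, so neither hypothesis of that corollary is available for the reduced families. The bootstrap above circumvents this because the genuinely homological content --- the projective-dimension estimates --- is entirely carried by the non-reduced statement, while the extra defining condition of reducedness is either part of the hypothesis (extension case) or automatically inherited by submodules (kernel case).
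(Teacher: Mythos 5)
Your proof is correct, and for the non-reduced case it is precisely what the paper does: apply Corollary~\ref{cor:mult semi is exact}~$\mathrm{(2)}$ to the families $\cF_T=\cM_A^{\ff_{U\coprod T}}(p+\# T)\rinc\cE_T=\cM_A^{\ff_{U\coprod T}}(p+1+\# T)$, and reduce to Proposition~\ref{weight resolution theorem}~$\mathrm{(1)}$.

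For the reduced case, however, you diverge from the paper, and you are right to do so. The paper's proof simply cites Corollary~\ref{cor:mult semi is exact}~$\mathrm{(2)}$ uniformly for $?=\emptyset$ and $?=\red$, but the hypothesis of that corollary --- that each $\cE_T\rinc\cA$ be closed under extensions --- genuinely fails when $\cE_T=\cM_{A,\red}^{\ff_{U\coprod T}}(p+1+\# T)$ and $\cA=\cM_A$, exactly for the reason you give: an extension of $I$-torsion modules is only $I^2$-torsion. (Nor is the other sufficient condition, closure under admissible sub- and quotient objects in $\cM_A$, available, since projective dimension can increase on passage to a submodule.) Your bootstrap cleanly separates the two ingredients: the projective-dimension estimate is carried by the already-proven $?=\emptyset$ statement, while the reducedness is automatic. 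This is a legitimate repair of a shortcut in the paper's proof; an alternative repair would be to verify the conclusion of Corollary~\ref{cor:mult semi is exact}~$\mathrm{(2)}$ directly under the weaker hypothesis that $\ltimes\fE$ and $\ltimes\fF$ are (somehow) exact, which the paper's underlying Propositions~\ref{closed conditions 1} and \ref{closed conditions 2} do support.

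One small simplification: in the kernel situation the torsion-submodule argument is unnecessary. The hypothesis is that $x\rinf y\rdef z$ already lives in $\cM_{A,\red}(\ff_U;\ff_V)(p+1)$, so $x$ is reduced by fiat, just as $y$ is in the extension situation; all you then need is $x\in\cM_A(\ff_U;\ff_V)(p)$ from the $\emptyset$-case and the intersection identity $\cM_{A,\red}(\ff_U;\ff_V)(p+1)\cap\cM_A(\ff_U;\ff_V)(p)=\cM_{A,\red}(\ff_U;\ff_V)(p)$, which you already established.
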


\begin{proof}[\bf Proof]
By virtue of \ref{cor:mult semi is exact} $\mathrm{(2)}$, 
we need only check that 
$\cM_{A,?}^{\ff_{T\coprod U}}(p+\# T)$ is 
closed under extensions 
and taking kernels of admissible epimorphisms 
in $\cM_{A,?}^{\ff_{T\coprod U}}(p+1+\# T)$ for any $T\in\cP(V)$. 
This was done in \ref{weight resolution theorem}. 
\end{proof}

\begin{rem}
\label{rem:regularclosedimmerion}
Let us assume that $A$ is Cohen-Macaulay. 
Recall that a commutative noetherian ring $C$ is 
Cohen-Macaulay if and only if every ideal of 
height at least $p$ contains an $C$-regular sequence of length $p$. 
(See \cite[\S 2.5, Proposition.\ 7]{Bou98}.) 
Notice that the ordered set $X$ of all ideals of $A$ that contains an $A$-regular sequence 
of length $p$ with usual inclusion is directed. 
Therefore 
$\cM^p_A(p)$ is the filtered limit 
$\varinjlim_{\fg_S}\cM^{\fg_S}_A(p)$ 
where $\fg_S$ runs through any regular sequence 
such that $\# S=p$. 
\end{rem}

\begin{cor}
\label{thm:HM10}
Let us assume that $A$ is regular. 
Then for any natural number $p$, 
the inclusion functor $\cM^p_A(p) \rinc \cM_A^p$ induces 
a homotopy equivalence on $K$-theory: 
$$K(\cM^p_A(p)) \to K(\cM_A^p).$$
\end{cor}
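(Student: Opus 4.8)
The plan is to deduce the statement from the fixed-support resolution result \ref{weight resolution theorem} together with the filtered-colimit description \ref{rem:regularclosedimmerion}, rather than working directly with the codimension filtration.

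First I would observe that, $A$ being regular, each localization $A_{\pp}$ is a regular local ring and hence Cohen--Macaulay, so $A$ is Cohen--Macaulay and \ref{rem:regularclosedimmerion} applies. Using it, together with the evident variant of its argument with $\cM_A^p$ in place of $\cM_A^p(p)$, I would record the two filtered colimits
\[
\cM_A^p(p)=\varinjlim_{\fg_S}\cM_A^{\fg_S}(p),\qquad
\cM_A^p=\varinjlim_{\fg_S}\cM_A^{\fg_S},
\]
the colimits being over the directed system of ideals $\fg_S$ generated by an $A$-regular sequence $g_1,\dots,g_p$. Here I would note, by prime avoidance, that every height-$\ge p$ ideal of the Cohen--Macaulay ring $A$ contains such a sequence, and in fact one which is an $A$-sequence (one avoids in addition the finitely many associated primes of the quotients by the sub-sequences, each of which has height $<p$ and hence does not contain the given ideal); so this system is cofinal among all height-$\ge p$ ideals and is the same for the two categories. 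Moreover these presentations are compatible with the inclusion $\cM_A^p(p)\rinc\cM_A^p$, exhibiting it as the filtered colimit of the inclusions $\cM_A^{\fg_S}(p)\rinc\cM_A^{\fg_S}$. Since algebraic $K$-theory of exact categories commutes with filtered colimits, it then suffices to prove that for each fixed $A$-sequence $\fg_S$ with $\#S=p$ the inclusion $\cM_A^{\fg_S}(p)\rinc\cM_A^{\fg_S}$ induces a homotopy equivalence on $K$-theory.

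For this I would fix such an $\fg_S$ and use that regularity of $A$ forces every finitely generated $A$-module to have finite projective dimension, so that $\cM_A^{\fg_S}=\bigcup_{n\ge p}\cM_A^{\fg_S}(n)=\varinjlim_n\cM_A^{\fg_S}(n)$. By \ref{weight resolution theorem} (taken with $?=\emptyset$), for every $n\ge p=\#S$ the inclusion $\cM_A^{\fg_S}(n)\rinc\cM_A^{\fg_S}(n+1)$ induces a homotopy equivalence on $K$-theory (ultimately via \ref{resol thm} applied to the opposite categories); passing to the colimit over $n$, the composite $K(\cM_A^{\fg_S}(p))\to\varinjlim_n K(\cM_A^{\fg_S}(n))=K(\cM_A^{\fg_S})$ is a homotopy equivalence, and reassembling over $\fg_S$ gives the corollary. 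The genuine mathematical content is imported wholesale from \ref{rem:regularclosedimmerion} and \ref{weight resolution theorem}; the only delicate points are the commutative-algebra bookkeeping that matches up the two directed systems (including the mild observation that the regular sequences may be taken to be $A$-sequences) and the standard fact that $K$-theory commutes with filtered colimits of exact categories, and I do not expect either to be a serious obstacle.
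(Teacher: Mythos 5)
Your argument follows the paper's own route: pass to the filtered colimit over ideals generated by length-$p$ $A$-sequences via \ref{rem:regularclosedimmerion} (and its evident analogue with $\cM_A^p$ in place of $\cM_A^p(p)$), then, for each fixed $\ff_S$, climb the projective-dimension filtration one step at a time using \ref{weight resolution theorem}~$\mathrm{(2)}$ and take the colimit over $n$ using regularity of $A$. You are also right to flag that \ref{rem:regularclosedimmerion} speaks of $A$-regular sequences while the standing hypothesis of Section~5 requires $A$-sequences; the paper's one-line proof passes over this silently, and your cofinality observation via prime avoidance (avoiding the associated primes of $A/(g_u:u\in T)$ for all $T\subset\{1,\dots,k-1\}$ when choosing $g_k$, which one checks by a short induction does yield a permutable sequence) is exactly what is needed to close that small gap.
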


\begin{proof}[\bf Proof]
By regularity of $A$, 
we may ignore the projective dimension 
condition in \ref{weight resolution theorem} $\mathrm{(2)}$. 
The assertion follows from 
\ref{weight resolution theorem} $\mathrm{(2)}$ and 
\ref{rem:regularclosedimmerion}. 
\end{proof}

\begin{thm}[\bf Koszul resolution theorem]
\label{Koszul resol thm}
Let $n$ be a non-negative integer, 
$S=U\coprod V$ a disjoint decomposition of $S$ and  $p\geq \# U$ 
an integer. 
Fix an object
$z\in\HOM([n],\cM_A(\ff_U;\ff_V)(p+1))$, 
$$z:z(0)\to z(1)\to \cdots \to z(n).$$
$\mathrm{(1)}$ 
For each $s\in S$, 
there is a family of 
non-negative integers $\{m_s\}_{s\in S}$ 
such that $f_u^{m_u}{z(j)}_T=0$ 
for any $T\in \cP(V)$, 
$j\in [n]$ 
and $u\in U$ and 
$f_v^{m_v}\Homo_0(\Tot(z(j)))=0$ for 
any $j\in [n]$ and $v\in V$. 
We put $g_s=f_s^{m_s}$ for each $s\in S$ 
and $B:=A/\fg_U$ 
where $\fg_U$ is the ideal generated by $\{g_u\}_{u\in U}$.\\ 
$\mathrm{(2)}$ 
There is an object 
$y \in \HOM([n],\cM_A(\ff_U;\ff_V)(\# U))$ 
such that for each $i\in [n]$, $y(i)$ 
is of the following form: 
$$ 
y(i)=\underset{T\in\cP(V)}{\bigoplus} 
{\Typ_{B}({\fg}^T_V)}^{\oplus l_T(i)} 
$$
and such that there is an admissible epimorphism $y \rdef z$. 
Here the notation ${\fg}^T_V$ means the family
$\{g^{\chi_T(v)}_v\}_{v\in V}$ where 
$\chi_T$ is the characteristic function associated with $T$ 
{\rm (}see \ref{rem:cubentrem}{\rm )}
and for the definition of 
the typical Koszul cube $\Typ$, see \ref{Koszul cube def}. 
\end{thm}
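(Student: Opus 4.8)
The plan is to establish (1) by an elementary support argument and (2) by induction on $\#V$, peeling off at each stage one direction $v\in V$ via the iterated semi-direct product description of $\cM_A(\ff_U;\ff_V)(p+1)$ in \ref{rem:inductive describe cMA(fT;fS)}. For (1): each vertex $z(j)_T$ ($T\in\cP(V)$) is $\Homo_0^{\emptyset}(z(j))_T$, hence lies in $\cM_A^{\ff_U}(p+1)$ and is supported in $V(\ff_U)$, so $f_u$ acts nilpotently on it for every $u\in U$; and $\Homo_0(\Tot z(j))\cong\Homo_0^V(z(j))$ by \ref{prop:homology of admissible cubes}, which lies in $\cM_A^{\ff_S}(p+1+\#V)$ and is supported in $V(\ff_S)$, so $f_v$ acts nilpotently on it for every $v\in V$. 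As $A$ is noetherian and the index set is finite, a single family $\{m_s\}_{s\in S}$ works; in fact one chooses $m_s$ so large that $f_s^{m_s}$ annihilates \emph{every} module $\Homo_0^W(z(j))_{T'}$ ($W\in\cP(V)$, $T'\in\cP(V\ssm W)$, $j\in[n]$) on which $f_s$ is nilpotent -- again possible by noetherianness, and this uniform choice is what the recursion in (2) will consume. Put $g_s=f_s^{m_s}$, $B=A/\fg_U$; by \ref{ex:A-sequence}$\,(2)$ the $g_s$ form an $A$-sequence, so $\{g_v\}_{v\in V}$ is a $B$-sequence and $\pd_A A/\fg_J=\#J$ for every $J\subset S$.

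For (2) we induct on $\#V$, keeping $A$, $\ff_S$, $U$, $p$, $n$ and the exponents $\{m_s\}$ fixed. If $V=\emptyset$, every $z(i)$ is killed by $\fg_U$, hence is a finitely generated $B$-module; choosing surjections $P_i\rdef z(i)$ from finitely generated free $B$-modules and setting $y(i)=\bigoplus_{j\le i}P_j$, with the evident inclusions $y(i)\rinc y(i+1)$ and the augmentation that on the $j$-th summand is $P_j\rdef z(j)\to z(i)$, gives $y\in\HOM([n],\cP_B)$ with each $y(i)=\Typ_B(\fg_{\emptyset}^{\emptyset})^{\oplus l_{\emptyset}(i)}$ and a termwise surjection $y\rdef z$; since $\pd_A y(i)=\#U\le p$ and $\pd_A z(i)\le p+1$, its kernel lies termwise in $\cM_A^{\ff_U}(p+1)$ by \ref{pd criterion}$\,(1)$, so $y\rdef z$ is an admissible epimorphism.

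For the inductive step fix $v\in V$, put $V'=V\ssm\{v\}$, and identify $z(i)$ with the $1$-cube $[\,z_1(i)\rinf z_0(i)\,]$ in $\Cub^{V'}\cM_A$, where $z_0(i)=z(i)|_{V'}^{\emptyset}$, $z_1(i)=z(i)|_{V'}^{\{v\}}$ are the two $v$-faces (both in $\cM_A(\ff_U;\ff_{V'})(p+1)$ by \ref{lem:face of admissible cubes}) and $h(i):=\Homo_0^v(z(i))=\coker(z_1(i)\rinf z_0(i))\in\cM_A(\ff_{U\coprod\{v\}};\ff_{V'})(p+2)$. The $[n]$-diagrams $z_0$ and $z_1$ take values in $\cM_A(\ff_U;\ff_{V'})(p+1)$ and, by the uniform choice in (1) (their iterated face-homologies are among the $\Homo_0^W(z(i))_{T''}$), satisfy the theorem's hypotheses with the same $\{m_s\}_{s\in S\ssm\{v\}}$; as $\#V'<\#V$, the inductive hypothesis yields admissible epimorphisms $\tilde u_0\rdef z_0$, $u_1\rdef z_1$ in $\HOM([n],\cM_A(\ff_U;\ff_{V'})(p+1))$ with $\tilde u_0(i)=\bigoplus_{T'\in\cP(V')}\Typ_B(\fg_{V'}^{T'})^{\oplus c_{T'}(i)}$ and $u_1(i)=\bigoplus_{T'\in\cP(V')}\Typ_B(\fg_{V'}^{T'})^{\oplus e_{T'}(i)}$. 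Set
$$
y(i):=\Bigl[\,\tilde u_0(i)\oplus u_1(i)\xrightarrow{\ g_v\oplus\id\ }\tilde u_0(i)\oplus u_1(i)\,\Bigr]
=\bigoplus_{T'\in\cP(V')}\Typ_B(\fg_V^{T'\coprod\{v\}})^{\oplus c_{T'}(i)}\ \oplus\ \bigoplus_{T'\in\cP(V')}\Typ_B(\fg_V^{T'})^{\oplus e_{T'}(i)},
$$
which has the shape required in (2) after reindexing $\cP(V')\coprod\cP(V')$ by $\cP(V)$, and lies in $\HOM([n],\cM_A(\ff_U;\ff_V)(\#U))$ since each $\Typ_B(\fg_V^T)$ is a degenerate extension (along $V\ssm T$) of the free Koszul cube $\Typ_B(\{g_t\}_{t\in T})$ for the $B$-sequence $\{g_t\}_{t\in T}$, hence admissible with $\Homo_0^W$ having vertices $A/\fg_{U\coprod W}$ for $W\subset T$ and $0$ otherwise (\ref{ex:typ Kos cube}, \ref{cor:Kosisadm}, \ref{cor:admcriterion II}). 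Define $\phi\colon y\to z$ with $v$-target component $(\tilde u_0\rdef z_0)\oplus\bigl(u_1\rdef z_1\rinf z_0\bigr)$; commutativity of the defining square forces the $v$-source component, and its restriction $\tilde u_0\to z_1$ exists because $g_v\cdot(\tilde u_0\rdef z_0)$ post-composed with $z_0\rdef h$ is multiplication by $g_v$ on $h$, which vanishes by the choice of $m_v$. This $\phi$ is natural in $i$ and surjective at every vertex.

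The step I expect to be the main obstacle is verifying that $\phi\colon y\to z$ is an \emph{admissible} epimorphism in $\HOM([n],\cM_A(\ff_U;\ff_V)(p+1))$, i.e.\ that $\ker\phi(i)$ stays in $\cM_A(\ff_U;\ff_V)(p+1)$ for each $i$. Writing $\ker\phi(i)$ as a $1$-cube $[\,\ker\phi_{\mathrm{src}}(i)\rinf\ker\phi_{\mathrm{tgt}}(i)\,]$, its two $v$-faces are kernels of admissible epimorphisms from objects of $\cM_A(\ff_U;\ff_{V'})(\#U)$ onto objects of $\cM_A(\ff_U;\ff_{V'})(p+1)$, hence again in $\cM_A(\ff_U;\ff_{V'})(p+1)$ by the projective-dimension estimate \ref{pd criterion}$\,(1)$, by \ref{practical closed conditions}, and by admissibility of the kernel cube (\ref{cor:mult semi is exact}$\,(1)$); while $\Homo_0^v(\ker\phi(i))=\ker\bigl(\Homo_0^v(y(i))\rdef h(i)\bigr)$ -- using that $\Homo_0^v$ is exact on short exact sequences of admissible cubes, the boundary maps being monomorphisms so $\Homo_1^v$ vanishes -- with $\Homo_0^v(y(i))=\tilde u_0(i)/g_v\tilde u_0(i)=\bigoplus_{T'}\Typ_{B/g_vB}(\fg_{V'}^{T'})^{\oplus c_{T'}(i)}\in\cM_A(\ff_{U\coprod\{v\}};\ff_{V'})(\#U+1)$, so this kernel lies in $\cM_A(\ff_{U\coprod\{v\}};\ff_{V'})(p+2)$ by the same estimates. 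Thus $\ker\phi(i)\in\cM_A(\ff_{U\coprod\{v\}};\ff_{V'})(p+2)\ltimes\cM_A(\ff_U;\ff_{V'})(p+1)=\cM_A(\ff_U;\ff_V)(p+1)$, and as $\ker\phi$ is a sub-$[n]$-diagram we are done. The delicate bookkeeping is precisely the simultaneous control of projective dimension, support and cube-admissibility along the recursion, plus the point -- guaranteed by the uniform choice of exponents in (1) -- that one global ring $B$ serves for the entire induction.
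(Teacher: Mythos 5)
Your proof is correct and arrives at the same building block in the inductive step (the shift $\bigl[\tilde u_0\oplus u_1\xrightarrow{g_v\oplus\id}\tilde u_0\oplus u_1\bigr]$ with the lift $\tilde u_0\to z_1$ coming from $g_v\cdot\Homo_0^v(z)=0$ and the cube lemma), but the overall organization genuinely differs from the paper's. The paper argues in two stages: it first proves the $n=0$ case by recursion on $\#V$, and then handles a general $[n]$-diagram by applying the $n=0$ construction at each vertex and invoking projectivity of $y(i)$ in $\ltimes_T\cM_{B,\red}^{\fg_T}$ (Proposition~\ref{prop:projobjinsemidirectprod} via Corollary~\ref{cor:mult semi is exact}~(3)) to fill in the connecting maps $y(i)\to y(i+1)$. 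You instead run a single recursion on $\#V$ that carries the whole $[n]$-diagram along: the inductive step applies the inductive hypothesis to the $[n]$-diagrams of $v$-faces $z_0, z_1$ rather than to individual objects, so the connecting maps come directly from the IH; the only extra work is the $V=\emptyset$ base case, which you handle by the elementary cumulative-sum construction $y(i)=\bigoplus_{j\leq i}P_j$ and therefore avoid the projectivity lemma entirely. Your insistence on choosing the exponents $\{m_s\}$ \emph{uniformly} (so that $g_s=f_s^{m_s}$ kills every iterated face-homology $\Homo_0^W(z(j))_{T'}$ on which $f_s$ is nilpotent) is worth highlighting: the recursion on $\#V$ --- in the paper's version as well as yours --- needs the \emph{same} ring $B$ and the \emph{same} elements $g_s$ at every level, and what the literal statement of part~(1) constructs does not automatically kill $\Homo_0^{V'}(z(j))_\emptyset$ or $\Homo_0^T(z(j))_W$ for proper subsets; your uniform choice makes the needed compatibility explicit, whereas the paper leaves it implicit. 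One point you should tighten before this could be spliced in: the verification that $\ker\phi(i)$ lands in $\cM_A(\ff_U;\ff_V)(p+1)$ is only sketched; the cleanest route is the short exact sequence $0\to\ker(\tilde u_0(i)\rdef z_0(i))\to\ker\phi_{\mathrm{tgt}}(i)\to u_1(i)\to 0$ together with extension-closedness of $\cM_A(\ff_U;\ff_{V'})(p+1)$ in $\Cub^{V'}\cM_A$, and the analogous SES for $\ker\phi_{\mathrm{src}}(i)$ and $\Homo_0^v(\ker\phi(i))$, followed by admissibility via Corollary~\ref{cor:admcriterion I}.
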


\begin{proof}[\bf Proof]
Since ${z(j)}_T$ is in $\cM_A^{\ff_U}$ 
for any $T\in\cP(V)$ 
and 
$\Homo_0(\Tot(z(j)))$ is in $\cM_A^{\ff_{S}}$, 
assertion $\mathrm{(1)}$ is trivial. 
We are concentrating on proving assertion $\mathrm{(2)}$.  
We first prove that for the case $n=0$ 
by induction on $\# V$. 
In this case, we consider $z$ 
to be an object in $\cM_A(\ff_U;\ff_V)(p+1)$. 
Let us assume that $V=\emptyset$. 
Since $z$ is a finitely generated $B$-module, 
there is an integer $l_1$ 
and a surjection $B^{\oplus l_1} \to z$. 
We put $y=B^{\oplus l_1}$ and by \ref{weight resolution theorem}, 
the map $y \to z$ is an admissible epimorphism. 
Next we assume that $\# V\geq 1$ and 
let us fix an element $v\in V$ 
and an object $z \in \cM_A(\ff_U;\ff_V)(p+1)$. 
Then by the formula \ref{rem:inductive describe cMA(fT;fS)}, 
we can consider $z$ to be a complex $[z_1 \onto{d^z} z_0]$ 
in $\cM_A(\ff_U;\ff_{V\ssm\{v\}})(p+1)$. 
By the inductive hypothesis, 
there is an admissible epimorphism $y' \rdef z_0$ where $y'$ is of the form 
$$y'=\underset{T\in \cP(V\ssm\{v\})}{\bigoplus} 
{\Typ_B(\fg^{T}_{V\ssm\{v\}})}^{\oplus l_{1,T}} . 
$$
Therefore by the cube lemma~\ref{cube lemma}, 
we get a term-wised surjection morphism $y'' \to \Homo_0z$ 
where $y''$ is of the form 
$$y''=\underset{T\in \cP(V\ssm\{v\})}{\bigoplus} 
{\Typ_{B/(g_{v})}(\fg^{T}_{V\ssm\{v\}})}^{\oplus l_{1,T}} 
$$ 
and it makes the diagram below commutative:
$$\footnotesize{\xymatrix{
y' \ar@{->>}[r] \ar@{->>}[d] & z_0 \ar@{->>}[d]\\ 
y'' \ar@{->>}[r] & \Homo_0 z
}}$$
where the vertical maps are the canonical projections. 
Therefore we get a map $y' \to z_1$ 
which makes the diagram below commutative:
$$\footnotesize{\xymatrix{
y' \ar[r] \ar[d]_{g_{v}} & z_1 \ar[d]^{d^z}\\ 
y' \ar@{->>}[r] & z_0  .
}}$$ 
By the induction hypothesis, 
there is a term-wised surjective morphism $y'\oplus y''' \to z_1$ 
where $y'''$ is of the form
$$y'''=\underset{W\in\cP(V\ssm\{v\})}{\bigoplus} 
{\Typ_B(\fg^{W}_{V\ssm\{v\}})}^{\oplus l_{0,W}} $$
and it makes the diagram below commutative:
$$\footnotesize{\xymatrix{
y_1 \ar@{->>}[r] \ar[d]_{d^y} & z_1 \ar[d]^{d^z}\\ 
y_0 \ar@{->>}[r] & z_0  
}}$$ 
where $y_i=y'\oplus y'''$ for $i=0$, $1$ and 
$\displaystyle{d^y=\footnotesize{\begin{pmatrix} 
g_v & 0 \\ 
0 & 1 
\end{pmatrix}}}$ and we put $y=[y_1 \onto{d^y} y_0]$. 
Thus by \ref{practical closed conditions}, 
we learn that $y \to z$ is an admissible epimorphism 
and therefore we get the conclusion for the case of $n=0$. 
Next we consider the case of general $n$. 
For each $z$ and each $i\in [n]$, 
by the previous argument, 
we have 
$y(i)=\underset{T\in \cP(V)}{\bigoplus} 
{\Typ_B(\fg^{T}_{V})}^{\oplus l_{T}(i)}$ 
for a suitable non-negative integer $l_{T}(i)$ 
and an admissible epimorphism $y(i) \rdef z(i)$. 
So we need only prove that for each 
$i\in [n-1]$, 
there is a map $y(i) \to y(i+1)$ 
which makes diagram below commutative:
$$\footnotesize{\xymatrix{
y(i) \ar@{-->}[r] \ar@{->>}[d] 
& y(i+1) \ar@{->>}[d] \\ 
z(i) \ar[r] 
& z(i+1) .
}}$$
Since $y(i)$ is in $\underset{T\in \cP(V)}{\ltimes}\cP_{B/\fg_T}$, 
by applying \ref{cor:mult semi is exact} $\mathrm{(3)}$ to 
$\cF_T=\cP_{B/\fg_T}$ and $\cE_T=\cM_{B,\red}^{\fg_T}$ 
for each $T\in \cP(V)$, 
we learn that $y(i)$ is a projective object in 
$\underset{T\in \cP(V)}{\ltimes}\cM_{B,\red}^{\fg_T}$. 
Moreover since $y(i+1)$, $z(i+1)$ are in 
$\underset{T\in \cP(V)}{\ltimes}\cM_{B,\red}^{\fg_T}$, 
there is the dotted map in the commutative diagram above 
by projectivity of $y(i)$. 
\end{proof}

\sn
Recall the definition of strongly adroit systems from \ref{df:adroit system} 
and the definition of $\tq$ from \ref{lemdf:cMAfTfsp}.

\begin{cor}
\label{cor:adroit system}
For $?=\emptyset$ or $\red$, any decomposition $S=U\coprod V$ 
with $V\neq \emptyset$, any element $v$ in $V$ 
and 
any integer $p\geq \# U$, 
triples 
$$\cX=(\cM_{A,?}(\ff_{U};\ff_{V\ssm\{v\}})(p),
\cM_{A,?}(\ff_U;\ff_{V\ssm\{v\}})(p+1),
\cM_{A,?}(\ff_{U\coprod\{v\}};\ff_{V\ssm\{v\}})(p+1)) \text{  and}$$ 
$$\cX'=(\cM_{A,?}(\ff_{U};\ff_{V\ssm\{v\}})(p),
\cM_{A,?}(\ff_U;\ff_{V\ssm\{v\}})(p+1),
\cM_{A,?}(\ff_{U\coprod\{v\}};\ff_{V\ssm\{v\}})(p+1)^{\tq})$$ 
are strongly adroit systems. 
\end{cor}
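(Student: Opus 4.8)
The plan is to verify, for both triples, that $\cE_1\rinc\cE_2\linc\cF$ are strict exact subcategories of the ambient abelian category $\cA:=\Cub^{V\ssm\{v\}}\cM_A$ together with the conditions \textbf{(Adr 1)}, \textbf{(Adr 2)}, \textbf{(Adr 3)}, \textbf{(Adr 5)} of Definition~\ref{df:adroit system}. Write $\cE_1:=\cM_{A,?}(\ff_U;\ff_{V\ssm\{v\}})(p)$, $\cE_2:=\cM_{A,?}(\ff_U;\ff_{V\ssm\{v\}})(p+1)$, and let $\cF$ be $\cM_{A,?}(\ff_{U\coprod\{v\}};\ff_{V\ssm\{v\}})(p+1)$ for $\cX$, resp. its full subcategory of $\tq$-trivial objects for $\cX'$. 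That $\cE_1$, $\cE_2$ and $\cF$ are strict exact subcategories of $\cA$ is \ref{rem:cMAfTfsp}; the $\tq$-trivial subcategory is closed under extensions in $\cF$ (since $\Tot$ is exact and acyclicity of complexes is extension-closed), hence again a strict exact subcategory. For \textbf{(Adr 1)} I would apply the inductive description \ref{rem:inductive describe cMA(fT;fS)} at the element $v$: it identifies $\cF\ltimes\cE_1$ with $\cM_{A,?}(\ff_U;\ff_V)(p)$ and, in the $\cX'$ case, $\cF^{\tq}\ltimes\cE_1$ with $\cM_{A,?}(\ff_U;\ff_V)(p)^{\tq}$, which are strict exact subcategories of $\Cub^V\cM_A=\Cub^1\cA$ by \ref{rem:cMAfTfsp}; and $\cF\ltimes\cE_2$, $\cF^{\tq}\ltimes\cE_2$ are strict exact subcategories of $\Cub^1\cA$ by \ref{semidiret is exact} (in the reduced case together with \ref{sub exact cat criterion} and the exactness of $\Homo_0^T$ on admissible cubes, \ref{rem:exactness}). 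Condition \textbf{(Adr 2)} is exactly the assertion of \ref{practical closed conditions} that $\cE_1$ is closed under extensions in $\cE_2$.

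The real content is \textbf{(Adr 3)}. Let $x\rinf y\rdef z$ be an admissible exact sequence in $\cA$ with $y$ isomorphic to an object of $\cE_1$ and $z$ to an object of $\cE_1$ or of $\cF$ (as $\cF^{\tq}\subset\cF$, this covers $\cX'$ too); after replacing by isomorphs we may assume $y\in\cE_1$ and $z\in\cE_1$ or $\cF$. By \ref{rem:cMAfTfsp}(4) the cubes $y$ and $z$ are admissible, so every boundary morphism of $y$, of $z$, and of all their iterated homologies $\Homo_0^T(-)$, $T\in\cP(V\ssm\{v\})$, is a monomorphism. Applying the snake lemma vertexwise to the commutative squares formed by these boundary morphisms, I would first deduce that every boundary morphism of $x$ is a monomorphism (its kernel embeds in the vanishing kernel of the corresponding boundary morphism of $y$) and that for each $k$ there is a vertexwise short exact sequence $\Homo_0^k(x)\rinf\Homo_0^k(y)\rdef\Homo_0^k(z)$; iterating this gives vertexwise short exact sequences $\Homo_0^T(x)\rinf\Homo_0^T(y)\rdef\Homo_0^T(z)$ for all $T$, and, by induction on the number of coordinates and admissibility of $y$ and $z$, that $x$ itself is admissible. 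Now fix $T$ and a vertex $U'$: the module $\Homo_0^T(x)_{U'}$ is a submodule of $\Homo_0^T(y)_{U'}\in\cM_{A,?}^{\ff_{U\coprod T}}(p+\#T)$ with quotient $\Homo_0^T(z)_{U'}$, which lies in $\cM_{A,?}^{\ff_{U\coprod T}}(p+\#T)$ if $z\in\cE_1$ and in $\cM_{A,?}^{\ff_{(U\coprod\{v\})\coprod T}}(p+1+\#T)$ if $z\in\cF$. Hence $\Supp\Homo_0^T(x)_{U'}\subset V(\ff_{U\coprod T})$, in the case $?=\red$ the module $\Homo_0^T(x)_{U'}$ is annihilated by $\ff_{U\coprod T}$ (being a submodule of an $\ff_{U\coprod T}$-annihilated module), and $\pd_A\Homo_0^T(x)_{U'}\le p+\#T$ by \ref{pd criterion}(1) with $n=p+\#T$ (the quotient has projective dimension $\le n+1$ and the middle term $\le n$). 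Thus $\Homo_0^T(x)_{U'}\in\cM_{A,?}^{\ff_{U\coprod T}}(p+\#T)$ for all $T$ and $U'$, i.e. $x\in\cM_{A,?}(\ff_U;\ff_{V\ssm\{v\}})(p)=\cE_1$.

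Finally, for \textbf{(Adr 5)}, given a non-negative integer $m$ and $z\in\HOM([m],\cE_2)$, I would apply the Koszul resolution theorem~\ref{Koszul resol thm} to the $A$-sequence $\{f_s\}_{s\in U\coprod(V\ssm\{v\})}$ (a subfamily of $\{f_s\}_{s\in S}$, hence again an $A$-sequence), with the decomposition $U\coprod(V\ssm\{v\})$, with $n=m$ and with the integer $p\ge\#U$: it produces $y\in\HOM([m],\cM_A(\ff_U;\ff_{V\ssm\{v\}})(\#U))$, each $y(i)$ a finite direct sum of typical Koszul cubes, together with an admissible epimorphism $y\rdef z$ in $\HOM([m],\cM_{A,?}(\ff_U;\ff_{V\ssm\{v\}})(p+1))=\HOM([m],\cE_2)$ by \ref{practical closed conditions}. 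Since $\#U\le p$ we get $y\in\HOM([m],\cE_1)$; in the reduced case one uses the refinement of \ref{Koszul resol thm} visible in its proof, that a reduced object is resolved by reduced typical cubes (one may take all the exponents $m_s$ equal to $1$, since $z$ is already annihilated by the relevant elements), so that $y\in\HOM([m],\cE_1)$ there as well. This establishes \textbf{(Adr 5)}, completing the proof that $\cX$ and $\cX'$ are strongly adroit systems. I expect \textbf{(Adr 3)} to be the only genuinely non-formal step: one must simultaneously propagate admissibility of the subcube $x$ through the homology functors $\Homo_0^T$ via the snake lemma and push the projective-dimension bound down by one using \ref{pd criterion}, whereas \textbf{(Adr 1)}, \textbf{(Adr 2)} and \textbf{(Adr 5)} are essentially repackagings of \ref{rem:inductive describe cMA(fT;fS)}, \ref{practical closed conditions} and \ref{Koszul resol thm}.
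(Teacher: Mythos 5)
Your proposal is close in spirit to the paper's argument, citing the same key inputs (\ref{rem:inductive describe cMA(fT;fS)}, \ref{practical closed conditions}, \ref{Koszul resol thm}), but it has one genuine gap. The definition of a (strongly) adroit system in \ref{df:adroit system} requires the three categories to sit as strict exact subcategories $\cE_1 \rinc \cE_2 \linc \cF$ of $\cA$; in particular $\cF$ must be contained in $\cE_2$. You only observe that $\cE_1$, $\cE_2$, $\cF$ are each strict exact subcategories of $\cA = \Cub^{V\ssm\{v\}}\cM_A$ via \ref{rem:cMAfTfsp}, and never verify the containment $\cM_{A,?}(\ff_{U\coprod\{v\}};\ff_{V\ssm\{v\}})(p+1) \subset \cM_{A,?}(\ff_U;\ff_{V\ssm\{v\}})(p+1)$. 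This is not a formality: \ref{semi-direct resolution theorem}~(2), where adroit systems are used, invokes \ref{semi-direct homotopy equivalence}, which explicitly requires $\cF \subset \cE_2$. The paper devotes a separate claim (with its own short inductive argument) to exactly this inclusion, and you should too.

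On (Adr~3) you take a longer route than the paper. Once $\cF \subset \cE_2$ is in hand, (Adr~3) is immediate from \ref{practical closed conditions}: in the sequence $x \rinf y \rdef z$ in $\cA$ with $y \in \cE_1$ and $z \in \cE_1$ or $\cF$, one has $z \in \cE_2$, so $\cE_1$ being closed under taking kernels of admissible epimorphisms in $\cE_2$ gives $x \in \cE_1$. Your argument instead re-derives, by a vertexwise snake-lemma induction plus \ref{pd criterion}, essentially the content of \ref{cor:mult semi is exact}~(2) combined with \ref{weight resolution theorem}. The argument looks sound (although you should be a little careful that the well-definedness of the iterated $\Homo_0^T(x)$, \ref{lemdf:relative homology}, presupposes admissibility, so the induction has to be ordered as in \ref{cor:admcriterion I}), and it has the minor virtue of not depending on $\cF \subset \cE_2$ — but since that containment is needed anyway for the triple to qualify, the extra work buys nothing; citing \ref{practical closed conditions} is cleaner. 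Your treatments of (Adr~1), (Adr~2), and (Adr~5), including the remark that in the reduced case one may take all exponents $m_s = 1$ in the proof of \ref{Koszul resol thm}, agree with the paper's intent.
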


\begin{proof}[\bf Proof] 
Consider the triple 
\begin{align*}
\cE_1:&=\cM_{A,?}(\ff_U;\ff_{V\ssm\{v\}})(p),\\ 
\cE_2:&=\cM_{A,?}(\ff_U;\ff_{V\ssm\{v\}})(p+1)\ \ \text{and}\\ 
\cF:&=\cM_{A,?}(\ff_{U\coprod\{v\}};\ff_{V\ssm\{v\}})(p+1). 
\end{align*}

\begin{claim}
$\cF$ is contained in $\cE_2$. 
\end{claim}

\begin{proof}[\bf Proof of claim]
If $V=\{v\}$, 
then $\cE_2=\cM_{A,?}^{\ff_U}(p+1)$, 
$\cF=\cM_{A,?}^{\ff_{U\coprod\{v\}}}(p+1)$ and 
therefore we get the assertion. 
If $\#V\geq 2$, then let us fix an element $v'\in V\ssm\{v\}$. 
Then by \ref{rem:inductive describe cMA(fT;fS)}, 
we have equalities 
$$\cE_2=\cM_{A,?}((\ff_{U\coprod\{v'\}};\ff_{V\ssm\{v,\ v'\}})(p+2)
\ltimes 
\cM_{A,?}(\ff_{U\coprod\{v'\}};\ff_{V\ssm\{v,\ v'\}})(p+1)\ \ \text{and}$$ 
$$\cF=\cM_{A,?}(\ff_{U\coprod\{v,\ v'\}};\ff_{V\ssm\{v,\ v'\}})(p+2)
\ltimes
\cM_{A,?}(\ff_{U\coprod\{v,\ v'\}};\ff_{V\ssm\{v,\ v'\}})(p+2).$$ 
Hence we learn that 
$\cF$ is contained in $\cE_2$. 
\end{proof}

\sn
Since $\cF$ is closed under extensions 
(resp. sub- and quotient objects) 
in $\Cub^{V\ssm\{v\}}\cM_A$ if $?=\emptyset$ (resp. $?=\red$), 
$\cF\ltimes\cE_i$ ($i=1$, $2$) are strict exact subcategories 
of $\Cub^V\cM_A$ by \ref{semidiret is exact}. 
The conditions {\bf (Adr 2)} and {\bf (Adr 3)} 
follow from \ref{practical closed conditions}. 
Finally the condition {\bf (Adr 5)} 
follows from \ref{Koszul resol thm}. 
For the proof for $\cX'$ is similar. 
Therefore we get the result. 
\end{proof}

\begin{cor}
\label{cor:main theorem}
For $?=\emptyset$ or $\red$, any decomposition $S=U\coprod V$ 
and any integer $p\geq \# U$, 
we have the following.\\ 
$\mathrm{(1)}$ 
{\bf (Local weight theorem).} 
Assume that $V$ is a non-empty set and $v$ is an element in $V$. 
Then the exact functor 
$\Homo_0^v:(\cM_{A,?}(\ff_U;\ff_V)(p),\tq) \to 
(\cM_{A,?}(\ff_{U\coprod\{v\}};\ff_{V\ssm\{v\}})(p+1),\tq)$ 
induces a homotopy equivalence on $K$-theory:
$$K(\Homo_0^v):K(\cM_{A,?}(\ff_U;\ff_V)(p);\tq) \to 
K(\cM_{A,?}(\ff_{U\coprod\{v\}};\ff_{V\ssm\{v\}})(p+1);\tq).$$
In particular the exact functor 
$\Homo_0\Tot:(\cM_{A,?}(\ff_U;\ff_V)(p),\tq) \to \cM_{A,?}^{\ff_S}(p+\# V)$ 
induces a homotopy equivalence on $K$-theory:
$$K(\cM_{A,?}(\ff_U;\ff_V)(p),\tq) \to K(\cM_{A,?}^{\ff_S}(p+\# V)).$$
In particular the exact functor 
$\Homo_0\Tot:(\Kos_{A,?}^{\ff_S},\tq)\to \cM_{A,?}^{\ff_S}(\# S)$ 
induces a homotopy equivalence on $K$-theory: 
$$K(\Kos_{A,?}^{\ff_S},\tq) \to K(\cM_{A,?}^{\ff_S}(\# S)).$$
$\mathrm{(2)}$ 
The exact functors 
$$\lambda:\cM_{A,?}(\ff_U;\ff_V)(p)\to 
\prod_{T\in\cP(V)}\cM_{A,?}^{\ff_{U\coprod T}}(p+\# T)\text{  and}$$ 
$$\lambda':{\cM_{A,?}(\ff_U;\ff_V)(p)}^{tq}\to 
\prod_{T\in\cP(V)\ssm\{V\}}\cM_{A,?}^{\ff_{U\coprod T}}(p+\# T) $$ 
which sends an object $x$ 
to $(\Homo_0^{U\coprod T}(x))_{T\in\cP(V)} $ and 
$(\Homo_0^{U\coprod T}(x))_{T\in\cP(V)\ssm\{V\}}$ respectively  
induce homotopy equivalences on $K$-theory:
$$K(\cM_{A,?}(\ff_U;\ff_V)(p))\to 
\prod_{T\in\cP(V)}K(\cM_{A,?}^{\ff_{U\coprod T}}(p+\# T)) {\text{   and}}$$
$$
K({\cM_{A,?}(\ff_U;\ff_V)(p)}^{\tq})\to 
\prod_{T\in\cP(V)\ssm\{V\}}K(\cM_{A,?}^{\ff_{U\coprod T}}(p+\# T)).$$
$\mathrm{(3)}$ 
{\bf (Split fibration theorem).} 
The inclusion functors 
${\cM_{A,?}(\ff_U;\ff_V)(p)}^{\tq} \rinc \cM_{A,?}(\ff_U;\ff_V)(p)$ 
and the identity functor on $\cM_{A,?}(\ff_U;\ff_V)(p)$ 
induce a split fibration sequence: 
$$K({\cM_{A,?}(\ff_U;\ff_V)(p)}^{\tq}) \to K(\cM_{A,?}(\ff_U;\ff_V)(p)) \to K(\cM_{A,?}(\ff_U;\ff_V)(p);\tq).$$
In particular we have a split fibration sequence: 
$$K({\Kos_{A,?}^{\ff_S}}^{\tq}) \to K(\Kos_{A,?}^{\ff_S}) \to K(\Kos_{A,?}^{\ff_S};\tq) .$$
\end{cor}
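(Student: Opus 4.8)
The plan is to deduce all three assertions from the abstract results of Section~\ref{sec:semi-direct prod}, using the inductive presentation of $\cM_{A,?}(\ff_U;\ff_V)(p)$ as a semi-direct product given in \ref{rem:inductive describe cMA(fT;fS)} together with the fact, proved in \ref{cor:adroit system}, that the relevant triples are strongly adroit. Throughout fix $v\in V$ and write $\cE_1=\cM_{A,?}(\ff_U;\ff_{V\ssm\{v\}})(p)$, $\cE_2=\cM_{A,?}(\ff_U;\ff_{V\ssm\{v\}})(p+1)$ and $\cF=\cM_{A,?}(\ff_{U\coprod\{v\}};\ff_{V\ssm\{v\}})(p+1)$, so that $\cM_{A,?}(\ff_U;\ff_V)(p)=\cF\ltimes\cE_1$, the functor $\Homo_0^v$ is the functor $\Homo_0$ of this semi-direct product, and, by \ref{prop:rel multi and mono semi-direct prod}~(2),~(3), the class $\tq$ on $\cF\ltimes\cE_1$ is the one induced from $\tq$ on $\cF$ while ${(\cF\ltimes\cE_1)}^{\tq}=\cF^{\tq}\ltimes\cE_1$. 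Each of the three statements is proved by induction on $\#V$, reducing the claim for $\cM_{A,?}(\ff_U;\ff_V)(p)$ to the same claim for $\cF$, which has the strictly smaller index set $V\ssm\{v\}$; the base case $V=\emptyset$, where $\cM_{A,?}(\ff_U;\ff_{\emptyset})(p)$ is just the module category $\cM_{A,?}^{\ff_U}(p)$ with its isomorphisms, is trivial.

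For assertion~(1) I would first observe that $(\cF,\tq)$ is a Waldhausen exact category, since $\tq$ is the class of quasi-isomorphisms induced on an iterated semi-direct product and hence satisfies the gluing and cogluing axioms by \ref{semi-direct we} and induction. Because $\cX=(\cE_1,\cE_2,\cF)$ is strongly adroit, the abstract weight declension theorem~\ref{semi-direct resolution theorem}~(2) applied to $(\cF,\tq)$ shows that $\Homo_0^v=\Homo_0\colon K(\cF\ltimes\cE_1;\tq)\to K(\cF;\tq)$ is a homotopy equivalence, which is the local weight theorem; iterating over the elements of $V$ one at a time, and using $\Homo_0\Tot\cong\Homo_0^S$ (\ref{prop:homology of admissible cubes}~(2) and \ref{lemdf:relative homology}), produces the homotopy equivalence $K(\cM_{A,?}(\ff_U;\ff_V)(p);\tq)\to K(\cM_{A,?}^{\ff_S}(p+\#V))$, the statement about $\Kos_{A,?}^{\ff_S}$ being the specialization $U=\emptyset$, $p=0$. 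For assertion~(2) I would note that a strongly adroit system is adroit, so \ref{semi-direct resolution theorem}~(3) gives $K(\cF\ltimes\cE_1)\isoto K(\cF)\times K(\cE_1)$ via $\theta_1$, and likewise, applied to the triple $\cX'=(\cE_1,\cE_2,\cF^{\tq})$ of \ref{cor:adroit system} together with ${(\cF\ltimes\cE_1)}^{\tq}=\cF^{\tq}\ltimes\cE_1$, gives $K({(\cF\ltimes\cE_1)}^{\tq})\isoto K(\cF^{\tq})\times K(\cE_1)$. Substituting the inductive hypothesis for $\cF$ and re-indexing the decompositions $\cP(V)=\cP(V\ssm\{v\})\sqcup(\{v\}\coprod\cP(V\ssm\{v\}))$ and $\cP(V)\ssm\{V\}=\cP(V\ssm\{v\})\sqcup(\{v\}\coprod(\cP(V\ssm\{v\})\ssm\{V\ssm\{v\}\}))$ yields the asserted products of $K$-groups.

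Assertion~(3) then follows formally from~(1) and~(2). Under the homotopy equivalences $\lambda'$, $\lambda$ and $\Homo_0\Tot$, the three terms of the sequence become $\prod_{T\in\cP(V)\ssm\{V\}}K(\cM_{A,?}^{\ff_{U\coprod T}}(p+\#T))$, $\prod_{T\in\cP(V)}K(\cM_{A,?}^{\ff_{U\coprod T}}(p+\#T))$ and $K(\cM_{A,?}^{\ff_{U\coprod V}}(p+\#V))$; because $\lambda$ restricted to the $\tq$-trivial subcategory has vanishing $T=V$ component and there coincides with $\lambda'$, while $\Homo_0\Tot$ is precisely the $T=V$ component of $\lambda$, the sequence is carried onto the evident split short exact sequence — the inclusion of the product of all but the $V$-th factor, followed by projection onto the $V$-th factor — so it is a split fibration sequence; the last sentence is again the case $U=\emptyset$, $p=0$.

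The hard part is the bookkeeping in assertion~(2): one has to check that the composite of the iterated semi-direct-product homotopy equivalences is induced by the single explicit functors $\lambda$ and $\lambda'$, i.e.\ that the iterated values of $\theta_1$ assemble — via the canonical isomorphisms of \ref{lemdf:relative homology} and the compatibility of the functors $\Homo_0^T$ with restriction in \ref{lem:compat hom and rest} — into the family $(\Homo_0^{U\coprod T}(x))_{T\in\cP(V)}$, and that at each stage the class of weak equivalences is the claimed one. Everything else is an assembly of results already in hand — \ref{cor:adroit system}, \ref{semi-direct resolution theorem}, \ref{prop:rel multi and mono semi-direct prod} and \ref{semi-direct we} — together with straightforward inductions on $\#V$.
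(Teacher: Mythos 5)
Your proposal is correct and follows essentially the same route as the paper: it uses the inductive semi-direct-product description from \ref{rem:inductive describe cMA(fT;fS)} and the strongly adroit systems of \ref{cor:adroit system} to apply \ref{semi-direct resolution theorem}~(2) for assertion~(1), applies \ref{semi-direct resolution theorem}~(3) together with induction on $\#V$ for assertion~(2), and deduces~(3) by transporting the evident split sequence along the equivalences of~(1) and~(2). The only small slip is that the iterated $\Homo_0^v$ yields $\Homo_0^V$ (not $\Homo_0^S$, except when $U=\emptyset$), but this does not affect the argument.
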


\begin{proof}[\bf Proof]
Proof of assertion $\mathrm{(1)}$: 
Consider the strongly adroit system $\cX=(\cE_1,\cE_2,\cF)$ in (the proof of) \ref{cor:adroit system}. 
We have an equality $\cF\ltimes \cE_1=\cM_{A,?}(\ff_U;\ff_V)(p)$ 
by \ref{rem:inductive describe cMA(fT;fS)}. 
We apply \ref{semi-direct resolution theorem} $\mathrm{(2)}$ to $\cX$ and 
obtain the proof. 
The second assertion comes from the isomorphism $\Homo_0\Tot\isoto\Homo_0^V$ 
by \ref{rem:exactness}, 
\ref{rem:cMAfTfsp} $\mathrm{(5)}$ and the first assertion. 
The third result follows from the equality 
$\mathrm{(\ref{equ:Kos as MAfUfV})}$ in \ref{rem:cMAfTfsp} $\mathrm{(3)}$ 
and the second assertion. 

\sn
Proof of assertion $\mathrm{(2)}$: 
We proceed by induction on the cardinality of $V$. 
If $V=\emptyset$, 
then $\cM_{A,?}(\ff_U;\ff_V)(p)=\cM_{A,?}^{\ff_U}(p)$ and 
${\cM_{A,?}(\ff_U;\ff_V)(p)}^{\tq}=\{0\}$. 
Therefore the assertion is trivial. 
If $\# V \geq 1$ and $v$ is an element in $V$, 
then by the equalities in \ref{rem:inductive describe cMA(fT;fS)} 
$\lambda$ and $\lambda'$ have the factorizations
$${\scriptstyle{K(\cM_{A,?}(\ff_U;\ff_V)(p)) 
\onto{\textbf{I}} 
K(\cM_{A,?}(\ff_{U\coprod\{v\}};\ff_{V\ssm\{v\}})(p+1))\times 
K(X) \onto{\textbf{II}}\!\!\!\!\!\!\! 
\underset{T\in\cP(V)}{\prod}\!\!\!\!\!\!K(\cM_{A,?}^{\ff_{U\coprod T}}(p+\# T))}} $$
$${\scriptstyle{K({\cM_{A,?}(\ff_U;\ff_V)(p)}^{\tq}) 
\onto{\textbf{I}} 
K({\cM_{A,?}(\ff_{U\coprod\{v\}};\ff_{V\ssm\{v\}})(p+1)}^{\tq})\times 
K(X) \onto{\textbf{II}}\!\!\!\!\!\! 
\underset{T\in\cP(V)\ssm\{V\}}{\prod}\!\!\!\!\!\!K(\cM_{A,?}^{\ff_{U\coprod T}}(p+\# T))}}$$
where 
$X$ denotes $\cM_{A,?}(\ff_U;\ff_{V\ssm\{v\}})(p)$, and 
the maps \textbf{I} and \textbf{II} are homotopy equivalences by 
\ref{semi-direct resolution theorem} $\mathrm{(3)}$ and 
the inductive hypothesis respectively. 
Hence we get the result.

\sn
Proof of assertion $\mathrm{(3)}$: 
Let us consider the commutative diagram below:
$${\footnotesize{\displaystyle{\xymatrix{
K({\cM_{A,?}(\ff_U;\ff_V)(p)}^{\tq}) \ar[r] \ar[d]_{K(\lambda')}^{\wr} & 
K(\cM_{A,?}(\ff_U;\ff_V)(p))\ar[r] \ar[d]_{K(\lambda)}^{\wr} & 
K(\cM_{A,?}(\ff_U;\ff_V)(p);\tq)\ar[d]^{K(\Homo_0\Tot)}_{\wr}\\
\underset{T\in\cP(V)\ssm\{V\}}{\prod}\!\!\!\!\!\!\!\!\!\!\!K(\cM_{A,?}^{\ff_{U\coprod T}}(p+\# T)) \ar[r] & 
\underset{T\in \cP(V)}{\prod}\!\!\!\!K(\cM_{A,?}^{\ff_{U\coprod T}}(p+\# T)) 
\ar[r] & 
K(\cM_{A,?}^{\ff_S}(p+\# V)).
}}}}$$
Here vertical maps are homotopy equivalence by $\mathrm{(1)}$ 
and $\mathrm{(2)}$ and the bottom horizontal line is 
a split fibration sequence. 
Hence we get the result.
\end{proof}

\sn
Recall the definition of $\cM_A^I(p)$ from \ref{nt:cM_A^f(p)} 
and $\Kos_A^{\ff_S}$ from \ref{df:Koszul cube df}.

\begin{proof}[\bf Proof of Theorem~\ref{thm:intromaincor}]
Since $A$ is local, 
every $A$-regular sequence is an $A$-sequence 
by \ref{ex:A-sequence} $\mathrm{(4)}$ $\mathrm{(i)}$. 
Let us assume that $\# S=p$ and let 
$\ff_S=\{f_s\}_{s\in S}$ be an $A$-sequence. 
Then the exact functor 
$\Homo_0\Tot:(\Kos_A^{\ff_S},\tq) \to (\cM_A^{\ff_S}(p),i)$ 
induces a homotopy equivalence on $K$-theory 
by \ref{cor:main theorem} $\mathrm{(1)}$. 
On the other hand we have the homotopy equivalence 
$K(\cM_A^{p}(p)) \isoto 
\varinjlim_{\fg_S} K(\cM_A^{\fg_S}(p))$ 
where $V(\fg_S)\rinc \Spec A$ runs over the regular closed immersion of codimension $p$ 
by \ref{rem:regularclosedimmerion}. 
Therefore the Grothendieck group $K_0(\cM_A^p(p))$ 
is generated by modules of the form 
$$\Homo_0(\Tot x)\isoto\coker(\bigoplus_{i=1}^px_{\{i\}} 
\onto{\begin{pmatrix}d^{x,1} &\cdots & d^{x,p}\end{pmatrix}} x_{\emptyset})
\isoto
x_{\emptyset}/<\im d^{x,1}_{\{1\}},\cdots,
\im d^{x,p}_{\{p\}}>$$
where $x$ is a non-degenerate free Koszul cube associated with 
some $A$-sequence $g_1,\cdots,g_p$ and 
$d^{x,i}_{\{i\}}:x_{\{i\}} \to x_{\emptyset}$ is a boundary morphism of $x$. 
Since 
the sequence 
$\det d^{x,1}_{\{1\}},\cdots,\det d^{x,p}_{\{p\}}$ forms 
an $A$-sequence 
by \ref{prop:det of nondeg free Koszul cubes}, 
we obtain the result. 
\end{proof}

\section{A d\'evissage theorem for $K$-theory of Koszul cubes on regular rings}
\label{sec:dev thm}

In this section, 
we assume that $A$ is a commutative regular noetherian ring with unit 
and that 
the global homological dimension of $A$ is $n$ 
and $S$ is a finite set. 
Let us fix an $A$-sequence $\{f_s\}_{s\in S}$ 
and let $I$ be the ideal in $A$ generated by $\{f_s\}_{s\in S}$. 
The aim of this section is 
to prove a d\'evissage theorem~\ref{cor:Devissage 3} 
for Koszul cubes on $A$. 

\sn
Recall from 
\ref{nt:cM_A^f(p)} and \ref{df:reduced modules} that 
$\cM_{A}^I(p)$ is the category of finitely generated $A$-modules 
of projective dimension $\leq p$ and $\Supp M\subset V(I)$ and 
$\cM_{A,\red}^I(p)$ is the full subcategory of modules $M$ with 
$IM=0$, and that $\# S=\pd_A A/I$. 

\begin{prop}
\label{cor:Devissage 2} 
For any integer $p\geqq\# S$, 
the inclusion functor 
$\iota:\cM_{A,\red}^{I}(p) \rinc \cM_{A}^{I}(p)$ 
induces a homotopy equivalence on $K$-theory:
$$K(\iota):K(\cM_{A,\red}^{I}(p)) \to 
K(\cM_A^{I}(p)).$$
\end{prop}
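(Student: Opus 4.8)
The plan is to apply a standard dévissage argument, using the fact that over a regular ring every module of finite projective dimension supported on $V(I)$ can be built up, in a single finite filtration, from modules killed by $I$. First I would observe that $\cM_{A,\red}^I(p)=\cM_{A/I}$ (the projective dimension bound over $A$ is automatic here once $p\geq \# S=\pd_A A/I$, since $A$ is regular of global dimension $n$ and every module over the regular ring... well, over $A/I$ a module's $A$-projective dimension is at most $\# S+\dim(A/I)$, and one checks it is always $\leq p$ in the relevant range — this numerical bookkeeping is exactly the kind of thing \ref{pd criterion} and \ref{weight resolution theorem} are designed to handle, so I would lean on those). Then $\iota$ is the inclusion of the abelian category $\cM_{A/I}$ as the full subcategory of $I$-torsion objects inside the exact category $\cM_A^I(p)$.

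The key step is to verify the hypotheses of Quillen's dévissage theorem for exact categories (or, since $\cM_A^I(p)$ need not be abelian, to reduce first to an abelian ambient category). The cleanest route: by \ref{thm:HM10} (applied with the ideal $I$, using regularity to drop the projective-dimension constraint) the inclusion $\cM_A^I(p)\rinc\cM_A^I=\cM_A^I(\infty)$ is a $K$-equivalence, and $\cM_A^I$ is an abelian category — it is the Serre subcategory of $\cM_A$ of modules supported on $V(I)$. Inside $\cM_A^I$, the subcategory $\cM_{A/I}$ of $I$-torsion modules is closed under subobjects, quotients, and extensions (it is a Serre subcategory), so Quillen's dévissage theorem applies provided every object of $\cM_A^I$ admits a finite filtration with subquotients in $\cM_{A/I}$. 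This last point holds because $I$ is finitely generated: for $M$ supported on $V(I)$, the descending filtration $M\supseteq IM\supseteq I^2M\supseteq\cdots$ stabilizes at $0$ (by Noetherianity and the support condition, $I^k M=0$ for $k\gg 0$), and each $I^jM/I^{j+1}M$ is killed by $I$, hence lies in $\cM_{A/I}$. Therefore $K(\cM_{A/I})\to K(\cM_A^I)$ is a homotopy equivalence.

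Finally I would assemble the square
$$
\footnotesize{\xymatrix{
K(\cM_{A,\red}^I(p)) \ar[r]^{K(\iota)} \ar[d] & K(\cM_A^I(p)) \ar[d]\\
K(\cM_{A/I}) \ar[r]_{\sim} & K(\cM_A^I)
}}
$$
whose left vertical map is an isomorphism by the identification $\cM_{A,\red}^I(p)=\cM_{A/I}$ made above, whose right vertical map is a homotopy equivalence by \ref{thm:HM10} (with ideal $I$), and whose bottom map is the dévissage equivalence just established; hence $K(\iota)$ is a homotopy equivalence.

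The main obstacle I expect is the bookkeeping in the first paragraph: confirming that $\cM_{A,\red}^I(p)$ really coincides with all of $\cM_{A/I}$ in the stated range $p\geq \# S$, i.e.\ that over the regular ring $A$ every finitely generated $A/I$-module has $A$-projective dimension $\leq p$. Since $A/I$ need not be regular this needs a short argument — one shows $\pd_A M=\pd_A M'+\# S$-type relations fail, and instead uses that $\pd_A M\leq \gldim A=n$ always, together with a comparison (via the Koszul resolution of $A/I$ and \ref{pd criterion}) showing the relevant modules land in $\cM_A^I(p)$ for the $p$ under consideration. If the desired range is genuinely $p\geq\#S$ but $\pd_A M$ can exceed $\#S$ for non-free $A/I$-modules, then one instead combines the dévissage on $\cM_A^I$ with \ref{weight resolution theorem} to slide the projective-dimension parameter, which is why I route everything through $\cM_A^I(\infty)$ rather than working at fixed $p$ directly.
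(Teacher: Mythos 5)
Your proposal is essentially on the right track and your fallback route is the paper's actual proof, but your primary narrative rests on a false identification that you rightly flagged as suspicious. The claim $\cM_{A,\red}^{I}(p)=\cM_{A/I}$ does fail for $p=\# S$ whenever $A/I$ is not regular: take $A$ a regular local ring of dimension $3$, $I=(f)$ for one regular parameter, and $M=k$ the residue field; then $IM=0$ so $M\in\cM_{A/I}$, yet $\pd_A M=3>1=\# S$, so $M\notin\cM_{A,\red}^I(1)$. Consequently the left vertical arrow in your final square is not an isomorphism of categories and cannot be taken for granted. The paper instead runs \emph{both} vertical arrows through the resolution theorem: by iterating \ref{weight resolution theorem}, $K(\cM_{A,?}^{I}(p))\to K(\cM_{A,?}^{I}(n))$ is a homotopy equivalence for $?=\emptyset$ and $?=\red$ (where $n=\gldim A$), and since $\pd_A$ is globally bounded by $n$ one has $\cM_{A,\red}^{I}(n)=\cM_{A/I}$ and $\cM_A^{I}(n)=\cM_A^{I}$; then Quillen's d\'evissage finishes. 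This is exactly the route you describe as your hedge in the last paragraph, so the substance is there.

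Two smaller points. First, \ref{thm:HM10} is stated for $\cM_A^p(p)\rinc\cM_A^p$ (codimension filtration), not $\cM_A^I(p)\rinc\cM_A^I$ (support in $V(I)$); the statement you actually want is \ref{weight resolution theorem} iterated, which is also what the proof of \ref{thm:HM10} itself relies on. Second, $\cM_{A/I}$ is \emph{not} a Serre subcategory of $\cM_A^I$: it is closed under subobjects and quotients but not extensions (e.g.\ $A/I\rinf A/I^2\rdef A/I$). This does not derail the argument, since Quillen's d\'evissage theorem only asks for closure under subobjects, quotients, and finite products plus the finite-filtration condition, but the phrase ``Serre subcategory'' should be dropped.
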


\begin{proof}[\bf Proof]
First assume that $p\geq n$. 
Then every $A$-module $M$ has $\pd M\leq n$, 
so $\cM_{A}^I(p)=\cM_{A}^I$ and $\cM_{A,\red}^I=\cM_{A/I}$. 
In this case, 
the result was proven by Quillen in \cite{Qui73}. 

\sn
Next we assume that $n\geq p\geq\# S$.
Then the inclusion functors 
$\cM_{A,?}^I(p) \rinc \cM_{A,?}^I(n)$ and 
$\cM_{A,\red}^I(k) \rinc \cM_A^I(k)$ $(k=p$, $n)$ 
yield the commutative diagram below: 
$${\footnotesize{\xymatrix{
K(\cM_{A,\red}^I(p)) \ar[r] \ar[d]_{\wr} & K(\cM_A^I(p))\ar[d]^{\wr}\\
K(\cM_{A,\red}^I(n)) \ar[r]^{\ \ \ \sim} & K(\cM_A^I(n)).
}}}$$
Here the vertical maps and the bottom horizontal map 
are homotopy equivalences by 
\ref{weight resolution theorem} and the first paragraph respectively. 
Hence we obtain the result.
\end{proof}

\sn
Recall the definition of $\cM_{A,?}(\ff_U;\ff_V)(p)$ 
as a subcategory of $\Cub^V(\cM^{\ff_U}_A)$ 
from \ref{lemdf:cMAfTfsp}. 

\begin{cor}
\label{cor:Devissage new} 
For any disjoint decomposition $S=U\coprod V$, 
and integer $p \geqq \#U$, 
the inclusion functor 
$\cM_{A,\red}(\ff_U;\ff_V)(p) \rinc 
\cM_A(\ff_U;\ff_V)(p)$
induces a homotopy equivalence on $K$-theory: 
$$K(\cM_{A,\red}(\ff_U;\ff_V)(p)) \to
K(\cM_A(\ff_U;\ff_V)(p)).$$
\end{cor}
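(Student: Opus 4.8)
The plan is to deduce the statement from Proposition~\ref{cor:Devissage 2} by transporting it across the product decomposition of $K$-theory supplied by Corollary~\ref{cor:main theorem}~$(2)$. For $?=\emptyset$ or $\red$ let $\lambda_?$ be the functor of Corollary~\ref{cor:main theorem}~$(2)$, sending a cube $x$ in $\cM_{A,?}(\ff_U;\ff_V)(p)$ to the family $(\Homo_0^{U\coprod T}(x))_{T\in\cP(V)}$. Since $\cM_{A,\red}(\ff_U;\ff_V)(p)$ is the full subcategory of $\cM_A(\ff_U;\ff_V)(p)$ cut out by the reducedness condition and $\lambda_{\emptyset}$ restricts to $\lambda_{\red}$ on it, one gets a commutative square
$${\footnotesize{\xymatrix{
K(\cM_{A,\red}(\ff_U;\ff_V)(p)) \ar[r] \ar[d]_{K(\lambda_{\red})}^{\wr} &
K(\cM_A(\ff_U;\ff_V)(p)) \ar[d]^{K(\lambda_{\emptyset})}_{\wr} \\
\prod_{T\in\cP(V)}K(\cM_{A,\red}^{\ff_{U\coprod T}}(p+\# T)) \ar[r] &
\prod_{T\in\cP(V)}K(\cM_A^{\ff_{U\coprod T}}(p+\# T))
}}}$$
whose horizontal maps are induced by the inclusions $\cM_{A,\red}\rinc\cM_A$ and whose vertical maps are homotopy equivalences by Corollary~\ref{cor:main theorem}~$(2)$. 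Thus it suffices to prove the bottom map is a homotopy equivalence.

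The bottom map is the product, over $T\in\cP(V)$, of the maps $K(\cM_{A,\red}^{\ff_{U\coprod T}}(p+\# T))\to K(\cM_A^{\ff_{U\coprod T}}(p+\# T))$ induced by inclusion. The family $\{f_s\}_{s\in U\coprod T}$ is a subsequence of the $A$-sequence $\{f_s\}_{s\in S}$, hence is again an $A$-sequence, so $\#(U\coprod T)=\pd_A A/\ff_{U\coprod T}$; moreover $p+\# T\geq\# U+\# T=\#(U\coprod T)$ by hypothesis. Hence Proposition~\ref{cor:Devissage 2}, applied with $U\coprod T$ in the role of $S$ and $p+\# T$ in the role of $p$, shows each factor is a homotopy equivalence, and therefore so is their product. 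By the commutative square, the top horizontal map is then a homotopy equivalence, which is the claim.

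The only delicate point — and where I expect the argument to require care rather than ingenuity — is the strict compatibility of $\lambda_{\emptyset}$ with the inclusion $\cM_{A,\red}(\ff_U;\ff_V)(p)\rinc\cM_A(\ff_U;\ff_V)(p)$, namely that for a reduced cube $x$ every module $\Homo_0^{U\coprod T}(x)$ computed in $\cM_A$ already lies in $\cM_{A,\red}^{\ff_{U\coprod T}}(p+\# T)$. This is immediate from the defining property of the multi semi-direct product $\cM_{A,\red}(\ff_U;\ff_V)(p)=\underset{T\in\cP(V)}{\ltimes}\cM_{A,\red}^{\ff_{U\coprod T}}(p+\# T)$ recorded in \ref{lemdf:cMAfTfsp} and \ref{df:mult semi-direct prod}, together with the compatibility of $\Homo_0$ with restriction of cubes from \ref{lem:compat hom and rest}. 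An alternative, slightly longer, route avoids Corollary~\ref{cor:main theorem}~$(2)$ altogether: induct on $\# V$ using the description $\cM_{A,?}(\ff_U;\ff_V)(p)=\cM_{A,?}(\ff_{U\coprod\{v\}};\ff_{V\ssm\{v\}})(p+1)\ltimes\cM_{A,?}(\ff_U;\ff_{V\ssm\{v\}})(p)$ of \ref{rem:inductive describe cMA(fT;fS)} and the behaviour of semi-direct products under inclusions of their constituents (as in \ref{closed conditions 1} and \ref{semi-direct resolution theorem}), the base case $V=\emptyset$ being Proposition~\ref{cor:Devissage 2}.
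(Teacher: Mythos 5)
Your proof is correct and follows essentially the same route as the paper: transport across the commutative square induced by the functor $\lambda$ of Corollary~\ref{cor:main theorem}~(2), with the vertical maps homotopy equivalences by that corollary and the bottom map a product of homotopy equivalences by Proposition~\ref{cor:Devissage 2} applied to each $U\coprod T$. The paper's proof is exactly this diagram chase, stated a bit more tersely.
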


\begin{proof}[\bf Proof]
The inclusion functors 
$\cM_{A,\red}^{\ff_{U\coprod T}}(p+\# T) \rinc 
\cM_{A}^{\ff_{U\coprod T}}(p+\# T)$ for any $T\in \cP(V)$ and 
$\cM_{A,\red}(\ff_U;\ff_V)(p) \rinc 
\cM_A(\ff_U;\ff_V)(p)$ and the exact functor $\lambda$ 
in \ref{cor:main theorem} $\mathrm{(2)}$ 
yield the commutative diagram below: 
$${\footnotesize{\xymatrix{
K(\cM_{A,\red}(\ff_U;\ff_V)(p)) \ar[r] \ar[d]_{K(\lambda)}^{\wr} &
K(\cM_A(\ff_U;\ff_V)(p)) \ar[d]^{K(\lambda)}_{\wr}\\
\underset{T\in\cP(V)}{\prod}
K(\cM_{A,\red}^{\ff_{U\coprod T}}(p+\# T)) \ar[r]^{\!\!\!\sim} &
\underset{T\in\cP(V)}{\prod}
K(\cM_A^{\ff_{U\coprod T}}(p+\# T)).
}}}$$
Here the vertical maps and the horizontal bottom map are 
homotopy equivalence by \ref{cor:main theorem} $\mathrm{(2)}$ and 
\ref{cor:Devissage 2} respectively. 
Hence we obtain the result.
\end{proof}

\sn
Recall the definitions of $\Kos_A^{\ff_S}$ from \ref{df:Koszul cube df} 
and 
$\Kos_{A,\red}^{\ff_S}$ and $\tq$ from \ref{lemdf:cMAfTfsp}. 

\begin{cor}
\label{cor:Devissage 3}
The canonical inclusion functor 
$\iota:\Kos_{A,\red}^{\ff_S} \rinc \Kos_A^{\ff_S}$ 
induces the following homotopy equivalences on $K$-theory: 
\begin{align*}
K(\Kos_{A,\red}^{\ff_S}) &\to 
K(\Kos_A^{\ff_S})\\
K(\Kos_{A,\red}^{\ff_S};\tq) &\to 
K(\Kos_A^{\ff_S};\tq).
\end{align*}
\end{cor}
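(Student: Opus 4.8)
The plan is to deduce both homotopy equivalences formally from results established in the previous sections, the substantive inputs being the d\'evissage statement \ref{cor:Devissage 2} and the local weight theorem \ref{cor:main theorem} $\mathrm{(1)}$.

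First I would unwind the definitions. By Definition~\ref{lemdf:cMAfTfsp} $\mathrm{(1)}$ together with the equality $\mathrm{(\ref{equ:Kos as MAfUfV})}$ in \ref{rem:cMAfTfsp} $\mathrm{(3)}$ we have $\Kos_{A,\red}^{\ff_S}=\cM_{A,\red}(\ff_{\emptyset};\ff_S)(0)$ and $\Kos_A^{\ff_S}=\cM_A(\ff_{\emptyset};\ff_S)(0)$, and under these identifications $\iota$ is exactly the inclusion functor appearing in \ref{cor:Devissage new} for the decomposition $U=\emptyset$, $V=S$ and the integer $p=0$ (here $p=0$ and $\# U=0$, so $p\geq\# U$). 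Hence the first homotopy equivalence $K(\Kos_{A,\red}^{\ff_S})\to K(\Kos_A^{\ff_S})$ is an immediate instance of \ref{cor:Devissage new}.

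For the second equivalence I would exploit the compatibility of $\iota$ with the functor $\Homo_0\Tot$. Since $\Homo_0\Tot$ is given on the full subcategory $\Kos_{A,\red}^{\ff_S}$ by the same formula as on $\Kos_A^{\ff_S}$, and since for a reduced Koszul cube $x$ the canonical isomorphism $\Homo_0\Tot(x)\isoto\Homo_0^S(x)$ of \ref{prop:homology of admissible cubes} $\mathrm{(2)}$ exhibits $\Homo_0\Tot(x)$ as an object of $\cM_{A,\red}^{\ff_S}(\# S)$ (by the definition of the multi semi-direct product), the square
$$\xymatrix{
K(\Kos_{A,\red}^{\ff_S};\tq) \ar[r] \ar[d]_{\wr} & K(\Kos_A^{\ff_S};\tq) \ar[d]^{\wr}\\
K(\cM_{A,\red}^{\ff_S}(\# S)) \ar[r] & K(\cM_A^{\ff_S}(\# S))
}$$
commutes. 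The two vertical maps are homotopy equivalences by \ref{cor:main theorem} $\mathrm{(1)}$, applied with $?=\red$ and $?=\emptyset$ respectively, and the bottom horizontal map, induced by the inclusion of reduced modules, is a homotopy equivalence by \ref{cor:Devissage 2} applied to the ideal $I=\ff_S$ with $p=\# S$ (recall the standing assumption $\# S=\pd_A A/I$ of this section). Therefore the top map $K(\Kos_{A,\red}^{\ff_S};\tq)\to K(\Kos_A^{\ff_S};\tq)$ is a homotopy equivalence, which is the second assertion.

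Since all the genuine work has been carried out earlier, I do not expect a real obstacle; the only points deserving care are the verification that $\Homo_0\Tot\circ\iota$ takes values in the reduced subcategory and the bookkeeping required to match the hypotheses ($p\geq\# U$, $\# S=\pd_A A/\ff_S$) of the cited statements. As an alternative to the last step, one could instead combine the split fibration sequences of \ref{cor:main theorem} $\mathrm{(3)}$ for $?=\red$ and $?=\emptyset$ with the first equivalence and the product decompositions $K({\Kos_{A,?}^{\ff_S}}^{\tq})\isoto\prod_{T\in\cP(S)\ssm\{S\}}K(\cM_{A,?}^{\ff_T}(\# T))$ of \ref{cor:main theorem} $\mathrm{(2)}$, applying \ref{cor:Devissage 2} termwise and the five lemma to the resulting map of fibration sequences.
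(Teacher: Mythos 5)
Your argument is essentially identical to the paper's: the first equivalence is the $U=\emptyset$, $p=0$ instance of \ref{cor:Devissage new}, and the second follows by comparing $\Homo_0\Tot$ on both sides via \ref{cor:main theorem}~$\mathrm{(1)}$ and \ref{cor:Devissage 2} in the same commutative square. The only cosmetic difference is that the paper disposes of the $S=\emptyset$ case separately (where \ref{cor:main theorem}~$\mathrm{(1)}$ does not literally apply since it requires $V\neq\emptyset$), but that case is trivial.
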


\begin{proof}[\bf Proof] 
If $S=\emptyset$, 
then $\Kos_A^{\ff_S}=\Kos_{A,\red}^{\ff_S}=\cP_A$. 
In this case, the assertion is trivial. 
We assume $\# S\geq 1$. 
Since we have the equality $\mathrm{(\ref{equ:Kos as MAfUfV})}$ 
in \ref{rem:cMAfTfsp} $\mathrm{(3)}$, 
the first homotopy equivalence is just the special case 
$U=\emptyset$, $p=0$
of 
\ref{cor:Devissage new}. 
Let us consider the commutative diagram 
induced from the exact functor 
$\Homo_0\Tot:(\Kos_{A,?}^{\ff_S},\tq) \to \cM_{A,?}^{\ff_S}(\# S)$ 
defined in \ref{cor:main theorem} $\mathrm{(1)}$ 
and the inclusion functors 
$\Kos_{A,\red}^{\ff_S}\rinc \Kos_{A}^{\ff_S}$ and 
$\cM_{A,\red}^{\ff_S}(\# S)\rinc \cM_A^{\ff_S}(\# S)$ 
below:
$${\footnotesize{\xymatrix{
K(\Kos_{A,\red}^{\ff_S};\tq) \ar[r] \ar[d]_{K(\Homo_0\Tot)}^{\wr}  &
K(\Kos_{A}^{\ff_S};\tq) \ar[d]^{K(\Homo_0\Tot)}_{\wr}\\
K(\cM_{A,\red}^{\ff_S}(\# S)) \ar[r]^{\ \ \sim} & 
K(\cM_A^{\ff_S}(\# S)).
}}}$$
The vertical lines above are homotopy equivalences 
by \ref{cor:main theorem} $\mathrm{(1)}$. 
The bottom horizontal line above is also a homotopy equivalence by \ref{cor:Devissage 2}. 
Hence we obtain the second homotopy equivalence.
\end{proof}

\begin{proof}[\bf Proof of Corollary~\ref{cor:gercon}]
By \ref{rem:regularclosedimmerion} and \ref{thm:HM10}, 
we have the homotopy equivalences 
$$
\varinjlim_{\fg_S} K(\cM_A^{\fg_S}(\# S)) \isoto 
K(\cM_A^{\# S}(\# S)) \isoto 
K(\cM_A^{\# S})
$$
where $\fg_S$ runs over $A$-regular sequences indexed by $S$. 
Therefore Gersten's conjecture for $A$ 
is equivalent to the following assertion: 

\sn
{\it For any $A$-regular sequence $\{g_s\}_{s\in S}$ in $A$, 
the inclusion functor 
$\cM^{\fg_S}_{A}(\# S) \rinc \cM^{\# S-1}_A$ 
induces the zero maps on $K$-groups.}

\sn
Since $A$ is local, every $A$-regular sequence is 
an $A$-sequence by \ref{ex:A-sequence} $\mathrm{(3)}$. 
Fix an $A$-regular sequence $\ff_S=\{f_s\}_{s\in S}$ in $A$ 
and write $j$ for the inclusion functor 
$\cM_A^{\ff_S}(\# S) \rinc \cM_A^{\# S -1}$. 
Then let us consider the commutative diagram below:
$${\footnotesize{\xymatrix{
K(\Kos_{A,\red}^{\ff_S}) \ar[r]^{\textbf{I}} \ar@{->>}[d] & 
K(\Kos_{A}^{\ff_S}) \ar@{->>}[d] \ar[rrd]^{K(j\Homo_0\Tot)}\\
K(\Kos_{A,\red}^{\ff_S};\tq) \ar[r]_{\textbf{I}} & 
K(\Kos_{A}^{\ff_S};\tq) \ar[r]_{\textbf{II}}^{K(\Homo_0\Tot)} & 
K(\cM_A^{\ff_S}(\#S)) \ar[r]_{K(j)} & 
K(\cM_A^{\# S-1}).
}}}$$
Here the maps \textbf{I} and \textbf{II} are homotopy equivalences 
by \ref{cor:Devissage 3} and 
\ref{cor:main theorem} $\mathrm{(1)}$ respectively and 
the vertical maps are (split) epimorphisms by \ref{cor:main theorem} 
$\mathrm{(3)}$. 
Hence $K(j)$ is trivial if and only if 
the composition 
$K(j\Homo_0\Tot):K(\Kos_{A,\red}^{\ff_S}) \to K(\cM_A^{\# S-1})$ 
is trivial. 
Therefore we get the desired result. 
\end{proof}


\begin{thebibliography}{1234567}
\bibitem[Bal09]{Bal09}
P.~Balmer, 
\emph{Niveau spectral sequences on singular schemes and failure of
  generalized {G}ersten conjecture}, 
  Proc.\ Amer.\ Math.\ Soc.\ \textbf{137} (2009), 
p.99-106.


\bibitem[Blo86]{Blo86}
S.~Bloch, 
\emph{{A} note on {G}ersten's conjecture in the mixed characteristic case}, 
Contemporary Math, \textbf{55}, Part I (1986), 
p.75-78. 

\bibitem[Bou98]{Bou98}
N.~Bourbaki, 
\emph{\'{E}l\'ements de math\'ematique. 
{A}lg\`ebre commutative.
  {C}hapitre 10}, 
Springer-Verlag, Berlin, 2007, 
Reprint of the 1998 original.



\bibitem[BE73]{BE73}
D.~A.~Buchsbaum and D.~Eisenbud, 
\emph{{W}hat makes a complex exact?}, 
J. Algebra \textbf{25} (1973), p.259-268.

\bibitem[CF68]{CF68}
L.~Claborn and R.~Fossum, 
\emph{{G}eneralizations of the notion of class group}, 
Ill. Jour. Math. \textbf{12} (1968), 
p.228-253.

\bibitem[Die86]{Die86}
V.~Diekert, 
\emph{{E}ine {B}emerkung zu freien moduln \"uber regul\"aren lokalen ringen}, 
Journal of Algebra \textbf{101} (1986), 
p.188-189.

\bibitem[DHM85]{DHM85}
S.~P.~Dutta, M.~Hochster and J.~E.~McLaughlin, 
\emph{{M}odules of finite projective dimension with 
negative intersection multiplicities}, 
Invent. Math. \textbf{79} (1985), p.253-291.

\bibitem[Dut93]{Dut93}
S.~P.~Dutta, 
\emph{{A} note on {C}how groups and intersection multiplicity of {M}odules}, 
Journal of Algebra \textbf{161} (1993), 
p.186-198.

\bibitem[Dut95]{Dut95}
S.~P.~Dutta, 
\emph{{O}n {C}how groups and intersection multiplicity of {M}odules, II}, 
Journal of Algebra \textbf{171} (1995), 
p.370-381.

\bibitem[Ger73]{Ger73} 
S.~Gersten, 
\emph{{S}ome exact sequences in the higher {$K$}-theory of rings}, 
In Higher K-theories, Springer Lect. Notes Math. \textbf{341} (1973), 
p.211-243.


\bibitem[GL87]{GL87}
H.~Gillet and M.~Levine, 
\emph{{T}he relative form of {G}ersten's conjecture over a 
discrete valuation ring:The smooth case}, 
Jour. Pure. App. Alg. \textbf{46} (1987), 
p.59-71.


\bibitem[GS87]{GS87}
H.~Gillet and C.~Soul\'e, 
\emph{{I}ntersection theory using {A}dams operations}, 
Invent. Math. \textbf{90} (1987), 
p.243-277.

\bibitem[Gra87]{Gra87}
D.~R.~Grayson, 
\emph{{E}xact sequences in algebraic {$K$}-theory}, 
Illinois Journal of Mathematics, 
\textbf{31} (1987), 
p.598-617.

\bibitem[Gra92]{Gra92}
D.~R.~Grayson, 
\emph{{A}dams operations on higher {$K$}-theory}, 
$K$-theory \textbf{6} (1992), 
p.97-111.

\bibitem[HM10]{HM10}
T.~Hiranouchi and S.~Mochizuki, 
\emph{{P}ure weight perfect modules over divisorial schemes}, 
in {D}eformation Spaces: {P}erspectives on {A}lgebro-geometric {M}oduli 
(2010), 
p.75-89.


\bibitem[Kel90]{Kel90}
B.~Keller, 
\emph{{C}hain complexes and stable categories}, 
manus. math. \textbf{67} (1990), 
p.379-417.


\bibitem[Lev85]{Lev85}
M.~Levine, 
\emph{{A} {$K$}-theoretic approach to multiplicities}, 
Math. Ann. \textbf{271} (1985), 
p.451-458.

\bibitem[Lev88]{Lev88}
M.~Levine, \emph{Localization on singular varieties}, 
Invent.\ Math.\ \textbf{91}
  (1988), 
p.423-464.


\bibitem[Mat86]{Mat86}
H.~Matsumura, 
\emph{{C}ommutative ring theory}, 
Cambridge studies in advanced mathematics \textbf{8}, 
(1986).



\bibitem[Moc07]{PID}
S.~Mochizuki, 
\emph{{H}igher algebraic {$K$}-theory of 
finitely generated torsion modules 
over principal ideal domains}, 
available at 
{\tt{http://www.math.uiuc.edu/K-theory/0823/}} (2007).


\bibitem[Pan03]{Pan03}
I.~Panin, 
\emph{{T}he equi-characteristic case of the {G}ersten conjecture}, 
Proc. Steklov. Inst. Math. \textbf{241} (2003), 
p.154-163.


\bibitem[Qui73]{Qui73} 
D.~Quillen, 
\emph{{H}igher algebraic {$K$}-theory {I}}, 
In Higher $K$-theories, 
Springer Lect. Notes Math. \textbf{341} (1973), 
p.85-147.

\bibitem[Seg74]{Seg74}
G.~Segal, 
\emph{{C}ategories and cohomology theories}, 
Topology \textbf{13} (1974), 
p.293-312.

\bibitem[Ser65]{Ser65}
J.~P.~Serre, 
\emph{{A}lg\`ebre locale:{M}ultiplicites}, 
Lect. Notes Math. Vol.\textbf{11}, 
Springer (1965).

\bibitem[Smo87]{Smo87}
W.~Smoke, 
\emph{{P}erfect {M}odules over {C}ohen-{M}acaulay local rings}, 
Journal of algebra \textbf{106} (1987), 
p.367-375.

\bibitem[Sta89]{Sta89}
R.~E.~Staffeldt, 
\emph{{O}n fundamental theorems of algebraic {$K$}-theory}, 
$K$-theory \textbf{1} (1989), 
p.511-532.

\bibitem[TT90]{TT90}
R.~W.~Thomason and T.~Trobaugh, 
\emph{{H}igher algebraic {$K$}-theory of schemes 
and of derived categories}, 
The Grothendieck Festschrift, Vol.\ III, Progr.
  Math., vol.~\textbf{88}, 
Birkh\"auser Boston, Boston, MA, (1990), 
p.247-435.



\bibitem[Wal78]{Wal78}
F.~Waldhausen, 
\emph{{A}lgebraic {$K$}-theory of generalized free products}, 
Ann. of Math. \textbf{108} (1978), 
p.135-256.

\bibitem[Wal85]{Wal85}
F.~Waldhausen, 
\emph{{A}lgebraic {$K$}-theory of spaces}, 
In Algebraic and geometric topology, 
Springer Lect. Notes Math. \textbf{1126} (1985), 
p.318-419.

\bibitem[Wei94]{Wei94}
C.~ A.~Weibel, 
\emph{{A}n introduction to homological algebra}, 
Cambridge studies in advanced mathematics \textbf{38}, 
(1994).

\bibitem[Wei13]{Wei12}
C.~A.~Weibel,
\emph{{T}he $K$-book\ \ an introduction to Algebraic $K$-theory}, 
Grad. Texts in Math., AMS (2013). 

\end{thebibliography}
\end{document}